\documentclass[a4paper]{amsart}
\author[Kuzeljevi{\' c}]{Bori{\v s}a Kuzeljevi{\' c}}
\thanks{Significant part of this paper has been written while the first author visited National University of Singapore. The first author would like to thank NUS for hospitality.
The first author was also partially supported by the Science Fund of the Repubic of Serbia grant number 7750027 (SMART)}
\address[Kuzeljevi{\' c}]{Department of Mathematics and Informatics\\
Faculty of Sciences\\
University of Novi Sad}
\email{\href{borisha@dmi.uns.ac.rs}{borisha@dmi.uns.ac.rs}}
\urladdr{\url{https://people.dmi.uns.ac.rs/\textasciitilde borisha}}
\author[Raghavan]{Dilip Raghavan}
\thanks{Second author was partially supported by the Singapore Ministry of Education's research grant number A-8001467-00-00}
\address[Raghavan]{Department of Mathematics\\
National University of Singapore\\
Singapore 119076.}
\email{\href{dilip.raghavan@protonmail.com}{dilip.raghavan@protonmail.com}}
\urladdr{\url{https://dilip-raghavan.github.io/}}
\date{\today}
\subjclass[2020]{03E35, 05D10, 22A15, 54D35, 03E40}
\keywords{Rudin-Keisler reducibility, Tukey reducibility, selective ultrafilter, stable ordered-union ultrafilter, P-point ultrafilter}
\title[Order structure]{Order structure of P-point ultrafilters and their relatives}
\usepackage[only, llbracket, rrbracket]{stmaryrd}
\usepackage[only, xRightarrow]{empheq}
\usepackage{amssymb, amsmath, amsthm, mathrsfs, enumitem, amsfonts, latexsym, bbm}
\usepackage[hidelinks]{hyperref}
\def\polhk#1{\setbox0=\hbox{#1}{\ooalign{\hidewidth
    \lower1.5ex\hbox{`}\hidewidth\crcr\unhbox0}}}
\newtheorem{Theorem}{Theorem}[section]

\newtheorem{Lemma}[Theorem]{Lemma}
\newtheorem{Cor}[Theorem]{Corollary}
\newtheorem{conj}[Theorem]{Conjecture}
\newtheorem{Question}[Theorem]{Question}

\theoremstyle{definition}
\newtheorem{Def}[Theorem]{Definition}

\theoremstyle{remark}
\newtheorem{remark}[Theorem]{Remark}

\newcommand{\restrict}{\mathord\upharpoonright}
\newcommand{\forallbutfin}{{\forall}^{\infty}}

\newcommand{\cc}{\mathfrak{c}}

\newcommand{\dd}{{\mathfrak{d}}}
\newcommand{\rrr}{{\mathfrak{r}}}

\newcommand{\homr}{{\mathfrak{hom}}_{\mathrm{R}}}
\newcommand{\homh}{{\mathfrak{hom}}_{\mathrm{MT}}}

\newcommand{\uu}{{\mathfrak{u}}}

\renewcommand{\[}{\left[}
\renewcommand{\]}{\right]}

\newcommand{\lc}{\left|}
\newcommand{\rc}{\right|}

\newcommand\ZFC{\mathrm{ZFC}}
\newcommand\FIN{\mathrm{FIN}}
\newcommand\MA{\mathrm{MA}}

\newcommand\CH{\mathrm{CH}}

\newcommand{\BS}{{\omega}^{\omega}}

\DeclareMathOperator{\col}{Col}

\DeclareMathOperator{\cov}{cov}

\DeclareMathOperator{\is}{IS}
\DeclareMathOperator{\dom}{dom}
\DeclareMathOperator{\ran}{ran}

\DeclareMathOperator{\cf}{cf}

\DeclareMathOperator{\rk}{rk}
\newcommand{\Pset}{\mathcal{P}}
\newcommand{\MMM}{\mathcal{M}}

\newcommand{\BBB}{\mathcal{B}}

\newcommand{\CCC}{{\mathcal{C}}}

\newcommand{\GGG}{{\mathcal{G}}}

\newcommand{\UUU}{{\mathcal{U}}}
\newcommand{\VVV}{{\mathcal{V}}}
\newcommand{\WWW}{{\mathcal{W}}}
\newcommand{\HHH}{{\mathcal{H}}}

\newcommand{\FFF}{{\mathcal{F}}}

\newcommand{\V}{{\mathbf{V}}}

\newcommand{\VG}{{{\mathbf{V}}[G]}}

\newcommand{\RR}{\mathbb{R}}

\newcommand{\ih}{{\mathcal{I}}_{\mathtt{Hindman}}}
\newcommand{\XXX}{\mathcal{X}}
\newcommand{\KKK}{\mathcal{K}}

\newcommand{\pr}[2]{\left\langle #1, #2 \right\rangle}
\newcommand{\seqq}[4]{\left\langle {#1}_{#2}: #2 #3 #4 \right\rangle}

\newcommand{\pc}[2]{{\[#1\]}^{#2}}
\newcommand{\lb}[2]{#1 \; {<}_{\mathtt{b}} \; #2}
\newcommand{\lbb}{{<}_{\mathtt{b}}}

\newcommand{\br}[1]{\left(#1\right)}
\newcommand{\brq}[1]{\left[#1\right]}
\newcommand{\set}[1]{\left\{#1\right\}}
\newcommand{\abs}[1]{\left\vert#1\right\vert}
\newcommand{\seq}[1]{\left<#1\right>}
\newcommand{\sset}{\operatorname{set}}
\newcommand{\cu}{\mathcal U}
\newcommand{\cv}{\mathcal V}
\begin{document}
\begin{abstract}
  We survey some recent results about the order structure of various kinds of ultrafilters.
  More precisely, we study Rudin-Keisler and Tukey reducibility in classes of selective, stable ordered-union, and P-point ultrafilters.
  Although these reductions are fundamentally different, there are connections between them.
  On the other hand, even though the classes of ultrafilters we consider are similar, there are significant differences in their order structure, as will be seen in the survey.
\end{abstract}
\maketitle
\section{Introduction}

The purpose of this survey is to present some recent results about the order structure of certain classes of ultrafilters on $\omega$.
For all undefined notions we refer the reader to Section \ref{s:notation}.
The term \emph{order structure} in this paper is reserved almost exclusively for Rudin-Keisler and Tukey reducibility of ultrafilters.
Note that these orders are defined in a completely different manner.
The Rudin-Keisler order is defined in terms of the existence of a function from $\omega$ to $\omega$, while the Tukey order is defined in terms of the existence of a function from the powerset of $\omega$ to the powerset of $\omega$.
Another significant difference, that will be discussed in the paper, is the fact that $\ZFC$ proves that there are two Rudin-Keisler incomparable ultrafilters, whereas it is not known whether $\ZFC$ proves that there are two Tukey incomparable ultrafilters.
All the classes of ultrafilters we will consider in the paper are actually consistent counterexamples to the latter question, known as the Isbell's problem.
Thus $\ZFC$ proves that neither of these ultrafilters exists.
Note also that \cite{natasha_survey} is an excellent survey about Tukey types of ultrafilters on a countable set.

One class of ultrafilters we will mention, in Section \ref{s:p}, is the class of P-point ultrafilters.
There are many equivalent definitions of a P-point ultrafilter, but in essence these are ultrafilters which are very close to being countably closed.
This is the key property which makes them the most natural counterexample to Isbell's problem.
Another class we will be looking into, in Section \ref{sec:selective}, is the class of selective ultrafilters.
These are P-point ultrafilters with additional properties.
One way to describe them is as minimal ultrafilters in the Rudin-Keisler ordering.
The other way to describe them is as those ultrafilters which contain witnesses to all the instances of the infinite Ramsey's theorem for pairs.
There are many other equivalent definitions of selective ultrafilters, as well.

The third class of ultrafilters we consider, in Section \ref{sec:sou}, is the class of stable ordered-union ultrafilters.
These ultrafilters arise naturally from the Milliken-Taylor's theorem, in a similar manner as selective ultrafilters come from Ramsey's theorem.
There is a formal way to explain this, which we will cover in the paper.

Finally, in the last section (Section \ref{sec:weakppoints}), we comment on weakenings of P-points.
One weakening with respect to the Tukey reducibility, and the other weakening with respect to being generic over a certain partial order.
Section \ref{s:notation} contains all the neccessary definitions and preliminary lemmas, as well as most of the needed notation.
Note however, that at some places in the paper, where it seemed more natural, notions were defined as they apear in the text.

\section{Preliminaries}\label{s:notation}

The notation is mostly standard, so $\subseteq$ denotes the subset relation, while $A\subsetneq B$ means that $A\subseteq B$ and $A\neq B$.
Similarly, $A\subseteq^* B$ means that $A\setminus B$ is a finite set.
For a set $X$, the powerset of $X$ is $\mathcal P(X)$.
Cardinality of a set $A$ is denoted by $\abs{A}$, and $\omega=\set{0,1,2,\dots}$ is the first infinite cardinal.
If $A$ is a set and $\kappa$ is a cardinal, then $[A]^{\kappa}=\set{X\subseteq A: \abs{X}=\kappa}$ and $[A]^{<\kappa}=\set{X\subseteq A: \abs{X}<\kappa}$.
If $f$ is a function and $X\subseteq \dom(f)$, then $f''X=\set{f(x):x\in X}$ denotes the direct image of the set $X$ under the map $f$. On the other hand, when $Y\subseteq \ran(f)$, then $f^{-1}(Y)=\set{x: f(x)\in Y}$ denotes the inverse image of the set $Y$ under the map $f$.
For sets $A$ and $B$, the set of all functions from $A$ to $B$ is denoted $B^A$.
For two sequences $a\in X^{\alpha}$ and $b\in Y^{\beta}$, their concatenation is denoted $a^{\frown}b$.

\begin{Def}\label{d:ultrafilter}
	For a set $X$, we say that $\cu\subsetneq \mathcal P(X)$ is \emph{an ultrafilter} on $X$ if:
	\begin{enumerate}
		\item $X\in\cu$,
		\item $a\cap b\in \cu$ for any $a,b\in \cu$,
		\item $b\in \cu$ whenever there is $a\in \cu$ such that $a\subseteq b$,
		\item for each $a\subseteq X$, either $a\in \cu$ or $X\setminus a\in\cu$.
	\end{enumerate}
\end{Def}

Recall that an ultrafilter $\cu$ on a set $X$ is \emph{principal} if there is $x\in X$ such that $\cu=\set{a\subseteq X:x\in a}$.
Otherwise, we say that $\cu$ is \emph{non-principal}.
All ultrafilters are assumed to be non-principal unless otherwise stated. The one exception to this rule will be $\beta X$. By definition, $\beta X$ contains all ultrafilters on $X$, including the principal ones. The remainder, which consists of the non-principal ultrafilters, is denoted $\beta X \setminus X$.

In the case of $\omega$, the space of all ultrafilters on $\omega$ is denoted $\beta\omega$, while the space of non-principal ultrafilters on $\omega$ is denoted $\omega^*=\beta\omega\setminus\omega$.
There is a natural topology on $\beta\omega$ which makes it a compact Hausdorff space homeomorphic with the Stone-Cech compactification of $\omega$ as a discrete space.
Namely, basic open sets are of the form $A^*=\set{\cu\in\beta\omega:A\in\cu}$ for $A\subseteq\omega$.
More details can be found in \cite{vanmill}, for example.
Now we define central objects of our study, P-points and two kinds of orders on ultrafilters.

\begin{Def}\label{d:ppoint}
	Let $\cu$ be a non-principal ultrafilter on $\omega$.  We say that $\cu$ is a \emph{P-point} ultrafilter if for every collection $\set{a_n:n<\omega}\subseteq\cu$ there is an $a\in\cu$ such that $a\subseteq^*a_n$ for every $n<\omega$.
\end{Def}

There are several equivalent definitions of being a P-point. One is that $\cu$ is a P-point if and only if for any function $f:\omega\to\omega$ there is a set $a\in\cu$ such that f is either constant or finite-to-one on $a$.
The other is immediate from the definition, an ultrafilter $\cu$ on $\omega$ is a P-point if and only if the intersection of any countably many neighborhoods of $\cu$ in $\beta\omega$ contains a neighborhood of $\cu$.

Next, we move to the Rudin-Keisler reducibility.
Recall that if $X$ is a nonempty set, then $\mathcal F\subsetneq \mathcal P(X)$ is a filter on $X$ if it satisfies conditions (1-3) of Definition \ref{d:ultrafilter}.  

\begin{Def}
	Let $X$ and $Y$ be non-empty sets, $\mathcal F$ a filter on $X$, and $\mathcal G$ a filter on $Y$. We say that $\mathcal F$ is \emph{Rudin-Keisler reducible} to $\mathcal G$ if there is a function $f:Y\to X$ such that for every $a\subseteq X$:
	$$a\in\mathcal F\Leftrightarrow f^{-1}(a)\in\mathcal G.$$
	In this case, we also say that $\mathcal F$ is Rudin-Keisler below $\mathcal G$ and write $\mathcal F\le_{RK}\mathcal G$.
\end{Def}

\begin{remark}\label{r:downwardPpoint}
  Note that if $\cu$ and $\cv$ are ultrafilters on $\omega$ and $f:\omega\to\omega$ is such that $f''a\in\cu$ for each $a\in\cv$, then $f$ already witnesses that $\cu\le_{RK}\cv$.
	Note also that if $\cu$ is a P-point and $\cv$ is an ultrafilter such that $\cv\le_{RK}\cu$, then $\cv$ is a P-point.
\end{remark}

Let $(D,\le_D)$ be a partially ordered set. We say that $D$ is \emph{directed} if for every $x$ and $y$ in $D$ there is $z$ in $D$ such that $x\le_D z$ and $y\le_D z$.
To compare the complexity of directed sets one typically uses the notion of a Tukey map.
Recall that for directed sets $D$ and $E$, a map $f:D\to E$ is \emph{a Tukey map} if for every unbounded set $X\subseteq D$ the set $f''X$ is unbounded in $E$.
Equivalently, the preimage under $f$ of every bounded set in $E$, is bounded in $D$.
When there is a Tukey map from $D$ to $E$, we say that $D$ is \emph{Tukey reducible} to $E$ and write $D\le_T E$.
It is well known that there is a Tukey map $f:D\to E$ if and only if there is a convergent map $g:E\to D$, i.e. the image under $g$ of every cofinal subset of $E$ is cofinal in $D$.

Note that any ultrafilter $\cu$ on $\omega$ can be viewed as a directed set, as $(\cu,\supseteq)$ ordered by $\supseteq$ relation.
In the case of ultrafilters, Tukey reducibility has a simpler form.
If $\cu$ and $\cv$ are ultrafilters, then $\cu\le_T \cv$ if and only if there is a map $\phi:\cv\to\cu$ which is monotone and cofinal in $\cu$.
Recall that for two families of sets $\mathcal X$ and $\mathcal Y$, a map $\phi:\mathcal X\to\mathcal Y$ is said to be \emph{monotone} if $\phi(a)\subseteq \phi(b)$ whenever $a,b\in\mathcal X$ and $a\subseteq b$, while we say that $\phi$ is \emph{cofinal in} $\mathcal Y$ if for every $b\in \mathcal Y$ there is $a\in\mathcal X$ such that $\phi(a)\subseteq b$.  

We denote $\FIN=[\omega]^{<\omega}\setminus\set{\emptyset}$.
If $(X,<)$ is a well-order and $\operatorname{otp}(X,<)=\abs{X}$, then $X(i)$ denotes the $i$th element of $X$ for $i<\abs{X}$.
It will always be clear which ordering we use.
For example if $X$ is a subset of $\omega$, then $X(i)$ is the $i$th element of the order $(X,<)$ inherited from $(\omega,<)$.
For functions $f,g\in \omega^{\omega}$ we define a relation
$$f<^{*}g\ \mbox{if and only if}\ \exists m<\omega \forall n\ge m \brq{f(n)\le g(n)}.$$
We say that a set $F\subseteq \omega^{\omega}$ is unbounded if there is no function $g\in \omega^{\omega}$ such that $\forall f\in F \brq{f<^*g}$, i.e. if $F$ is an unbounded subset of $(\omega^{\omega},<^*)$.
A collection $F\subseteq [\omega]^{\omega}$ is said to have the \emph{finite intersection property (FIP)} if $\bigcap A$ is infinite for every finite $A\subseteq F$.
Now we define some of the cardinal invariants of the continuum.
The first one is \emph{the pseudointersection number:}
$$\mathfrak p=\min\set{\abs{F}:F\subseteq [\omega]^{\omega}\wedge F\ \mbox{has the FIP}\wedge \neg \exists b\in [\omega]^{\omega} \forall a\in F \brq{b\subseteq^*a}}.$$
The second one is \emph{the bounding number:}
$$\mathfrak b=\min\set{\abs{F}:F\ \mbox{is an unbounded subset of}\ (\omega^{\omega},<^*)}.$$
The third one is \emph{the dominating number:}
$$\mathfrak d=\min\set{\abs{F}:F\subseteq \omega^{\omega}\ \wedge\ \forall g\in \omega^{\omega} \exists f\in F\ [g<^*f]},$$
i.e. $\mathfrak d$ is the minimal size of a dominating family of functions in $\omega^{\omega}$.  

To formulate results in the rest of the paper, we will often need certain set theoretic assumptions. Before we state them, recall a couple of notions.
The cardinality of the continuum is $\mathfrak c=2^{\aleph_0}$.
A poset $\mathbb{P}$ is \emph{ccc} if there is no uncountable collection of pairwise incompatible conditions in $\mathbb{P}$.
A poset $\mathbb{P}$ is \emph{$\sigma$-centered} if it can be written as the countable union $\mathbb{P}=\bigcup_{n<\omega}P_n$ of centered subsets, where $X\subseteq \mathbb{P}$ is \emph{centered} if any finitely many members of $X$ have a lower bound in $\mathbb{P}$. The assumptions we will be using are:

\medskip
$\CH$:\ \ $2^{\aleph_0}=\aleph_1$.

\medskip
$\MA$:\ \ For every ccc partial order $\mathbb{P}$, every $\alpha<\mathfrak{c}$, and every collection $\set{D_i:i<\alpha}$ of sets dense in $\mathbb{P}$, there is a filter $G\subseteq \mathbb{P}$ such that $G\cap D_i\neq\emptyset$ for every $i<\alpha$.

\medskip
$\MA(\sigma-\mbox{centered})$:\ \ For every $\sigma$-centered poset $\mathbb{P}$, every $\alpha<\mathfrak{c}$, and every collection $\set{D_i:i<\alpha}$ of sets dense in $\mathbb{P}$, there is a filter $G\subseteq \mathbb{P}$ such that $G\cap D_i\neq\emptyset$ for every $i<\alpha$.

\begin{remark}
	Note that $\MA(\sigma-\mbox{centered})$ is equivalent to $\mathfrak p=\mathfrak c$ and that $$\CH\Rightarrow \MA\Rightarrow \MA(\sigma-\mbox{centered}).$$
\end{remark}

Note also that $\MA_{\alpha}$ is the statement that for every ccc partial order $\mathbb{P}$, and every collection $\set{D_i:i<\alpha}$ of sets dense in $\mathbb{P}$, there is a filter $G\subseteq \mathbb{P}$ such that $G\cap D_i\neq\emptyset$ for every $i<\alpha$. Then $\MA$ simply says that $\MA_{\alpha}$ holds for each $\alpha<\mathfrak{c}$.

\section{Selective ultrafilters} \label{sec:selective}
Ramsey's theorem for pairs states that if $c: \pc{\omega}{2} \rightarrow 2$ is any coloring, then there exists $H \in \pc{\omega}{{\aleph}_{0}}$ such that $c$ is constant on $\pc{H}{2}$.
One of the easiest ways to prove this statement is using an arbitrary ultrafilter $\UUU$ on $\omega$.
We will recall this well-known argument as a motivation for the definition of a selective ultrafilter.
For each $m \in \omega$, there exists ${i}_{m} \in 2$ such that ${K}_{m} = \{n > m: c(\{m, n\}) = {i}_{m} \} \in \UUU$.
And there exists $i \in 2$ such that $K = \{m \in \omega: {i}_{m} = i\} \in \UUU$.
If $\{{n}_{0}, \dotsc, {n}_{l}\} \subseteq K$, then we may choose ${n}_{l+1} \in K \cap \left( {\bigcap}_{j \leq l}{{K}_{{n}_{j}}} \right)$.
Proceeding in this way, we construct $H = \{{n}_{j}: j \in \omega\} \subseteq K$ with the property that $c$ is constantly $i$ on $\pc{H}{2}$.
The reader will notice that even though the sets $K$ and ${K}_{m}$ belong to $\UUU$, there is no guarantee that $H$ will belong to $\UUU$.
This consideration leads to the following definition.
\begin{Def} \label{def:selective}
 An ultrafilter $\UUU$ on $\omega$ is said to be \emph{selective} if for every $c: \pc{\omega}{2} \rightarrow 2$, there exists $H \in \UUU$ such that $c$ is constant on $\pc{H}{2}$.
\end{Def}
Thus selective ultrafilters contain a witness to each instance of Ramsey's theorem for pairs.
This turns out to be a very robust concept with a plethora of equivalent characterizations.
We need to introduce a few definitions in order to state these equivalences.
\begin{Def} \label{def:canonicalonomega}
 A function $f: \omega \rightarrow \omega$ is \emph{canonical on} a subset \emph{$A \subseteq \omega$} if $f$ is either constant or one-to-one on $A$.
\end{Def}
A simple consequence of Ramsey's theorem for pairs is that every $f: \omega \rightarrow \omega$ is canonical on some infinite $A \subseteq \omega$.
In the realm of ultrafilters, this consequence turns out to be strong enough to recover the general form of Ramsey's theorem for all finite dimensions and for any finite number of colors.
The theorem that every $f: \omega \rightarrow \omega$ is canonical on some infinite $A \subseteq \omega$ can be naturally broken into two parts: every $g: \omega \rightarrow \omega$ is either constant or finite-to-one on some infinite $B \subseteq \omega$; and every finite-to-one $h: \omega \rightarrow \omega$ is one-to-one on some infinite $C \subseteq \omega$.
The first part leads to the definition of a P-point.
The second part leads to the following.
\begin{Def} \label{def:qpoint}
 An ultrafilter $\UUU$ on $\omega$ is called a \emph{Q-point} if for every finite-to-one function $f: \omega \rightarrow \omega$, there exists $A \in \UUU$ such that $f$ is one-to-one on $A$.
\end{Def}
Every Q-point is rapid (see Definition \ref{d:rapid}), but there may be rapid ultrafilters which are not Q-points.
The following theorem is the result of combining the work of several people including Choquet, Kunen, and Silver.
A proof can be found in Bartoszy{\' n}ski and Judah~\cite{BJ} or Todorcevic~\cite{topics}.
\begin{Theorem} \label{thm:kunenramsey}
 The following are equivalent for any ultrafilter $\UUU$ on $\omega$:
 \begin{enumerate}
  \item \label{kunenramsey:first}
  $\UUU$ is selective;
  \item \label{kunenramsey:second}
  for each $1 \leq n, k < \omega$ and $c: \pc{\omega}{n} \rightarrow k$, there is an $A \in \UUU$ such that $c$ is constant on $\pc{A}{n}$;
  \item \label{kunenramsey:third}
  for every function $f: \omega \rightarrow \omega$, there exists $A \in \UUU$ such that $f$ is canonical on $A$;
  \item \label{kunenramsey:fourth}
  whenever $\XXX \subseteq \pc{\omega}{{\aleph}_{0}}$ is analytic, there exists $A \in \UUU$ such that either $\pc{A}{{\aleph}_{0}} \subseteq \XXX$ or $\pc{A}{{\aleph}_{0}} \cap \XXX = \emptyset$;
  \item \label{kunenramsey:fifth}
  $\UUU$ is both a P-point and a Q-point.
 \end{enumerate}
\end{Theorem}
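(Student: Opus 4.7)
The plan is to prove the equivalences via the cycle $(4)\Rightarrow(2)\Rightarrow(1)\Rightarrow(3)\Rightarrow(5)\Rightarrow(2)$ together with the separate implication $(1)\Rightarrow(4)$, which is the deep step. Most of the implications in the cycle have short combinatorial proofs. The implication $(2)\Rightarrow(1)$ is just the case $n=k=2$. For $(1)\Rightarrow(3)$, given $f:\omega\to\omega$ apply selectivity to the pair colouring $c(\{m,n\})=0$ iff $f(m)=f(n)$; a homogeneous $A\in\UUU$ is either a set on which $f$ is constant (colour $0$) or one on which $f$ is injective (colour $1$). For $(3)\Rightarrow(5)$, Q-pointness is immediate because a finite-to-one $g$ cannot be constant on any set in $\UUU$, so its canonical set is a set where $g$ is injective; for P-pointness, given a decreasing sequence $X_0\supseteq X_1\supseteq\dotsb$ in $\UUU$ with $\bigcap_n X_n=\emptyset$, canonicalise $f(n)=\max\{k:n\in X_k\}$ and observe that the constant alternative is ruled out because $f\equiv k_0$ on $A\in\UUU$ would force $A\cap X_{k_0+1}=\emptyset$; in the injective alternative, $\{a\in A:a\notin X_n\}\subseteq\{a\in A:f(a)<n\}$ is finite, so $A\subseteq^* X_n$ for every $n$. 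For $(4)\Rightarrow(2)$, each set $\XXX_i=\{A\in\pc{\omega}{\aleph_0}:c\restrict\pc{A}{n}\equiv i\}$ with $i<k$ is closed, hence analytic; applying $(4)$ to every $\XXX_i$ either produces an $i$-monochromatic $A_i\in\UUU$ (and we are done), or else $\pc{A_i}{\aleph_0}\cap\XXX_i=\emptyset$ for every $i$, in which case the ordinary Ramsey theorem applied inside $\bigcap_{i<k}A_i\in\UUU$ yields a homogeneous subset of some colour $i_0$, contradicting the alternative chosen at $i_0$.

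The first genuinely combinatorial step is $(5)\Rightarrow(2)$, proved by induction on $n$; the base case $n=1$ is the partition property of an ultrafilter. At the step from $n$ to $n+1$, given $c:\pc{\omega}{n+1}\to k$, for each $s\in\pc{\omega}{n}$ pick $i_s<k$ and $A_s\in\UUU$ with $c(s\cup\{x\})=i_s$ for every $x\in A_s$ above $\max s$; the inductive hypothesis applied to the colouring $s\mapsto i_s$ delivers $B\in\UUU$ on which this secondary colour is constantly some $i$. The remaining task is to thin $B$ to $H\in\UUU$ with the diagonal property that $x\in A_s$ whenever $s\in\pc{H}{n}$ and $x\in H\cap(\max s,\infty)$; the P-point property supplies $H_1\subseteq B$ in $\UUU$ with $H_1\subseteq^* A_s$ for every $s\in\pc{\omega}{n}$, and the Q-point property is then used to sparsify $H_1$ to $H$ by picking at most one point from each piece of a finite interval partition that records the finite errors $H_1\setminus A_s$; the resulting $H$ is $i$-homogeneous for $c$.

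The principal obstacle, and the deepest ingredient, is $(1)\Rightarrow(4)$, which is the selective version of Silver's theorem and is essentially Mathias's theorem on selective ultrafilters. My approach is to invoke Silver's theorem as a black box: for any analytic $\XXX\subseteq\pc{\omega}{\aleph_0}$ and any infinite $C\subseteq\omega$, there is an infinite $D\subseteq C$ with $\pc{D}{\aleph_0}\subseteq\XXX$ or $\pc{D}{\aleph_0}\cap\XXX=\emptyset$; the content is to ensure $D$ may be chosen inside $\UUU$. One codes the analytic set by a Nash--Williams front $\FFF\subseteq\fin$ on which membership in $\XXX$ is decided, colours $\FFF$ by these decisions, and applies the already established $(1)\Leftrightarrow(2)$ inside $\UUU$ (extending Ramsey's theorem from $\pc{\omega}{n}$ to arbitrary fronts via a standard fusion) to obtain a monochromatic $A\in\UUU$ on $\FFF$; a diagonalisation then shows that Silver's alternative is actually realised on $\pc{A}{\aleph_0}$. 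Setting up the front machinery and verifying that analytic sets are decided at a front level is the chief technical burden here, and for the complete argument the reader should consult \cite{BJ} or \cite{topics}.
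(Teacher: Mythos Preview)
The paper does not prove this theorem; it attributes the result to Choquet, Kunen, and Silver and refers the reader to \cite{BJ} and \cite{topics}. Your sketch therefore goes well beyond what the paper itself offers, and it follows the standard route found in those references, including the identification of $(1)\Rightarrow(4)$ as the Mathias--Silver step.

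One point deserves correction. In your argument for $(5)\Rightarrow(2)$ you write that ``the P-point property supplies $H_1\subseteq B$ in $\UUU$ with $H_1\subseteq^* A_s$ for every $s\in\pc{\omega}{n}$''. As stated this is false: there are continuum many such $s$, and the P-point property only yields a pseudo-intersection for countably many sets. The repair is the usual one: for each $m<\omega$ set $D_m = B\cap\bigcap\{A_s:s\in\pc{m}{n}\}$, a finite intersection and hence in $\UUU$; apply the P-point property to the countable family $\{D_m:m<\omega\}$ to get $H_1\in\UUU$ with $H_1\subseteq^* D_m$ for all $m$; then the Q-point sparsification you describe (choosing at most one point of $H_1$ from each block of a suitable interval partition determined by the finite errors $H_1\setminus D_m$) produces $H\in\UUU$ with the property that whenever $y<x$ are in $H$ then $x\in D_{y+1}$, which is exactly what is needed. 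With this adjustment the argument is correct.
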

The partition property in item (4) can be further strengthened in the presence of large cardinals to include all subsets of $\pc{\omega}{{\aleph}_{0}}$ in $\mathbf{L}(\RR)$.
This leads to the notion of a generic ultrafilter over a model.
$(\pc{\omega}{\omega}, {\subseteq}^{\ast})$ is a countably closed forcing, and hence it does not add any reals.
If $\UUU \subseteq \pc{\omega}{\omega}$ is a generic filter for this forcing over some model $\V$, then $\UUU$ is a selective ultrafilter in $\V\[\UUU\]$.
A remarkable theorem of Todorcevic (see \cite{MR1654181} and \cite{MR1644345}) says that in the presence of large cardinals, every selective ultrafilter is generic over the inner model $\mathbf{L}(\RR)$.
\begin{Theorem}[Todorcevic]
 Assume that there is a supercompact cardinal.
 $\UUU$ is a selective ultrafilter on $\omega$ if and only if $\UUU$ is a generic filter for the forcing $(\pc{\omega}{\omega}, {\subseteq}^{\ast})$ over $\mathbf{L}(\RR)$.
\end{Theorem}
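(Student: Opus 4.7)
The plan is to prove the two directions separately, with $(\Leftarrow)$ being essentially immediate from the definitions and $(\Rightarrow)$ being the heart of the matter, where the large cardinal assumption enters.

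For $(\Leftarrow)$, suppose $\UUU$ is generic for $\br{\pc{\omega}{\omega}, {\subseteq}^{\ast}}$ over $\mathbf{L}(\RR)$. Given any coloring $c \colon \pc{\omega}{2} \to 2$, I would form $D_c = \set{A \in \pc{\omega}{\omega} : c \text{ is constant on } \pc{A}{2}}$. This set belongs to $\mathbf{L}(\RR)$ because $c$ does, and Ramsey's theorem for pairs applied to $c \restrict \pc{X}{2}$ shows that every $X \in \pc{\omega}{\omega}$ has an infinite subset in $D_c$; hence $D_c$ is dense in $\br{\pc{\omega}{\omega}, {\subseteq}^{\ast}}$. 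Genericity then yields $H \in \UUU \cap D_c$, which witnesses selectivity via Definition \ref{def:selective}.

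For $(\Rightarrow)$, suppose $\UUU$ is selective. The strategy is to upgrade Theorem \ref{thm:kunenramsey}(4), replacing ``analytic'' with ``in $\mathbf{L}(\RR)$'': I would prove that for every $\XXX \subseteq \pc{\omega}{{\aleph}_{0}}$ with $\XXX \in \mathbf{L}(\RR)$ there exists $A \in \UUU$ such that $\pc{A}{{\aleph}_{0}} \subseteq \XXX$ or $\pc{A}{{\aleph}_{0}} \cap \XXX = \emptyset$. The first ingredient is Woodin's theorem that a supercompact cardinal implies $\mathbf{L}(\RR) \models \mathrm{AD}$, and the second is the Galvin-Prikry / Mathias theorem, extended under $\mathrm{AD}$, that every subset of $\pc{\omega}{{\aleph}_{0}}$ in such a model is \emph{completely Ramsey}. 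The combinatorial content of selectivity --- especially the P-point and Q-point sides granted by Theorem \ref{thm:kunenramsey}(5) --- is exactly what allows the completely Ramsey homogeneous set existing inside any given member of $\UUU$ to be diagonalized into a homogeneous set that itself lies in $\UUU$.

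Once this $\mathbf{L}(\RR)$-Ramsey property for $\UUU$ is available, genericity follows routinely. Given a dense $D \in \mathbf{L}(\RR)$, its $\subseteq^*$-downward closure $\tilde{D} = \set{A \in \pc{\omega}{\omega} : \exists B \in D\ \br{A \subseteq^* B}}$ lies in $\mathbf{L}(\RR)$ and is open dense, so applying the upgraded Ramsey property to $\tilde{D}$ yields $A \in \UUU$ with $\pc{A}{{\aleph}_{0}} \subseteq \tilde{D}$ or $\pc{A}{{\aleph}_{0}} \cap \tilde{D} = \emptyset$; density of $\tilde{D}$ rules out the second alternative, and in the first $A \in \tilde{D}$, so some $B \in D$ satisfies $A \subseteq^* B$ and hence lies in $\UUU \cap D$. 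The main obstacle is clearly the $\mathbf{L}(\RR)$-Ramsey upgrade of Theorem \ref{thm:kunenramsey}(4); this rests on descriptive set theory in $\mathbf{L}(\RR)$ under large cardinals and goes well beyond the elementary arguments sufficient for the analytic case.
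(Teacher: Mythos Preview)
The paper does not include a proof of this theorem; it is stated as a result of Todorcevic with references to \cite{MR1654181} and \cite{MR1644345}, so there is no in-paper argument to compare against.

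Your outline is the standard route and is essentially correct. The $(\Leftarrow)$ direction is fine as written (modulo also checking, via equally easy density arguments, that the generic is an ultrafilter in $\V$; this uses that ${\Pset(\omega)}^{\mathbf{L}(\RR)} = {\Pset(\omega)}^{\V}$). For $(\Rightarrow)$, the decomposition into (i) $\mathbf{L}(\RR) \models \mathrm{AD}$ under a supercompact, (ii) every set of reals in $\mathbf{L}(\RR)$ is completely Ramsey, and (iii) upgrading ``completely Ramsey'' to ``$\UUU$-Ramsey'' via selectivity, is exactly the structure of the cited proofs. The only place your sketch is vague is step (iii): the passage from ``there is a homogeneous set inside every $A \in \UUU$'' to ``there is a homogeneous set in $\UUU$'' is not a one-line diagonalization but the local-Ramsey theory for selective (more generally, semiselective) coideals developed in \cite{MR1644345} and used in \cite{MR1654181}. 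If you want a self-contained write-up, that is the step requiring real work; the rest is as you describe.
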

In an earlier work, Blass~\cite{Bl} had obtained the same conclusion for the model $\mathbf{HOD}(\RR)$ inside a variant of the L{\' e}vy--Solovay model.
\begin{Theorem}[Corollary 11.2 of Blass~\cite{Bl}]
 Let $\kappa$ be a Mahlo cardinal in $\V$ and let $H$ be a generic filter for $\col(\omega, < \kappa)$ over $\V$.
 Then, in $\V\[H\]$, $\UUU$ is a selective ultrafilter on $\omega$ if and only if $\UUU$ is a generic filter for the the forcing $(\pc{\omega}{\omega}, {\subseteq}^{\ast})$ over ${\mathbf{HOD}(\RR)}^{\V\[H\]}$.
\end{Theorem}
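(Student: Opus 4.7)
The plan is to prove the two directions of the biconditional separately. For \emph{generic $\Rightarrow$ selective}, I would observe first that every real of $\V\[H\]$ lies in $\mathbf{HOD}(\RR)^{\V\[H\]}$ (it is trivially definable from itself as parameter). Hence for any coloring $c : \pc{\omega}{2} \to 2$ in $\V\[H\]$, the set $D_c = \set{A \in \cube : c \text{ is constant on } \pc{A}{2}}$ is uniformly definable from $c$ and so belongs to $\mathbf{HOD}(\RR)^{\V\[H\]}$; density of $D_c$ in $(\cube, \subseteq^*)$ follows from Ramsey's theorem for pairs applied in $\V\[H\]$. Genericity of $\UUU$ over $\mathbf{HOD}(\RR)^{\V\[H\]}$ produces $A \in \UUU \cap D_c$, so $\UUU$ contains a $c$-homogeneous set and is selective. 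A parallel argument using the dense sets $\set{B \in \cube : B \subseteq^* A \text{ or } \abs{B \cap A} < \aleph_0}$ for $A \subseteq \omega$ in $\V\[H\]$ shows $\UUU$ is in fact an ultrafilter in the first place.

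For the harder direction \emph{selective $\Rightarrow$ generic}, fix a selective $\UUU$ in $\V\[H\]$ and a dense open $D \in \mathbf{HOD}(\RR)^{\V\[H\]}$ in $(\cube, \subseteq^*)$; the task is to produce $A \in \UUU \cap D$. The main obstacle is establishing a Ramsey-type regularity for $\mathbf{HOD}(\RR)^{\V\[H\]}$-sets: every $\XXX \subseteq \cube$ in $\mathbf{HOD}(\RR)^{\V\[H\]}$ is \emph{completely Ramsey}, that is, for each $A \in \cube$ there exists $B \in \pc{A}{{\aleph}_{0}}$ with $\pc{B}{{\aleph}_{0}} \subseteq \XXX$ or $\pc{B}{{\aleph}_{0}} \cap \XXX = \emptyset$. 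This is a Mathias-style theorem and is where the Mahloness of $\kappa$ is essential: one reflects the real parameter defining $\XXX$ below an inaccessible $\lambda < \kappa$ (abundant by Mahloness), writes $\V\[H\] = \V\[H \restrict \lambda\]\[H / \lambda\]$, and runs a Mathias-forcing homogenisation argument over $\V\[H \restrict \lambda\]$.

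Granting this regularity, the proof of item~(\ref{kunenramsey:fourth}) of Theorem~\ref{thm:kunenramsey} --- that a selective ultrafilter diagonalises every analytic subset of $\cube$ --- goes through verbatim once the target set is merely completely Ramsey; applied to $D$, it yields $A \in \UUU$ with $\pc{A}{{\aleph}_{0}} \subseteq D$ or $\pc{A}{{\aleph}_{0}} \cap D = \emptyset$. Density of $D$ in $(\cube, \subseteq^*)$ rules out the second alternative: using density pick $B \subseteq^* A$ with $B \in D$, then $B \cap A \in \pc{A}{{\aleph}_{0}}$ and $B \cap A \subseteq^* B \in D$, so $B \cap A \in D$ by openness of $D$. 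Hence $\pc{A}{{\aleph}_{0}} \subseteq D$, so in particular $A \in D \cap \UUU$, completing the argument.
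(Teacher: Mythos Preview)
The paper does not give its own proof of this statement; it is a survey and simply quotes the result as Corollary~11.2 of Blass~\cite{Bl}, so there is nothing in the paper to compare your proposal against.

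For what it is worth, your outline is the standard route and is essentially what Blass does. The easy direction is exactly the density computation you describe. For the hard direction, the heart of the matter is indeed the Mathias-type regularity theorem that every subset of $\cube$ lying in $\mathbf{HOD}(\RR)^{\V[H]}$ is completely Ramsey, proved by absorbing the real parameter below an inaccessible $\lambda<\kappa$ (supplied by Mahloness), factoring $\col(\omega,<\kappa)$ as $\col(\omega,<\lambda)\ast\col(\omega,[\lambda,\kappa))$, and running the Mathias homogenisation over the intermediate model. Once that is in hand, your use of selectivity to decide the dense open $D$ and the density/openness argument ruling out $\pc{A}{\aleph_0}\cap D=\emptyset$ are correct. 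One point of precision: make sure ``completely Ramsey'' is taken in the localized Ellentuck sense (homogeneity below every basic set $[s,A]$, not just below $[\emptyset,A]$), since the argument that a selective ultrafilter decides such sets---whether via the game ${\Game}^{\mathtt{Sel}}(\UUU)$ or via the P-point plus Q-point diagonalization---needs this stronger form.
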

Selective ultrafilters also have a useful characterization in terms of games.
This characterization was independently discovered by Galvin and McKenzie, although neither one of them seems to have published the result.
A proof can be found in Chapter VI \textsection 5 of Shelah~\cite{PIF}.
\begin{Def} \label{def:Gsel}
 Let $\UUU$ be an ultrafilter on $\omega$.
 The \emph{selectivity game on $\UUU$}, denoted \emph{${\Game}^{\mathtt{Sel}}\left(\UUU\right)$}, is a two player perfect information game in which Players I and II alternatively choose ${A}_{i}$ and ${n}_{i}$ respectively, where ${A}_{i} \in \UUU$ and ${n}_{i} \in {A}_{i}$.
 Together they construct the sequence
 \begin{align*}
  {A}_{0}, {n}_{0}, {A}_{1}, {n}_{1}, \dotsc,
 \end{align*}
 where each ${A}_{i} \in \UUU$ has been played by Player I and ${n}_{i} \in {A}_{i}$ has been chosen by Player II in response.
 Player II wins if and only if $\{{n}_{i}: i < \omega\} \in \UUU$.
\end{Def}
\begin{Theorem}[Galvin; McKenzie] \label{thm:sel}
 An ultrafilter $\UUU$ on $\omega$ is selective if and only if Player I does not have a winning strategy in ${\Game}^{\mathtt{Sel}}\left(\UUU\right)$.
\end{Theorem}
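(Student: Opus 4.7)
The plan is to establish the two implications separately, invoking Theorem \ref{thm:kunenramsey}(\ref{kunenramsey:fifth}) to characterize selectivity as the conjunction of the P-point and Q-point properties. For the contrapositive of the ``only if'' direction, assume $\UUU$ is not selective and exhibit a winning strategy for Player I in each of the two cases. If $\UUU$ is not a P-point, fix a decreasing sequence $\langle {A}_{k} : k < \omega \rangle \subseteq \UUU$ with no pseudo-intersection in $\UUU$ and let Player I play ${A}_{k}$ on round $k$ regardless of Player II's history; then any play $\{{n}_{k} : k < \omega\}$ satisfies $\{{n}_{j} : j \geq k\} \subseteq {A}_{k}$ for each $k$, so it is a pseudo-intersection of $\langle {A}_{k}\rangle$ and hence not in $\UUU$. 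If $\UUU$ is a P-point but not a Q-point, fix a partition $\omega = {\bigsqcup}_{m < \omega} {I}_{m}$ into finite blocks admitting no selector in $\UUU$ (a selector being a set meeting each ${I}_{m}$ in at most one point), and let Player I respond on round $k$ with $\omega \setminus \bigcup\{{I}_{m} : {I}_{m} \cap \{{n}_{0}, \dotsc, {n}_{k-1}\} \neq \emptyset\}$; this set lies in $\UUU$ because it differs from $\omega$ by finitely many finite sets, and it forces Player II's moves into pairwise distinct blocks, making the play a selector and therefore not in $\UUU$.

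For the ``if'' direction, assume $\UUU$ is selective and let $\sigma$ be an arbitrary strategy for Player I. Recursively associate to each $s = \langle {n}_{0}, \dotsc, {n}_{k-1}\rangle \in {\omega}^{<\omega}$ the set ${A}_{s} \in \UUU$ that $\sigma$ prescribes after the canonical history where Player II's successive moves are ${n}_{0}, {n}_{1}, \dotsc, {n}_{k-1}$, using ${A}_{s} = \omega$ when that history is not legal. Define ${D}_{n} = {A}_{\emptyset} \cap \bigcap\{{A}_{s} : s \in {\omega}^{<\omega},\ |s| \leq n,\ \max s \leq n\}$; being a finite intersection of members of $\UUU$, each ${D}_{n}$ lies in $\UUU$, and $\langle {D}_{n} : n < \omega \rangle$ is decreasing. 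Apply Definition \ref{def:selective} to the coloring $c : \pc{\omega}{2} \to 2$ with $c(\{m, n\}) = 0$ iff $n \in {D}_{m}$ (for $m < n$), and then intersect the resulting monochromatic set with ${A}_{\emptyset}$ to obtain $H \in \UUU$ with $H \subseteq {A}_{\emptyset}$ and $c$ constant on $\pc{H}{2}$. Enumerating $H = \{{h}_{i} : i < \omega\}$ in increasing order, the constant color cannot be $1$: otherwise $H \cap {D}_{{h}_{0}} \subseteq \{{h}_{0}\}$, contradicting the fact that both $H$ and ${D}_{{h}_{0}}$ lie in $\UUU$. So the color is $0$, giving ${h}_{i+1} \in {D}_{{h}_{i}}$ for every $i$. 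For each $k \geq 1$, the sequence $s = \langle {h}_{1}, \dotsc, {h}_{k}\rangle$ satisfies $|s| = k \leq {h}_{k}$ (using ${h}_{i} \geq i$) and $\max s = {h}_{k}$, so ${A}_{s} \supseteq {D}_{{h}_{k}} \ni {h}_{k+1}$; combined with ${h}_{1} \in {A}_{\emptyset}$, this shows that the play in which Player II successively chooses ${h}_{1}, {h}_{2}, \dotsc$ is legal and consistent with $\sigma$, while $\{{h}_{i} : i \geq 1\} \in \UUU$, so Player II wins.

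The principal technical delicacy is the design of the sets ${D}_{n}$, which must simultaneously belong to $\UUU$---ensured by the bounds on $|s|$ and $\max s$ forcing the intersection to be finite---and uniformly dominate the ${A}_{s}$ that arise along the actual play---ensured by the fact that the enumeration of $H$ satisfies ${h}_{i} \geq i$, so passing to a tail of $H$ produces sequences $s$ with $|s| \leq \max s$. The invocation of selectivity is isolated to the Ramsey-theoretic elimination of the constant-$1$ color, which is essentially the same pigeonhole observation underlying the motivating argument that precedes Definition \ref{def:selective}.
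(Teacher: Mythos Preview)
Your argument is correct. The paper itself does not prove this theorem; it merely attributes the result to Galvin and McKenzie and points the reader to Chapter~VI~\S5 of Shelah~\cite{PIF} for a proof. So there is no ``paper's own proof'' to compare against, but your write-up is a clean, self-contained treatment: the contrapositive for the forward direction splits correctly along the P-point/Q-point failure (via Theorem~\ref{thm:kunenramsey}(\ref{kunenramsey:fifth})), and for the backward direction your diagonalization sets ${D}_{n}$ are well chosen---the bounds $\lvert s\rvert\le n$ and $\max s\le n$ keep the intersection finite, while the increasing enumeration of $H$ guarantees $\lvert s\rvert\le\max s$ along the play, so ${D}_{{h}_{k}}\subseteq {A}_{\langle {h}_{1},\dotsc,{h}_{k}\rangle}$ as needed. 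Dropping ${h}_{0}$ and starting the play at ${h}_{1}$ is a nice device to ensure $\lvert\langle h_1\rangle\rvert = 1 \le h_1$ from the outset.
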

A standard diagonalization argument like the one given by Rudin in \cite{walterppoints} shows that $\CH$ implies the existence of ${2}^{\cc}$ selective ultrafilters.
Note that this is the maximum possible number of ultrafilters on a countable set.
In fact, a much weaker hypothesis than $\CH$ is sufficient to obtain an even stronger conclusion.
\begin{Def} \label{def:generic}
 Let $\KKK$ be a class of ultrafilters on a countable set $X$.
 We say that ultrafilters from $\KKK$ \emph{exist generically} if every filter base on $X$ of size less than ${2}^{{\aleph}_{0}}$ can be extended to an ultrafilter belonging to $\KKK$.
\end{Def}
The earliest result regarding the generic existence of some special class of ultrafilters seems to be Ketonen's theorem that P-points exist generically if and only if $\dd = \cc$.
Canjar~\cite{MR0993747} introduced the general concept of generic existence for classes of ultrafilters and proved the following.
\begin{Theorem}[Canjar~\cite{MR0993747}] \label{thm:selective-generic}
 Selective ultrafilters exist generically if and only if $\cov(\MMM) = \cc$.
\end{Theorem}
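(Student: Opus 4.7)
The plan is to prove both directions by exploiting the Bartoszy\'nski characterization of $\cov(\MMM)$ as the minimum cardinality of a family $F\subseteq\omega^\omega$ such that no single $g\in\omega^\omega$ satisfies $\exists^\infty n\;[g(n)=f(n)]$ for every $f\in F$. Equivalently, $\cov(\MMM)=\cc$ is the statement $\MA_{<\cc}$ for Cohen forcing, and this formulation guides the transfinite construction in the sufficient direction, while the Bartoszy\'nski form drives the contradiction in the necessary direction.

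For the direction $\cov(\MMM)=\cc\Rightarrow$ generic existence, fix a filter base $\mathcal{B}$ on $\omega$ with $|\mathcal{B}|=\kappa<\cc$. I enumerate $\PPP(\omega)$ as $\seqq{B}{\xi}{<}{\cc}$ and the $2$-colorings of $\pc{\omega}{2}$ as $\seqq{c}{\xi}{<}{\cc}$, then recursively construct an $\subseteq$-increasing chain of filter bases $\mathcal{B}_\xi$ with $\mathcal{B}_0=\mathcal{B}$, $|\mathcal{B}_\xi|<\cc$, and unions at limits. At a successor stage $\xi+1$, I first adjoin whichever of $B_\xi,\omega\setminus B_\xi$ is compatible with $\mathcal{B}_\xi$, and then adjoin an infinite $c_\xi$-homogeneous set $H_\xi$ that remains compatible with the enlarged base. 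The ultrafilter $\UUU$ generated by $\bigcup_{\xi<\cc}\mathcal{B}_\xi$ then extends $\mathcal{B}$ and is selective by Theorem \ref{thm:kunenramsey}, (\ref{kunenramsey:first})$\Leftrightarrow$(\ref{kunenramsey:second}).

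The pivotal step, which I view as the main obstacle of the entire argument, is the following \emph{Key Lemma}: for any filter $\mathcal{F}$ on $\omega$ with $|\mathcal{F}|<\cov(\MMM)$ and any $c:\pc{\omega}{2}\to 2$, there is an infinite $c$-homogeneous $H\subseteq\omega$ such that $H\cap X$ is infinite for every finite intersection $X$ of elements of $\mathcal{F}$. I would prove it by working with the Mathias-style forcing $\MM(\mathcal{F})$ with conditions $(s,A)$, where $s\in\fin$, $A\in\mathcal{F}$, $\max s<\min A$, ordered in the standard fashion. A generic real for $\MM(\mathcal{F})$ is a pseudointersection of $\mathcal{F}$, and a Ramsey-type density argument ensures it can be made $c$-homogeneous. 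Although $\MM(\mathcal{F})$ is $\sigma$-centered when grouped by stems, meeting the required $\kappa$-many dense sets via $\MA(\sigma\text{-centered})$ would demand the stronger hypothesis $\p=\cc$; the crucial observation is that the dense sets needed for pseudointersection and homogeneity are determined by countable data attached to each stem, so that meeting them reduces to Cohen-style $\MA_{<\cc}$, which is exactly $\cov(\MMM)=\cc$. Alternatively, $H$ can be built directly as the range of a function $g\in\omega^\omega$ which, via the Bartoszy\'nski characterization, simultaneously hits every coded filter element and respects the homogeneity constraint.

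For the converse, I argue by contrapositive. Assume $\cov(\MMM)=\kappa<\cc$ and fix a witnessing family $\{f_\alpha:\alpha<\kappa\}\subseteq\omega^\omega$, which we may arrange to satisfy $f_\alpha(n)\leq n$ for all $n$. The plan is to build a filter base $\mathcal{B}$ of size $\kappa$ encoding the $f_\alpha$'s on a partition $\omega=\bigsqcup_n J_n$ with $|J_n|=n+1$, such that any selective extension $\UUU\supseteq\mathcal{B}$, via its Q-point selector for $\{J_n\}$ and its P-point refinement, yields a $g\in\omega^\omega$ with $\exists^\infty n\;[g(n)=f_\alpha(n)]$ for every $\alpha<\kappa$, contradicting the choice of $\{f_\alpha\}$. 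The main obstacle is the calibration of $\mathcal{B}$: its members must be thick enough to retain the finite intersection property for every finite subfamily --- typically by keeping an entire ``window'' of each $J_n$ around the graph of each encoded $f_\alpha$ --- yet sharp enough that the Q-point selector of any selective extension is forced to meet the graph of each $f_\alpha$ on an infinite set, from which selectivity then extracts the single diagonal $g$. Once the encoding is calibrated, the verification reduces to a routine unwinding of the P-point and Q-point halves of Theorem \ref{thm:kunenramsey}(\ref{kunenramsey:fifth}).
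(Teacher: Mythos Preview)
The paper does not supply a proof of this theorem: it is a survey, and Theorem~\ref{thm:selective-generic} is stated with attribution to Canjar and immediately followed by commentary, with no argument given. So there is no proof in the paper against which to compare, and I comment only on the internal soundness of your plan.

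Your forward direction is standard and essentially correct. The Key Lemma --- that any filter of size $<\cov(\MMM)$ is compatible with some infinite $c$-homogeneous set --- is exactly what drives the recursion, and your identification of $\cov(\MMM)=\cc$ with $\MA$ for countable posets is the right lever for proving it.

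The reverse direction, however, has a genuine gap. You correctly isolate the Bartoszy\'nski form of $\cov(\MMM)$ and the rough shape of the argument, but the ``calibration'' you flag as the main obstacle is not resolved in your sketch, and the candidate encodings you mention do not work. If $A_\alpha\cap J_n$ is the single point coding $f_\alpha(n)$, the family $\{A_\alpha:\alpha<\kappa\}$ typically fails FIP, since nothing prevents two of the $f_\alpha$'s from being eventually different from one another. If $A_\alpha\cap J_n$ is instead a ``window'' around $f_\alpha(n)$, then either the windows are too narrow for FIP or too wide to pin the Q-point selector to the graph of $f_\alpha$. Your closing clause, ``from which selectivity then extracts the single diagonal $g$,'' compounds the difficulty: selectivity diagonalizes over countably many sets, not over $\kappa$ many, so even if each $A_\alpha$ forced infinite agreement with $f_\alpha$ for \emph{some} selector in $\UUU$, you would still owe a \emph{single} $g$ working for all $\alpha$ simultaneously --- which is precisely what the choice of $\{f_\alpha\}$ forbids. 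To close the gap you need an encoding in which membership in each $A_\alpha$ already constrains the \emph{one} fixed Q-point selector (so that no further diagonalization over $\alpha$ is required); Canjar's original argument does this, and it is worth consulting directly.
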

Therefore, it is strictly more difficult to arrange for the generic existence of selective ultrafilters than for the generic existence of P-points.
Brendle~\cite{MR1729447} provided characterizations in terms of cardinal invariants for the generic existence of many special classes of ultrafilters.
$\cov(\MMM) = \cc$ is a fairly mild hypothesis which holds in many models of set theory, for example it is a consequence of $\MA$.
Nevertheless, selective ultrafilters may fail to exist.
\begin{Theorem}[Kunen~\cite{rknonlinear, kunenweak}] \label{thm:kunen-sel}
 There are no selective ultrafilters when ${\aleph}_{2}$ random reals are added to any model of $\ZFC + \CH$.
\end{Theorem}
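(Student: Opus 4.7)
My plan is to destroy the selective property by producing, for any putative selective ultrafilter in $\V[G]$, a coloring $c:\pc{\omega}{2}\to 2$ having no homogeneous set in that ultrafilter, directly contradicting Definition \ref{def:selective}. Let $\mathbb{B}$ denote the measure algebra adjoining $\aleph_2$ random reals over $\V\models\ZFC+\CH$, let $G$ be $\mathbb{B}$-generic, and enumerate the random reals as $\langle r_\beta:\beta<\aleph_2\rangle$. Via a fixed bijection $\omega\cong\pc{\omega}{2}$, each $r_\beta\in 2^\omega$ codes a coloring $c_\beta:\pc{\omega}{2}\to 2$.

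The central measure-theoretic input is the following: for any $\beta<\aleph_2$ and any infinite $H$ lying in $\V[G\restrict(\aleph_2\setminus\{\beta\})]$, the coloring $c_\beta$ is not constant on $\pc{H}{2}$ in $\V[G]$. From the viewpoint of the intermediate model in which $H$ lives, $c_\beta\restrict\pc{H}{2}$ is a sequence of independent fair coin flips, so the event of tailwise constancy has Lebesgue measure zero by Borel--Cantelli. Consequently no infinite $H$ whose support misses $\beta$ can be $c_\beta$-homogeneous in $\V[G]$.

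Assume toward contradiction that $\UUU\in\V[G]$ is selective. For each $\beta<\aleph_2$, selectivity supplies $H_\beta\in\UUU$ with $c_\beta$ constant on $\pc{H_\beta}{2}$. Using that every real in $\V[G]$ lies in $\V[G\restrict C]$ for some countable $C\subseteq\aleph_2$, each $H_\beta\in\V[G\restrict C_\beta]$ for a countable $C_\beta$, and the preceding paragraph forces $\beta\in C_\beta$. By $\CH$ in $\V$ and a coordinate-permutation argument, the $\mathbb{B}$-names for reals fall into only $\aleph_1$ isomorphism types, so a pigeonhole combined with the $\Delta$-system lemma yields an uncountable $X\subseteq\aleph_2$ and a countable root $R\subseteq\aleph_2$ with $\{C_\beta:\beta\in X\}$ forming a $\Delta$-system of root $R$ and $\beta\in C_\beta\setminus R$ for every $\beta\in X$.

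The main obstacle is converting this configuration into a contradiction with $\UUU$ being an ultrafilter. For distinct $\beta,\beta'\in X$, the set $H_\beta\cap H_{\beta'}\in\UUU$ is infinite, has support in $C_\beta\cup C_{\beta'}$, and must be simultaneously $c_\beta$- and $c_{\beta'}$-homogeneous. The plan is to apply Fubini with the product decomposition $\mathbb{B}\restrict(C_\beta\cup C_{\beta'})\cong\mathbb{B}\restrict(C_\beta\setminus R)\times\mathbb{B}\restrict R\times\mathbb{B}\restrict(C_{\beta'}\setminus R)$ and to exploit the independence of the random reals at coordinates $\beta,\beta'$ from the remainder: after fixing the coordinates in $R\cup(C_{\beta'}\setminus R)$, the requirement that $c_\beta$ be constant on $\pc{H_\beta\cap H_{\beta'}}{2}$ should be shown to hold on a null set of $r_\beta$'s, contradicting positive measure. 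Making this final Fubini/reflection step rigorous---in particular navigating the mutual dependence of $H_\beta$ and $H_{\beta'}$ on coordinates in $R$ and ensuring the uncountable indexing set $X$ produces a genuine measure-theoretic contradiction rather than merely a pairwise constraint---is precisely where Kunen's specific understanding of random forcing is required, and constitutes the heart of the argument.
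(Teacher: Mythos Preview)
The paper does not prove Theorem~\ref{thm:kunen-sel}; this is a survey, and the result is merely stated with attribution to Kunen and the references~\cite{rknonlinear, kunenweak}. There is no argument in the paper to compare your proposal against.

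On its own merits, your outline correctly identifies the basic mechanism: encoding colorings $c_\beta$ by the random reals, the measure-zero computation showing that no infinite $H$ with support missing $\beta$ can be $c_\beta$-homogeneous, and the countable-support fact forcing $\beta\in C_\beta$. But you yourself flag the final step as incomplete, and the gap is real. The specific plan you sketch --- fixing the coordinates in $R\cup(C_{\beta'}\setminus R)$ and arguing that $c_\beta$-homogeneity of $H_\beta\cap H_{\beta'}$ is a null event in $r_\beta$ --- does not work as written, precisely because $H_\beta$ itself depends on $r_\beta$ (you have just argued $\beta\in C_\beta\setminus R$). As $r_\beta$ varies so does $H_\beta$, hence the target set $\pc{H_\beta\cap H_{\beta'}}{2}$ moves with it, and the independence reasoning from your second paragraph no longer applies. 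The $\Delta$-system plus isomorphism-of-names reduction is a standard preliminary move, but by itself it yields no contradiction; one still needs a genuine argument that membership in an \emph{ultrafilter} cannot be arranged coherently across the $\aleph_2$ random coordinates, and that is exactly the part you have deferred to ``Kunen's specific understanding of random forcing.'' As it stands the proposal is a setup, not a proof.
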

We will see in Section \ref{s:p} that even P-points may fail to exist.
Miller~\cite{MR0548093} showed that there are no Q-points in the Laver model.
In the Miller model~\cite{millerforcing}, there are P-points, but no Q-points, while Q-points exist in Shelah's model with no P-points (see \cite{PIF}).
Thus the two conditions in item (5) of Theorem \ref{thm:kunenramsey} are quite independent of each other.
Actually, P-points and Q-points have opposing existence conditions.
For example, there is a Q-point if $\dd = {\aleph}_{1}$, while $\dd = \cc$ implies the existence of P-points.
This immediately leads to the following conclusion: if $\cc \leq {\aleph}_{2}$, then there is either a P-point or a Q-point.
It is a long-standing open question whether this is always the case.
\begin{Question} \label{q:porq}
 Is it consistent that there are no P-points and no Q-points?
\end{Question}
An immediate consequence of item (3) of Theorem \ref{thm:kunenramsey} is that if $\VVV$ is selective and $\UUU \; {\leq}_{RK} \; \VVV$, then $\UUU \; {\equiv}_{RK} \; \VVV$.
Thus selective ultrafilters are RK-minimal among all ultrafilters, and it is not hard to see that every RK-minimal ultrafilter must be selective.
It turns out that selective ultrafilters are Tukey minimal as well.
This was proved by Raghavan and Todorcevic~\cite{tukey}.
They were able to characterize all ultrafilters on $\omega$ that are Tukey below a selective.
In fact, \cite{tukey} contains a characterization of all ultrafilters on $\omega$ that are Tukey below a given basically generated ultrafilter.
The basically generated ultrafilters constitute a much larger class than the selectives (see Definition \ref{def:basicgen} for the notion of a basically generated ultrafilter).
We will end this section by showing that the result for selective ultrafilters found in \cite{tukey} follows from the more general characterization for basically generated ultrafilters, also found in \cite{tukey}, via standard arguments.
\begin{Def} \label{def:fubini}
 For $A \subseteq \omega \times \omega$ and $m \in \omega$, $A\[m\] = \{n \in \omega: \pr{m}{n} \in A\}$.
 Let $\UUU$ and $\seqq{\UUU}{m}{\in}{\omega}$ be ultrafilters on $\omega$.
 Define
 \begin{align*}
  {\bigotimes}_{\UUU}{{\UUU}_{m}} = \left\{ A \subseteq \omega \times \omega: \left\{m \in \omega: A\[m\] \in {\UUU}_{m} \right\} \in \UUU \right\}.
 \end{align*}
It is easily seen that ${\bigotimes}_{\UUU}{{\UUU}_{m}}$ is an ultrafilter on $\omega \times \omega$.
\end{Def}
\begin{Def} \label{def:cu}
 For $\UUU \in \beta\omega \setminus \omega$, define ${\CCC}_{\alpha, \UUU}$ by induction on $\alpha < {\omega}_{1}$ as follows.
 Let ${\CCC}_{0, \UUU} = \left\{ \VVV \in \beta\omega: \VVV \; {\equiv}_{RK} \; \UUU \right\}$.
 For $\alpha > 0$, let
 \begin{align*}
 {\CCC}_{\alpha, \UUU} = \left\{ \VVV \in \beta\omega: \exists \seqq{\VVV}{n}{\in}{\omega} \in {\left( {\bigcup}_{\xi < \alpha}{{\CCC}_{\xi, \UUU}} \right)}^{\omega} \[\VVV \; {\equiv}_{RK} \; {\bigotimes}_{\UUU}{{\VVV}_{n}}\] \right\}.
\end{align*}
\end{Def}
\begin{Lemma} \label{lem:Cbelow}
 Let $\UUU$ be a selective ultrafilter on $\omega$.
 If $\VVV \in {\CCC}_{\alpha, \UUU}$ and $\WWW \; {\leq}_{RK} \; \VVV$, where $\WWW$ is an ultrafilter on $\omega$, then $\WWW \in {\CCC}_{\xi, \UUU}$, for some $\xi \leq \alpha$.
\end{Lemma}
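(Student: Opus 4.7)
The plan is transfinite induction on $\alpha < \omega_1$. The base case $\alpha = 0$ uses RK-minimality of selective ultrafilters, a consequence of item \ref{kunenramsey:third} of Theorem \ref{thm:kunenramsey}: since $\WWW \leq_{RK} \VVV \equiv_{RK} \UUU$ and $\WWW$ is non-principal, we get $\WWW \equiv_{RK} \UUU \in \CCC_{0, \UUU}$. For the inductive step at $\alpha > 0$, assume the lemma for all $\xi < \alpha$, write $\VVV \equiv_{RK} \bigotimes_{\UUU} \VVV_n$ with $\VVV_n \in \CCC_{\xi_n, \UUU}$ and $\xi_n < \alpha$, and fix $f : \omega \times \omega \to \omega$ witnessing $\WWW \leq_{RK} \VVV$. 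Let $\WWW_m$ denote the pushforward of $\VVV_m$ along $f_m := f(m, \cdot)$. Since $\WWW_m \leq_{RK} \VVV_m$, the inductive hypothesis tells us each $\WWW_m$ is either principal or belongs to $\CCC_{\eta_m, \UUU}$ for some $\eta_m \leq \xi_m < \alpha$.

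The first dichotomy is whether $P = \{m : \WWW_m \text{ is principal}\}$ lies in $\UUU$. If $P \in \UUU$, let $g(m)$ be the point on which $\WWW_m$ concentrates; a direct unpacking of the Fubini product definition shows $\WWW = g(\UUU)$, so $\WWW \leq_{RK} \UUU$ places $\WWW \in \CCC_{0, \UUU}$ by the base case. Otherwise $\omega \setminus P \in \UUU$, and we invoke the Ramsey partition property of selective ultrafilters (item \ref{kunenramsey:second} of Theorem \ref{thm:kunenramsey}) on the $2$-coloring $c(\{m_1, m_2\}) = 0 \iff \WWW_{m_1} = \WWW_{m_2}$, obtaining a monochromatic $E \in \UUU$. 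If $c \equiv 0$ on $\pc{E}{2}$, all $\WWW_m$ for $m \in E$ coincide with a common $\WWW^* \in \CCC_{\eta, \UUU}$ ($\eta < \alpha$), and the definition of $\bigotimes$ immediately gives $\WWW = \WWW^* \in \CCC_{\eta, \UUU}$. The same conclusion applies whenever $\WWW = \WWW_{m^*}$ for some $m^* \in E$.

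The main case---and the main obstacle---is the remaining sub-case: $c \equiv 1$ on $\pc{E}{2}$ (so the $\WWW_m$ are pairwise distinct on $E$) and $\WWW \neq \WWW_m$ for every $m \in E$. Here the target is $\WWW \equiv_{RK} \bigotimes_\UUU \WWW_m \in \CCC_{\alpha, \UUU}$. The direction $\WWW \leq_{RK} \bigotimes_\UUU \WWW_m$ is witnessed by the second projection. For the reverse direction, use the selectivity game ${\Game}^{\mathtt{Sel}}(\UUU)$ of Theorem \ref{thm:sel}: fix in advance $C^{(m)} \in \WWW_m \setminus \WWW$ for each $m \in E$ (possible because $\WWW \neq \WWW_m$); then build indices $m_0 < m_1 < \dotsc$ in $E$ inductively, where at stage $n$ one plays $D_n = E \cap \bigcap_{i < n}\{m : C_i \notin \WWW_m\}$, which belongs to $\UUU$ because $C_i \notin \WWW$ forces $\{m : C_i \notin \WWW_m\} \in \UUU$, the Player II winning strategy picks $m_n \in D_n$, and one sets $C_n := C^{(m_n)} \setminus \bigcup_{i < n} C_i$. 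This yields $D = \{m_n : n \in \omega\} \in \UUU$ together with pairwise disjoint $C_n \in \WWW_{m_n}$, all outside $\WWW$. Defining $h : \omega \to \omega \times \omega$ by $h(k) = (m_n, k)$ whenever $k \in C_n$ (and arbitrary otherwise), the pairwise disjointness of the $C_n$ forces $C_n \notin \WWW_{m_k}$ for all $n \neq k$, which collapses the membership $\bigcup_n (C_n \cap B[m_n]) \in \WWW_{m_k}$ to $B[m_k] \in \WWW_{m_k}$, and the computation then gives $h(\WWW) = \bigotimes_\UUU \WWW_m$, completing the reverse RK-reduction.
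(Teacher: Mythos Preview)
Your argument is correct and reaches the same conclusion, but the route in the main case differs from the paper's. Both proofs split on whether the fiber maps are constant on a $\UUU$-large set of $m$ (your set $P$), and in the remaining case both aim to produce pairwise disjoint sets in the $\WWW_m$ so that the second projection becomes one-to-one and witnesses $\WWW \; {\equiv}_{RK} \; {\bigotimes}_{\UUU}{\WWW_m}$. The paper obtains these disjoint sets by applying selectivity directly to a \emph{disjointness} coloring: $c(\{m,n\}) = 0$ iff there is $E_{m,n} \in \VVV_n$ with $g''_m D_m \cap g''_n E_{m,n} = \emptyset$, and then shows the homogeneous color must be $0$. You instead apply Ramsey to an equality coloring (which is in fact dispensable: only the hypothesis $\WWW \neq \WWW_m$ for $\UUU$-many $m$ is used downstream), and then extract pairwise disjoint $C_n \in \WWW_{m_n}$ via the game characterization of Theorem~\ref{thm:sel}. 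The paper's route is more direct; yours is a nice illustration of how the game can replace an explicit Ramsey argument.

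One phrasing issue: Theorem~\ref{thm:sel} does \emph{not} provide Player~II with a winning strategy, only the absence of one for Player~I. What your construction really does is specify a strategy for Player~I (play $D_n$ at stage $n$, determined by $m_0, \dotsc, m_{n-1}$); since this strategy is not winning, there exists a single run following it in which Player~II wins, i.e.\ $\{m_n : n < \omega\} \in \UUU$, and you only need that one run. Also note a small bookkeeping point the paper makes explicit: to conclude $\WWW \in \CCC_{\alpha, \UUU}$ one should redefine $\WWW_m = \UUU$ for $m \notin D$, which leaves ${\bigotimes}_{\UUU}{\WWW_m}$ unchanged but ensures every $\WWW_m$ lies in ${\bigcup}_{\xi < \alpha}{\CCC_{\xi, \UUU}}$.
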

\begin{proof}
 Induct on $\alpha$.
 When $\alpha = 0$, this follows from the RK-minimality of selective ultrafilters.
 Suppose $\alpha > 0$, $\VVV \in {\CCC}_{\alpha, \UUU}$, $\seqq{\VVV}{n}{\in}{\omega} \in {\left( {\bigcup}_{\xi < \alpha}{{\CCC}_{\xi, \UUU}} \right)}^{\omega}$, and $\WWW \; {\leq}_{RK} \; \VVV \; {\equiv}_{RK} \; {\bigotimes}_{\UUU}{{\VVV}_{n}}$.
 Let $g: \omega \times \omega \rightarrow \omega$ be a map witnessing $\WWW \; {\leq}_{RK} \; {\bigotimes}_{\UUU}{{\VVV}_{n}}$.
 For $m \in \omega$, let ${g}_{m}: \omega \rightarrow \omega$ be the map given by ${g}_{m}(n) = g(\pr{m}{n})$, for all $n \in \omega$.
 Write $A = \left\{ m \in \omega: \exists {C}_{m} \in {\VVV}_{m}\[{g}_{m} \ \text{is constant on} \ {C}_{m}\] \right\}$ and
 \begin{align*}
  B = \left\{ m \in \omega: \exists {\zeta}_{m} < \alpha \exists {\WWW}_{m} \in {\CCC}_{{\zeta}_{m}, \UUU} \forall D \in {\VVV}_{m} \[{g}_{m}''D \in {\WWW}_{m}\] \right\}.
 \end{align*}
The induction hypothesis implies that $\omega = A \cup B$.
If $A \in \UUU$, then for each $m \in A$, let ${l}_{m} \in \omega$ be so that ${g}_{m}''{C}_{m} = \{{l}_{m}\}$.
Now the map $m \mapsto {l}_{m}$, which is defined on $A$, witnesses $\WWW \; {\leq}_{RK} \; \UUU$, whence $\WWW \in {\CCC}_{0, \UUU}$.
Hence we may assume that $B \in \UUU$.
If there exists $m \in B$ so that $\forall D \in {\VVV}_{m}\[{g}_{m}''D \in \WWW\]$, then $\WWW = {\WWW}_{m} \in {\CCC}_{{\zeta}_{m}, \UUU}$, where ${\zeta}_{m} < \alpha$.
So we may assume that for every $m \in B$, there exists ${D}_{m} \in {\VVV}_{m}$ so that ${g}_{m}''{D}_{m} \notin \WWW$.
For convenience, put ${\WWW}_{m} = \UUU \in {\CCC}_{0, \UUU}$, for all $m \notin B$.
Define $c: {\[B\]}^{2} \rightarrow 2$ by
\begin{align*}
 c(\{m, n\}) = \begin{cases}
                0 \ &\text{if} \ \exists {E}_{m, n} \in \pc{{D}_{n}}{{\aleph}_{0}} \cap {\VVV}_{n}\[{g}_{m}''{D}_{m} \cap {g}_{n}''{E}_{m, n} = \emptyset\];\\
                1 \ &\text{otherwise.}
               \end{cases}
\end{align*}
By selectivity, there exists $F \in \UUU \cap \pc{B}{{\aleph}_{0}}$ so that $c$ is constant on ${\[F\]}^{2}$.
This constant value cannot be $1$.
For otherwise, letting $m = \min(F)$ and noting that ${g}^{-1}\left( \omega \setminus {g}_{m}''{D}_{m} \right) \in {\bigotimes}_{\UUU}{{\VVV}_{k}}$, find $n \in F \setminus \{m\}$ and $E \in \pc{{D}_{n}}{{\aleph}_{0}} \cap {\VVV}_{n}$ such that $g''\left(\{n\} \times E\right) \subseteq \omega \setminus {g}_{m}''{D}_{m}$, giving a contradiction to $c(\{m, n\}) = 1$.
Therefore $c$ is constantly $0$ on ${\[F\]}^{2}$.
Define, for $n \in F$, ${G}_{n} = {D}_{n} \cap \left( {\bigcap}_{m \in F \cap n}{{E}_{m, n}} \right) \in {\VVV}_{n}$.
Note that ${g}_{n}''{G}_{n} \in {\WWW}_{n}$ and that ${g}_{n}''{G}_{n} \cap {g}_{m}''{G}_{m} = \emptyset$, for all $m \in F \cap n$.
Therefore, $H = {\bigcup}_{n \in F}{\left( \{n\} \times {g}_{n}''{G}_{n} \right)} \in {\bigotimes}_{\UUU}{{\WWW}_{n}}$ and the map $q: H \rightarrow \omega$ given by $q(\pr{n}{l}) = l$ is one-to-one on $H$.
Further, if $I \in \UUU \cap \pc{F}{{\aleph}_{0}}$ and ${J}_{n} \in {\WWW}_{n} \cap \pc{{g}_{n}''{G}_{n}}{{\aleph}_{0}}$, for $n \in I$, then $K = {\bigcup}_{n \in I}{\left(\{n\} \times {g}^{-1}_{n}({J}_{n}) \right)} \in {\bigotimes}_{\UUU}{{\VVV}_{n}}$, and $q''\left( {\bigcup}_{n \in I}{\left( \{n\} \times {J}_{n} \right)} \right) = {\bigcup}_{n \in I}{{J}_{n}} = g''K \in \WWW$.
Therefore, $q$ witnesses that $\WWW \; {\equiv}_{RK} \; {\bigotimes}_{\UUU}{{\WWW}_{n}}$, whence $\WWW \in {\CCC}_{\xi, \UUU}$, where $\xi = \sup\{{\zeta}_{m}+1: m \in B\}$.
Since $\xi \leq \alpha$, the induction is complete.
\end{proof}
\begin{Def} \label{def:rank}
 Suppose $\UUU$ is an ultrafilter and $P \subseteq \FIN$.
 $P$ is said to \emph{$\UUU$-large} if for each $A \in \UUU$, $\pc{A}{< {\aleph}_{0}} \cap P \neq \emptyset$.
 For $s \in \FIN$, ${s}^{-} = s \setminus \{\min(s)\}$.
 For $n \in \omega$, ${P}_{n} = \left\{{s}^{-}: s \in P \wedge \min(s) = n \right\}$.
 For $s, t \in \pc{\omega}{< {\aleph}_{0}}$, $s \sqsubseteq t$ means that \emph{$s$ is a non-empty initial segment of $t$} -- in other words, $s \neq \emptyset$ and $s \subseteq t$ and $\forall m \in s \forall n \in t \setminus s\[m < n\]$.
 For $t \in \pc{\omega}{< {\aleph}_{0}}$, define $\is(t) = \left\{ s \in \FIN: s \sqsubseteq t \right\}$.
 Observe that $\is(\emptyset) = \emptyset$.
 Let $Q(P) = \{t \in \pc{\omega}{< {\aleph}_{0}}: \is(t) \cap P = \emptyset\}$.
 Observe that $\emptyset \in Q(P)$.
 For $s \in Q(P)$, we say ${\rk}_{P}(s) \leq 0$ if $\{n \in \omega: s \subseteq n \wedge s \cup \{n\} \notin Q(P)\} \in \UUU$.
 When $\alpha > 0$, we say ${\rk}_{P}(s) \leq \alpha$ if ${\rk}_{P}(s) \leq 0$ or $\left\{ n \in \omega: s \subseteq n \wedge s \cup \{n\} \in Q(P) \wedge \exists {\xi}_{n} < \alpha \[{\rk}_{P}(s \cup \{n\}) \leq {\xi}_{n}\] \right\} \in \UUU$.
 Observe that if $\alpha \leq \beta$ and ${\rk}_{P}(s) \leq \alpha$, then ${\rk}_{P}(s) \leq \beta$.
 If there exists $\alpha$ such that ${\rk}_{P}(s) \leq \alpha$, then ${\rk}_{P}(s) = \min\{ \alpha: {\rk}_{P}(s) \leq \alpha \}$, and ${\rk}_{P}(s) = \infty$ otherwise.
\end{Def}
\begin{Lemma} \label{lem:urankexists}
 Suppose $\UUU$ is selective and $P \subseteq \FIN$ is $\UUU$-large.
 Then ${\rk}_{P}(\emptyset) < \infty$.
\end{Lemma}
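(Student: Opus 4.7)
The plan is to argue by contradiction. Suppose ${\rk}_{P}(\emptyset) = \infty$ and set $R := \{s \in Q(P) : {\rk}_{P}(s) = \infty\}$, so that $\emptyset \in R$. I will use the selectivity of $\UUU$ to produce some $H \in \UUU$ whose nonempty finite subsets all lie in $R \subseteq Q(P)$; since $t \in \is(t)$ for every nonempty $t$, this will force $\pc{H}{<{\aleph}_{0}} \cap P = \emptyset$, contradicting the $\UUU$-largeness of $P$.

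The crucial combinatorial step is the claim that for each $s \in R$, the set $A_{s} := \{n \in \omega : s \subseteq n \text{ and } s \cup \{n\} \in R\}$ belongs to $\UUU$. To see this, note that $s \in R$ forces ${\rk}_{P}(s) \not\leq 0$, so $Y := \{n : s \subseteq n \text{ and } s \cup \{n\} \in Q(P)\} \in \UUU$. If $A_{s} \notin \UUU$, then $Y \setminus A_{s} \in \UUU$, and for each $n$ in this set $\xi_{n} := {\rk}_{P}(s \cup \{n\})$ is an ordinal; setting $\alpha := \sup\{\xi_{n} + 1 : n \in Y \setminus A_{s}\}$, the recursive clause of Definition \ref{def:rank} would witness ${\rk}_{P}(s) \leq \alpha$, contradicting $s \in R$. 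This ordinal bookkeeping --- converting a $\UUU$-large family of ordinal ranks into a single ordinal bound, which works precisely because the supremum of a set of ordinals is an ordinal --- is the main obstacle in the proof; everything else is standard selectivity diagonalization.

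With the key claim in hand, I appeal to Theorem \ref{thm:sel} by exhibiting a non-winning strategy $\sigma$ for Player I in the selectivity game on $\UUU$: after Player II has played $n_{0}, \ldots, n_{i-1}$, let Player I play
\begin{equation*}
A_{i} := A_{\emptyset} \cap \bigcap \{A_{t} : \emptyset \neq t \subseteq \{n_{0}, \ldots, n_{i-1}\} \wedge t \in R\} \cap \{n \in \omega : n > \max\{n_{0}, \ldots, n_{i-1}\}\},
\end{equation*}
which is a finite intersection of $\UUU$-sets (using the key claim) with a cofinite set, and thus belongs to $\UUU$. Since $\UUU$ is selective, $\sigma$ is not winning, so there is a play by Player II producing $H := \{n_{i} : i < \omega\} \in \UUU$ with $n_{0} < n_{1} < \cdots$ by construction. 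A straightforward induction on $|t|$ now shows that every nonempty $t \in \pc{H}{<{\aleph}_{0}}$ lies in $R$: writing $t = \{n_{i_{0}} < \cdots < n_{i_{k}}\}$, the inductive hypothesis (or, when $k = 0$, the inclusion $n_{i_{0}} \in A_{\emptyset}$) gives $s := \{n_{i_{0}}, \ldots, n_{i_{k-1}}\} \in R \cup \{\emptyset\}$, and then $n_{i_{k}} \in A_{i_{k}} \subseteq A_{s}$ forces $t = s \cup \{n_{i_{k}}\} \in R$, completing the argument.
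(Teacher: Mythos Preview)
Your proof is correct and follows essentially the same approach as the paper's: both argue by contradiction, establish that $\{n : s \subseteq n \wedge s \cup \{n\} \in Q(P) \wedge {\rk}_{P}(s \cup \{n\}) = \infty\} \in \UUU$ whenever ${\rk}_{P}(s) = \infty$, and then use the game characterization of selectivity (Theorem~\ref{thm:sel}) to produce $H \in \UUU$ whose finite subsets all lie in $Q(P)$, contradicting $\UUU$-largeness. Your version simply spells out more detail --- the supremum argument for the key claim and the explicit definition of the strategy --- where the paper leaves these to the reader.
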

\begin{proof}
 Suppose ${\rk}_{P}(\emptyset) = \infty$.
 Note that for any $s \in Q(P)$, if ${\rk}_{P}(s) = \infty$, then ${B}_{s} = \left\{ n \in \omega: s \subseteq n \wedge s \cup \{n\} \in Q(P) \wedge {\rk}_{P}(s \cup \{n\}) = \infty \right\} \in \UUU$.
 Therefore, it is possible to define a strategy $\Sigma$ for Player I in ${\Game}^{\mathtt{Sel}}\left(\UUU\right)$ which has the property that whenever $\left\langle \pr{{A}_{i}}{{n}_{i}}: i < \omega \right\rangle$ is a run of ${\Game}^{\mathtt{Sel}}\left(\UUU\right)$ where Player I has followed $\Sigma$, then $s \in Q(P)$ and ${\rk}_{P}(s) = \infty$, for every $s \in \pc{\left\{ {n}_{i}: i < \omega \right\}}{< {\aleph}_{0}}$.
 Since $\Sigma$ is not a winning strategy, there is such a run of ${\Game}^{\mathtt{Sel}}\left(\UUU\right)$ for which $\left\{ {n}_{i}: i < \omega \right\} \in \UUU$.
 However, since $P$ is $\UUU$-large, there exists $s \in \pc{\left\{ {n}_{i}: i < \omega \right\}}{< {\aleph}_{0}}$ such that $s \in P$, which contradicts $s \in Q(P)$.
\end{proof}
\begin{Lemma} \label{lem:pnlarge}
 Suppose $\UUU$ is selective, $P \subseteq \FIN$ is $\UUU$-large, and ${\rk}_{P}(\emptyset) \neq 0$.
 Then $\{n \in \omega: {P}_{n} \subseteq \FIN \wedge {P}_{n} \ \text{is} \ \UUU\text{-large}\} \in \UUU$.
\end{Lemma}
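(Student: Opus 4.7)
The plan is to proceed in two stages. First, I would unpack the hypothesis ${\rk}_{P}(\emptyset) \neq 0$ to secure $P_{n} \subseteq \FIN$ for $n$ in a $\UUU$-large set; second, I would use the selectivity of $\UUU$, via the game characterization from Theorem \ref{thm:sel}, to convert any purported failure of $\UUU$-largeness of $P_{n}$ into a contradiction with the $\UUU$-largeness of $P$.

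For the first stage, I would unwind the rank definition at $\emptyset$. Since $\emptyset \in Q(P)$ and $\alpha := {\rk}_{P}(\emptyset)$ is strictly positive and finite (the latter by Lemma \ref{lem:urankexists}), the positive-rank clause yields $\{n \in \omega : \{n\} \in Q(P) \wedge {\rk}_{P}(\{n\}) < \alpha\} \in \UUU$. In particular, $A_{0} := \{n \in \omega: \{n\} \notin P\}$ lies in $\UUU$, and because $\emptyset \in P_{n}$ if and only if $\{n\} \in P$, this already ensures $P_{n} \subseteq \FIN$ for every $n \in A_{0}$.

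For the second stage, I would argue by contradiction. Assuming that $D := \{n \in A_{0}: P_{n} \text{ is not } \UUU\text{-large}\}$ lies in $\UUU$, I would fix, for each $n \in D$, a witness $B_{n} \in \UUU$ with $B_{n} \subseteq \omega \setminus (n+1)$ and $\pc{B_{n}}{<{\aleph}_{0}} \cap P_{n} = \emptyset$. Next I would describe a strategy $\Sigma$ for Player~I in ${\Game}^{\mathtt{Sel}}(\UUU)$: open with $D$, and after responses $n_{0} < n_{1} < \cdots < n_{k-1}$ from Player~II, play $D \cap B_{n_{0}} \cap \cdots \cap B_{n_{k-1}} \setminus (n_{k-1}+1)$, which is in $\UUU$. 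By Theorem \ref{thm:sel}, $\Sigma$ is not winning, so some run produces $A := \{n_{i}: i < \omega\} \in \UUU$ with $A \subseteq D$ and $A \setminus (n_{i}+1) \subseteq B_{n_{i}}$ for every $i$.

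To close, I would invoke the $\UUU$-largeness of $P$ on $A$: pick $s \in \pc{A}{<{\aleph}_{0}} \cap P$ and set $n = \min(s)$ and $t = s^{-}$. Since $n \in A_{0}$, we have $\{n\} \notin P$, which forces $t \neq \emptyset$; then $t \in P_{n}$ directly from the definition of $P_{n}$, while $t \subseteq A \setminus (n+1) \subseteq B_{n}$, contradicting the choice of $B_{n}$. The main obstacle is the diagonalization producing a single $A \in \UUU$ that sits almost inside each $B_{n}$ keyed to its predecessors in $A$; the selectivity game is the natural tool, and the only delicate point is to arrange Player~I's $k$-th move to intersect with $B_{n_{0}} \cap \cdots \cap B_{n_{k-1}}$ so that later members of $A$ fall into the $B_{n}$'s associated with earlier members.
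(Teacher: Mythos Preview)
Your proof is correct and follows essentially the same route as the paper: first use ${\rk}_{P}(\emptyset)\neq 0$ to secure $\{n:\{n\}\notin P\}\in\UUU$ (hence $P_{n}\subseteq\FIN$ there), then argue by contradiction, choosing witnesses $B_{n}\in\UUU$ to the failure of $\UUU$-largeness, diagonalize via selectivity to a set whose later elements lie in the $B_{n}$ of earlier ones, and finish by picking $s\in P$ inside that set. The only cosmetic differences are that the paper extracts $\{n:\{n\}\in Q(P)\}\in\UUU$ directly from ${\rk}_{P}(\emptyset)\not\leq 0$ (rather than going through Lemma~\ref{lem:urankexists} and the positive-rank clause), and it invokes the standard diagonalization property of selective ultrafilters in one line instead of phrasing it as a run of ${\Game}^{\mathtt{Sel}}(\UUU)$.
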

\begin{proof}
 Since ${\rk}_{P}(\emptyset) \not\leq 0$, $B = \{n \in \omega: \{n\} \in Q(P)\} \in \UUU$.
 Observe that ${P}_{n} \subseteq \FIN$, for every $n \in B$.
 Assume for a contradiction that $C = \{n \in B: {P}_{n} \ \text{is not} \ \UUU\text{-large}\} \in \UUU$, and for each $n \in C$, choose ${A}_{n} \in \UUU$ such that $\pc{{A}_{n}}{< {\aleph}_{0}} \cap {P}_{n} = \emptyset$.
 Since $\UUU$ is selective, there exists $D \in \UUU$ so that $D \subseteq C$ and $\forall n, m \in D\[m < n \implies n \in {A}_{m}\]$.
 Choose $s \in \pc{D}{< {\aleph}_{0}} \cap P$ and write $n = \min(s)$.
 Then $n \in C$ and ${s}^{-} \in \pc{{A}_{n}}{< {\aleph}_{0}}$, whence ${s}^{-} \notin {P}_{n}$.
 However, this contradicts $s \in P$.
\end{proof}
\begin{Lemma} \label{lem:rkbound}
 Suppose $\UUU$ is selective, $P \subseteq \FIN$ is $\UUU$-large, $n \in \omega$, ${P}_{n} \subseteq \FIN$, and ${P}_{n}$ is $\UUU$-large.
 For any $s \in Q({P}_{n})$ with $s \cap \left( n + 1 \right) = \emptyset$, ${\rk}_{{P}_{n}}(s) \leq {\rk}_{P}(\{n\} \cup s)$.
\end{Lemma}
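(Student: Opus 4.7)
The plan is to prove the lemma by transfinite induction on $\alpha$, showing that whenever $s \in Q(P_n)$ satisfies $s \cap (n+1) = \emptyset$ and ${\rk}_P(\{n\} \cup s) \leq \alpha$, one has ${\rk}_{P_n}(s) \leq \alpha$. Before starting, I would record the basic correspondence between initial segments, which is essentially the only content of the proof. Every non-empty initial segment of $\{n\} \cup s$ or $\{n\} \cup s \cup \{k\}$ (for $k > \max(\{n\} \cup s)$) must contain $n$ as its minimum, because all elements of $s$ and of $\{k\}$ exceed $n$; hence these initial segments are exactly $\{n\}$ together with $\{n\} \cup u$ for $u \sqsubseteq s$, plus, in the second case, the full set $\{n\} \cup s \cup \{k\}$. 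Moreover, the definition of $P_n$ gives $u \in P_n$ iff $\{n\} \cup u \in P$, for $u \in \FIN$ with $u \cap (n+1) = \emptyset$. Combined with $P_n \subseteq \FIN$ (which just says $\{n\} \notin P$), it follows that $s \in Q(P_n)$ with $s \cap (n+1) = \emptyset$ implies $\{n\} \cup s \in Q(P)$, so that ${\rk}_P(\{n\} \cup s)$ is well-defined.

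For the base case $\alpha = 0$, let $A = \{k : \{n\} \cup s \subseteq k \wedge \{n\} \cup s \cup \{k\} \notin Q(P)\} \in \UUU$. For $k \in A$, some non-empty initial segment of $\{n\} \cup s \cup \{k\}$ lies in $P$; by the correspondence, this segment is $\{n\}$, or $\{n\} \cup u$ for some $u \sqsubseteq s$, or $\{n\} \cup s \cup \{k\}$ itself. The first two options are excluded by $\{n\} \notin P$ and $s \in Q(P_n)$ respectively, so the full set lies in $P$, which gives $s \cup \{k\} \in P_n$ and hence $s \cup \{k\} \notin Q(P_n)$. Thus $A$ witnesses ${\rk}_{P_n}(s) \leq 0$. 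For the inductive step, assume the lemma for all $\xi < \alpha$, suppose ${\rk}_P(\{n\} \cup s) \leq \alpha$ with $\alpha > 0$, and further assume ${\rk}_P(\{n\} \cup s) \not\leq 0$ (else apply the base case). The witnessing set $B \in \UUU$ supplies, for each $k \in B$, an ordinal $\xi_k < \alpha$ with $\{n\} \cup s \cup \{k\} \in Q(P)$ and ${\rk}_P(\{n\} \cup s \cup \{k\}) \leq \xi_k$. The same initial-segment analysis shows that no non-empty initial segment of $s \cup \{k\}$ can lie in $P_n$, so $s \cup \{k\} \in Q(P_n)$; also $(s \cup \{k\}) \cap (n+1) = \emptyset$ and $s \subseteq k$. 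The induction hypothesis applied to $s \cup \{k\}$ then gives ${\rk}_{P_n}(s \cup \{k\}) \leq \xi_k < \alpha$, so $B$ witnesses ${\rk}_{P_n}(s) \leq \alpha$.

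The argument invokes neither selectivity of $\UUU$ nor $\UUU$-largeness of $P$ and $P_n$: these appear only because they are standing hypotheses from the earlier lemmas, but the bound itself is a purely syntactic consequence of the definitions. The only delicate point is in the base case, where one must notice that $\{n\} \cup s \in Q(P)$ already blocks every \emph{proper} initial segment of $\{n\} \cup s \cup \{k\}$ from lying in $P$, so that any failure of $\{n\} \cup s \cup \{k\}$ to lie in $Q(P)$ is forced to occur at the full set, and therefore translates directly into a $P_n$-failure at $s \cup \{k\}$.
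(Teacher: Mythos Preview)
Your proof is correct and follows essentially the same route as the paper's: both first observe that $\{n\}\cup s\in Q(P)$ (using $P_n\subseteq\FIN$ to get $\{n\}\notin P$), then induct on the rank of $\{n\}\cup s$, translating the witnessing set $A\in\UUU$ for ${\rk}_P$ into a witnessing set for ${\rk}_{P_n}$ via the initial-segment correspondence. The only difference is expository---you spell out the initial-segment bijection and the base-case deduction more fully than the paper does, and you correctly note that selectivity and $\UUU$-largeness play no role in this particular lemma.
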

\begin{proof}
 First observe that $\{n\} \notin P$ because ${P}_{n} \subseteq \FIN$, and so $\{n\} \cup s \in Q(P)$ because $s \in Q({P}_{n})$ and $s \cap \left( n + 1 \right) = \emptyset$.
 There is nothing to prove when ${\rk}_{P}(\{n\} \cup s) = \infty$.
 So we assume that ${\rk}_{P}(\{n\} \cup s) < \infty$ and we induct on it.
 Suppose ${\rk}_{P}(\{n\} \cup s) = 0$.
 Find $A \in \UUU$ so that for every $l \in A$, $\{n\} \cup s \subseteq l$ and $\{n\} \cup s \cup \{l\} \notin Q(P)$.
 Then for every $l \in A$, $s \subseteq l$ and $s \cup \{l\} \notin Q({P}_{n})$, whence ${\rk}_{{P}_{n}}(s) \leq 0$.
 Now suppose ${\rk}_{P}(\{n\} \cup s) = \alpha > 0$.
 Since ${\rk}_{P}(\{n\} \cup s) \not\leq 0$, let $A \in \UUU$ be so that for every $l \in A$, $\{n\} \cup s \subseteq l$, $\{n\} \cup s \cup \{l\} \in Q(P)$, and there exists ${\xi}_{l} < \alpha$ with ${\rk}_{P}(\{n\} \cup s \cup \{l\}) \leq {\xi}_{l}$.
 For every $l \in A$, $s \subseteq l$, $\left( s \cup \{l\} \right) \cap \left( n+1 \right) = \emptyset$, and $s \cup \{l\} \in Q({P}_{n})$.
 Thus the induction hypothesis applies and implies that ${\rk}_{{P}_{n}}(s \cup \{l\}) \leq {\rk}_{P}(\{n\} \cup s \cup \{l\}) \leq {\xi}_{l}$.
 Therefore, $A$ witnesses that ${\rk}_{{P}_{n}}(s) \leq \alpha$.
\end{proof}
\begin{Def} \label{def:up}
 Let $\UUU$ be an ultrafilter on $\omega$ and let $P \subseteq \FIN$.
 Define $\UUU(P) = \left\{ R \subseteq P: \exists A \in \UUU\[ P \cap \pc{A}{< {\aleph}_{0}} \subseteq R\] \right\}$.
\end{Def}
\begin{Lemma} \label{lem:upprod}
 Suppose $\UUU$ is selective and $P \subseteq \FIN$ is $\UUU$-large.
 Assume that $\forall t, s \in P\[t \subseteq s \implies t = s\]$.
 Then $\UUU(P)$ is an ultrafilter on $P$ and there exist $\xi < {\omega}_{1}$ and $\VVV \in {\CCC}_{\xi, \UUU}$ such that $\UUU(P) \; {\equiv}_{RK} \; \VVV$.
\end{Lemma}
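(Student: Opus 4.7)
The plan is to induct on $\alpha = \rk_P(\emptyset)$, which is a countable ordinal by Lemma~\ref{lem:urankexists}. The base case $\alpha = 0$ is handled directly: by Definition~\ref{def:rank}, the set $A_0 = \{n \in \omega : \{n\} \in P\}$ belongs to $\UUU$, and since $P$ is an antichain, every $s \in P$ with $s \subseteq A_0$ must equal $\{n\}$ for some $n \in A_0$. Hence $P \cap {[A_0]}^{<\omega} = \{\{n\} : n \in A_0\}$, and the bijection $n \mapsto \{n\}$ witnesses both that $\UUU(P)$ is an ultrafilter and that $\UUU(P) \; \equiv_{RK} \; \UUU \in \CCC_{0, \UUU}$.

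For the inductive step with $\alpha > 0$, Lemma~\ref{lem:pnlarge} provides $B = \{n : P_n \subseteq \FIN \text{ and } P_n \text{ is } \UUU\text{-large}\} \in \UUU$. Each such $P_n$ is automatically an antichain (if $s \subsetneq t$ were in $P_n$, then $\{n\} \cup s \subsetneq \{n\} \cup t$ would both lie in $P$). Applying Definition~\ref{def:rank} to $\rk_P(\emptyset) \leq \alpha$ with $\alpha > 0$ (so $\rk_P(\emptyset) \not\leq 0$), together with Lemma~\ref{lem:rkbound}, one sees that $C = \{n \in B : \rk_{P_n}(\emptyset) < \alpha\}$ also belongs to $\UUU$. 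For $n \in C$ the induction hypothesis supplies an ultrafilter $\UUU(P_n)$ on $P_n$ together with $\xi_n < \omega_1$ and $\VVV_n \in \CCC_{\xi_n, \UUU}$ with $\UUU(P_n) \; \equiv_{RK} \; \VVV_n$; for $n \notin C$ set $\VVV_n = \UUU \in \CCC_{0, \UUU}$.

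The crux is the characterization: $R \in \UUU(P)$ iff $\{n : \Phi(R)_n \in \UUU(P_n)\} \in \UUU$, where $\Phi : P \to \bigcup_n \{n\} \times P_n$ is the bijection $\Phi(s) = (\min(s), s^-)$ and $\Phi(R)_n = \{s^- : s \in R \wedge \min(s) = n\}$. The forward direction is straightforward: if $P \cap {[A]}^{<\omega} \subseteq R$ with $A \in \UUU$, then replacing $A$ by $A \cap B$, the same $A$ witnesses $P_n \cap {[A]}^{<\omega} \subseteq \Phi(R)_n$ for every $n \in A$. For the converse, set $E = \{n \in C : \Phi(R)_n \in \UUU(P_n)\} \in \UUU$, choose $A_n \in \UUU$ with $P_n \cap {[A_n]}^{<\omega} \subseteq \Phi(R)_n$ for each $n \in E$, and invoke the selectivity of $\UUU$ (exactly as in the proof of Lemma~\ref{lem:pnlarge}) to extract $A \in \UUU$ with $A \subseteq E$ and $\forall m, n \in A\;[m < n \Rightarrow n \in A_m]$. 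Any $s \in P \cap {[A]}^{<\omega}$ with $n = \min(s)$ then has $s^- \subseteq A_n$, so $s^- \in P_n \cap {[A_n]}^{<\omega} \subseteq \Phi(R)_n$, whence $s \in R$.

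Once this characterization is established, the fact that $\UUU(P)$ is an ultrafilter on $P$ follows immediately from the ultrafilter properties of $\UUU$ and of each $\UUU(P_n)$, $n \in C$. Transferring fiberwise via the RK-bijections witnessing $\UUU(P_n) \; \equiv_{RK} \; \VVV_n$ yields $\UUU(P) \; \equiv_{RK} \; \bigotimes_\UUU \VVV_n$, which by Definition~\ref{def:cu} lies in $\CCC_{\xi, \UUU}$ for $\xi = \sup\{\xi_n + 1 : n \in \omega\} < \omega_1$. I expect the main difficulty to lie in the selective diagonalization step: the antichain hypothesis is essential so that each $s \in P$ is uniquely reconstructed from $(\min(s), s^-)$, and one must verify that the compatibility condition $m < n \Rightarrow n \in A_m$ routes every tail $s^-$ into the target set $A_{\min(s)}$. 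Packaging the resulting Fubini-product structure into the $\CCC$-hierarchy is then a routine manipulation.
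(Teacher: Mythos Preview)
Your proposal is correct and follows essentially the same inductive structure as the paper's proof: induct on $\rk_P(\emptyset)$, handle $\alpha=0$ via the singletons, and for $\alpha>0$ use Lemmas~\ref{lem:pnlarge} and~\ref{lem:rkbound} plus a selective diagonalization to identify $\UUU(P)$ with a Fubini product of the $\UUU(P_n)$. The only cosmetic difference is that you work directly with the natural bijection $\Phi(s)=(\min(s),s^{-})$, whereas the paper composes with fiberwise bijections $f_n:\omega\to P_n$ to build a single bijection $f:\omega\times\omega\to P$ and also tracks the sharper bound $\xi\leq\alpha$ through the induction (which your argument does not need, since $\xi<\omega_1$ suffices).
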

\begin{proof}
 By Lemma \ref{lem:urankexists}, ${\rk}_{P}(\emptyset) < \infty$.
 Let $\alpha = {\rk}_{P}(\emptyset)$.
 It is clear from the definition of ${\rk}_{P}$ that $\alpha < {\omega}_{1}$.
 We show by induction on $\alpha$ that there exist $\xi \leq \alpha$ and $\VVV \in {\CCC}_{\xi, \UUU}$ such that $\UUU(P) \; {\equiv}_{RK} \; \VVV$.
 
 Assume ${\rk}_{P}(s) = 0$.
 Find $A \in \UUU$ so that for each $n \in A$, $\{n\} \notin Q(P)$, which means that $\{n\} \in P$.
 By shrinking $A$ if necessary, find a bijection $f: \omega \rightarrow P$ such that for each $n \in A$, $f(n) = \{n\}$.
 Suppose $R \subseteq P$ with ${f}^{-1}(R) \in \UUU$.
 Let $B = A \cap {f}^{-1}(R) \in \UUU$.
 Suppose $s \in \pc{B}{< {\aleph}_{0}} \cap P$ and let $n = \min(s) \in B$.
 So $\{n\} = f(n) \in R \subseteq P$.
 Since $s \in P$ and $\{n\} \subseteq s$, $s = \{n\} \in R$.
 Therefore, $\pc{B}{< {\aleph}_{0}} \cap P \subseteq R$, whence $R \in \UUU(P)$.
 Conversely, assume that $R \in \UUU(P)$, and find $B \in \UUU$ with $\pc{B}{< {\aleph}_{0}} \cap P \subseteq R \subseteq P$.
 Let $C = A \cap B \in \UUU$.
 For each $n \in C$, $f(n) = \{n\} \in \pc{B}{< {\aleph}_{0}} \cap P \subseteq R$, whence $C \subseteq {f}^{-1}(R)$, whence ${f}^{-1}(R) \in \UUU$.
 Therefore, $\UUU(P) = \left\{ R \subseteq P: {f}^{-1}(R) \in \UUU \right\}$, which shows that $\UUU(P)$ is an ultrafilter on $P$ and that $\UUU(P) \; {\equiv}_{RK} \; \UUU$ as $f$ is a bijection.
 As $\UUU \in {\CCC}_{0, \UUU}$, this is as needed.
 
 Proceeding by induction, assume that ${\rk}_{P}(\emptyset) = \alpha > 0$.
 Let $A \in \UUU$ be so that for each $n \in A$, $\{n\} \in Q(P)$ and there exists ${\xi}_{n} < \alpha$ such that ${\rk}_{P}(\{n\}) \leq {\xi}_{n}$.
 Since ${\rk}_{P}(\emptyset) \neq 0$, Lemma \ref{lem:pnlarge} applies and implies there exist $B \in \UUU$ such that $B \subseteq A$ and for each $n \in B$, ${P}_{n} \subseteq \FIN$ and ${P}_{n}$ is $\UUU$-large.
 Since $\emptyset \in Q({P}_{n})$ and $\emptyset \cap \left( n+1 \right) = \emptyset$, Lemma \ref{lem:rkbound} applies and implies that ${\rk}_{{P}_{n}}(\emptyset) \leq {\rk}_{P}(\{n\}) \leq {\xi}_{n}$, for every $n \in B$.
 Furthermore, suppose $n \in B$, and $t, s \in {P}_{n}$ are such that $t \subseteq s$.
 Find $u, v \in P$ with $\min(u) = n = \min(v)$, $t = {u}^{-}$, and $s = {v}^{-}$.
 Then $u \subseteq v$, whence $u = v$, whence $t = {u}^{-} = {v}^{-} = s$.
 Applying the inductive hypothesis, $\UUU({P}_{n})$ is an ultrafilter on ${P}_{n}$ and there are ${\zeta}_{n} \leq {\xi}_{n}$, ${\VVV}_{n} \in {\CCC}_{{\zeta}_{n}, \UUU}$, and a bijection ${f}_{n}: \omega \rightarrow {P}_{n}$ witnessing that ${\VVV}_{n} \; {\equiv}_{RK} \; \UUU({P}_{n})$, for every $n \in B$.
 By shrinking $B$ if necessary, find a bijection $f: \omega \times \omega \rightarrow P$ such that for every $\pr{n}{m} \in B \times \omega$, $f(\pr{n}{m}) = \{n\} \cup {f}_{n}(m)$.
 For convenience, define ${\VVV}_{n} = \UUU \in {\CCC}_{0, \UUU}$, for every $n \in \omega \setminus B$.
 Suppose $R \subseteq P$ with ${f}^{-1}(R) \in {\bigotimes}_{\UUU}{{\VVV}_{n}}$.
 Find $C \in \UUU \cap \pc{B}{{\aleph}_{0}}$ and ${D}_{n} \in {\VVV}_{n}$, for every $n \in C$, such that ${\bigcup}_{n \in C}\left( \{n\} \times {D}_{n} \right) \subseteq {f}^{-1}(R)$.
 For every $n \in C$, find ${E}_{n} \in \UUU$ with $\pc{{E}_{n}}{< {\aleph}_{0}} \cap {P}_{n} \subseteq {f}_{n}''{D}_{n}$.
 Since $\UUU$ is selective, there exists $E \in \UUU \cap \pc{C}{{\aleph}_{0}}$ such that $\forall m, n \in E\[m < n \implies n \in {E}_{m}\]$.
 Suppose $s \in \pc{E}{< {\aleph}_{0}} \cap P$.
 Let $n = \min(s)$.
 Then ${s}^{-} \in \pc{{E}_{n}}{< {\aleph}_{0}} \cap {P}_{n}$, whence ${s}^{-} = {f}_{n}(m)$, for some $m \in {D}_{n}$.
 Then $f(\pr{n}{m}) \in R$.
 Therefore, $s = \{n\} \cup {s}^{-} = \{n\} \cup {f}_{n}(m) = f(\pr{n}{m}) \in R$.
 This shows that $\pc{E}{< {\aleph}_{0}} \cap P \subseteq R$, which means that $R \in \UUU(P)$.
 Conversely, suppose $R \in \UUU(P)$ and fix $F \in \UUU$ with $\pc{F}{< {\aleph}_{0}} \cap P \subseteq R \subseteq P$.
 Let $G = B \cap F \in \UUU$.
 For each $n \in G$, $\pc{G}{< {\aleph}_{0}} \cap {P}_{n} \in \UUU({P}_{n})$, whence ${H}_{n} = {f}^{-1}_{n}\left( \pc{G}{< {\aleph}_{0}} \cap {P}_{n} \right) \in {\VVV}_{n}$.
 Thus ${\bigcup}_{n \in G}{\left( \{n\} \times {H}_{n} \right)} \in {\bigotimes}_{\UUU}{{\VVV}_{n}}$, and if $n \in G$ and $m \in {H}_{n}$, then $f(\pr{n}{m}) = \{n\} \cup {f}_{n}(m)$.
 Since ${f}_{n}(m) \subseteq G \subseteq F$ and $n \in F$, $f(\pr{n}{m}) \in \pc{F}{< {\aleph}_{0}} \cap P \subseteq R$.
 This shows that ${\bigcup}_{n \in G}{\left( \{n\} \times {H}_{n} \right)} \subseteq {f}^{-1}(R)$, whence ${f}^{-1}(R) \in {\bigotimes}_{\UUU}{{\VVV}_{n}}$.
 This shows $\UUU(P) = \left\{ R \subseteq P: {f}^{-1}(R) \in {\bigotimes}_{\UUU}{{\VVV}_{n}} \right\}$, which means that $\UUU(P)$ is an ultrafilter on $P$ and that ${\bigotimes}_{\UUU}{{\VVV}_{n}} \; {\equiv}_{RK} \; \UUU(P)$ as $f$ is a bijection.
 Now let $\xi = \sup\left\{ {\zeta}_{n} + 1: n \in B \right\}$, pick an arbitrary bijection $\pi: \omega \times \omega \rightarrow \omega$, and define $\WWW = \left\{ I \subseteq \omega: {\pi}^{-1}(I) \in {\bigotimes}_{\UUU}{{\VVV}_{n}} \right\}$.
 Then $\WWW$ is an ultrafilter on $\omega$ such that $\WWW \; {\equiv}_{RK} \; {\bigotimes}_{\UUU}{{\VVV}_{n}} \; {\equiv}_{RK} \; \UUU(P)$.
 Since $\WWW \in {\CCC}_{\xi, \UUU}$ and $\xi \leq \alpha$, this is as needed.
\end{proof}
\begin{Theorem}[Theorem 17 of \cite{tukey}] \label{thm:basic}
 Let $\UUU$ be basically generated.
 Let $\VVV$ be an arbitrary ultrafilter with $\VVV \; {\leq}_{T} \; \UUU$.
 Then there is $P \subseteq \FIN$ such that:
 \begin{enumerate}
  \item
  $\forall t, s \in P\[t \subseteq s \implies t = s\]$;
  \item
  $\UUU(P) \; {\equiv}_{T} \; \UUU$;
  \item
  $\VVV \; {\leq}_{RK} \; \UUU(P)$.
 \end{enumerate}
\end{Theorem}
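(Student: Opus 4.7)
The plan is to begin with a monotone cofinal map $\phi : \BBB \to \VVV$ witnessing $\VVV \leq_T \UUU$, where $\BBB \subseteq \UUU$ is a cofinal basis for $\UUU$ realising the basic generation property, and to extract from $\phi$ a family of finite ``traces'' that will serve as $P$. The underlying idea is that each $s \in P$ should record a minimal amount of information about $\phi$, enough to determine at least one element of every value $\phi(B)$ with $s \subseteq B$.

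The key construction is as follows. For each $s \in \FIN$ set
\begin{align*}
\phi^\ast(s) = \bigcap\set{\phi(B) : B \in \BBB \text{ and } s \subseteq B}.
\end{align*}
Monotonicity of $\phi$ gives $\phi^\ast(s) \subseteq \phi^\ast(t)$ whenever $s \subseteq t$. The first main step is to use the basic generation hypothesis to show that for every $B \in \BBB$ there exists $s \in \pc{B}{<{\aleph}_{0}}$ with $\phi^\ast(s) \neq \emptyset$. If this failed for some $B$, one could recursively build a sequence $\seqq{B}{n}{<}{\omega}$ of members of $\BBB$ below $B$, converging in the usual Cantor topology on $\PPP(\omega)$ to some limit set, and such that the candidate elements of $\phi^\ast$ at each finite stage are killed off; basic generation then delivers an infinite $A \subseteq \omega$ for which $\bigcap_{n \in A}B_n$ contains a member of $\BBB$, and the corresponding value of $\phi$ supplies the required $s$, contradicting the assumption.

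With this largeness in hand, let $P$ be the antichain of all $\subseteq$-minimal $s \in \FIN$ with $\phi^\ast(s) \neq \emptyset$ and, for each $s \in P$, pick some $g(s) \in \phi^\ast(s)$. Clause (1) is immediate from the minimality, and $P$ is $\UUU$-large by the previous paragraph, so $\UUU(P)$ is a filter on $P$. For clause (3), given $V \in \VVV$, pick $B \in \BBB$ with $\phi(B) \subseteq V$: for every $s \in \pc{B}{<{\aleph}_{0}} \cap P$ one has $g(s) \in \phi^\ast(s) \subseteq \phi(B) \subseteq V$, whence $\pc{B}{<{\aleph}_{0}} \cap P \subseteq g^{-1}(V)$, showing $g^{-1}(V) \in \UUU(P)$. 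The converse $g^{-1}(V) \in \UUU(P) \Rightarrow V \in \VVV$ follows from $\VVV$ being an ultrafilter, so $g$ witnesses $\VVV \leq_{RK} \UUU(P)$.

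For clause (2), the direction $\UUU(P) \leq_T \UUU$ is witnessed by the monotone map $B \mapsto \pc{B}{<{\aleph}_{0}} \cap P$ from $\BBB$ to $\UUU(P)$, which is cofinal by the $\UUU$-largeness of $P$. The harder direction $\UUU \leq_T \UUU(P)$ is where I expect the main obstacle: one must show that any cofinal subfamily of $\UUU(P)$ lifts to a cofinal subfamily of $\UUU$. The plan is to invoke basic generation a second time: given a monotone cofinal $\mathcal{R} \subseteq \UUU(P)$, associate to each $R \in \mathcal{R}$ some $B_R \in \BBB$ with $\pc{B_R}{<{\aleph}_{0}} \cap P \subseteq R$, and argue that $\set{B_R : R \in \mathcal{R}}$ remains cofinal in $\UUU$; a putative counterexample would give a sequence in $\BBB$ with no refinement below any $B_R$, from which basic generation extracts a convergent subsequence whose limit escapes every member of $\mathcal{R}$, contradicting its cofinality. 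The delicate point is ensuring that the two diagonal arguments --- the one producing $P$ and the one verifying Tukey equivalence --- can be made to cooperate with the chosen basis $\BBB$, so that the convergent subsequences extracted at each stage stay inside $\BBB$ and the basic generation hypothesis genuinely applies.
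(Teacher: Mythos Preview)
The survey does not contain a proof of this theorem; it is simply quoted from \cite{tukey}.  Comparing with the argument there, your overall plan --- define $\phi^\ast(s)$ as the intersection of the $\phi(B)$ over all $B\in\BBB$ with $s\subseteq B$, take $P$ to be the $\subseteq$-minimal $s$ with $\phi^\ast(s)\neq\emptyset$, and let $g(s)\in\phi^\ast(s)$ witness the RK-reduction --- is exactly the approach of the original paper, and your verification of (1) and (3) is correct.

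Two remarks.  First, in the key lemma your sketch is loose on a point that matters: the sequence $\langle B_n\rangle$ must converge to an element \emph{of $\BBB$}, since the basic-generation hypothesis only applies to such sequences.  The standard way to arrange this is to assume $\BBB$ is closed under finite intersections, fix the given $C\in\BBB$, and for each $n$ build $A_n\in\BBB$ with $A_n\subseteq C$, $A_n\cap(n{+}1)=C\cap(n{+}1)$, and $\phi(A_n)\cap(n{+}1)=\emptyset$; then $A_n\to C\in\BBB$, basic generation yields $D\in\BBB$ below infinitely many $A_n$, and $\phi(D)=\emptyset$ gives the contradiction.

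Second, you have misidentified the difficulty in clause~(2).  The direction $\UUU\leq_T\UUU(P)$ is \emph{automatic} once $P$ is $\UUU$-large and needs no second appeal to basic generation: the same map $A\mapsto[A]^{<\aleph_0}\cap P$ is already a Tukey map from $\UUU$ to $\UUU(P)$.  Indeed, if $\{[A]^{<\aleph_0}\cap P:A\in\mathcal{X}\}$ is bounded below by some $R\supseteq[B]^{<\aleph_0}\cap P$ with $B\in\UUU$, then $C=\bigcup\bigl([B]^{<\aleph_0}\cap P\bigr)$ is contained in every $A\in\mathcal{X}$, and $C\in\UUU$ because otherwise $B\setminus C\in\UUU$ and any $s\in[B\setminus C]^{<\aleph_0}\cap P$ (which exists by $\UUU$-largeness) would satisfy both $s\subseteq C$ and $s\cap C=\emptyset$.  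So all the real work lies in the key lemma establishing that $P$ is $\UUU$-large; your proposed second diagonalization is unnecessary and, as described, does not give the required contradiction.
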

\begin{Cor}[Lemma 22 and Theorem 24 of \cite{tukey}] \label{cor:selectivetukey}
 Let $\UUU$ be a selective ultrafilter.
 Suppose $\VVV$ is an ultrafilter on $\omega$ such that $\VVV \; {\leq}_{T} \; \UUU$.
 Then $\VVV \in {\CCC}_{\xi, \UUU}$, for some $\xi < {\omega}_{1}$.
\end{Cor}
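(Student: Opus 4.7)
The plan is to deduce the corollary by combining Theorem \ref{thm:basic} with Lemmas \ref{lem:Cbelow} and \ref{lem:upprod}, exploiting the fact (established in \cite{tukey}) that every selective ultrafilter is basically generated. Thus, given $\VVV \; {\leq}_{T} \; \UUU$, Theorem \ref{thm:basic} supplies a set $P \subseteq \FIN$ which is an antichain under $\subseteq$, satisfies $\UUU(P) \; {\equiv}_{T} \; \UUU$, and has $\VVV \; {\leq}_{RK} \; \UUU(P)$. The strategy is to apply Lemma \ref{lem:upprod} to $P$ in order to locate $\UUU(P)$ inside some ${\CCC}_{\xi, \UUU}$, and then to use Lemma \ref{lem:Cbelow} to pull $\VVV$ itself down into the hierarchy.

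First I would verify that $P$ is $\UUU$-large, which is the hypothesis required to invoke Lemma \ref{lem:upprod}. This follows from $\UUU(P) \; {\equiv}_{T} \; \UUU$: if $P$ failed to be $\UUU$-large, there would exist $A \in \UUU$ with $\pc{A}{< {\aleph}_{0}} \cap P = \emptyset$, forcing $\emptyset \in \UUU(P)$ and hence collapsing $\UUU(P)$ to a directed set with a minimum element. Such a directed set is Tukey-equivalent to $1$, whereas $\UUU$, being a non-principal ultrafilter, has no maximum in $(\UUU,\supseteq)$, contradicting $\UUU(P) \; {\equiv}_{T} \; \UUU$.

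Having confirmed that $P$ is $\UUU$-large and is an antichain under $\subseteq$, Lemma \ref{lem:upprod} produces an ordinal $\xi < {\omega}_{1}$ and an ultrafilter $\WWW \in {\CCC}_{\xi, \UUU}$ with $\UUU(P) \; {\equiv}_{RK} \; \WWW$ (after transferring $\UUU(P)$ to an ultrafilter on $\omega$ via any bijection between $P$ and $\omega$, which does not affect the RK-class). Combining this with clause (3) of Theorem \ref{thm:basic}, we obtain $\VVV \; {\leq}_{RK} \; \UUU(P) \; {\equiv}_{RK} \; \WWW$. Lemma \ref{lem:Cbelow} then applies directly to $\WWW$ and yields an ordinal $\eta \leq \xi < {\omega}_{1}$ with $\VVV \in {\CCC}_{\eta, \UUU}$, which is the desired conclusion.

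I do not expect any serious obstacles: the technical work has been absorbed by Lemmas \ref{lem:urankexists}, \ref{lem:pnlarge}, \ref{lem:rkbound}, \ref{lem:upprod}, and \ref{lem:Cbelow}, and by Theorem \ref{thm:basic}. The only non-routine verification is the step that $P$ is $\UUU$-large, and even this is a short argument once one notes that $\UUU(P) \equiv_T \UUU$ forces $\UUU(P)$ to be unbounded as a directed set. The overall proof is therefore a short bookkeeping argument assembling the pieces.
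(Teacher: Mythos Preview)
Your proposal is correct and follows essentially the same route as the paper's proof: invoke Theorem \ref{thm:basic} (using that selective ultrafilters are basically generated), check that $P$ must be $\UUU$-large from $\UUU(P) \; {\equiv}_{T} \; \UUU$, then apply Lemma \ref{lem:upprod} followed by Lemma \ref{lem:Cbelow}. One small slip: in the directed set $(\UUU(P), \supseteq)$, the element $\emptyset$ is a \emph{maximum}, not a minimum, and it is the presence of a maximum that trivializes the Tukey type; the paper phrases this step as ``if $P$ is not $\UUU$-large then $\UUU(P) = \Pset(P)$'', which amounts to the same observation.
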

\begin{proof}
 Selective ultrafilters are basically generated, in fact, they are basic (see \cite{dt}).
 So apply Theorem \ref{thm:basic} to find $P \subseteq \FIN$ satisfying (1)--(3) of that theorem.
 Note that if $P$ is not $\UUU$-large, then $\UUU(P) = \Pset(P)$.
 Since $\pr{\UUU}{\supseteq} \; {\not\equiv}_{T} \; \pr{\Pset(P)}{\supseteq}$, $P$ must be $\UUU$-large for (2) of Theorem \ref{thm:basic} to hold.
 Therefore Lemma \ref{lem:upprod} implies that there exist $\alpha < {\omega}_{1}$ and $\WWW \in {\CCC}_{\alpha, \UUU}$ such that $\UUU(P) \; {\equiv}_{RK} \; \WWW$.
 Thus by (3) of Theorem \ref{thm:basic}, $\VVV \; {\leq}_{RK} \; \UUU(P) \; {\equiv}_{RK} \; \WWW$.
 Now by Lemma \ref{lem:Cbelow}, $\VVV \in {\CCC}_{\xi, \UUU}$, for some $\xi \leq \alpha < {\omega}_{1}$.
\end{proof}
The final sentence of Section 5 of \cite{tukey} pointed out that Theorem 24 of \cite{tukey} could be derived as a consequence of Theorem 17 of \cite{tukey}.
However, details of this argument were not provided there.
\section{Stable ordered-union ultrafilters} \label{sec:sou}
Graham and Rothschild~\cite{MR284352} conjectured that whenever $\omega$ is partitioned into finitely many pieces, then one of the pieces contains all distinct sums from some infinite subset.
Hindman~\cite{MR349574} proved this conjecture and this result is known as Hindman's theorem.
In the earlier paper~\cite{MR307926}, Hindman had established a connection with certain ultrafilters on $\omega$: the Graham and Rothschild Conjecture holds if and only if there is an ultrafilter on $\omega$ such that every member of it contains all non-repeating sums from some infinite subset.
In the same paper he also proved that under the Continuum Hypothesis ($\CH$), the conjecture of Graham and Rothschild is equivalent to the existence of an ultrafilter $\UUU$ which is an idempotent -- i.e.\@ satisfying $\UUU + \UUU = \UUU$ -- in the semigroup $(\beta\omega,+)$, where ultrafilters are thought of as finitely additive measures and the $+$ operation is given by the convolution of measures.
Not withstanding these equivalences, Hindman managed to find an elementary, but technical, proof of the Graham and Rothschild Conjecture in~\cite{MR349574}, avoiding the use of ultrafilters completely.
Nevertheless, it was quickly observed by Galvin and Glazer (see Baumgartner~\cite{MR354394}), that a short proof of Hindman's theorem could be given by using a lemma of Ellis~\cite{MR101283} about the existence of idempotents in arbitrary semigroups.
Indeed, this is the standard proof of Hindman's theorem given in texts on Ramsey theory today (e.g.\@ Todorcevic~\cite{introramsey}).

The various connections with ultrafilters discovered by Hindman in~\cite{MR307926} were soon elaborated and explored by others.
In~\cite{MR349574} and~\cite{MR2991425}, van Douwen is credited with the observation that if $\CH$ holds, then there is an ultrafilter on $\omega$ which has a base consisting of sets made up of all the non-repeating sums from some infinite subset of $\omega$.
Such ultrafilters are called \emph{strongly summable ultrafilters}, and van Douwen raised the question of whether such ultrafilters exist in $\ZFC$.
Blass~\cite{blass-hindman} introduced ordered-union ultrafilters as a counterpart to van Douwen's strongly summable ones for another result of Hindman from \cite{MR349574} about the union operation on finite subsets of $\omega$.
We introduce some notation in order to state this theorem.
\begin{Def} \label{def:block}
 $\FIN$ denotes the collection of non-empty finite subsets of $\omega$.
 For $s, t \in \FIN$, write $\lb{s}{t}$ to mean $\max(s) < \min(t)$.
 $X \subseteq \FIN$ is called a \emph{block sequence} if $X$ is non-empty and it is linearly ordered by the relation $\lbb$.
 The notation $X(i)$ will be used to denote the $i$th member of $\pr{X}{\lbb}$, for all $i < \lc X \rc$.
 For $1 \leq \alpha \leq \omega$ and $A \subseteq \FIN$,
 \begin{align*}
  &{A}^{\[\alpha\]} = \left\{ X \subseteq A: X \ \text{is a block sequence and} \ \lc X \rc = \alpha \right\};\\
  &{A}^{\[< \alpha\]} = \left\{ X \subseteq A: X \ \text{is a block sequence and} \ 1 \leq \lc X \rc < \alpha \right\}.
 \end{align*}
 Thus ${A}^{\[\alpha\]}$ is the collection of all block sequences of length $\alpha$ from $A$ and ${A}^{\[< \alpha\]}$ is the collection of all block sequences of length $< \alpha$ from $A$.
 For $A \subseteq \FIN$,
 \begin{align*}
  \[A\] = \left\{\bigcup{X}: X \in {A}^{\[< \omega\]}\right\}.
 \end{align*}
 Thus $\[A\]$ is the collection of all unions of finite length block sequences from $A$.
 Note $A \subseteq \[A\] \subseteq \FIN$.
\end{Def}
In \cite{MR349574}, Hindman proved that for every $1 \leq n < \omega$ and $c: \FIN \rightarrow n$, there exists $X \in {\FIN}^{\[\omega\]}$ such that $c$ is constant on $\[X\]$.
This result can be proved using an idempotent in the semigroup $(\gamma\FIN, \cup)$, and it is easily seen to imply Hindman's theorem about non-repeating sums via the map $s \mapsto \sum_{n\in s}2^n$, for $s \in \FIN$.
For this reason, Hindman's result on finite unions is also referred to as Hindman's theorem.
\begin{Def} \label{def:Ih}
 Define $\ih = \left\{ A \subseteq \FIN: \neg \exists X \in {\FIN}^{\[\omega\]}\[\[X\] \subseteq A \]\right\}$.
 Hindman's theorem implies that $\ih$ is a proper non-principal ideal on $\FIN$.
\end{Def}
Suppose $\HHH$ is an ultrafilter on $\FIN$ so that $\HHH \cap \ih = \emptyset$.
Then, by definition, for every $A \in \HHH$, there exists $X \in {\FIN}^{\[\omega\]}$ with $\[X\] \subseteq A$, and indeed this condition is equivalent to the condition that $\HHH \cap \ih = \emptyset$.
\begin{Def} \label{def:gammafinu}
 Let $\gamma\FIN = \left\{\HHH \in \beta\FIN: \forall k \in \omega \[ \left\{s \in \FIN: k < \min(s) \right\} \in \HHH \] \right\}$.
 It is clear that $\gamma\FIN$ is a closed subset of $\beta\FIN$ and that every $\HHH \in \gamma\FIN$ is non-principal.
 For $\GGG$ and $\HHH$ in $\gamma\FIN$ define
 \begin{align*}
  \GGG \cup \HHH = \left\{ A \subseteq \FIN: \left\{ s \in \FIN: \left\{ t \in \FIN: \lb{s}{t} \ \text{and} \ s \cup t \in A \right\} \in \HHH \right\} \in \GGG \right\}.
 \end{align*}
 It is not difficult to show that $\GGG \cup \HHH \in \gamma\FIN$ and that $(\gamma\FIN, \cup)$ is a compact semigroup (see Lemmas 2.13 to 2.16 of \cite{introramsey}).
 $\HHH$ is said to be an \emph{idempotent in $(\gamma\FIN, \cup)$} if $\HHH \cup \HHH = \HHH$.
\end{Def}
Suppose $\HHH$ is an idempotent in $(\gamma\FIN, \cup)$ and consider any $A \in \HHH$.
Say that $s \in A$ is \emph{good} if $\{\lb{s}{t}: s \cup t \in A\} \in \HHH$.
As $A \in \HHH \cup \HHH$, $\left\{s \in A: s \ \text{is good} \right\} \in \HHH$.
If $s \in A$ is good, then by idempotence, there exists $B \in \HHH$ so that for each $t \in B$, $\lb{s}{t}$, $s \cup t \in A$, and $\{\lb{t}{u}: s \cup t \cup u \in A\} \in \HHH$, which implies that $s \cup t$ is good.
So for every good $s \in A$, $\{ \lb{s}{t}: s \cup t \ \text{is good} \} \in \HHH$.
Now construct $X \in {\FIN}^{\[\omega\]}$ by induction as follows.
Let $X(0) \in A$ be good.
Assume that $\{X(0), \dotsc, X(n)\}$ are given so that every $s \in \[\{X(0), \dotsc, X(n)\}\]$ is good, which means that ${B}_{s} = \{\lb{X(n)}{t}: s \cup t \ \text{is good}\} \in \HHH$.
Choose $X(n+1) \in {\bigcap}_{s \in \[\{X(0), \dotsc, X(n)\}\]}{{B}_{s}}$.
Then $\lb{X(n)}{X(n+1)}$ and every $s \in \[\{X(0), \dotsc, X(n), X(n+1)\}\]$ is good, allowing the induction to proceed.
This construction shows that every idempotent $\HHH$ in $(\gamma\FIN, \cup)$ has the property that $\forall A \in \HHH \exists X \in {\FIN}^{\[\omega\]}\[\[X\] \subseteq A\]$.
Ordered-union ultrafilters satisfy a strengthening of the idempotence condition.
\begin{Def}[Blass~\cite{blass-hindman}] \label{def:ou}
 An ultrafilter $\HHH$ on $\FIN$ is called \emph{ordered-union} if for every $c: \FIN \rightarrow 2$ there is $X \in {\FIN}^{\[\omega\]}$ so that $\[X\] \in \HHH$ and $c$ is constant on $\[X\]$.
\end{Def}
Note that an ultrafilter on $\FIN$ is ordered-union if and only if it has a filter base consisting of sets of the form $\[X\]$ where $X \in {\FIN}^{\[\omega\]}$.
So these ultrafilters contain a witness to each instance of Hindman's theorem.
Ramsey's theorem and Hindman's theorem have a common generalization, the Milliken--Taylor theorem of~\cite{MR373906} and~\cite{MR424571}.
The Milliken--Taylor theorem says that for any $1 \leq n, k < \omega$ and $c: {\FIN}^{\[n\]} \rightarrow k$, there exists $X \in {\FIN}^{\[\omega\]}$ such that $c$ is constant on ${\[X\]}^{\[n\]}$.
\begin{Def}[Blass~\cite{blass-hindman}] \label{def:sou}
 An ultrafilter $\HHH$ on $\FIN$ is called \emph{stable ordered-union} if for every $c: {\FIN}^{\[2\]} \rightarrow 2$, there exists $X \in {\FIN}^{\[\omega\]}$ such that $\[X\] \in \HHH$ and $c$ is constant on ${\[X\]}^{\[2\]}$.
\end{Def}
So these ultrafilters contain a witness to every instance of the Milliken-Taylor theorem.
Suppose $\HHH$ is ordered-union and consider some $A \in \HHH$.
Let $X \in {\FIN}^{\[\omega\]}$ be so that $\[X\] \in \HHH$ and $\[X\] \subseteq A$.
Then for every $s \in \[X\]$, ${B}_{s} = \{t \in \[X\]: \lb{s}{t}\} \in \HHH$ and for each $t \in {B}_{s}$, $s \cup t \in A$, showing that $A \in \HHH \cup \HHH$.
Thus every ordered-union ultrafilter is an idempotent in $(\gamma\FIN, \cup)$.
Hence, we have the following implications for an ultrafilter $\HHH$ on $\FIN$.
\begin{align*}
 &\HHH \ \text{is stable ordered-union} \ \xRightarrow{\mathrm{(I)}} \HHH \ \text{is ordered-union} \ \xRightarrow{\mathrm{(II)}}\\
 &\HHH \ \text{is an idempotent in} \ (\gamma\FIN, \cup) \xRightarrow{\mathrm{(III)}} \forall A \in \HHH \exists X \in {\FIN}^{\[\omega\]}\[\[X\] \subseteq A\].
\end{align*}
To see that $\xRightarrow{\mathrm{(III)}}$ is not reversible, let $\{{X}_{n}: n \in \omega\}$ be members of ${\FIN}^{\[\omega\]}$ such that $\[{X}_{n}\] \cap \[{X}_{m}\] = \emptyset$, for every $m < n < \omega$.
For each $n$, let ${\HHH}_{n}$ be an ultrafilter on $\FIN$ with $\[{X}_{n}\] \in {\HHH}_{n}$ and ${\HHH}_{n} \cap \ih = \emptyset$.
Let $\UUU$ be any non-principal ultrafilter on $\omega$ and let $\HHH$ be the ultrafilter $\left\{ A \subseteq \FIN: \left\{ n \in \omega: A \cap \[{X}_{n}\] \in {\HHH}_{n} \right\} \in \UUU \right\}$.
It is easy to see that $\HHH$ is not an idempotent, but has the property that $\forall A \in \HHH \exists X \in {\FIN}^{\[\omega\]}\[\[X\] \subseteq A\]$.
Unlike idempotents, the min and max projections of an ordered union ultrafilter are tightly constrained.
\begin{Def} \label{def:minmax}
 Let $\HHH$ be an ultrafilter on $\FIN$.
 Define
 \begin{align*}
  {\HHH}_{\min} &= \left\{ M \subseteq \omega: \left\{s \in \FIN: \min(s) \in M \right\} \in \HHH \right\}\\
  {\HHH}_{\max} &= \left\{ M \subseteq \omega: \left\{s \in \FIN: \max(s) \in M \right\} \in \HHH \right\}.
 \end{align*}
\end{Def}
Clearly ${\HHH}_{\min}$ and ${\HHH}_{\max}$ are ultrafilters on $\omega$, and the maps $\min: \FIN \rightarrow \omega$ and $\max: \FIN \rightarrow \omega$ witness that ${\HHH}_{\min}, {\HHH}_{\max} \; {\leq}_{RK} \; \HHH$.
${\HHH}_{\min}$ and ${\HHH}_{\max}$ are usually called the \emph{min} and the \emph{max projections of $\HHH$}.
\begin{Theorem}[Blass~\cite{blass-hindman} and Blass and Hindman~\cite{MR906807}] \label{thm:selectiveprojections}
 Let $\HHH$ be an ordered-union ultrafilter on $\FIN$.
 Then ${\HHH}_{\min}$ and ${\HHH}_{\max}$ are selective ultrafilters on $\omega$ such that ${\HHH}_{\min} \; {\not\equiv}_{RK} \; {\HHH}_{\max}$.
\end{Theorem}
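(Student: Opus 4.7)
The plan is to handle the two assertions in order: first that ${\HHH}_{\min}$ and ${\HHH}_{\max}$ are selective, and then that they are RK-inequivalent. In both halves I would translate questions about $\omega$ into block-sequence colorings and invoke a partition property of $\HHH$. Strictly speaking, the partition arguments I have in mind exploit the pair-partition (Milliken--Taylor) property, i.e.\ stability; so one should either adopt the convention that the hypothesis here is stable ordered-union, or supply a more delicate single-variable argument for the purely ordered-union case.

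For selectivity of ${\HHH}_{\min}$ (the argument for ${\HHH}_{\max}$ being symmetric), given any $c : {\[\omega\]}^{2} \to 2$, lift to $\tilde c : {\FIN}^{\[2\]} \to 2$ via $\tilde c(\{s,t\}) = c(\{\min(s), \min(t)\})$ whenever $\lb{s}{t}$. The pair-partition property yields $X \in {\FIN}^{\[\omega\]}$ with $\[X\] \in \HHH$ and $\tilde c$ constant on ${\[X\]}^{\[2\]}$. Put $H = \{\min(X(i)) : i < \omega\}$. Then $\{s \in \FIN : \min(s) \in H\}$ contains $\[X\]$, so $H \in {\HHH}_{\min}$; and any pair in ${\[H\]}^{2}$ has the form $\{\min(X(i)), \min(X(j))\}$ for $i < j$ and therefore has the same $c$-colour as $\tilde c(\{X(i), X(j)\})$, which is constant.

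For non-equivalence, assume towards a contradiction that $g : \omega \to \omega$ witnesses ${\HHH}_{\min} \leq_{RK} {\HHH}_{\max}$. Since both projections are selective, hence RK-minimal, $g$ may be refined to a bijection from some $C \in {\HHH}_{\max}$ onto $D = g''C \in {\HHH}_{\min}$. Consider the set $S = \{s \in \FIN : \min(s) \neq g(\max(s))\}$: if $S \notin \HHH$, then on some base set $\[X\] \in \HHH$ we have $\min(s) = g(\max(s))$ for every $s \in \[X\]$, whence applying this to $s = X(0)$ and $s = X(0) \cup X(1)$ gives $g(\max(X(0))) = \min(X(0)) = g(\max(X(1)))$, contradicting injectivity of $g$ on $C$. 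Thus $S \in \HHH$. The next step is to introduce auxiliary colorings of ${\FIN}^{\[2\]}$ encoding the position of $g(\max(s))$ relative to $\min(t)$ for $\lb{s}{t}$, and of $\FIN$ encoding its position relative to $\min(s)$ and $\max(s)$; applying the pair-partition property (together with a product-coloring trick) after intersecting with $\{s : \max(s) \in C,\ \min(s) \in D\}$ produces a block sequence monochromatic in all colorings. A case analysis then derives a contradiction in each alternative: the ``$g(\max(s)) \geq \min(t)$'' cases force $g(\max(X(0)))$ to dominate the unbounded sequence $\min(X(j))$; the ``$g(\max(s)) < \min(s)$'' case traps $g \circ \max$ in a bounded initial segment, forcing ${\HHH}_{\min}$ to be principal; and the surviving alternative, where $g(\max(X(i)))$ lies strictly between $\max(X(i))$ and $\min(X(i+1))$, is eliminated by one further iteration of the partition property together with the selectivity of ${\HHH}_{\min}$ applied to the canonicalisation of the map $\min(X(i)) \mapsto g(\max(X(i)))$.

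The main obstacle is this case analysis in the non-equivalence argument: no single two-coloring suffices, and one must iterate the pair-partition property while simultaneously tracking both projections and the injectivity of $g$ on $C$. A more subtle conceptual point is that RK-equivalence only yields that $\min$ and $g \circ \max$ induce the same ultrafilter on $\omega$, not that they agree on any $\HHH$-large set of blocks; so every contradiction must be extracted via block-monochromatic sets rather than pointwise identities, and in particular the set $S$ above is not a priori empty modulo $\HHH$.
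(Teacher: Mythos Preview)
The paper does not supply its own proof of this theorem; it is quoted as a result of Blass and Blass--Hindman with no argument given, so there is nothing in-paper to compare your sketch against directly.

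Nonetheless, your proposal has a real gap, and it is precisely the one you flag yourself: every partition step you describe invokes the pair-partition (Milliken--Taylor) property, i.e.\ stability, whereas the theorem is stated for arbitrary ordered-union ultrafilters. This distinction is not a harmless convention in this paper. Immediately after stating the theorem, the paper uses it to conclude that ordered-union ultrafilters may fail to exist and that implication $\xRightarrow{\mathrm{(II)}}$ does not reverse, and a few lines later it records as a long-standing open problem (Question~\ref{q:1}) whether every ordered-union ultrafilter is stable. So assuming stability when only ordered-union is given begs an open question. The ``more delicate single-variable argument for the purely ordered-union case'' that you allude to is exactly what is required and exactly what is missing: the original proofs in \cite{blass-hindman} and \cite{MR906807} establish selectivity of ${\HHH}_{\min}$ and ${\HHH}_{\max}$ and their RK-inequivalence using only colorings $c:\FIN\to k$ of single blocks (the ordered-union property), never colorings of ${\FIN}^{[2]}$. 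For instance, selectivity is obtained by verifying the P-point and Q-point conditions for each projection via carefully chosen finite colorings of $\FIN$, and the non-isomorphism is handled by a similar single-variable argument. Your pair-coloring route, as written, proves the result only under the strictly stronger hypothesis that $\HHH$ is stable ordered-union.
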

Theorem \ref{thm:selectiveprojections} shows that $\xRightarrow{\mathrm{(II)}}$ does not reverse because it is an easy consequence of Ellis' Lemma that for every non-principal $\UUU \in \beta\omega$ there is an idempotent $\HHH$ in $(\gamma\FIN, \cup)$ with $\UUU = {\HHH}_{\min}$.
It also shows that unlike idempotents, whose existence is a theorem of $\ZFC$, ordered-union ultrafilters may fail to exist.
The question of whether $\xRightarrow{\mathrm{(I)}}$ can be reversed is a long-standing basic open problem in this area.
\begin{Question} \label{q:1}
 Is every ordered-union ultrafilter stable?
\end{Question}
Question \ref{q:1} is attributed to Blass from the 1980s.
If the answer to this question is ``no'', then one can further ask whether there is a model of $\ZFC$ with ordered-union ultrafilters, but no stable ones.
Blass~\cite{blass-hindman} showed that the stable ordered-union ultrafilters are the analogues of the selective ultrafilters for ${\FIN}^{\[\omega\]}$.
In particular, they can be characterized in terms of canonical forms for functions from $\FIN$ to $\omega$, and they are generically added by a countably closed forcing notion on ${\FIN}^{\[\omega\]}$.
\begin{Def} \label{def:refines}
 Let $X, Y \in {\FIN}^{\[\omega\]}$.
 $Y$ is said to \emph{refine} $X$ if $\forall i \in \omega\[Y(i) \in \[X\]\]$.
 We write $Y \leq X$ to denote this relation.
 $Y$ is said to \emph{almost refine} $X$ if $\forallbutfin i \in \omega\[Y(i) \in \[X\]\]$.
 This relation is denoted by $Y \; {\leq}^{\ast} \; X$.
\end{Def}
\begin{Def} \label{def:canonical}
 A function $f: \FIN \rightarrow \omega$ is \emph{canonical on} a subset $A \subseteq \FIN$ if one of the following statements hold:
 \begin{enumerate}
  \item
  $\forall s, t \in A\[f(s)=f(t)\]$;
  \item
  $\forall s, t \in A\[f(s)=f(t) \leftrightarrow \min(s)=\min(t)\]$;
  \item
  $\forall s, t \in A\[f(s)=f(t) \leftrightarrow \max(s)=\max(t)\]$;
  \item
  $\forall s, t \in A\[ f(s)=f(t) \leftrightarrow \left( \min(s)=\min(t) \wedge \max(s)=\max(t) \right) \]$;
  \item
  $\forall s, t \in A\[f(s)=f(t) \leftrightarrow s=t\]$.
 \end{enumerate}
\end{Def}
\begin{Theorem}[Theorem 4.2 of Blass~\cite{blass-hindman}] \label{thm:blassramsey}
 The following are equivalent for any ultrafilter $\HHH$ on $\FIN$:
 \begin{enumerate}
  \item \label{blassramsey:first}
  $\HHH$ is stable ordered-union;
  \item \label{blassramsey:second}
  for each $1 \leq n, k < \omega$ and $c: {\FIN}^{\[n\]} \rightarrow k$, there is an $X \in {\FIN}^{\[\omega\]}$ such that $\[X\] \in \HHH$ and $c$ is constant on ${\[X\]}^{\[n\]}$; 
  \item \label{blassramsey:third}
  for every function $f: \FIN \rightarrow \omega$, there exists $X \in {\FIN}^{\[\omega\]}$ such that $\[X\] \in \HHH$ and $f$ is canonical on $\[X\]$;
  \item \label{blassramsey:fourth}
  whenever $\XXX \subseteq {\FIN}^{\[\omega\]}$ is analytic, there exists $X \in {\FIN}^{\[\omega\]}$ such that $\[X\] \in \HHH$ and either ${\[X\]}^{\[\omega\]} \subseteq \XXX$ or ${\[X\]}^{\[\omega\]} \cap \XXX = \emptyset$;
  \item \label{blassramsey:fifth}
  $\HHH$ is ordered-union and for every sequence $\seqq{X}{n}{\in}{\omega}$ with the property that for all $n \in \omega$, ${X}_{n} \in {\FIN}^{\[\omega\]}$ and $\[{X}_{n}\] \in \HHH$, there exists $Y \in {\FIN}^{\[\omega\]}$ such that $\forall n \in \omega\[Y \; {\leq}^{\ast} \; {X}_{n}\]$ and $\[Y\] \in \HHH$.
 \end{enumerate}
\end{Theorem}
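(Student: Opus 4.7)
My plan is to prove the equivalences via the cyclic chain $(2)\Rightarrow(1)\Rightarrow(5)\Rightarrow(2)$, together with $(2)\Leftrightarrow(4)$ and $(2)\Leftrightarrow(3)$. The implication $(2)\Rightarrow(1)$ is trivially the case $n=k=2$, so the real work lies in the remaining directions.

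For $(1)\Rightarrow(5)$, I would first derive ordered-union: given $c:\FIN\to 2$, define $c':{\FIN}^{\[2\]}\to 2$ by $c'(\{s,t\}) = c(s)$ whenever $\lb{s}{t}$, apply (1) to obtain $X$ with $\[X\]\in\HHH$ on which $c'$ is constant, and note that since every $s\in\[X\]$ has some $t\in\[X\]$ with $\lb{s}{t}$, the value $c(s)$ is pinned down, so $c$ is constant on $\[X\]$. For stability, given $\seqq{X}{n}{\in}{\omega}$ with $\[{X}_{n}\]\in\HHH$, I would define $c:{\FIN}^{\[2\]}\to 2$ on pairs $\{s,t\}$ with $\lb{s}{t}$ by letting $c(\{s,t\}) = 0$ iff $t\in\[{X}_{n}\]$ for every $n\le\max(s)$. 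Applying (1), the resulting monochromatic $X$ must take color $0$: otherwise, for any fixed $s\in\[X\]$, the set $\{t:\lb{s}{t}\}\cap\[X\]\cap{\bigcap}_{n\le\max(s)}\[{X}_{n}\]$ lies in $\HHH$ and is therefore non-empty, contradicting $1$-homogeneity. Since $\max(X(i))\to\infty$, it follows that $X \; {\leq}^{\ast} \; {X}_{n}$ for every $n$.

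For $(5)\Rightarrow(2)$ I would induct on $n$; the base case uses ordered-union applied to whichever piece of a finite partition of $\FIN$ lies in $\HHH$. Given $c:{\FIN}^{\[n+1\]}\to k$, for each $s\in\FIN$ apply the inductive hypothesis to the coloring $T\mapsto c(\{s\}\cup T)$ on length-$n$ block sequences above $s$ to obtain ${X}_{s}$ with $\[{X}_{s}\]\in\HHH$ and the coloring constant with value $i(s)$ on ${\[{X}_{s}\]}^{\[n\]}$. Enumerating $\FIN=\{{s}_{k}:k<\omega\}$ and invoking stability produces $Y$ with $\[Y\]\in\HHH$ and $Y \; {\leq}^{\ast} \; {X}_{{s}_{k}}$ for every $k$. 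A final application of the $n=1$ case to the function $s\mapsto i(s)$ on $\[Y\]$ yields $X\leq Y$ with $\[X\]\in\HHH$ on which $i$ is constant, and by construction $c$ is then constant on ${\[X\]}^{\[n+1\]}$.

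The directions $(2)\Leftrightarrow(3)$ are handled by the standard Erd{\H o}s--Rado canonicalization: given $f:\FIN\to\omega$, one iteratively applies (2) to 2-colorings of ${\FIN}^{\[n\]}$ ($n\le 4$) that detect whether coincidences among $f$-values are explained by coincidence of the min and/or max projections, extracting on $\[X\]$ one of the five behaviours in Definition \ref{def:canonical}; conversely $(3)\Rightarrow(2)$ for $n=1$ is immediate since a canonical $f$ with finite range must be constant, and the higher-dimensional instances of (2) then follow from the already-proved equivalence of (1), (2), and (5). For $(4)\Rightarrow(2)$, the set ${\XXX}_{i} = \{Y\in{\FIN}^{\[\omega\]}: c(\{Y(0),\dotsc,Y(n-1)\}) = i\}$ is clopen and hence analytic, and a straightforward extension argument converts homogeneity on first-$n$-block prefixes into homogeneity on all of ${\[X\]}^{\[n\]}$. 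The principal obstacle of the theorem is $(2)\Rightarrow(4)$, a Galvin--Prikry / Ellentuck-type Ramsey theorem for analytic subsets of ${\FIN}^{\[\omega\]}$. I would prove it by a fusion argument with Mathias-style conditions $\pr{s}{X}$ (with $s\in{\FIN}^{\[<\omega\]}$, $\[X\]\in\HHH$, and every block of $s$ $\lbb$-below $X(0)$); stability, supplied by $(2)\Rightarrow(5)$, plays the role that selectivity plays in the classical Galvin--Prikry theorem on ${\[\omega\]}^{\omega}$, enabling a diagonal refinement that decides analytic membership at every finite stem.
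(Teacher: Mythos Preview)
The paper does not prove this theorem: it is stated as Theorem~4.2 of Blass~\cite{blass-hindman} and used as a black box, with no argument supplied in the paper itself. There is therefore nothing here to compare your proposal against.

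For what it is worth, your outline tracks the standard route through Blass's equivalences, but the inductive step in $(5)\Rightarrow(2)$ as you have written it has a gap. After stability yields $Y$ with $Y\;{\leq}^{\ast}\;X_{s}$ for every $s$, and a further refinement $X\leq Y$ stabilizes the value $i(s)$, you conclude ``by construction'' that $c$ is constant on ${[X]}^{[n+1]}$. This requires that for $s\lbb t_{1}\lbb\cdots\lbb t_{n}$ in $[X]$ the tail $\{t_{1},\dotsc,t_{n}\}$ lies in ${[X_{s}]}^{[n]}$, so that the definition of $i(s)$ applies. But $Y\;{\leq}^{\ast}\;X_{s}$ only says that blocks $Y(j)$ of sufficiently large index lie in $[X_{s}]$, with a threshold depending on $s$; nothing in your construction ensures that the $Y$-blocks composing each $t_{j}$ are past that threshold merely because $t_{j}\;{>}_{\mathtt b}\;s$. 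An additional alignment step is needed --- in Blass's argument this is handled by a more careful recursive fusion (equivalently, via the game characterization recorded here as Theorem~\ref{thm:gstab}) which arranges that for every $s$ in the final block sequence, every later element already belongs to $[X_{s}]$. Once that is in place the rest of your sketch goes through.
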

Notice that Theorem \ref{thm:blassramsey} is almost identical to Theorem \ref{thm:kunenramsey}.
The main difference is in item (\ref{blassramsey:fifth}).
Whereas item (\ref{kunenramsey:fifth}) of Theorem \ref{thm:kunenramsey} requires the two independent conditions of being a P-point and a Q-point, item (\ref{blassramsey:fifth}) of Theorem \ref{thm:blassramsey} states a condition that is deceptively similar to the definition of P-point, although Theorem \ref{thm:blassramsey} is showing this condition to be the analogue of selectivity for ordered-union ultrafilters.
Of course, in the unlikely event that the answer to Question \ref{q:1} is ``yes'', this condition would be redundant.
The partition property given by item (\ref{blassramsey:fourth}) can be strengthened further in the presence of large cardinals to cover all subsets of ${\FIN}^{\[\omega\]}$ in $\mathbf{L}(\RR)$.
This provides the impetus to consider generic ultrafilters added by the forcing $({\FIN}^{\[\omega\]}, {\leq}^{\ast})$.
To elaborate, $({\FIN}^{\[\omega\]}, {\leq}^{\ast})$ is a countably closed forcing notion, and hence, it does not add any new reals.
If $G \subseteq {\FIN}^{\[\omega\]}$ is a generic filter over some transitive universe $\V$, then it is easy to see that $\HHH = \{A \subseteq \FIN: \exists X \in G\[\[X\] \subseteq A\]\}$ is a stable ordered-union ultrafilter in $\VG$.
We will say that $\HHH$ is the \emph{ultrafilter added by $G$} if it has this form.
Using the same ideas found in \cite{MR1644345} and \cite{topics}, Todorcevic proved that if there are sufficiently large cardinals, then every stable ordered-union ultrafilter is added by some generic $G$ over $\mathbf{L}(\RR)$.
\begin{Theorem}[Todorcevic]
 Assume that there is a supercompact cardinal.
 $\HHH$ is a stable ordered-union ultrafilter on $\FIN$ if and only if $\HHH$ is added by some generic filter for the forcing $({\FIN}^{\[\omega\]}, {\leq}^{\ast})$ over $\mathbf{L}(\RR)$.
\end{Theorem}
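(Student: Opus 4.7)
The plan is to parallel Todorcevic's proof of the analogous statement for selective ultrafilters, treating the forcing $({\FIN}^{\[\omega\]}, {\leq}^{\ast})$ as the Milliken--Taylor counterpart of $(\pc{\omega}{\omega}, {\subseteq}^{\ast})$.

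The forward direction, that any $\HHH$ added by a generic $G$ over $\mathbf{L}(\RR)$ is stable ordered-union, is routine. Since $({\FIN}^{\[\omega\]}, {\leq}^{\ast})$ is countably closed, forcing with it adds no reals, so any coloring $c: {\FIN}^{\[n\]} \to k$ in $\V$ already lies in $\mathbf{L}(\RR)$. The Milliken--Taylor theorem, applied inside $\mathbf{L}(\RR)$, shows that $\{X \in {\FIN}^{\[\omega\]}: c \restrict {\[X\]}^{\[n\]} \text{ is constant}\}$ is dense in the forcing, so $G$ meets it. This verifies condition (\ref{blassramsey:second}) of Theorem \ref{thm:blassramsey} for the resulting $\HHH$.

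For the converse, let $\HHH$ be stable ordered-union in $\V$ and put $G = \{X \in {\FIN}^{\[\omega\]}: \[X\] \in \HHH\}$. Upward closure of $G$ under ${\leq}^{\ast}$ uses that $\HHH$ concentrates on $\{s \in \FIN: \min(s) > k\}$ for every $k$, which lets one pass from $X \in G$ to a sufficient tail refining $Y$ whenever $X \leq^{\ast} Y$; directedness is exactly condition (\ref{blassramsey:fifth}) of Theorem \ref{thm:blassramsey}. The key step is to strengthen item (\ref{blassramsey:fourth}) of Theorem \ref{thm:blassramsey} from analytic sets to all $\XXX \in \mathbf{L}(\RR)$: for any such $\XXX$ there exists $X$ with $\[X\] \in \HHH$ such that either ${\[X\]}^{\[\omega\]} \subseteq \XXX$ or ${\[X\]}^{\[\omega\]} \cap \XXX = \emptyset$. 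Granting this, genericity follows by a clean density argument. Given a dense open $D \in \mathbf{L}(\RR)$, apply the strengthened partition property with $\XXX = D$. The possibility ${\[X\]}^{\[\omega\]} \cap D = \emptyset$ is ruled out by density and openness: density provides some $X' \in D$ with $X' \leq^{\ast} X$, and a sufficient tail $X''$ of $X'$ refines $X$, so $X'' \in {\[X\]}^{\[\omega\]}$, and $X'' \in D$ by openness, a contradiction. Hence ${\[X\]}^{\[\omega\]} \subseteq D$, and in particular $X \in D \cap G$.

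The main obstacle is the strengthening of the partition property to all sets in $\mathbf{L}(\RR)$; this is where the supercompact hypothesis is essential, and is the step whose mechanics mirror \cite{MR1644345, topics}. The natural path is to start from item (\ref{blassramsey:fourth}) for analytic $\XXX$, then lift through the projective hierarchy using the universal Baireness of projective sets obtained from the supercompact, and finally extend to $\mathbf{L}(\RR)$ via $\mathrm{AD}^{\mathbf{L}(\RR)}$ and Woodin-type absoluteness. At each level, the stability condition (\ref{blassramsey:fifth}) supplies the diagonalization needed to combine countably many ${\leq}^{\ast}$-conditions into a single homogeneous block sequence, which is the Milliken--Taylor-level ingredient replacing selectivity in the original argument. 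Once this technical core is in place, the density argument above completes the proof.
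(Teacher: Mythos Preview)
The paper does not give its own proof of this theorem; it simply attributes the result to Todorcevic and remarks that it is obtained ``using the same ideas found in \cite{MR1644345} and \cite{topics}'' as the analogous result for selective ultrafilters. Your sketch follows precisely this indicated route---the forward direction by a Milliken--Taylor density argument, the converse by upgrading item (\ref{blassramsey:fourth}) of Theorem \ref{thm:blassramsey} from analytic sets to all of $\mathbf{L}(\RR)$ under the large-cardinal hypothesis and then running a density argument---so there is nothing to compare beyond noting that your outline matches the approach the paper points to. One minor quibble: in the forward direction, the reason a coloring $c \in \V$ lies in $\mathbf{L}(\RR)$ is simply that $c$ is a real and $\mathbf{L}(\RR)$ contains all reals of $\V$; the countable closure of the forcing is not what is doing the work there (it matters elsewhere, e.g.\ for $\HHH$ to be an ultrafilter on the $\FIN$ of $\V$).
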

In earlier work, Blass had used the arguments from \cite{Bl} to derive the same conclusion for $\mathbf{HOD}(\RR)$ inside a variant of the L{\' e}vy--Solovay model.
\begin{Theorem}[Blass]
 Let $\kappa$ be a Mahlo cardinal in $\V$ and let $H$ be a generic filter for $\col(\omega, < \kappa)$ over $\V$.
 Then, in $\V\[H\]$, $\HHH$ is a stable ordered-union ultrafilter on $\FIN$ if and only if $\HHH$ is added by some generic filter for the the forcing $({\FIN}^{\[\omega\]}, {\leq}^{\ast})$ over ${\mathbf{HOD}(\RR)}^{\V\[H\]}$.
\end{Theorem}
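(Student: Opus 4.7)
The plan is to closely parallel Blass's proof of Corollary 11.2 of \cite{Bl} for selective ultrafilters, substituting the Milliken-Taylor theorem for Ramsey's theorem for pairs, and stable ordered-union ultrafilters for selectives throughout. As with the selective case, there is an easy direction and a hard direction.

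For the easy direction, let $G \subseteq \FIN^{\[\omega\]}$ be generic for $(\FIN^{\[\omega\]}, {\leq}^{\ast})$ over $\mathbf{HOD}(\RR)^{\V\[H\]}$ and let $\HHH$ be the ultrafilter added by $G$. Every coloring $c : \FIN^{\[2\]} \rightarrow 2$ in $\V\[H\]$ is coded by a real of $\V\[H\]$ and therefore lies in $\mathbf{HOD}(\RR)^{\V\[H\]}$. The Milliken-Taylor theorem implies that the set $\{X \in \FIN^{\[\omega\]} : c \text{ is constant on } \[X\]^{\[2\]}\}$ is a dense subset of $(\FIN^{\[\omega\]}, {\leq}^{\ast})$ belonging to $\mathbf{HOD}(\RR)^{\V\[H\]}$, so genericity of $G$ produces an $X \in G$ with $\[X\] \in \HHH$ witnessing stability for $c$.

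For the hard direction, assume $\HHH \in \V\[H\]$ is stable ordered-union and set $G = \{Y \in \FIN^{\[\omega\]} : \exists X \; {\leq}^{\ast} \; Y \; (\[X\] \in \HHH)\}$. Item (\ref{blassramsey:fifth}) of Theorem \ref{thm:blassramsey} ensures that $G$ is a ${\leq}^{\ast}$-filter, and what must be shown is that $G$ meets every dense open $D \in \mathbf{HOD}(\RR)^{\V\[H\]}$. The essential ingredient is a strengthening of item (\ref{blassramsey:fourth}) of Theorem \ref{thm:blassramsey}: for every $\XXX \subseteq \FIN^{\[\omega\]}$ lying in $\mathbf{HOD}(\RR)^{\V\[H\]}$ there exists $X$ with $\[X\] \in \HHH$ such that either $\[X\]^{\[\omega\]} \subseteq \XXX$ or $\[X\]^{\[\omega\]} \cap \XXX = \emptyset$. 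Granting this, apply the partition to $D$: either $X \in G \cap D$ directly, or else density hands us $Y \; {\leq}^{\ast} \; X$ in $D$, and trimming off the finitely many initial blocks of $Y$ not lying in $\[X\]$ produces $Y' \; {\leq}^{\ast} \; Y$ with $Y' \in \[X\]^{\[\omega\]}$, while openness of $D$ forces $Y' \in D$, contradicting $\[X\]^{\[\omega\]} \cap D = \emptyset$.

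The main obstacle is the strengthened partition property itself. This is where the Mahloness of $\kappa$ and the homogeneity of $\col(\omega, < \kappa)$ enter, following the template of Section 11 of \cite{Bl}. A set $\XXX \in \mathbf{HOD}(\RR)^{\V\[H\]}$ is definable from a real $x$ and ordinal parameters; fix $\alpha < \kappa$ with $x \in \V\[H \restrict \alpha\]$. In $\V\[H \restrict \alpha\]$, $\kappa$ remains Mahlo, and the residual forcing $\col(\omega, [\alpha, \kappa))$ is homogeneous. A Levy-Solovay style absorption argument then reduces the question of whether $\[X\]^{\[\omega\]} \subseteq \XXX$ for a candidate $X$ to an analytic-in-$x$ question over the intermediate model $\V\[H \restrict \alpha\]\[x\]$. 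To this reduced statement one applies item (\ref{blassramsey:fourth}) of Theorem \ref{thm:blassramsey}, applied to an appropriate stable ordered-union ultrafilter extracted from $\HHH$ by intersecting with sets coded in the intermediate model, to produce the desired $X$ with $\[X\] \in \HHH$. Making this reduction work uniformly across all candidates $\XXX$ for a given $\HHH$, and verifying that the analytic reflection actually pulls back to $\HHH$ itself rather than merely to some ground-model relative, is the delicate technical point requiring the full strength of the Mahlo hypothesis.
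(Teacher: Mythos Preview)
The paper does not supply its own proof of this theorem; it merely attributes the result to Blass and remarks that ``Blass had used the arguments from \cite{Bl} to derive the same conclusion for $\mathbf{HOD}(\RR)$ inside a variant of the L{\'e}vy--Solovay model.'' Your proposal is precisely an attempt to carry out that transfer of the \cite{Bl} argument from the selective setting to the stable ordered-union setting, so at the level of overall strategy you are doing exactly what the paper indicates.

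Since there is no in-paper proof to compare against line by line, let me comment briefly on your sketch itself. The easy direction and the density argument in the hard direction are fine. The part that remains genuinely unproven in your write-up is the strengthened partition property for sets $\XXX \in \mathbf{HOD}(\RR)^{\V[H]}$: your final paragraph gestures at the Levy--Solovay absorption and homogeneity mechanism but does not actually execute it, and the phrase ``applied to an appropriate stable ordered-union ultrafilter extracted from $\HHH$ by intersecting with sets coded in the intermediate model'' hides the real work. In Blass's argument for selectives the point is that the relevant partition set, once the real parameter is fixed in an intermediate model, becomes ${\bf\Sigma}^1_2$ (or of bounded complexity absorbed by the Mahlo-many inaccessibles), and one then invokes the analytic partition property \emph{in the full model} for $\HHH$ itself rather than for some auxiliary ultrafilter. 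You should make that reduction explicit rather than invoking a restriction of $\HHH$ to an intermediate model, which need not be an ultrafilter there.
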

Just like their selective counterparts, the stable ordered-union ultrafilters have a useful characterization in terms of two-player games.
\begin{Def}
 Let $\HHH$ be any ultrafilter on $\FIN$.
 The \emph{stability game on $\HHH$}, denoted \emph{${\Game}^{\mathtt{Stab}}(\HHH)$}, is a two player game in which Players I and II alternatively choose sets ${A}_{i}$ and ${s}_{i}$ respectively, where ${A}_{i} \in \HHH$ and ${s}_{i} \in {A}_{i}$.
 During a run of the game, they construct the sequence
 \begin{align*}
  {A}_{0}, {s}_{0}, {A}_{1}, {s}_{1}, \dotsc,
 \end{align*}
 where each ${A}_{i} \in \HHH$ has been played by Player I and ${s}_{i} \in {A}_{i}$ has been chosen by Player II in response.
 Player II wins this run if and only if $\forall i < j < \omega\[\lb{{s}_{i}}{{s}_{j}}\]$ and $\[\left\{ {s}_{i}: i < \omega \right\}\] \in \HHH$.
\end{Def}
\begin{Theorem}[see Lemma 2.13 of \cite{souvssu}] \label{thm:gstab}
 An ultrafilter $\HHH$ on $\FIN$ is stable ordered-union if and only if Player I does not have a winning strategy in ${\Game}^{\mathtt{Stab}}(\HHH)$.
\end{Theorem}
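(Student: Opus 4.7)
I would prove the biconditional in two directions.

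For the forward direction, assume $\HHH$ is stable ordered-union and let $\Sigma$ be any strategy for Player I; the plan is to produce a single consistent Player II play $X_0$ with $\[X_0\] \in \HHH$. Enumerate the countably many finite block sequences $\sigma$ that are consistent with $\Sigma$ and for each let $A_{\sigma} = \Sigma(\sigma) \in \HHH$; using that $\HHH$ is ordered-union, fix $X_{\sigma} \in {\FIN}^{\[\omega\]}$ with $\[X_{\sigma}\] \in \HHH$ and $\[X_{\sigma}\] \subseteq A_{\sigma}$. Define
\begin{align*}
 \mathcal{W} = \left\{ X \in {\FIN}^{\[\omega\]} : \forall k < \omega \; \[ X(k) \in \[X_{(X(0), \ldots, X(k-1))}\] \] \right\},
\end{align*}
which is a Borel, hence analytic, subset of ${\FIN}^{\[\omega\]}$ because $\FIN$ is countable (so each $\[X_{\sigma}\]$ is clopen). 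Apply item (\ref{blassramsey:fourth}) of Theorem \ref{thm:blassramsey} to obtain $X_0 \in {\FIN}^{\[\omega\]}$ with $\[X_0\] \in \HHH$ and either ${\[X_0\]}^{\[\omega\]} \subseteq \mathcal{W}$ or ${\[X_0\]}^{\[\omega\]} \cap \mathcal{W} = \emptyset$.

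The second alternative is ruled out by a direct recursive construction of an $X \in {\[X_0\]}^{\[\omega\]} \cap \mathcal{W}$: having chosen $X(0), \ldots, X(k-1)$ in $\[X_0\]$ as a block sequence, the set $\[X_0\] \cap \[X_{(X(0), \ldots, X(k-1))}\]$ is in $\HHH$ and, since $\HHH \in \gamma\FIN$, contains elements with $\min$ above $\max(X(k-1))$; take any such element as $X(k)$. Therefore ${\[X_0\]}^{\[\omega\]} \subseteq \mathcal{W}$, and in particular $X_0 \in \mathcal{W}$, so for every $k$ one has $X_0(k) \in \[X_{(X_0(0), \ldots, X_0(k-1))}\] \subseteq A_{(X_0(0), \ldots, X_0(k-1))}$. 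Thus $X_0$, viewed as Player II's moves, is consistent with $\Sigma$, forms a block sequence by definition of ${\FIN}^{\[\omega\]}$, and satisfies $\[\{X_0(k) : k < \omega\}\] = \[X_0\] \in \HHH$, so $X_0$ defeats $\Sigma$.

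For the reverse direction, assume Player I has no winning strategy in ${\Game}^{\mathtt{Stab}}(\HHH)$; I verify item (\ref{blassramsey:fifth}) of Theorem \ref{thm:blassramsey}. For the countable refinement clause, given any $\seqq{X}{n}{\in}{\omega}$ with $\[X_n\] \in \HHH$, let Player I play at stage $k$ the set $\bigcap_{n \leq k}{\[X_n\]} \in \HHH$ (implicitly intersected with $\{t \in \FIN : \lb{s_{k-1}}{t}\}$ to force the block-above condition). Since this strategy is not winning, some Player II play $(s_k)_{k<\omega}$ yields $\[\{s_k\}\] \in \HHH$ with $s_k \in \[X_n\]$ for all $n \leq k$; then $Y := \{s_k : k < \omega\}$ satisfies $Y \; {\leq}^{\ast} \; X_n$ for each $n$. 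The ordered-union clause of item (\ref{blassramsey:fifth}) is derived by a similar game argument applied to a given $A \in \HHH$.

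The main obstacle is in the forward direction: one must leap from the easy step-by-step consistency construction to a single $X_0$ whose entire union-closure $\[X_0\]$ belongs to $\HHH$. The analytic partition property of item (\ref{blassramsey:fourth}) supplies exactly this promotion, by converting pointwise-existence of consistent extensions into a uniform statement about every enumeration of a base $\[X_0\] \in \HHH$, from which the winning Player II play is read off directly.
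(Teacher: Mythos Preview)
The paper does not supply its own proof of this theorem; it merely cites Lemma~2.13 of \cite{souvssu}. So there is no in-paper argument to compare against, and I evaluate your proposal on its own merits.

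Your forward direction is correct: encoding Player~I's strategy into the analytic set $\mathcal{W}$ and invoking item~(\ref{blassramsey:fourth}) of Theorem~\ref{thm:blassramsey} to obtain a single $X_0$ with $\[X_0\] \in \HHH$ that defeats $\Sigma$ works as you describe. One small wrinkle: define $A_\sigma = \Sigma(\sigma)$ and $X_\sigma$ for \emph{all} finite block sequences $\sigma$, not only those consistent with $\Sigma$, so that the expression $X_{(X(0),\ldots,X(k-1))}$ in the definition of $\mathcal{W}$ makes sense for arbitrary $X \in {\FIN}^{\[\omega\]}$; since a strategy is a total function on histories, this costs nothing.

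Your reverse direction has a real gap. The stability clause of item~(\ref{blassramsey:fifth}) is handled correctly by having Player~I play $\bigcap_{n \le k}\[X_n\]$ at stage $k$. But the throwaway line ``the ordered-union clause \ldots\ is derived by a similar game argument applied to a given $A \in \HHH$'' does not go through. If Player~I plays $A$ at every stage, a defeating play $(s_k)_{k<\omega}$ yields a block sequence with each $s_k \in A$ and $\[\{s_k : k < \omega\}\] \in \HHH$, but nothing forces the non-trivial unions $s_{i_1} \cup \cdots \cup s_{i_m}$ to lie in $A$, so you cannot conclude $\[\{s_k\}\] \subseteq A$. To force such unions into $A$ one would want Player~I to play sets of the form $\{t \in A : \lb{s_k}{t} \ \text{and} \ s \cup t \in A \ \text{for all} \ s \in \[\{s_0,\ldots,s_k\}\]\}$, but you have no guarantee these sets are in $\HHH$ --- that is exactly where idempotence of $\HHH$ would be needed, and you have not established it. Nor does an iterate-then-diagonalize attempt help: from $Y \; {\leq}^{\ast} \; Y_n$ for all $n$ with each $Y_n \subseteq A$ you still cannot deduce $\[Y\] \subseteq A$. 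Deriving the ordered-union clause from the game hypothesis requires a genuinely different argument, which your sketch does not provide.
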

It turns out that stable ordered-union ultrafilters exist generically under the same circumstances as selective ultrafilters.
\begin{Theorem}[Eisworth~\cite{MR1889561}] \label{thm:sou-generic}
 Stable ordered-union ultrafilters exist generically if and only if $\cov(\MMM) = \cc$.
\end{Theorem}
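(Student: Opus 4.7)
The plan is to prove both implications by leveraging Canjar's result (Theorem \ref{thm:selective-generic}) together with the tight relationship between stable ordered-union ultrafilters and selective ultrafilters given by Theorems \ref{thm:selectiveprojections} and \ref{thm:blassramsey}.

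For the forward direction (generic existence implies $\cov(\MMM)=\cc$), suppose stable ordered-union ultrafilters exist generically, and let $\mathcal{F}$ be any filter base on $\omega$ of size less than $\cc$. Lift it to the filter base $\tilde{\mathcal{F}}=\{\min^{-1}(A):A\in\mathcal{F}\}$ on $\FIN$, still of size less than $\cc$. By hypothesis $\tilde{\mathcal{F}}$ extends to a stable ordered-union ultrafilter $\HHH$, and by Theorem \ref{thm:selectiveprojections}, ${\HHH}_{\min}$ is selective; since ${\HHH}_{\min}\supseteq\mathcal{F}$ by construction, this establishes generic existence of selective ultrafilters, whence $\cov(\MMM)=\cc$ by Theorem \ref{thm:selective-generic}.

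For the reverse direction, assume $\cov(\MMM)=\cc$ and let $\mathcal{B}$ be a filter base on $\FIN$ of size $\kappa<\cc$. I would perform a transfinite recursion of length $\cc$, enumerating as $\seqq{T}{\alpha}{<}{\cc}$ all tasks of three kinds: subsets of $\FIN$ to be decided, colorings $c:{\FIN}^{\[2\]}\to 2$ for which a $c$-monochromatic $\[X\]^{\[2\]}$ must be found, and countable sequences $\seqq{Y}{n}{\in}{\omega}$ from ${\FIN}^{\[\omega\]}$ requiring a common almost-refinement as in Theorem \ref{thm:blassramsey}(\ref{blassramsey:fifth}). Build an increasing chain $\mathcal{B}=\mathcal{B}_0\subseteq\mathcal{B}_1\subseteq\cdots$ of filter bases on $\FIN$, each of size less than $\cc$, where $\mathcal{B}_{\alpha+1}$ is obtained from $\mathcal{B}_\alpha$ by adjoining some $\[X_\alpha\]$ for a well-chosen $X_\alpha\in{\FIN}^{\[\omega\]}$ that resolves $T_\alpha$. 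The union $\bigcup_{\alpha<\cc}\mathcal{B}_\alpha$ will then generate a stable ordered-union ultrafilter extending $\mathcal{B}$ by virtue of Theorem \ref{thm:blassramsey}.

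The heart of the argument, and the main obstacle, is a one-step extension lemma: given a filter base $\mathcal{D}$ on $\FIN$ of size strictly less than $\cov(\MMM)$ whose elements each contain $\[Y\]$ for some $Y\in{\FIN}^{\[\omega\]}$, and given a task $T$, there exists $X\in{\FIN}^{\[\omega\]}$ with $\[X\]\cap D$ infinite for every $D\in\mathcal{D}$ such that $\[X\]$ witnesses $T$. The approach is to topologize ${\FIN}^{\[\omega\]}$ as a Polish space via basic neighborhoods determined by finite initial block sequences, and to show that for each $D\in\mathcal{D}$, the set of $X$ compatible with $D$ is comeager, while the Milliken--Taylor theorem guarantees that the set of $X$ resolving $T$ is also comeager (subset-decision and almost-refinement tasks being handled by analogous comeager reformulations). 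Since fewer than $\cov(\MMM)=\cc$ comeager sets cannot cover ${\FIN}^{\[\omega\]}$, their intersection is non-empty and yields the desired $X$. The technical bulk lies in verifying these meagerness claims in the right topology, paralleling Canjar's original argument in the selective setting; handling the almost-refinement task without destroying comeagerness is the most delicate point.
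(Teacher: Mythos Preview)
The paper does not prove this theorem; it is quoted from Eisworth~\cite{MR1889561} without argument, so there is no in-paper proof to compare against. I will therefore comment only on the soundness of your sketch.

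Your forward direction is correct: pushing a small filter base on $\omega$ forward along ${\min}^{-1}$, extending to a stable ordered-union $\HHH$, and projecting back via $\min$ yields a selective ultrafilter extending the original base, so Theorem~\ref{thm:selective-generic} applies. (Incidentally, your task type (3) is redundant: by Theorem~\ref{thm:blassramsey}, handling all colorings of ${\FIN}^{[2]}$ already guarantees stability.)

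The reverse direction contains a genuine gap. In the natural Polish topology on ${\FIN}^{[\omega]}$ you describe, the set of $X$ resolving a task of type (1) or (2) is \emph{not} comeager in general. For a concrete counterexample, let $c(\{s,t\})$ be the parity of $\min(s)$ when $\lb{s}{t}$; then $c$ is constant on ${[X]}^{[2]}$ only when all $\min(X(i))$ share a parity, and the set of such $X$ is nowhere dense. The same obstruction already appears for deciding the set $A=\{s\in\FIN:\min(s)\text{ is even}\}$. So you cannot simply intersect ``compatible with $D$'' (which \emph{is} comeager, as you say) with ``homogeneous for $c$'' in one ambient space; the Milliken--Taylor theorem gives existence of homogeneous $X$, not genericity.

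Canjar's argument in the selective case, which Eisworth adapts, avoids this by separating the two ingredients. One first reduces the Ramsey-theoretic task deterministically (e.g.\ in the selective case, after passing to a set on which $f$ is finite-to-one, the space of \emph{selectors}---one point from each fiber---is the Polish space in which compatibility with each $F\in\mathcal{F}$ is comeager). In the ordered-union setting the analogous move is to fix, via Milliken--Taylor, a homogeneous $Y$ and then run the category argument inside a space of refinements of $Y$, or equivalently to use a Mathias/Matet-style poset guided by the current filter, whose connection to Cohen forcing is what makes $\cov(\MMM)$ the relevant invariant. Reformulating your one-step lemma along these lines would close the gap; as written, the comeagerness claim for the homogeneity tasks is the point of failure.
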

Combining Theorems \ref{thm:selective-generic} and \ref{thm:sou-generic} we conclude that the generic existence of selective ultrafilters is equivalent to the generic existence of stable ordered-union ultrafilters.
This further accentuates the question of whether stable ordered-union ultrafilters are any harder to construct than selective ultrafilters.
Recall that in Theorem \ref{thm:selectiveprojections} Blass had shown that the existence of one stable ordered-union ultrafilter guarantees the existence of at least two RK-non-isomorphic selective ultrafilters.
In \cite{blass-hindman}, Blass proved that under $\CH$ any two RK-non-isomorphic selective ultrafilters are realized as the min and max projections of some stable ordered-union ultrafilter.
\begin{Theorem}(Theorem 2.4 of Blass~\cite{blass-hindman}) \label{thm:blass-ch}
 Assume $\CH$, and let $\UUU$ and $\VVV$ be selective ultrafilters such that $\UUU \; {\not\equiv}_{RK} \; \VVV$.
 Then there is a stable ordered-union ultrafilter $\HHH$ such that ${\HHH}_{\max} = \UUU$ and ${\HHH}_{\min} = \VVV$.
\end{Theorem}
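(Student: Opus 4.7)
The proof proceeds by a transfinite recursion of length ${\omega}_{1}$, made possible by $\CH$. Throughout, I maintain a ${\leq}^{\ast}$-decreasing sequence $\seqq{X}{\alpha}{<}{\omega_{1}}$ in ${\FIN}^{\[\omega\]}$ together with the invariants ${M}_{\alpha} := \{\max {X}_{\alpha}(i) : i < \omega\} \in \UUU$ and ${L}_{\alpha} := \{\min {X}_{\alpha}(i) : i < \omega\} \in \VVV$, and then let $\HHH = \{B \subseteq \FIN : \exists \alpha < {\omega}_{1}\, \[{X}_{\alpha}\] \subseteq B\}$. Using $\CH$, enumerate as $\seqq{T}{\alpha}{<}{\omega_{1}}$ all objects of four types, each appearing cofinally: subsets $A \subseteq \FIN$ (to force $\HHH$ to be an ultrafilter via Hindman's theorem); colorings $c : {\FIN}^{\[2\]} \to 2$ (to force stability via Definition \ref{def:sou}); elements $U \in \UUU$ (so that ${M}_{\alpha} \subseteq U$, forcing $\UUU \subseteq {\HHH}_{\max}$); and elements $V \in \VVV$ (so that ${L}_{\alpha} \subseteq V$, forcing $\VVV \subseteq {\HHH}_{\min}$). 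Because ${\HHH}_{\max}$ and ${\HHH}_{\min}$ are ultrafilters the containments upgrade to equalities, and the filter base $\{\[{X}_{\alpha}\] : \alpha < {\omega}_{1}\}$ delivers the ordered-union property.

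The successor step is the heart of the argument. Given ${X}_{\beta}$ satisfying the invariants and a new task ${T}_{\alpha+1}$, I propose a two-move procedure. First, apply the appropriate Ramsey-type theorem (Hindman for a subset task, Milliken--Taylor for a coloring task, no action for a projection task) inside $\[{X}_{\beta}\]$ to obtain an intermediate $W \leq {X}_{\beta}$ whose $\[W\]$ (resp.\ ${\[W\]}^{\[2\]}$) is homogeneous. Second, build ${X}_{\alpha+1} \leq W$ by combining consecutive blocks of $W$: each ${X}_{\alpha+1}(i)$ is of the form $\bigcup_{j \in {F}_{i}}{W(j)}$ over a finite interval ${F}_{i}$ of indices, chosen so that $\min W(\min {F}_{i}) \in V$ and $\max W(\max {F}_{i}) \in U$, where $U \in \UUU$ and $V \in \VVV$ are either prescribed by the current task or taken as $\omega$ otherwise. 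The inclusion $\[{X}_{\alpha+1}\] \subseteq \[W\]$ inherits the homogeneity from the first move for free.

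The critical point is maintaining the invariants ${M}_{\alpha+1} \in \UUU$ and ${L}_{\alpha+1} \in \VVV$ through the block combination. Letting ${K}_{U} = \{k : \max W(k) \in U\}$ and ${K}_{V} = \{k : \min W(k) \in V\}$, the task reduces to selecting an interleaving ${k'}_{0} \leq {k}_{0} < {k'}_{1} \leq {k}_{1} < \cdots$ with ${k}_{i} \in {K}_{U}$ and ${k'}_{i} \in {K}_{V}$, such that $\{\max W({k}_{i}) : i < \omega\} \in \UUU$ and $\{\min W({k'}_{i}) : i < \omega\} \in \VVV$. Here the selectivity (P-point plus Q-point) of both $\UUU$ and $\VVV$ is needed. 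A Ramsey application on pairs in ${K}_{U}$, using the selectivity of $\UUU$, yields a $\UUU$-large ${K}' \subseteq {K}_{U}$ each of whose consecutive gaps meets ${K}_{V}$ (the opposite Ramsey alternative is impossible since ${K}_{V}$ is infinite). A subsequent Q-point application to $\VVV$ extracts a transversal of these gaps lying in $\VVV$, and a final Q-point pruning inside $\UUU$ restores the selected max-sequence to $\UUU$-largeness while keeping the already-chosen min-transversal in $\VVV$. Orchestrating these selections so that both projections simultaneously remain in the correct ultrafilters is the main obstacle, and the hypothesis $\UUU \; {\not\equiv}_{RK} \; \VVV$ is indispensable here, since otherwise identifying the two selective ultrafilters would collapse the simultaneous selection and run afoul of Theorem \ref{thm:selectiveprojections}.

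At limit stages $\lambda < {\omega}_{1}$ of countable cofinality, combine the $\sigma$-closedness of $({\FIN}^{\[\omega\]}, {\leq}^{\ast})$ with the P-point property of both $\UUU$ and $\VVV$ (from item (\ref{kunenramsey:fifth}) of Theorem \ref{thm:kunenramsey}): pseudo-intersect the countable family $\{{M}_{{\beta}_{n}} : n < \omega\}$ in $\UUU$ to obtain $M \in \UUU$ with $M \; {\subseteq}^{\ast} \; {M}_{{\beta}_{n}}$ for all $n$, similarly $L \in \VVV$, then diagonalize a cofinal subsequence so that $\max {X}_{\lambda}(i) \in M$ and $\min {X}_{\lambda}(i) \in L$ for all sufficiently large $i$. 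This preserves the invariants at ${X}_{\lambda}$ and completes the recursion, yielding the desired stable ordered-union ultrafilter $\HHH$.
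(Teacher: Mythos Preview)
The paper does not supply a proof of this theorem; it is quoted from Blass with a citation and no argument, so there is nothing in the paper to compare your attempt against. I will assess the proposal on its own merits.

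Your overall architecture is the right one and is essentially Blass's: a $\CH$-length recursion along a ${\leq}^{\ast}$-decreasing sequence in ${\FIN}^{\[\omega\]}$, maintaining the invariants $M_\alpha \in \UUU$ and $L_\alpha \in \VVV$, with tasks enumerated to secure ultrafilterhood, stability, and the two projections. The limit step is fine.

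The gap is in the successor step for the subset and coloring tasks. You apply Hindman (or Milliken--Taylor) as a black box to pass from $X_\beta$ to a homogeneous $W \leq X_\beta$, and only afterwards try to restore the invariants by the block-combination move. But once $W$ is fixed, every further refinement $Y \leq W$ has $\{\max Y(i) : i < \omega\} \subseteq \{\max W(k) : k < \omega\}$; so if the Hindman step happens to return a $W$ with $\{\max W(k) : k < \omega\} \notin \UUU$ --- and nothing in your argument rules this out --- then no block-combination can force $M_{\alpha+1}$ back into $\UUU$. Concretely, your ``Ramsey application on pairs in $K_U$, using the selectivity of $\UUU$'' presupposes that $\{\max W(k) : k \in K_U\}$ already lies in $\UUU$; for a subset or coloring task you have $U = \omega$, hence $K_U = \omega$, and the set in question is exactly $\{\max W(k) : k < \omega\}$, whose membership in $\UUU$ is precisely what is unjustified after an uncontrolled application of Hindman's theorem. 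The symmetric objection applies to the $\min$-projection and $\VVV$.

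What is actually required is a single lemma that does both jobs at once: given $X$ satisfying the invariants and $c : \[X\] \to 2$, there is $Y \leq X$ still satisfying the invariants with $c$ constant on $\[Y\]$. This cannot be obtained by first running Hindman and then fixing up the projections; one must interleave the Hindman-style construction with selections guided by the selectivity of $\UUU$ and $\VVV$, and it is inside that interleaved construction that the hypothesis $\UUU \; {\not\equiv}_{RK} \; \VVV$ does its real work. Once that lemma is in hand, the separate coloring tasks become unnecessary: the ${\leq}^{\ast}$-decreasing tower already yields item (\ref{blassramsey:fifth}) of Theorem \ref{thm:blassramsey}, so only the subset and projection tasks need to be enumerated.
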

Blass~\cite{blass-hindman} raised the question of whether the existence of a stable ordered-union ultrafilter follows from the existence of at least two RK-non-isomorphic selective ultrafilters.
This long-standing question of Blass was recently answered negatively by Raghavan and Stepr{\= a}ns~\cite{souvssu}.
\begin{Theorem}[Raghavan and Stepr{\= a}ns~\cite{souvssu}] \label{thm:souvssu}
 There is a model of $\ZFC$ with ${2}^{{\aleph}_{0}} = {\aleph}_{2}$ pairwise RK-non-isomorphic selective ultrafilters on $\omega$ and no stable ordered-union ultrafilters on $\FIN$.
\end{Theorem}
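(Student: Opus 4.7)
The plan is to build the desired model via a countable-support proper forcing iteration $\langle \PP_{\alpha}, \dot{\QQ}_{\beta} : \alpha \leq \omega_{2},\ \beta < \omega_{2} \rangle$ over a ground model $\V \models \GCH$. Since $\CH$ holds in $\V$, there are $2^{\mathfrak{c}} = \aleph_{2}$ pairwise RK-non-isomorphic selective ultrafilters on $\omega$ (by a routine diagonalization, or by iterating the construction underlying Theorem~\ref{thm:blass-ch}); fix such a family $\{\UUU_{\beta} : \beta < \aleph_{2}\}$ in $\V$. A standard bookkeeping will arrange that every $\PP_{\omega_{2}}$-name $\dot{\HHH}$ for a potential stable ordered-union ultrafilter on $\FIN$ is captured at some intermediate stage $\V^{\PP_{\alpha}}$ and explicitly killed by the iterand $\dot{\QQ}_{\alpha}$, while each $\UUU_{\beta}$ is required to generate a selective ultrafilter at every intermediate stage.

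The design of the iterand $\dot{\QQ}_{\alpha}$ is the novelty. Given a $\PP_{\alpha}$-name $\dot{\HHH}$ for a candidate stable ordered-union ultrafilter in $\V^{\PP_{\alpha}}$, we seek to add a coloring $c \colon {\FIN}^{\[2\]} \to 2$ witnessing the failure of clause~(\ref{blassramsey:second}) of Theorem~\ref{thm:blassramsey} at $n = 2$, equivalently supplying Player~I with a winning strategy in $\Game^{\mathtt{Stab}}(\dot{\HHH})$ of Theorem~\ref{thm:gstab}. A natural candidate for $\dot{\QQ}_{\alpha}$ is a Matet/Milliken-style forcing whose conditions are triples $(s, X, c_{s})$ where $s \in {\FIN}^{\[<\omega\]}$ is an initial block sequence, $X \in {\FIN}^{\[\omega\]}$ is a tail satisfying $\[X\] \in \dot{\HHH}$ and starting strictly after $s$, and $c_{s}$ is a finite approximation to the generic coloring, ordered by end-extending $s$ into $\[X\]$, shrinking $X$ by $\leq^{*}$, and extending $c_{s}$ coherently.

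The principal obstacle is to verify that each $\dot{\QQ}_{\alpha}$ preserves the selectivity of every $\UUU_{\beta}$, and that this preservation is inherited by the countable-support iteration. Since stable ordered-union ultrafilters are tightly bound to selectives by Theorem~\ref{thm:selectiveprojections}, a forcing that destroys $\dot{\HHH}$ is liable to damage the selectivity of its min- or max-projection and hence potentially of some $\UUU_{\beta}$. To circumvent this, one must establish, via a fusion argument tailored to $\dot{\QQ}_{\alpha}$, that the forcing is $\BS$-bounding and preserves both P-points and Q-points; this places the iteration inside Shelah's framework of preservation theorems for countable-support proper iterations and yields that every $\UUU_{\beta}$ continues to generate a selective ultrafilter at every stage.

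Once preservation is in hand, the remainder is routine. In $\V^{\PP_{\omega_{2}}}$ one has $2^{\aleph_{0}} = \aleph_{2}$; each $\UUU_{\beta}$ generates a selective ultrafilter there, and distinct $\UUU_{\beta}$'s remain RK-non-isomorphic since any witness to a would-be isomorphism would, by properness, be captured at some intermediate stage where the $\UUU_{\beta}$'s still coincide with the ground-model selectives. Finally, any stable ordered-union ultrafilter $\HHH$ in $\V^{\PP_{\omega_{2}}}$ would have a $\PP_{\omega_{2}}$-name reflecting below some $\alpha < \omega_{2}$ by the $\aleph_{2}$-cc and properness of the iteration, contradicting the construction at stage~$\alpha$.
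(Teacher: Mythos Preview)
The paper does not contain a proof of this theorem; it is a survey that merely cites the result from \cite{souvssu} and comments on its significance. So there is no proof in the paper to compare against.

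That said, your proposal has the right architectural shape but two genuine gaps, one of which is fatal as written.

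First, the reflection step at the end is incorrect. A stable ordered-union ultrafilter in $\V^{\PP_{{\omega}_{2}}}$ is an object of size ${2}^{{\aleph}_{0}} = {\aleph}_{2}$; neither the ${\aleph}_{2}$-c.c.\ nor properness lets you reflect its full name to a single intermediate stage $\V^{\PP_{\alpha}}$. What you can reflect are individual reals, not the whole ultrafilter. Your bookkeeping assumes that at stage $\alpha$ you are handed a stable ordered-union ultrafilter $\dot\HHH$ living in $\V^{\PP_{\alpha}}$, and that killing it there suffices; but an $\HHH$ in the final model need not restrict to a stable ordered-union (or even an ultrafilter) at any earlier stage. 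The correct argument must show that the coloring $c$ added at stage $\alpha$ remains without an $\HHH$-homogeneous set for \emph{every} $\HHH$ arising in the full extension, not merely for the one $\dot\HHH$ you targeted. This typically requires either a genericity property of each $c_{\alpha}$ over all future stages, or a structural argument that any $\HHH$ in $\V^{\PP_{{\omega}_{2}}}$ must trace back to something at an earlier stage that your bookkeeping catches. Neither is automatic.

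Second, the sentence ``one must establish, via a fusion argument tailored to $\dot\QQ_{\alpha}$, that the forcing is ${\omega}^{\omega}$-bounding and preserves both P-points and Q-points'' is where essentially all the work lies, and you have not done it. The tension you yourself identify is real: a forcing that destroys $\dot\HHH$ by adding a bad $2$-coloring is, on its face, liable to damage ${\dot\HHH}_{\min}$ or ${\dot\HHH}_{\max}$, which may well coincide with some of your preserved ${\UUU}_{\beta}$. Proving that a Matet-style forcing with side conditions from $\dot\HHH$ preserves \emph{arbitrary} ground-model selectives (not just the projections of $\dot\HHH$) is highly non-trivial; the cited paper \cite{souvssu} runs to over forty pages largely because of this. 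Without at least a sketch of why the fusion works and how it interacts with Q-point preservation, the proposal is an outline of what one would like to be true rather than a proof.
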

This theorem shows that it is provably harder to produce one ultrafilter that contains a witness to each instance of the Milliken--Taylor theorem than it is to produce many ultrafilters containing witnesses to every instance of Ramsey's theorem.
We do not know whether there are any ordered-union ultrafilters in the model from \cite{souvssu}, nor do we know if there are any selective ultrafilters of character ${\aleph}_{2}$.
\begin{Question} \label{q:souvssu}
 Is it consistent to have ${2}^{{\aleph}_{0}}$ pairwise RK-non-isomorphic selective ultrafilters and no ordered-union ultrafilters?
 Is it consistent to have ${2}^{{\aleph}_{0}}$ pairwise RK-non-isomorphic selective ultrafilters of character ${2}^{{\aleph}_{0}}$ and no stable ordered-union ultrafilters?
\end{Question}
Blass characterized all ultrafilters that are RK below a stable ordered-union ultrafilter.
Using (\ref{blassramsey:third}) of Theorem \ref{thm:blassramsey}, he was able to show that there are precisely 4 such ultrafilters.
One of these is the following.
\begin{Def} \label{def:Hminmax}
 For an ultrafilter $\HHH$ on $\FIN$, define
 \begin{align*}
  {\HHH}_{\mathrm{minmax}} = \left\{A \subseteq \omega \times \omega: \left\{s \in \FIN: \pr{\min(s)}{\max(s)} \in A \right\} \in \HHH \right\}.
 \end{align*}
\end{Def}
${\HHH}_{\mathrm{minmax}}$ is easily seen to be an ultrafilter on $\omega \times \omega$ and it is clear that the map $\pr{\min}{\max}: \FIN \rightarrow \omega \times \omega$ witnesses ${\HHH}_{\mathrm{minmax}} \; {\leq}_{RK} \; \HHH$.
Indeed, Blass used Theorem \ref{thm:selectiveprojections} to show that ${\HHH}_{\mathrm{minmax}}$ is RK-isomorphic to the product of the min projection of $\HHH$ with its max projection.
\begin{Lemma}[Blass~\cite{blass-hindman}] \label{lem:minmaxprod}
 If $\HHH$ is a stable ordered-union ultrafilter on $\FIN$, then ${\HHH}_{\mathrm{minmax}} \; {\equiv}_{RK} \; {\HHH}_{\min} \bigotimes {\HHH}_{\max}$.
\end{Lemma}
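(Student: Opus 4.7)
The plan is to prove a stronger statement: ${\HHH}_{\mathrm{minmax}}$ and ${\HHH}_{\min} \otimes {\HHH}_{\max}$ coincide as ultrafilters on $\omega \times \omega$, so that the identity bijection witnesses the RK-equivalence. Since both sides are ultrafilters on the same countable set, it suffices to establish one containment, say ${\HHH}_{\mathrm{minmax}} \subseteq {\HHH}_{\min} \otimes {\HHH}_{\max}$; the reverse inclusion follows automatically.

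To carry this out I would fix $A \in {\HHH}_{\mathrm{minmax}}$, giving $B := \set{s \in \FIN : \pr{\min(s)}{\max(s)} \in A} \in \HHH$. Because $\HHH$ is stable ordered-union it is in particular ordered-union, so the sets $\[X\]$ with $X \in {\FIN}^{\[\omega\]}$ form a base for $\HHH$, and I can pick $X \in {\FIN}^{\[\omega\]}$ with $\[X\] \in \HHH$ and $\[X\] \subseteq B$. Letting $M = \set{\min(X(i)) : i < \omega}$ and $N = \set{\max(X(i)) : i < \omega}$, the inclusions $\[X\] \subseteq \set{s \in \FIN : \min(s) \in M}$ and $\[X\] \subseteq \set{s \in \FIN : \max(s) \in N}$ yield $M \in {\HHH}_{\min}$ and $N \in {\HHH}_{\max}$.

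The key step is then the following observation: for every $m = \min(X(i)) \in M$ and every $j \geq i$, the union $s := X(i) \cup X(j)$ lies in $\[X\] \subseteq B$, hence $\pr{m}{\max(X(j))} = \pr{\min(s)}{\max(s)} \in A$. Consequently $\set{\max(X(j)) : j \geq i} \subseteq A\[m\]$, and since the left-hand side is cofinite in $N \in {\HHH}_{\max}$, we obtain $A\[m\] \in {\HHH}_{\max}$. Thus $M \subseteq \set{m \in \omega : A\[m\] \in {\HHH}_{\max}}$, so this latter set belongs to ${\HHH}_{\min}$, which by Definition \ref{def:fubini} is exactly the assertion $A \in {\HHH}_{\min} \otimes {\HHH}_{\max}$, completing the required inclusion.

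I do not foresee a serious obstacle. The argument is essentially an unwinding of definitions, powered by the fact that a single block sequence $X$ with $\[X\] \in \HHH$ controls both projections simultaneously and allows arbitrary pairs in $M \times N$ with $m \leq n$ to be realized as $\pr{\min(s)}{\max(s)}$ for some $s \in \[X\]$. In particular, stability of $\HHH$ and the selectivity of its projections (Theorem \ref{thm:selectiveprojections}) are not strictly needed for this argument; the ordered-union property alone suffices.
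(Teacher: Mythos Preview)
The paper does not supply its own proof of this lemma; it is stated with attribution to Blass~\cite{blass-hindman} and immediately followed by the next theorem. Your argument is correct and in fact establishes the stronger statement that ${\HHH}_{\mathrm{minmax}}$ and ${\HHH}_{\min} \otimes {\HHH}_{\max}$ are literally the same ultrafilter on $\omega \times \omega$, not merely RK-equivalent. Your closing observation is also accurate: the proof uses only that $\HHH$ has a base of sets of the form $\[X\]$, so the ordered-union hypothesis suffices and neither stability nor Theorem~\ref{thm:selectiveprojections} is invoked.
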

\begin{Theorem}[Blass~\cite{blass-hindman}] \label{thm:sourkbelow}
 Suppose that $\HHH$ is a stable ordered-union ultrafilter on $\FIN$.
 If $\UUU$ is an ultrafilter on $\omega$ such that $\UUU \; {\leq}_{RK} \; \HHH$, then $\UUU \; {\equiv}_{RK} \; \HHH$, or $\UUU \; {\equiv}_{RK} \; {\HHH}_{\min} \bigotimes {\HHH}_{\max}$, or $\UUU \; {\equiv}_{RK} \; {\HHH}_{\min}$, or $\UUU \; {\equiv}_{RK} \; {\HHH}_{\max}$.
\end{Theorem}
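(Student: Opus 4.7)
The plan is to apply item (\ref{blassramsey:third}) of Theorem \ref{thm:blassramsey} to the reducing map. Fix $f : \FIN \to \omega$ witnessing $\UUU \; {\leq}_{RK} \; \HHH$, so $\UUU = \set{A \subseteq \omega : f^{-1}(A) \in \HHH}$. By that item, there is $X \in \FIN^{\[\omega\]}$ with $\[X\] \in \HHH$ on which $f$ is canonical in one of the five senses of Definition \ref{def:canonical}. Each canonical form will dictate the RK-type of $\UUU$: the constant case is excluded because it would force $\set{v} \in \UUU$ for some $v \in \omega$, contradicting non-principality; the four surviving cases will yield $\UUU \; {\equiv}_{RK} \; \HHH_{\min}$, $\UUU \; {\equiv}_{RK} \; \HHH_{\max}$, $\UUU \; {\equiv}_{RK} \; \HHH_{\mathrm{minmax}}$, and $\UUU \; {\equiv}_{RK} \; \HHH$ respectively, and Lemma \ref{lem:minmaxprod} then identifies $\HHH_{\mathrm{minmax}}$ with $\HHH_{\min} \bigotimes \HHH_{\max}$.

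To illustrate with the min case, suppose $f(s) = f(t) \iff \min(s) = \min(t)$ for all $s, t \in \[X\]$. Canonicity lets me unambiguously define $g : \omega \to \omega$ by $g(n) = f(s)$ for any $s \in \[X\]$ with $\min(s) = n$, extended arbitrarily off $\min''\[X\]$. Then $f$ and $g \circ \min$ agree on the $\HHH$-large set $\[X\]$, so for every $A \subseteq \omega$ one has $f^{-1}(A) \in \HHH$ iff $(g \circ \min)^{-1}(A) \in \HHH$ iff $g^{-1}(A) \in \HHH_{\min}$. Hence $g$ witnesses $\UUU \; {\leq}_{RK} \; \HHH_{\min}$. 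For the converse, canonicity forces $g$ to be injective on $\min''\[X\] \in \HHH_{\min}$, so any $h : \omega \to \omega$ inverting $g$ on that set witnesses $\HHH_{\min} \; {\leq}_{RK} \; \UUU$, producing the equivalence.

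The three remaining cases run in parallel. The max case is symmetric. The minmax case repeats the argument with $\langle \min, \max \rangle : \FIN \to \omega \times \omega$ in place of $\min$, producing $\UUU \; {\equiv}_{RK} \; \HHH_{\mathrm{minmax}}$, which is converted to $\HHH_{\min} \bigotimes \HHH_{\max}$ by Lemma \ref{lem:minmaxprod}. In the injective case, $f$ restricts to a bijection between $\[X\]$ and $f''\[X\] \in \UUU$; extending its inverse to some $h : \omega \to \FIN$ witnesses $\HHH \; {\leq}_{RK} \; \UUU$, which complements $f$ in the opposite direction to give $\UUU \; {\equiv}_{RK} \; \HHH$. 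There is no genuine difficulty here beyond invoking Theorem \ref{thm:blassramsey}(\ref{blassramsey:third}); the only subtle point is recognizing that the constant canonical form is inadmissible, which is dispatched by non-principality as noted above.
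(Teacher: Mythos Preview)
Your proof is correct and follows essentially the same approach as the paper: apply item (\ref{blassramsey:third}) of Theorem \ref{thm:blassramsey} to the witnessing map $f$, rule out the constant case by non-principality, and handle each of the four remaining canonical forms to obtain the corresponding RK-equivalence, invoking Lemma \ref{lem:minmaxprod} for the minmax case. Your write-up is slightly more explicit than the paper's about why the factored map $g$ is injective on a large set (and hence gives an equivalence rather than merely a reduction), but the structure is identical.
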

\begin{proof}
 Let $f: \FIN \rightarrow \omega$ witness $\UUU \; {\leq}_{RK} \; \HHH$.
 Use (\ref{blassramsey:third}) of Theorem \ref{thm:blassramsey} to find $A \in \HHH$ such that $f$ is canonical on $A$.
 Since $\UUU$ is non-principal, alternative (1) of Definition \ref{def:canonical} cannot hold.
 If alternative (2) holds, then there exists $g: \omega \rightarrow \omega$ such that $g$ is one-to-one on $\{\min(s): s \in A\}$ and $\forall s \in A\[f(s) = g(\min(s))\]$.
 Now, $g$ witnesses ${\HHH}_{\min} \; {\equiv}_{RK} \; \UUU$.
 Similarly, if (3) holds, then ${\HHH}_{\max} \; {\equiv}_{RK} \; \UUU$.
 If (4) holds, then $\UUU \; {\equiv}_{RK} \; {\HHH}_{\mathrm{minmax}} \; {\equiv}_{RK} \; {\HHH}_{\min} \bigotimes {\HHH}_{\max}$ by Lemma \ref{lem:minmaxprod}.
 Finally if (5) holds, then $\UUU \; {\equiv}_{RK} \; \HHH$.
\end{proof}
Theorem \ref{thm:sourkbelow} actually shows that stable ordered-union ultrafilters are RK-mi\-ni\-mal among the ultrafilters on $\Pset(\FIN) \slash \ih$.
The proof of this fact relies on a canonical form for functions modulo restriction to a set belonging to a product of selective ultrafilters.
We will include this argument below because it is not as well-known.
\begin{Lemma} \label{lem:functionprod}
 Suppose $\UUU$ and $\VVV$ are selective ultrafilters on $\omega$.
 Let $f: \omega \times \omega \rightarrow \omega$.
 There exists $A \in \UUU \bigotimes \VVV$ such that one of the following hold:
 \begin{enumerate}
  \item
  $f$ is constant on $A$;
  \item
  $f$ is one-to-one on $A$;
  \item
  there is a one-to-one $g: \omega \rightarrow \omega$ such that $\forall \pr{m}{n} \in A\[f(\pr{m}{n}) = g(m)\]$;
  \item
  there is a one-to-one $g: \omega \rightarrow \omega$ such that $\forall \pr{m}{n} \in A\[f(\pr{m}{n}) = g(n)\]$.
 \end{enumerate}
\end{Lemma}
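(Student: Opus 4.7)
The plan is to iterate Ramsey-style arguments drawn from $\VVV$ and $\UUU$, reducing first to canonical behaviour within the vertical fibers of $f$ and then across them. Concretely, for each $m\in\omega$, let $f_m(n)=f(\pr{m}{n})$ and apply Theorem \ref{thm:kunenramsey}(\ref{kunenramsey:third}) to $f_m$ and $\VVV$ to obtain $B_m\in\VVV$ on which $f_m$ is constant or one-to-one. Let $C=\{m:f_m \text{ is constant on } B_m\}$. If $C\in\UUU$, define $g(m)$ to be the constant value of $f_m\restriction B_m$ on $C$ and apply Theorem \ref{thm:kunenramsey}(\ref{kunenramsey:third}) to $g$ and $\UUU$: on a $\UUU$-large $A'\subseteq C$, $g$ is constant (yielding alternative (1) via $A=\bigcup_{m\in A'}\{m\}\times B_m$) or one-to-one (yielding alternative (3), after extending $g\restriction A'$ to a global injection $\omega\to\omega$, possibly after trimming $A'$ by finitely many elements).

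Otherwise $\omega\setminus C\in\UUU$, and after restriction we may assume that every $f_m$ is one-to-one on $B_m$. Define a two-colouring $c:\pc{\omega}{2}\to 2$ by $c(\{m,m'\})=0$ iff $\{n\in B_m\cap B_{m'}:f_m(n)=f_{m'}(n)\}\in\VVV$, and by Theorem \ref{thm:kunenramsey}(\ref{kunenramsey:second}) for $\UUU$ find a homogeneous $H\in\UUU$. When $c\equiv 0$ on $\pc{H}{2}$, fix $m_0\in H$, set $A'_m=\{n\in B_m\cap B_{m_0}:f_m(n)=f_{m_0}(n)\}\in\VVV$ for each $m\in H$, and extend $f_{m_0}\restriction B_{m_0}$ to an injection $g:\omega\to\omega$ (possible after minor finite adjustments). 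Then $A=\bigcup_{m\in H}\{m\}\times A'_m \in \UUU\bigotimes\VVV$ witnesses alternative (4).

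The principal obstacle is the complementary case $c\equiv 1$ on $\pc{H}{2}$, where the target is alternative (2). Here the fibers pairwise disagree on $\VVV$-almost every $n$, but this pointwise disagreement is not by itself enough: to force $f$ to be one-to-one on $A=\bigcup_{m\in A'}\{m\}\times A'_m$ one must arrange that the images $f_m(A'_m)$ are \emph{pairwise disjoint}, thereby ruling out cross-collisions $f(\pr{m}{n})=f(\pr{m'}{n'})$ with $n\ne n'$. I plan to iterate the Ramsey machinery. First apply selectivity of $\UUU$ to the colouring $\{m,m'\}\mapsto 0$ iff $f_m(B_m)\cap f_{m'}(B_{m'})$ is finite, producing $H'\in\UUU$ homogeneous. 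In the always-finite subcase, apply the canonical partition theorem (a consequence of selectivity of $\UUU$) to $\phi(\{m,m'\})=\max(f_m(B_m)\cap f_{m'}(B_{m'}))$, and for each resulting canonical form of $\phi$ produce $A'_m\in\VVV$ by thresholding to $\{n\in B_m:f_m(n)\geq\theta_m\}$ with $\theta_m$ read off the canonical form; this makes the images pairwise disjoint by design. In the always-infinite subcase, split further on whether $\{n\in B_m:f_m(n)\in f_{m'}(B_{m'})\}$ lies in $\VVV$, and combine the P-point and Q-point properties of $\VVV$ with the Galvin--McKenzie game characterization of selectivity for $\UUU$ (Theorem \ref{thm:sel}) to perform a Mathias-style diagonal construction of $A'_m$ along a $\UUU$-large $A'$ that kills all remaining cross-collisions. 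This diagonal construction, where both selective properties must be used in tandem, is where the main technical difficulty lies.
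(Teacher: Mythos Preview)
Your treatment of the cases leading to alternatives (1), (3), and (4) is essentially the paper's argument. The divergence is in the remaining case, where you aim for alternative (2), and here your plan is both over-engineered and left incomplete at the decisive step.

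The difficulty you identify is real: from $c\equiv 1$ on $\pc{H}{2}$ you only know that $\{n:f_m(n)\neq f_{m'}(n)\}\in\VVV$ for each pair, which says nothing about cross-collisions $f_m(n)=f_{m'}(n')$ with $n\neq n'$. Your proposed fix---a secondary colouring by finiteness of $f_m(B_m)\cap f_{m'}(B_{m'})$, then canonical Ramsey on the max of the overlap, then a further split and a ``Mathias-style diagonal construction'' via the selectivity game---is not worked out, and the infinite-overlap subcase in particular is only a hope, not a plan.

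The paper avoids all of this by strengthening the pairwise dichotomy at the outset. For each pair $m<n$, selectivity of $\VVV$ yields $A_{m,n}\in\VVV$ on which either $f_m\equiv f_n$ \emph{or} $f_m''A_{m,n}\cap f_n''A_{m,n}=\emptyset$. (One way to see this: on a $\VVV$-set where both $f_m,f_n$ are injective and $f_m\neq f_n$, the partial injection $f_n^{-1}\circ f_m$ is fixed-point-free, so its functional graph is $3$-colourable and one colour class lies in $\VVV$; on that class the images are disjoint.) Homogenizing over $\UUU$ then gives four clean cases. In the case corresponding to your $c\equiv 1$, one already has disjoint images on each $A_{m,n}$; the P-point property of $\VVV$ produces a single $E\in\VVV$ with $E\subseteq^* A_{m,n}$ for all relevant pairs, and a simple threshold $h(n)$ absorbing the finitely many exceptions (and the finitely many preimages under $f_n$ of the exceptional values) makes the images of $B_n\cap(E\setminus h(n))$ pairwise disjoint. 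No game, no canonical Ramsey on pairs, no diagonalization.

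So the missing idea is to replace your colouring $c$ (agree on a $\VVV$-set vs.\ not) by the stronger dichotomy (agree on $A_{m,n}$ vs.\ disjoint images on $A_{m,n}$), obtained pairwise from selectivity of $\VVV$ \emph{before} you homogenize over $\UUU$. With that in hand, your hard case dissolves.
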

\begin{proof}
 Define ${f}_{m}(n) = f(\pr{m}{n})$, for all $m, n \in \omega$.
 Since $\VVV$ is selective, there exists ${A}_{m, n} \in \VVV$ so that either $\forall l \in {A}_{m, n}\[{f}_{m}(l) = {f}_{n}(l)\]$ or ${f}_{m}''{A}_{m, n} \cap {f}_{n}''{A}_{m, n} = \emptyset$, for all $m < n < \omega$.
 There also exist ${B}_{n} \in \VVV$ and ${g}_{n}: \omega \rightarrow \omega$ such that ${f}_{n} \restrict {B}_{n} = {g}_{n} \restrict {B}_{n}$ and ${g}_{n}$ is either constant or one-to-one, for all $n \in \omega$.
 Since $\UUU$ is selective, one of the following 4 cases must occur.
 
 Case I: there exist $C \in \UUU$ and $k \in \omega$ such that for each $n \in C$, ${g}_{n}$ is constantly equal to $k$.
 Then (1) occurs as $f$ is constantly $k$ on $A = {\bigcup}_{n \in C}{\left( \{n\} \times {B}_{n} \right)} \in \UUU \bigotimes \VVV$.
 
 Case II: there exist $C \in \UUU$ and a one-to-one function $g: \omega \rightarrow \omega$ such that for each $n \in C$, ${g}_{n}$ is constantly equal to $g(n)$.
 Then (3) holds with $A = {\bigcup}_{n \in C}{\left( \{n\} \times {B}_{n} \right)}$.
 
 Case III: there exists $C \in \UUU$ such that for each $n \in C$, ${g}_{n}$ is one-to-one and for each $m \in C \cap n$, $\forall l \in {A}_{m, n}\[{f}_{m}(l) = {f}_{n}(l)\]$.
 Let $m = \min(C)$ and $g = {g}_{m}$.
 Let ${D}_{m} = {B}_{m}$, for all $n \in C$ with $m < n$, let ${D}_{n} = {B}_{m} \cap {A}_{m, n}$, and let $A = {\bigcup}_{n \in C}{\left( \{n\} \times {D}_{n} \right)} \in \UUU \bigotimes \VVV$.
 Then (4) holds on $A$.
 
 Case IV: there exists $C \in \UUU$ such that for each $n \in C$, ${g}_{n}$ is one-to-one and for each $m \in C \cap n$, ${f}_{m}''{A}_{m, n} \cap {f}_{n}''{A}_{m, n} = \emptyset$.
 Since $\VVV$ is a P-point, find $E \in \VVV$ so that $E \; {\subseteq}^{\ast} \; {A}_{m, n}$, for all $m, n \in C$ with $m < n$.
 For each $n \in C$ find $h(n) \in \omega$ so that for each $m \in C \cap n$, $E \setminus {A}_{m, n} \subseteq h(n)$, and ${g}^{-1}_{n}\left( {f}_{m}''\left( E \setminus {A}_{m, n} \right) \right) \subseteq h(n)$.
 Note that for $m, n \in C$ with $m < n$, ${f}_{m}''E \cap {f}_{n}''\left( {B}_{n} \cap \left( E \setminus h(n) \right) \right) = \emptyset$.
 Now (2) occurs on $A = {\bigcup}_{n \in C}{\left( \{n\} \times \left(  {B}_{n} \cap \left( E \setminus h(n) \right)  \right) \right)} \in \UUU \bigotimes \VVV$.
\end{proof}
\begin{Lemma} \label{lem:sounonprod}
 Suppose $\UUU$ and $\VVV$ are selective ultrafilters on $\omega$ and $\HHH$ is an ultrafilter on $\FIN$ with $\HHH \cap \ih = \emptyset$.
 Then $\HHH \; {\not\equiv}_{RK} \; \UUU \bigotimes \VVV$.
\end{Lemma}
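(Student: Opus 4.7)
The plan is to suppose, for contradiction, that $\HHH \equiv_{RK} \UUU \bigotimes \VVV$ and in particular fix $g \colon \omega \times \omega \to \FIN$ witnessing $\HHH \leq_{RK} \UUU \bigotimes \VVV$. First I would apply Lemma~\ref{lem:functionprod} separately to $\min \circ g$ and $\max \circ g$ (each viewed as a map $\omega \times \omega \to \omega$) and intersect the two resulting sets, producing a single $A \in \UUU \bigotimes \VVV$ on which both $\min \circ g$ and $\max \circ g$ attain one of the four canonical forms listed in that lemma. Since $g^{-1}(g''A) \supseteq A \in \UUU \bigotimes \VVV$, the defining property of the reduction forces $g''A \in \HHH$, and then $\HHH \cap \ih = \emptyset$ yields $X \in \FIN^{\[\omega\]}$ with $\[X\] \subseteq g''A$. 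The rest of the argument is to show that the finite-union structure of $\[X\]$ is incompatible with every combination of canonical forms.

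The constant and one-to-one forms are ruled out immediately from $\[X\] \subseteq g''A$. The blocks $X(0)$ and $X(1)$ are distinct elements of $\[X\]$ with distinct minima, so $\min \circ g$ cannot be constant on $A$; on the other hand, $X(0)$ and $X(0) \cup X(1)$ are distinct elements of $\[X\]$ sharing the common value $\min X(0)$, so $\min \circ g$ cannot be one-to-one on $A$. A symmetric observation, applied to $X(1)$ and $X(0) \cup X(1)$, eliminates the constant and one-to-one forms for $\max \circ g$. Hence, on $A$, each of $\min \circ g$ and $\max \circ g$ must depend via a one-to-one function on exactly one of the two coordinates.

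The four remaining sub-cases are dispatched by inspecting a few more elements of $\[X\]$. If both $\min \circ g$ and $\max \circ g$ depend only on $m$, then the value of $\min$ on any element of $g''A$ determines the value of $\max$; but $X(0)$ and $X(0) \cup X(1)$ share minimum $\min X(0)$ while $\max X(0) \neq \max(X(0) \cup X(1))$, contradiction. The case in which both depend only on $n$ is handled symmetrically using the common maximum of $X(1)$ and $X(0) \cup X(1)$. In each of the two mixed cases, the pair $\pr{\min(s)}{\max(s)}$ uniquely determines the preimage in $A$ of any $s \in g''A$; applying this to $X(0) \cup X(2)$ and $X(0) \cup X(1) \cup X(2)$, which share minimum $\min X(0)$ and maximum $\max X(2)$, forces these two distinct elements of $\[X\]$ to have a common preimage, contradicting that $g$ is single-valued. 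The only real obstacle is the orderly bookkeeping of these sub-cases; the conceptual point is that the iterated-union structure of a set of the form $\[X\]$ has no counterpart inside the two-coordinate structure of $\UUU \bigotimes \VVV$.
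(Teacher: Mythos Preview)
Your proof is correct and follows essentially the same route as the paper: apply Lemma~\ref{lem:functionprod} to $\min\circ g$ and $\max\circ g$, use $\HHH\cap\ih=\emptyset$ to obtain a block sequence $X$ with $[X]\subseteq g''A$, and then exploit elements of $[X]$ to eliminate each of the canonical forms. The paper is terser---it asserts in one line that the hypothesis $\HHH\cap\ih=\emptyset$ forces the mixed case $\min(f(m,n))=\varphi(m)$, $\max(f(m,n))=\psi(n)$ and then finishes with a single pair $s,t\in f''A$ sharing $\min$ and $\max$---whereas you spell out all four sub-cases explicitly, but the underlying argument is the same.
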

\begin{proof}
 Suppose not.
 Let $f: \omega \times \omega \rightarrow \FIN$ be a one-to-one map witnessing $\HHH \; {\equiv}_{RK} \; \UUU \bigotimes \VVV$.
 Define $$g(\pr{m}{n}) = \min(f(\pr{m}{n}))\ \ \mbox{and}\ \ h(\pr{m}{n}) = \max(f(\pr{m}{n})),$$ for all $\pr{m}{n} \in \omega \times \omega$.
 Lemma \ref{lem:functionprod} applied to the maps $g$ and $h$ together with the hypothesis $\HHH \cap \ih = \emptyset$ imply that there exist one-to-one functions $\varphi, \psi: \omega \rightarrow \omega$ and a set $A \in \UUU \bigotimes \VVV$ such that for each $\pr{m}{n} \in A$, $g(\pr{m}{n}) = \varphi(m)$ and $h(\pr{m}{n}) = \psi(n)$.
 However, there exist $s, t \in f''A$ such that $s \neq t$, $\max(s) = \max(t)$ and $\min(s) = \min(t)$, which gives a contradiction.
\end{proof}
\begin{Cor}[Blass]
 If $\HHH$ is a stable ordered-union ultrafilter on $\FIN$ and $\KKK$ is any ultrafilter on $\FIN$ such that $\KKK \cap \ih = \emptyset$ and $\KKK \; {\leq}_{RK} \; \HHH$, then $\KKK \; {\equiv}_{RK} \; \HHH$.
 In particular, stable-ordered union ultrafilters are RK-minimal among all idempotents in $(\gamma\FIN, \cup)$.
\end{Cor}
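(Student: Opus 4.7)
The plan is to combine Theorem \ref{thm:sourkbelow} with Lemma \ref{lem:sounonprod}, supplemented by a short direct argument that eliminates the two ``selective'' options.

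First, I would observe that Theorem \ref{thm:sourkbelow} applies to $\KKK$ itself: although it is stated for ultrafilters on $\omega$, any bijection $\pi: \FIN \rightarrow \omega$ transports $\KKK$ to an ultrafilter $\KKK'$ on $\omega$ with $\KKK \; {\equiv}_{RK} \; \KKK'$ and $\KKK' \; {\leq}_{RK} \; \HHH$, so the theorem identifies $\KKK$ up to RK-equivalence with one of $\HHH$, ${\HHH}_{\min}$, ${\HHH}_{\max}$, or ${\HHH}_{\min} \bigotimes {\HHH}_{\max}$. Next, since both ${\HHH}_{\min}$ and ${\HHH}_{\max}$ are selective by Theorem \ref{thm:selectiveprojections}, Lemma \ref{lem:sounonprod} directly rules out $\KKK \; {\equiv}_{RK} \; {\HHH}_{\min} \bigotimes {\HHH}_{\max}$, using the hypothesis $\KKK \cap \ih = \emptyset$.

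What I expect to be the main obstacle is the elimination of the two selective alternatives $\KKK \; {\equiv}_{RK} \; {\HHH}_{\min}$ and $\KKK \; {\equiv}_{RK} \; {\HHH}_{\max}$. The plan here is to exploit the fact that RK-equivalence with a selective ultrafilter propagates the canonical form property, so that every function $\FIN \rightarrow \omega$ must be either constant or one-to-one on some member of $\KKK$. Applied to $h = \min$, the constant alternative would force ${\KKK}_{\min}$ to be principal, which is forbidden by $\KKK \cap \ih = \emptyset$ because each set $\{s \in \FIN: \min(s) = n\}$ lies in $\ih$. Hence $\min$ is one-to-one on some $C \in \KKK$, and by $\KKK \cap \ih = \emptyset$ there exists $Z \in {\FIN}^{\[\omega\]}$ with $\[Z\] \subseteq C$. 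The contradiction then arises from the simple structural observation that $Z(0)$ and $Z(0) \cup Z(1)$ are two distinct members of $\[Z\]$ sharing the same minimum, contradicting the injectivity of $\min$ on $C$. The case $\KKK \; {\equiv}_{RK} \; {\HHH}_{\max}$ is ruled out symmetrically via $\max$, using $Z(1)$ and $Z(0) \cup Z(1)$.

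The only remaining possibility is then $\KKK \; {\equiv}_{RK} \; \HHH$, which is the desired conclusion. For the ``in particular'' clause, no further work is needed: the discussion preceding Definition \ref{def:ou} shows that every idempotent $\KKK$ in $(\gamma\FIN, \cup)$ satisfies $\KKK \cap \ih = \emptyset$, so the main assertion applies.
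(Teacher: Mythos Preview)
Your proof is correct and follows the same overall strategy as the paper: apply Theorem \ref{thm:sourkbelow} to reduce to four cases, use Lemma \ref{lem:sounonprod} to eliminate the product case, and analyze $\min$ on members of $\KKK$ to eliminate the selective cases. The paper's treatment of the last step is marginally more economical: it observes that $\min$ is neither constant nor \emph{finite-to-one} on any $C \in \KKK$ (any such $C$ contains some $\[Z\]$, and the infinitely many sets $Z(0) \cup Z(i)$ share the same minimum), so $\KKK$ is not a P-point, which rules out RK-equivalence with both ${\HHH}_{\min}$ and ${\HHH}_{\max}$ in one stroke rather than invoking the selective canonical form and treating the two cases separately.
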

\begin{proof}
 Since $\KKK \cap \ih = \emptyset$, the map $s \mapsto \min(s)$ is not finite-to-one or constant on any set in $\KKK$.
 Therefore $\KKK$ is not a P-point and so $\KKK \; {\not\equiv}_{RK} \; {\HHH}_{\min}$ and $\KKK \; {\not\equiv}_{RK} \; {\HHH}_{\max}$.
 By Lemma \ref{lem:sounonprod}, $\KKK \; {\not\equiv}_{RK} \; \UUU \bigotimes \VVV$.
 Therefore by Theorem \ref{thm:sourkbelow}, $\KKK \; {\equiv}_{RK} \; \HHH$.
 
 The second sentence follows from the first, the fact that stable ordered-union ultrafilters are idempotent, and the fact that every idempotent is disjoint from $\ih$.
\end{proof}
\begin{Question} \label{q:sousminimal}
 Suppose $\HHH$ is an idempotent in $(\gamma\FIN, \cup)$ which is RK-minimal among all idempotents in $(\gamma\FIN, \cup)$.
 Is $\HHH$ ordered-union?
\end{Question}
Regarding the Tukey types of stable ordered-union ultrafilters, Dobrinen and Todorcevic~\cite{dt} have raised the following question.
\begin{Question}[Question 56 of \cite{dt}] \label{q:56}
 If $\HHH$ is any stable ordered-union ultrafilter, does it follow that $\HHH \; {>}_{T} \; {\HHH}_{\mathrm{minmax}}$?
\end{Question}
In a recent work, Benhamou and Dobrinen~\cite{benhamou2023tukey} have proved that $\HHH \; {\equiv}_{T} \; \HHH \bigotimes \HHH$, for every stable ordered-union ultrafilter $\HHH$.

We will end this section with a discussion of some cardinal invariants associated with the ultrafilters discussed in this section and in Section \ref{sec:selective}.
Virtually nothing is known about cardinal invariants of the Boolean algebra $\Pset(\FIN) \slash \ih$.
The cardinal invariants associated with such definable quotients have attracted considerable attention in the literature.
Notable examples include~\cite{szzh}, \cite{bs1}, and \cite{MR4518092}, among many others.
\begin{Def} \label{def:uhrh}
 Let $\BBB$ be a non-atomic Boolean algebra.
 Define
 \begin{align*}
  &{\uu}_{\BBB} = \left\{\lc \FFF \rc: \FFF \subseteq \BBB \wedge \FFF \ \text{is a filter base for an ultrafilter on} \ \BBB\right\},\\
  &{\rrr}_{\BBB} = \left\{\lc \FFF \rc: \FFF \subseteq \BBB \setminus \{0\} \wedge \forall a \in \BBB \exists b \in \FFF\[b \leq a \vee b \leq 1-a\]\right\}.
 \end{align*}
 ${\uu}_{\mathtt{H}}$ will denote ${\uu}_{\left( \Pset(\FIN) \slash \ih \right)}$ and ${\rrr}_{\mathtt{H}}$ will denote ${\uu}_{\left( \Pset(\FIN) \slash \ih \right)}$.
\end{Def}
Note that ${\rrr}_{\BBB}$ is the least possible size of a pseudo-base for an ultrafilter on $\BBB$.
Of course, ${\uu}_{\left( \Pset(\omega) \slash \pc{\omega}{< {\aleph}_{0}} \right)}$ and ${\rrr}_{\left( \Pset(\omega) \slash \pc{\omega}{< {\aleph}_{0}} \right)}$ are the cardinals $\uu$ and $\rrr$, which have been well-studied.
However, their relationship with ${\uu}_{\mathtt{H}}$ and ${\rrr}_{\mathtt{H}}$ is unknown.
\begin{Question} \label{q:uhrhur}
 Is it consistent that $\uu < {\uu}_{\mathtt{H}}$?
 Is it consistent that $\rrr < {\rrr}_{\mathtt{H}}$?
\end{Question}
The least possible size of a pseudo-base for a selective ultrafilter may be larger than $\rrr$.
The exact relationship between ${\rrr}_{\mathtt{H}}$ and the minimal number of elements in a pseudo-base for a stable ordered-union ultrafilter is not known.
The following cardinals are well-defined by Ramsey's theorem and the Milliken-Taylor theorem respectively.
\begin{Def} \label{def:hom}
 Define
 \begin{align*}
  &\homr = \min\left\{\lc \FFF \rc: \FFF \subseteq {\[\omega\]}^{{\aleph}_{0}} \wedge \forall c \in {2}^{\left({\[\omega\]}^{2}\right)} \exists A \in \FFF \[c\restrict {\[A\]}^{2} \ \text{is constant}\]\right\}\\
  &\homh = \min\left\{\lc \FFF \rc: \FFF \subseteq {\FIN}^{\[\omega\]} \wedge \forall c \in {2}^{\left({\FIN}^{\[2\]}\right)} \exists X \in \FFF\[c\restrict {\[X\]}^{\[2\]} \ \text{is constant}\]\right\}.
 \end{align*}
\end{Def}
Observe that the cardinality of any pseudo-base for a selective ultrafilter must be at least $\homr$, and that any pseudo-base for a stable ordered-union ultrafilter must contain at least $\homh$ elements.
The cardinal $\homr$ was studied by Blass~\cite{blasssmall}, who proved that $\homr = \max\{{\rrr}_{\sigma}, \dd\}$.
It is clear that $\homr \leq \homh$, but little else is known.
\begin{Question} \label{q:3}
 Is it consistent that $\homr < \homh$?
\end{Question}

\section{P-point ultrafilters}\label{s:p}

Recall that we have defined P-point ultrafilters in Definition \ref{d:ppoint}.
The notion of a P-point in a topological space has its roots in the work of Gillman and Henriksen on rings of continuous functions in \cite{gillman_henriksen}.
Later Rudin constructed a P-point ultrafilter using $\CH$ and used it to show that, consistently, the space of ultrafilters $\omega^*$ is not homogeneous.
Afterwards, in \cite{ketonen}, Ketonen constructed a P-point ultrafilter from the weaker assumption $\mathfrak d=\mathfrak{c}$.
In particular, he showed that under $\mathfrak d=\mathfrak{c}$ every ultrafilter generated by less then $\mathfrak{c}$ many elements is a P-point, by proving the following (in a sense of Definition \ref{def:generic}).

\begin{Theorem}\label{t:ketonen}
	The equality $\mathfrak d=\mathfrak{c}$ holds if and only if P-points generically exist.
\end{Theorem}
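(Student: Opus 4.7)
The plan is a biconditional argument. For $(\Rightarrow)$ I will carry out a standard transfinite recursion of length $\cc$, building a chain of filter bases $\mathcal F=\mathcal F_0\subseteq\dots\subseteq\mathcal F_\xi\subseteq\dots$ with $\abs{\mathcal F_\xi}<\cc$, handling one subset of $\omega$ and one countable subfamily at each stage so that the union is a P-point ultrafilter extending $\mathcal F$. For $(\Leftarrow)$ I will suppose $\dd<\cc$ and, using a dominating family of size $\dd$, construct a filter base of size $\dd<\cc$ that demonstrably fails to extend to any P-point. The hard part of $(\Rightarrow)$ is producing pseudo-intersections compatible with the current filter base; the hard part of $(\Leftarrow)$ is identifying the right encoding of the dominating family into a filter base.

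For the forward direction, the recursion is driven by the following key lemma: if $\mathcal G$ is a filter base with $\abs{\mathcal G}<\dd$ and $\seqq{a}{n}{\in}{\omega}\subseteq\mathcal G$ is decreasing, then there exists $c\in\cube$ with $c\subseteq^* a_n$ for every $n$ and $\mathcal G\cup\set{c}$ still having the FIP. To prove the lemma, WLOG close $\mathcal G$ under finite intersections and, for $b\in\mathcal G$, put $f_b(n)=\min\set{k\geq n : k\in b\cap a_n}$, which is well-defined since $b\cap a_n$ is infinite. Because $\set{f_b : b\in\mathcal G}$ has size $<\dd$ it is not dominating, so pick $g\in\omega^\omega$ with $g\not<^* f_b$ for every $b$ and let $c=\bigcup_n\br{a_n\cap\brq{n,g(n)}}$. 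Then $c\setminus a_n\subseteq\bigcup_{k<n}\brq{k,g(k)}$ is finite, and for any $b\in\mathcal G$ the element $f_b(n)$ lies in $b\cap c$ for the infinitely many $n$ with $g(n)\geq f_b(n)$, so $b\cap c$ is infinite. The bookkeeping is then routine: enumerate all subsets of $\omega$ and all countable subfamilies in type $\cc$; at stage $\xi$ add $X_\xi$ or $\omega\setminus X_\xi$ to preserve FIP, and apply the key lemma to $S_\xi$ whenever $S_\xi\subseteq\mathcal F_\xi$. The resulting $\mathcal U=\bigcup_{\xi<\cc}\mathcal F_\xi$ is an ultrafilter because every subset of $\omega$ has been decided, and $\cf(\cc)>\omega$ ensures every countable subfamily of $\mathcal U$ lies in some $\mathcal F_\xi$ and so received a pseudo-intersection later, making $\mathcal U$ a P-point.

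For the backward direction, fix a partition $\omega=\bigsqcup_{n<\omega}A_n$ into infinite pieces together with a dominating family $\set{g_\alpha : \alpha<\dd}$ of strictly increasing functions, and put $B_\alpha=\bigcup_n\set{k\in A_n : k\geq g_\alpha(n)}$. The family $\mathcal F=\set{B_\alpha : \alpha<\dd}\cup\set{\omega\setminus A_n : n<\omega}$ has cardinality $\dd<\cc$ and the FIP, since any finite intersection of its generators still contains, for cofinitely many $n$, the infinite tail $\set{k\in A_n : k\geq\max_{\alpha\in F}g_\alpha(n)}$. If $\mathcal F$ extended to a P-point $\mathcal U$, the P-point property applied to $\set{\omega\setminus A_n : n<\omega}$ would supply $C\in\mathcal U$ with $C\cap A_n$ finite for every $n$; then, for each $\alpha<\dd$, $C\cap B_\alpha\in\mathcal U$ is infinite, and since $C\cap A_n$ is finite for each $n$ its elements are spread across infinitely many blocks $A_n$, each such block contributing some $k\in C\cap A_n$ with $k\geq g_\alpha(n)$. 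Therefore the function $h$ defined by $h(n)=\max(C\cap A_n)+1$ when $C\cap A_n\neq\emptyset$ (and $h(n)=0$ otherwise) satisfies $h(n)>g_\alpha(n)$ for infinitely many $n$; hence $h\not\leq^* g_\alpha$ for every $\alpha<\dd$, contradicting the dominating property of $\set{g_\alpha}$.
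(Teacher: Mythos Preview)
The paper does not actually supply a proof of this theorem; it is stated and attributed to Ketonen~\cite{ketonen} as background, so there is no in-paper argument to compare against. Your proposal is a correct and complete rendition of the standard Ketonen argument.

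For the forward direction, your key lemma is exactly Ketonen's: given a filter base of size $<\dd$ and a decreasing sequence in it, the functions $f_b$ form a non-dominating family, and any $g$ escaping them yields a compatible pseudo-intersection $c=\bigcup_n\br{a_n\cap\brq{n,g(n)}}$. Two minor glosses are worth making explicit when you write this up: (i) to apply the lemma to an arbitrary countable $S_\xi\subseteq\mathcal F_\xi$ you should first pass to the decreasing sequence $a_n=\bigcap_{k\le n}s_k$, which lies in the finite-intersection closure of $\mathcal F_\xi$ (still of size $<\cc$); and (ii) the bookkeeping must list each countable sequence cofinally often so that every countable $S\subseteq\mathcal U$ is eventually handled at a stage $\xi$ with $S\subseteq\mathcal F_\xi$. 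Your phrase ``received a pseudo-intersection later'' indicates you have this in mind; just say it.

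For the backward direction, your encoding of a dominating family $\{g_\alpha:\alpha<\dd\}$ into the sets $B_\alpha=\bigcup_n\{k\in A_n:k\ge g_\alpha(n)\}$ together with the cofinite-on-each-block filter $\{\omega\setminus A_n:n<\omega\}$ is again the standard construction, and your contradiction via $h(n)=\max(C\cap A_n)+1$ is correct: for each $\alpha$, the infinite set $C\cap B_\alpha$ meets infinitely many blocks $A_n$, and in each such block it witnesses $h(n)>g_\alpha(n)$, so $h$ is not dominated---contradicting the choice of $\{g_\alpha\}$.
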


Note that some set theoretic assumption is necessary for a P-point ultrafilter to exist.
This had been shown in the work of Shelah and Wimmers by proving that there is a model of $\ZFC$ with no P-points (see \cite{PIF} and \cite{wimmers}).
Recently, Chodounsky and Guzman proved that there are canonical models of set theory, namely Silver's model, where P-points do not exist (see \cite{david_osvaldo}).
The assumption which guarantees the existence of many P-points is Martin's Axiom, $\MA$, as well as $\CH$ implying it. There are $2^{\mathfrak{c}}$ many P-points under $\MA$.

After these initial investigations, there has been a significant amount of work on these, and similar types of ultrafilters. 
Simultaneously, the theory of orderings on ultrafilters has been developed.
An early example of this is the strengthening of Rudin's theorem, a proof that $\omega^*$ is not homogeneous in $\ZFC$. 
This was obtained through the analysis of the Rudin-Frol\'ik order (see \cite{frolik} and \cite{merudin}).
Afterwards, due to its connections with model theory and topology, the Rudin-Keisler ordering became predominantly explored ordering of ultrafilters. 
The first systematic analysis of this ordering on P-points was done by Blass in \cite{blass}.
He extensively used a model theoretic definition of a P-point ultrafilter. For this we will need a few notions.

The language $L$ will consist of symbols for all relations and all functions on $\omega$.
Let $N$ be the standard model for this language, its domain is $\omega$ and each relation or function denotes itself.
Let $M$ be an elementary extension of $N$, and let ${}^*R$ be the relation in $M$ denoted by $R$, and let ${}^*f$ be the function in $M$ denoted by $f$.
Note that if $a\in M$, then the set $\set{{}^*f(a):f\in\omega^{\omega}}$ is the domain of an elementary submodel of $M$.
A submodel like this, i.e. generated by a single element, will be called \emph{principal}.
It is not difficult to prove that a principal submodel generated by $a$ is isomorphic to the ultrapower of the standard model by the ultrafilter ${\cu}_{a} = \{X \subseteq \omega: a \in {}^{\ast}{X}\}$.
If $A,B\subseteq M$, we say that they are \emph{cofinal with each other} iff $\forall a\in A \exists b\in B\ [a\ {}^*\!\!\le b]$ and $\forall b\in B \exists a\in A\ [b\ {}^*\!\!\le a]$.
Now we can state another definition of a P-point ultrafilter.

\begin{Lemma}\label{l:model_p_point}
	An ultrafilter $\cu$ on $\omega$ is a P-point if and only if every nonstandard elementary submodel of $\omega^{\omega}/\cu$ is cofinal with $\omega^{\omega}/\cu$.
\end{Lemma}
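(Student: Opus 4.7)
The plan is to prove both directions by first reducing to principal submodels, exploiting the well-known equivalent characterization of P-points: $\cu$ is a P-point if and only if every $a:\omega\to\omega$ is either constant or finite-to-one on some set in $\cu$. The crucial reduction is that any nonstandard elementary submodel $M'$ of $\omega^{\omega}/\cu$ must contain some nonstandard element $[a]$, and hence must contain the entire principal submodel generated by $[a]$, namely $\{[f\circ a]:f\in\omega^{\omega}\}$ (as observed immediately before the lemma, this is isomorphic to the ultrapower by $\cu_{a}$, which is nonprincipal precisely when $a$ is not constant on any set in $\cu$). So both directions become statements about when this principal submodel is cofinal in $\omega^{\omega}/\cu$.

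For the forward direction, I would assume $\cu$ is a P-point, pick any nonstandard $[a]\in M'$, and use the P-point property to obtain $A\in\cu$ on which $a$ is finite-to-one. Given an arbitrary $[g]\in\omega^{\omega}/\cu$, I would define $f:\omega\to\omega$ by $f(k)=\max\{g(m):m\in A\cap a^{-1}(k)\}$, well-defined by finiteness of fibers. Then $g(n)\le f(a(n))$ for every $n\in A$, so $[g]\le [f\circ a]\in M'$, yielding cofinality.

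For the backward direction, assuming every nonstandard elementary submodel of $\omega^{\omega}/\cu$ is cofinal, I would take an arbitrary $a:\omega\to\omega$ and show it is constant or finite-to-one on some set in $\cu$. The constant case is trivial; otherwise $[a]$ is nonstandard, so the principal submodel it generates is cofinal by hypothesis. The key step is then to design a witnessing function: let $g(n)=|a^{-1}(a(n))\cap n|$, the $0$-indexed position of $n$ within its own fiber under $a$. Applying cofinality of the principal submodel to $[g]$ yields $f\in\omega^{\omega}$ with $E:=\{n:g(n)\le f(a(n))\}\in\cu$, and then by construction $E\cap a^{-1}(k)$ contains at most $f(k)+1$ elements for each $k$, giving finite-to-oneness of $a\restrict E$.

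The main potential obstacle is engineering $g$ in the backward direction so that the inequality $g\le f\circ a$ modulo $\cu$ encodes exactly the statement that $a$ is finite-to-one on a large set; the position-within-fiber choice above achieves this uniformly in one stroke, without any case split on whether the fibers of $a$ are finite or infinite. The verification that the principal submodel $\{[f\circ a]:f\in\omega^{\omega}\}$ really is closed under the interpretations of all function and relation symbols of the language $L$, and is therefore an elementary submodel, is routine and already indicated in the paragraph immediately preceding the lemma.
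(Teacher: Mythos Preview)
The paper does not give a proof of this lemma; it is stated without proof as a known model-theoretic characterization of P-points from Blass's work, with the relevant facts about principal submodels recorded in the paragraph immediately preceding it. There is therefore nothing in the paper to compare your argument against.

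Your argument is correct in both directions. In the forward direction the fiber-maximum function $f(k)=\max\{g(m):m\in A\cap a^{-1}(k)\}$ works as stated (with the harmless convention $f(k)=0$ when $A\cap a^{-1}(k)=\emptyset$), and $[f\circ a]={}^{*}f([a])\in M'$ dominates $[g]$. In the backward direction your choice $g(n)=|a^{-1}(a(n))\cap n|$ is exactly the right witness: since the map $n\mapsto |a^{-1}(k)\cap n|$ restricted to the fiber $a^{-1}(k)$ is an order-preserving bijection onto an initial segment of $\omega$, the bound $g(n)\le f(a(n))$ on $E\in\cu$ forces $|E\cap a^{-1}(k)|\le f(k)+1$ for every $k$, so $a\restrict E$ is finite-to-one. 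The reduction to principal submodels is legitimate because the paper explicitly records that $\{{}^{*}f([a]):f\in\omega^{\omega}\}$ is always an elementary submodel.
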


There is also a reformulation of the Rudin-Keisler reducibility in model theoretic terms. In particular, for ultrafilters $\cu$ and $\cv$ on $\omega$ (not neccesarily P-points) $\cu\le_{RK}\cv$ if and only if $\omega^{\omega}/\cu$ can be elementary embedded in $\omega^{\omega}/\cv$. 

Using these model theoretic tools Blass was able to prove (in $\ZFC$):

\begin{Theorem}\label{t:blass_lower}
	If $\set{\cu_n:n<\omega}$ is a countable set of P-point ultrafilters such that $\cu_n\le_{RK}\cu_0$ for $n<\omega$, then there is a P-point $\cu$ such that $\cu\le_{RK}\cu_n$ for all $n<\omega$.
\end{Theorem}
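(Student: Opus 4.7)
The plan is to work inside the ultrapower $M$ of $\omega$ by $\cu_0$, using the model-theoretic framework set up just before the theorem. For each $n$ fix a function $f_n : \omega \to \omega$ witnessing $\cu_n \leq_{RK} \cu_0$, and let $N_n$ denote the principal elementary submodel of $M$ generated by $f_n$, which is isomorphic to $\omega^{\omega}/\cu_n$. I will exhibit a nonstandard element $a \in \bigcap_{n<\omega} N_n$; its principal submodel $N_a$ then satisfies $N_a \subseteq N_n$ for all $n$, which via the model-theoretic reformulation of $\leq_{RK}$ discussed above gives $\cu_a \leq_{RK} \cu_n$ for every $n$.

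To check $\cu_a$ is a P-point I apply Lemma \ref{l:model_p_point}. Let $K \preceq N_a$ be nonstandard and fix a nonstandard $b \in K$. Since $K$ is closed under the functions of the language, $N_b \subseteq K$; and since $b \in N_a \subseteq N_n$, $N_b$ is a nonstandard elementary submodel of every $N_n$. P-pointness of $\cu_n$, reinterpreted through Lemma \ref{l:model_p_point}, then forces $N_b$ to be cofinal in $N_n$, and as $N_a \subseteq N_n$ we conclude that $N_b$, and a fortiori $K$, is cofinal in $N_a$. Hence every nonstandard elementary submodel of $N_a$ is cofinal in it, so $\cu_a$ is indeed a P-point.

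Producing the nonstandard $a \in \bigcap_n N_n$ is the heart of the argument: one needs a function $a : \omega \to \omega$, not constant modulo $\cu_0$, such that for each $n$ there exists $g_n$ with $a = g_n \circ f_n$ on a set in $\cu_0$. Using the P-point property of $\cu_0$ together with non-principality of each $\cu_n$, arrange first that every $f_n$ is finite-to-one on a common $A \in \cu_0$, and write $\pi_n$ for the partition of $A$ into $f_n$-fibres. The desired $a$ can be taken to be a block-indexing function for the equivalence relation $\sim$ on $A$ generated by all the $\pi_n$, provided that $\sim$ has only finite classes on some $\cu_0$-large subset. The main obstacle is that for arbitrary representatives $\sim$ may collapse everything into a single $\cu_0$-large class; this is overcome by exploiting the freedom to modify each $f_n$ on a $\cu_0$-null set (without changing $\cu_n$) together with the P-point property of $\cu_0$ to run a diagonal recursion, choosing at stage $n$ a set $A_n \subseteq A_{n-1}$ in $\cu_0$ on which $\pi_n$ does not merge distinct cells of the equivalence generated by $\pi_0, \ldots, \pi_{n-1}$. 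A $\cu_0$-pseudo-intersection of the $A_n$'s then supplies $A_\infty \in \cu_0$ on which $\sim$ has only finite classes, and enumerating those classes yields the required $a$.
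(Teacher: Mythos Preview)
The paper does not prove this theorem; it is a survey that attributes the result to Blass and merely records the model-theoretic framework (Lemma~\ref{l:model_p_point}) together with, in the discussion preceding Theorem~\ref{t:krv_models}, the key intermediate claim: the intersection $\bigcap_{i<\omega} M_i$ of the relevant principal submodels contains a principal submodel cofinal with each $M_i$. Your overall strategy matches this description exactly, and your verification that $\cu_a$ is a P-point via Lemma~\ref{l:model_p_point} is correct and clean.

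The sketch of how to produce the nonstandard $a$, however, has a genuine gap. Even granting that at each stage $n$ you can shrink to $A_n\in\cu_0$ on which the equivalence $\rho_n$ generated by $\pi_0,\dots,\pi_n$ has only finite classes, nothing in the recursion bounds the \emph{growth} of those classes with $n$. The full relation $\sim$ on the pseudo-intersection $A_\infty$ is the increasing union of the $\rho_n{\upharpoonright}A_\infty$, and such a union can easily have infinite---even $\cu_0$-large---classes, in which case your block-indexing function $a$ is constant modulo $\cu_0$ and $\cu_a$ is principal. Reading ``$\pi_n$ does not merge distinct cells of $\rho_{n-1}$'' literally, as $\pi_n{\upharpoonright}A_n$ refining $\rho_{n-1}{\upharpoonright}A_n$, would avoid the growth problem but is in general impossible: already at $n=1$, with $f_0=\mathrm{id}$ and $\rho_0$ the partition into singletons, it would force $f_1$ to be injective on a set in $\cu_0$, hence $\cu_1\equiv_{RK}\cu_0$. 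Blass's argument does not attempt to realise $a$ as the quotient map of a single limit equivalence; one has to interleave the construction, committing to values of $a$ on longer and longer initial segments while simultaneously shrinking the $A_n$'s to maintain compatibility with factoring through the later $f_n$'s.
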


In other words, if a countable set of P-points has an $RK$ upper bound which is a P-point, then it also has a lower bound. 
This theorem has two immediate consequences.

\begin{Cor}
	Any $RK$-decreasing $\omega$-sequence of P-points has an $RK$-lower bound which is a P-point. 
\end{Cor}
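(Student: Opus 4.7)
The plan is to observe that this corollary is a direct specialization of Theorem \ref{t:blass_lower}. Suppose $\langle \cu_n : n < \omega \rangle$ is an $RK$-decreasing sequence of P-point ultrafilters, meaning $\cu_0 \ge_{RK} \cu_1 \ge_{RK} \cu_2 \ge_{RK} \cdots$. I would first note that the underlying set $\{\cu_n : n < \omega\}$ is a countable collection of P-points, and by transitivity of $\le_{RK}$ applied to the decreasing chain, $\cu_n \le_{RK} \cu_0$ for every $n < \omega$.

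Next, I would invoke Theorem \ref{t:blass_lower} with this collection. The hypotheses of the theorem are satisfied verbatim, so we obtain a P-point ultrafilter $\cu$ with $\cu \le_{RK} \cu_n$ for every $n < \omega$. This $\cu$ is by definition an $RK$-lower bound for the sequence, and it is a P-point, which is exactly the desired conclusion.

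There is no genuine obstacle to overcome in this argument, since all of the work is done inside Theorem \ref{t:blass_lower} itself. The only very minor subtlety is the distinction between an $\omega$-sequence and a countable set: if the sequence happens not to be injective (so that $\cu_m \equiv_{RK} \cu_n$ for some $m \neq n$), the set $\{\cu_n : n < \omega\}$ is still countable and the theorem still applies. Consequently, the corollary is an immediate consequence of the preceding theorem and requires no additional construction.
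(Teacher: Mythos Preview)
Your proposal is correct and matches the paper's own treatment: the paper presents this corollary as an immediate consequence of Theorem~\ref{t:blass_lower} without giving any separate proof, and your argument simply spells out why the hypotheses of that theorem are met by an $RK$-decreasing sequence (namely, $\cu_0$ itself serves as the required P-point upper bound).
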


\begin{Cor}\label{cor:two_p_points}
	If two P-points have an upper bound which is a P-point, then they also have a lower bound.
\end{Cor}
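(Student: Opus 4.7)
The plan is to derive Corollary \ref{cor:two_p_points} as a direct application of Theorem \ref{t:blass_lower}. Suppose $\cv_1$ and $\cv_2$ are P-point ultrafilters which admit a common Rudin-Keisler upper bound $\cw$ that is itself a P-point, so that $\cv_1 \le_{RK} \cw$ and $\cv_2 \le_{RK} \cw$. The only structural issue is that Theorem \ref{t:blass_lower} is formulated for a countably infinite set of P-points all bounded by $\cu_0$, whereas the hypothesis of the corollary only furnishes three ultrafilters. This mismatch is superficial: one may pad the finite family into a countable one by repetition.

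First, I would define a sequence $\langle \cu_n : n < \omega \rangle$ of P-points by setting $\cu_0 = \cw$, and, for $n \ge 1$, $\cu_n = \cv_1$ when $n$ is odd and $\cu_n = \cv_2$ when $n$ is even. Since $\cv_1, \cv_2, \cw$ are all P-points, every $\cu_n$ is a P-point. Moreover, by the choice of $\cw$ as an upper bound, $\cu_n \le_{RK} \cu_0$ for every $n < \omega$, so the hypotheses of Theorem \ref{t:blass_lower} are satisfied.

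Next, I would invoke Theorem \ref{t:blass_lower} to obtain a P-point ultrafilter $\cu$ such that $\cu \le_{RK} \cu_n$ for every $n < \omega$. In particular, taking $n$ equal to any odd index and to any even positive index yields $\cu \le_{RK} \cv_1$ and $\cu \le_{RK} \cv_2$. Thus $\cu$ is the desired Rudin-Keisler lower bound of the pair $\{\cv_1, \cv_2\}$, completing the argument.

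There is no substantive obstacle beyond this bookkeeping step; the entire content of the corollary is packaged inside Theorem \ref{t:blass_lower}, and the corollary is simply its restriction to the most economical nontrivial instance. The only detail worth noting in the write-up is that the lower bound produced is itself a P-point, which strengthens the conclusion stated (that a lower bound merely exists) but is a direct output of the theorem invoked.
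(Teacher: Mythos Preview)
Your proposal is correct and matches the paper's treatment: the paper does not give a separate argument but simply declares the corollary an immediate consequence of Theorem \ref{t:blass_lower}, and your padding-by-repetition trick is exactly the routine bookkeeping that makes this immediate. One could even argue the padding is unnecessary, since the phrase ``countable set'' in Theorem \ref{t:blass_lower} already covers finite families, but either way there is nothing to add.
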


From the last corollary it follows that, since selective ultrafilters are $RK$-minimal P-points (hence, minimal ultrafilters as well), any two $RK$-inequivalent selective ultrafilters do not have an $RK$-upper bound which is a P-point.
Since $\MA$ implies the existence of $2^{\mathfrak c}$ $RK$-inequivalent selective ultrafilters, the last results may be viewed as a witness to the fact that, under $\MA$, the $RK$ ordering of P-points is not upwards directed. 
Note also that the situation described in Corollary \ref{cor:two_p_points} happens, as shown by Blass:

\begin{Theorem}
	Assume $\MA$. There is a P-point ultrafilter with two incomparable $RK$-predecessors (which are P-points by Remark \ref{r:downwardPpoint}).
\end{Theorem}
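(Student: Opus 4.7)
The strategy is to build, under $\MA$, a P-point ultrafilter $\cu$ on the countable set $\omega\times\omega$ whose two coordinate projections $\cv_i := (\pi_i)_*(\cu)$, for $i\in\{1,2\}$, are $RK$-incomparable. By Remark \ref{r:downwardPpoint}, each $\cv_i$ is then automatically a P-point and lies $RK$-below $\cu$, giving the required example (after identifying $\omega\times\omega$ with $\omega$ via a bijection). So the proof reduces to constructing such a $\cu$.

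I would build $\cu=\bigcup_{\alpha<\mathfrak c}\mathcal F_\alpha$ by transfinite recursion, each $\mathcal F_\alpha$ a filter on $\omega\times\omega$ generated by fewer than $\mathfrak c$ sets. Fix enumerations of length $\mathfrak c$ of (i) all subsets $S\subseteq\omega\times\omega$, (ii) all countable subfamilies of $\Pset(\omega\times\omega)$, and (iii) all pairs $(f,i)\in\omega^\omega\times\{1,2\}$, interleaved by bookkeeping so that every task is visited. Throughout, maintain the invariant that no ``graph'' set $\Gamma_g=\{(x,y):x=g(y)\}$ belongs to $\mathcal F_\alpha$. At a typical stage, three tasks are performed: the \emph{ultrafilter task} for the next $S$ adds $S$ or $\omega^2\setminus S$, preferring the complement whenever $S$ is a graph (the invariant makes this always consistent, since $\mathcal F_\alpha$ meets every such complement infinitely); the \emph{P-point task} for $\mathcal A\subseteq\mathcal F_\alpha$ adds a pseudo-intersection $C$ that is compatible with $\mathcal F_\alpha$, using $\mathfrak p=\mathfrak c$ applied to $\mathcal A$ together with a base of $\mathcal F_\alpha$ of size $<\mathfrak c$.

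The delicate task is the \emph{RK-killing} task. Given $(f,i)$, say $i=2$, I want to add a set to $\mathcal F_\alpha$ that forbids $f$ from witnessing $\cv_1\le_{RK}\cv_2$; concretely, it suffices to add $R_A:=A\times(\omega\setminus f^{-1}(A))$ for some $A\subseteq\omega$ that makes $R_A$ $\mathcal F_\alpha$-positive, since then $A\in\cv_1$ while $f^{-1}(A)\notin\cv_2$, so $f_*(\cv_2)\neq\cv_1$. To produce $A$, force with the $\sigma$-centered poset $\PP$ of finite partial functions $p:\omega\to\{0,1\}$ ordered by extension. For each member $B$ of a fixed base of $\mathcal F_\alpha$ and each $n\in\omega$, the set
\[
D_{B,n}=\set{p\in\PP: \abs{\{(x,y)\in B: x\in p^{-1}(1),\ f(y)\in p^{-1}(0)\}}\ge n}
\]
is dense: by the invariant plus the earlier ultrafilter-task handling of $\Gamma_f$, we have $B\setminus\Gamma_f\in\mathcal F_\alpha$, hence infinite, so any $p$ can be extended by $n$ new points $(x,y)\in B\setminus\Gamma_f$, sending $x\mapsto 1$ and $f(y)\mapsto 0$. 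There are fewer than $\mathfrak c$ such dense sets, so $\MA(\sigma\text{-centered})$ yields a filter hitting them all, producing $A$ with $R_A$ meeting every $B$ infinitely; then $\mathcal F_\alpha\cup\{R_A\}$ has the FIP and extends to $\mathcal F_{\alpha+1}$.

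The main obstacle is exactly the RK-killing step, and in particular the maintenance and use of the no-graph invariant. Preservation is routine for the ultrafilter and P-point tasks (the P-point pseudo-intersections and the rectangles $R_A$ are never themselves contained in any $\Gamma_g$, given the forcing construction), and maintenance in the ultrafilter task is automatic because we always prefer the complement of a graph. Given the invariant, the forcing argument for producing $A$ goes through as indicated, and the resulting $\cu$ is a P-point with $RK$-incomparable projections $\cv_1$, $\cv_2$, as required.
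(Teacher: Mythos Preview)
The paper does not include a proof of this theorem; it merely attributes the result to Blass. So there is no paper proof to compare against, and I will assess your argument on its own.

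The overall architecture is reasonable, but the RK-killing step has a genuine gap. You claim that $D_{B,n}$ is dense because $B\setminus\Gamma_f\in\mathcal F_\alpha$ is infinite, ``so any $p$ can be extended by $n$ new points $(x,y)\in B\setminus\Gamma_f$, sending $x\mapsto 1$ and $f(y)\mapsto 0$.'' But to extend $p$ consistently you need $(x,y)\in B$ with $x\neq f(y)$ \emph{and} $x,f(y)\notin\dom(p)$. The mere infinitude of $B\setminus\Gamma_f$ does not give this: nothing in your invariant prevents $B\setminus\Gamma_f$ from being contained in $(\dom(p)\times\omega)\cup(\omega\times f^{-1}(\dom(p)))$. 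For instance, if $f$ is constant with value $0$ and $0\in\dom(p)$, then $f(y)\in\dom(p)$ for every $y$, and no extension is possible; more generally, if $f^{-1}(\dom(p))$ is infinite, the rows $\omega\times f^{-1}(\dom(p))$ are not covered by finitely many graphs, so your ``no $\Gamma_g$ in $\mathcal F_\alpha$'' invariant is too weak. You would at least need a preliminary case split (if $f^{-1}(\{m\})$ is $\mathcal F_\alpha$-positive for some $m$, arrange $f_*(\cv_2)$ principal and skip the killing) together with a stronger invariant ruling out finite unions of graphs in both directions, and you have not indicated how to do this.

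A second, related gap is invariant preservation. You assert that pseudo-intersections and the rectangles $R_A$ ``are never themselves contained in any $\Gamma_g$,'' but the relevant question is whether $R_A\cap B$ avoids every $\Gamma_g$ for all $B\in\mathcal F_\alpha$, and your forcing conditions do not obviously guarantee this. Similarly, a generic pseudo-intersection $C$ of countably many sets in $\mathcal F_\alpha$ need not satisfy $C\cap B\not\subseteq\Gamma_g$ for all $B\in\mathcal F_\alpha$ and all $g$; you must build this into the construction of $C$, which requires diagonalizing against $\mathfrak c$ many graphs while keeping the generating set small. The symmetric case $i=1$ is also not addressed and would need the mirror-image invariant about graphs $\{(x,y):y=h(x)\}$. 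None of this is unfixable, but as written the argument does not go through.
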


Blass was also able to prove, under the same assumption of $\MA$, that there is no $RK$-maximal P-point ultrafilter.
This implies that, under $\MA$, there is a an $RK$-increasing $\omega$-sequence of P-points, and in fact, every P-point ultrafilter can be the first element of such a sequence.
Even more is true, as proved by Blass:

\begin{Theorem}\label{t:blass_omega1_real}
	Assume $\MA$. Then:
	\begin{enumerate}
		\item Every $RK$-increasing $\omega$-sequence of P-points has an $RK$-upper bound which is a P-point. In particular, every P-point ultrafilter is the first element of some $RK$-increasing $\omega_1$-sequence of P-points.
		\item There is an order isomorphic embedding of the real line $\br{\mathbb R,\le}$ into the set of P-points under the $RK$-ordering.
	\end{enumerate}
\end{Theorem}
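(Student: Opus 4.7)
For Part (1), fix the given chain $\UUU_0<_{RK}\UUU_1<_{RK}\cdots$ of P-points and maps $\pi_n:\omega\to\omega$ witnessing $\UUU_n\leq_{RK}\UUU_{n+1}$. The plan under $\MA$ is to build by transfinite recursion of length $\mathfrak{c}$ a P-point $\UUU$ on $\omega$ together with maps $g_n:\omega\to\omega$ witnessing $\UUU_n\leq_{RK}\UUU$. Enumerate $\PPP(\omega)=\set{X_\alpha:\alpha<\mathfrak{c}}$ and all countable subfamilies of $\PPP(\omega)$ as $\set{\AAA_\alpha:\alpha<\mathfrak{c}}$, and maintain at stage $\alpha$ a filter $\FFF_\alpha$ of size $<\mathfrak{c}$ containing $g_n^{-1}(A)$ for every $n<\omega$ and every $A\in\UUU_n$.

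At stage $\alpha$ first decide the membership of $X_\alpha$: add $X_\alpha$ if $\FFF_\alpha\cup\set{X_\alpha}$ has the FIP, and otherwise add $\omega\setminus X_\alpha$. Then, if $\AAA_\alpha\subseteq\FFF_\alpha$, produce by $\MA$ a pseudo-intersection of $\AAA_\alpha$ compatible with $\FFF_\alpha$ via the standard $\sigma$-centered poset of conditions $(s,F)\in\fin\times\brq{\FFF_\alpha\cup\AAA_\alpha}^{<\omega}$ where $(s',F')\leq(s,F)$ iff $s\subseteq s'$, $F\subseteq F'$, and $s'\setminus s\subseteq\bigcap F$; fewer than $\mathfrak{c}$ dense sets need be met, ensuring $|s|\to\infty$ and that every $c\in\FFF_\alpha\cup\AAA_\alpha$ eventually enters $F$. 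The main obstacle is showing $\FFF_\alpha$ does not become degenerate, and this is where each $\UUU_n$ being a P-point is essential: the subfilter $g_n^{-1}(\UUU_n)$ already contains a pseudo-intersection of every countable subfamily (pull back a $\UUU_n$-pseudo-intersection), so the $\MA$-step merely amalgamates finitely many such pre-existing witnesses across the various $n$. For the ``in particular'' clause, iterate through $\omega_1$: repeatedly append strict $RK$-successors (via an analogous MA-construction) and invoke Part (1) at each limit stage.

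For Part (2), first embed $\br{\mathbb{Q},\le}$ order-isomorphically into the P-points under $\leq_{RK}$ by an $\omega$-step recursion: enumerate $\mathbb{Q}=\set{q_n:n<\omega}$ and at step $n$ place $\UUU_{q_n}$ strictly between the previously constructed $\UUU_{q_i}$ via the MA-based ``extend above/below'' and ``insert between'' constructions for P-points. For each irrational $r$, pick $\omega$-sequences $\seq{q_{n_k}:k<\omega}\uparrow r$ and $\seq{q'_{m_k}:k<\omega}\downarrow r$ from $\mathbb{Q}$ and take $\UUU_r$ to be a P-point that is an $RK$-upper bound of $\set{\UUU_{q_{n_k}}:k<\omega}$ and simultaneously an $RK$-lower bound of $\set{\UUU_{q'_{m_k}}:k<\omega}$, obtained by running the Part (1) recursion inside the ``box'' below $\UUU_{q'_{m_0}}$ while checking at each stage that each newly created generator $g_n^{-1}(A)$ respects some $RK$-map into $\UUU_{q'_{m_k}}$. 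The delicate technical point is to verify that this two-sided recursion can be carried out simultaneously at every stage, which follows from another application of MA to the appropriate combined $\sigma$-centered poset.
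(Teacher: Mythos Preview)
The paper is a survey and does not supply a proof of this theorem; it merely attributes the result to Blass. So there is no ``paper's own proof'' to compare against, and I will evaluate your sketch on its own merits.

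Your argument for Part~(1) has a genuine gap at the central step. You say you will ``maintain at stage $\alpha$ a filter $\FFF_\alpha$ of size $<\mathfrak{c}$ containing $g_n^{-1}(A)$ for every $n<\omega$ and every $A\in\UUU_n$,'' but each $\UUU_n$ already has $\mathfrak{c}$ members, so this is impossible. If instead $\FFF_\alpha$ is generated by all of $\bigcup_n g_n^{-1}(\UUU_n)$ together with fewer than $\mathfrak{c}$ explicitly added sets, then the Mathias-style poset you describe requires $\mathfrak{c}$ dense sets, and $\MA$ does not apply. Your attempted repair---that each $g_n^{-1}(\UUU_n)$ is a P-filter so one can ``pull back a $\UUU_n$-pseudo-intersection''---does not close the gap. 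First, pullback along $g_n$ preserves $\subseteq^*$ only when $g_n$ is finite-to-one, which you never arrange. Second, and more importantly, the new pseudo-intersection $B$ must remain compatible with $g_n^{-1}(\UUU_n)$ for \emph{all} $n$ simultaneously, which amounts to $g_n''B\in\UUU_n$ for every $n$; an arbitrary pseudo-intersection of $\AAA_\alpha$ has no reason to satisfy this, and your phrase ``amalgamates finitely many such pre-existing witnesses across the various $n$'' is not an argument, since there are infinitely many $n$ imposing non-finitary constraints. This is exactly the difficulty in the problem; Blass overcomes it by constructing the maps $g_n$ and the base of $\UUU$ in a coordinated way (in his original paper via the model-theoretic picture recalled around Lemma~\ref{l:model_p_point}, working with cofinal extensions of the direct limit of the ultrapowers $\omega^{\omega}/\UUU_n$), not by fixing the $g_n$ in advance and hoping a generic pseudo-intersection will project correctly.

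For Part~(2), two further ingredients are missing. The $\mathbb{Q}$-step needs the interpolation lemma (a P-point strictly $RK$-between two given comparable P-points), which you invoke but do not prove. The extension to irrationals requires a \emph{two-sided} version of Part~(1): a P-point above a given increasing $\omega$-sequence and simultaneously below a given decreasing $\omega$-sequence. This is strictly stronger than Part~(1), and ``running the Part~(1) recursion inside the box'' together with ``another application of $\MA$ to the appropriate combined $\sigma$-centered poset'' is a promissory note, not an argument; the same compatibility issue as above recurs, now from both sides.
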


All these results show that the $RK$-ordering of P-points is very rich under suitable set theoretic assumptions.
Let us now review some straightforward obstructions on this ordering.
First, there are only $2^{\mathfrak{c}}$ many ultrafilters on $\omega$, so this ordering can be at most this size. Second, there are only $\mathfrak{c}$ many maps from $\omega$ to $\omega$, so any P-point can have at most $\mathfrak{c}$ many predecessors.
At the moment, there are no other known restrictions, hence the working hypothesis is the following:

\begin{conj}\label{c:workinghypothesis}
	Assume $\MA(\sigma-\mbox{centered})$. Let $(P,<)$ be a partial order of size at most $2^{\mathfrak{c}}$ such that every $p\in P$ has at most $\mathfrak{c}$ many predecessors. Then $P$ embeds into the $RK$ ordering of P-points.
\end{conj}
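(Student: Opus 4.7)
The plan is to construct the family $\{\UUU_p : p \in P\}$ of P-points by transfinite recursion of length $2^{\mathfrak{c}}$, applying $\MA(\sigma-\mbox{centered})$, equivalently $\mathfrak{p} = \mathfrak{c}$, at each stage. Enumerate $P = \{p_\alpha : \alpha < 2^{\mathfrak{c}}\}$ via some bookkeeping. At stage $\alpha$, having built $\UUU_{p_\beta}$ for $\beta < \alpha$, construct a P-point $\UUU_{p_\alpha}$ so that the invariant $\UUU_{p_\beta} \leq_{RK} \UUU_{p_\gamma} \Longleftrightarrow p_\beta \leq p_\gamma$ holds for all $\beta, \gamma \leq \alpha$. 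This splits into positive requirements (realizing the specified reductions via canonical functions) and negative requirements (blocking all unintended reductions by diagonalization against $\omega^\omega$).

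For the positive direction, I would set up a $\sigma$-centered forcing $\mathbb{P}_\alpha$ whose conditions are finite approximations both to a filter base for $\UUU_{p_\alpha}$ and to a coherent system of maps $f_{\beta,\alpha} : \omega \to \omega$ for every $\beta$ with $p_\beta < p_\alpha$, subject to the cocycle condition $f_{\gamma,\beta} \circ f_{\beta,\alpha} \equiv f_{\gamma,\alpha}$ modulo the generic filter whenever $p_\gamma < p_\beta < p_\alpha$. The downset hypothesis bounds the number of these predecessor relations by $\mathfrak{c}$, so only $\mathfrak{c}$ dense sets are needed in order to force the P-point property, the specified reductions, and the cocycle coherence. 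This generalizes Blass's upper-bound construction in Theorem \ref{t:blass_omega1_real} from $\omega$-chains to partial orders of size $\mathfrak{c}$. For the negative direction, I would add to $\mathbb{P}_\alpha$ further dense sets arranging, for every $\beta < \alpha$ with $p_\alpha \not\leq p_\beta$ and every $h \in \omega^\omega$, that some $B$ in the eventual filter base of $\UUU_{p_\alpha}$ satisfies $h^{-1}(B) \notin \UUU_{p_\beta}$; the symmetric blockings of $\UUU_{p_\beta} \leq_{RK} \UUU_{p_\alpha}$ for $p_\beta \not\leq p_\alpha$ are handled the same way.

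The principal obstacle is that the number of negative requirements at stage $\alpha$ scales as $|\alpha| \cdot \mathfrak{c}$, and for $\alpha \geq \mathfrak{c}$ this exceeds the $\mathfrak{c}$ dense sets addressable by a single application of $\MA(\sigma-\mbox{centered})$. Overcoming this appears to require either a carefully engineered bookkeeping that spreads the $2^{\mathfrak{c}}$-many non-reduction obligations across the $2^{\mathfrak{c}}$ stages in $\mathfrak{c}$-sized packets, or a genuinely different strategy such as constructing a universal P-point template of cardinality $2^{\mathfrak{c}}$ into which every admissible $P$ can be embedded and then reducing the conjecture to a purely combinatorial embedding statement. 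A further structural complication is that a general admissible $P$ (such as $\mathbb{Q}$ with its usual order) need not be well-founded, so there is no canonical inductive enumeration in which each $p_\alpha$ appears strictly after all its $<$-predecessors; the recursion must therefore be organized by a different scheme, for example along a $\subseteq$-increasing chain of small downward-closed initial segments. Carrying all of this out simultaneously while preserving both the P-point property and the non-reductions established at earlier stages is the essential difficulty that keeps the statement at the level of a conjecture.
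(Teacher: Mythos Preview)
The statement is labeled as a conjecture in the paper and is presented there as an open working hypothesis; the paper provides no proof. Your proposal correctly recognizes this: you explicitly identify the principal obstacles (the $2^{\mathfrak{c}}$-many negative requirements at late stages exceeding what a single application of $\MA(\sigma\text{-centered})$ can handle, and the lack of well-foundedness in a general $P$) and conclude that these difficulties keep the statement at the level of a conjecture. There is therefore nothing to compare against: both you and the paper treat this as open. Your outline of the naive recursive approach and its failure points is accurate and aligns with the partial results surveyed in the paper, notably the Raghavan--Shelah embedding of $\mathcal P(\omega)/\FIN$, which handles all orders of size at most $\mathfrak{c}$ but not the full $2^{\mathfrak{c}}$ case.
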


Note that already Blass asked which orders can be embedded into the set of P-points under natural set-theoretic assumptions.
This particular statement of the conjecture, in the form of a question, was first mentioned by Raghavan and Shelah in \cite{dilip_shelah}.
Note also that the working conjecture is actually a bit stronger.
It conjectures that $P$ from the statement embeds both into the $RK$ and Tukey orderings of P-points.
The analysis of the Tukey order of ultrafilters was initiated in the work of Isbell (see \cite{isbell}), and later revived in the work of Milovich (see \cite{milovich}), although in a bit different setting.
It was then further developed in the work of Dobrinen, Todorcevic and the second author. 

One of the central results of this theory is the following theorem, due to Dobrinen and Todorcevic \cite{dt}, building on the work of Solecki and Todorcevic in \cite{slawek}.

\begin{Theorem}
	Let $\cu$ be a P-point and $\cv$ any ultrafilter. If $\cv\le_T\cu$, then there is a continuous $\phi:\cu\to\cv$ which is monotone and cofinal in $\cv$.
\end{Theorem}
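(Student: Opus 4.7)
The plan is to start from a monotone cofinal map $\phi : \cu \to \cv$ (which exists by the assumption $\cv \le_T \cu$) and to produce from it a continuous monotone cofinal map. Continuity here refers to the product topology on $\mathcal{P}(\omega) = 2^{\omega}$, so a map $\phi^* : \mathcal{P}(\omega) \to \mathcal{P}(\omega)$ is continuous exactly when there exist integers $k_n < \omega$ and families $\mathcal{S}_n \subseteq \mathcal{P}(k_n)$ with $n \in \phi^*(A) \iff A \cap k_n \in \mathcal{S}_n$. So the task is to extract from $\phi$ precisely such finite ``decision data'' while preserving monotonicity and cofinality, and the tool for doing so should be the P-point property of $\cu$.

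As a preliminary reduction, I would replace $\phi$ by $A \mapsto \phi(A) \cap A$, which is still monotone and cofinal (since $\cv$ is closed under finite intersections), and so assume $\phi(A) \subseteq A$ for every $A \in \cu$. The heart of the argument is then a recursive construction of a $\subseteq^*$-decreasing sequence $\seqq{B}{n}{<}{\omega}$ in $\cu$ together with integers $k_0 < k_1 < \cdots$ and families $\mathcal{S}_n \subseteq \mathcal{P}(k_{n+1})$ enjoying the following finite-determination property: for every $B \in \cu$ with $B \subseteq^* B_n$, the statement $n \in \phi(B)$ is equivalent to $B \cap k_{n+1} \in \mathcal{S}_n$. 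Once this is done, use the P-point property of $\cu$ to obtain a pseudo-intersection $M \in \cu$ with $M \subseteq^* B_n$ for every $n$, and set
$$\phi^*(A) := \{ n < \omega : A \cap M \cap k_{n+1} \in \mathcal{S}_n \}.$$
By construction $\phi^*$ is continuous and monotone in $A$, and for every $A \in \cu$ the sets $\phi^*(A)$ and $\phi(A \cap M)$ agree modulo a finite set, so $\phi^*$ inherits cofinality from $\phi$.

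The main obstacle is justifying the finite-determination step of the recursion: at each stage $n$, why should monotonicity of $\phi$ together with the P-point property of $\cu$ force the existence of a finite threshold $k_{n+1}$ and a decision family $\mathcal{S}_n$? A priori, membership $n \in \phi(B)$ could depend on arbitrarily late coordinates of $B$. The resolution follows the Solecki--Todorcevic extraction scheme: if no finite threshold worked at stage $n$, one constructs inside $\cu$ a $\subseteq^*$-decreasing sequence of sets whose pseudo-intersection in $\cu$ (guaranteed by the P-point hypothesis) yields, via monotonicity and cofinality of $\phi$, two incompatible verdicts about the membership of $n$ in the image, a contradiction. The subtlety lies in threading this argument coherently across all $n$ simultaneously while ensuring that the sequence $\seqq{B}{n}{<}{\omega}$ itself still has a pseudo-intersection in $\cu$ at the end, which is exactly where the full strength of being a P-point is used.
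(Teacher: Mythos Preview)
The paper does not prove this theorem; it merely states it with attribution to Dobrinen--Todorcevic~\cite{dt}, building on Solecki--Todorcevic~\cite{slawek}. So there is no in-paper proof to compare against, and the question is simply whether your sketch is sound and whether it tracks the cited argument.

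There is a concrete error in your preliminary reduction. You propose replacing $\phi$ by $A \mapsto \phi(A) \cap A$ and justify this by saying ``$\cv$ is closed under finite intersections.'' But $A$ is an element of $\cu$, not of $\cv$, and these are in general different ultrafilters; so $\phi(A) \cap A$ need not lie in $\cv$ at all, and the modified map is not a map into $\cv$. Fortunately this reduction is not used anywhere in the rest of your sketch, so you should simply delete it.

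The more substantive issue is the finite-determination step. You assert that for each $n$ one can find $B_n \in \cu$ and $k_{n+1}$ such that, for every $B \in \cu$ with $B \subseteq^* B_n$, membership of $n$ in $\phi(B)$ depends only on $B \cap k_{n+1}$. Your justification (``construct a $\subseteq^*$-decreasing sequence whose pseudo-intersection yields two incompatible verdicts'') is too vague to evaluate, and as stated the claim is stronger than what is actually proved in~\cite{dt}. The Dobrinen--Todorcevic argument does not show that the original $\phi$ becomes locally finitely determined below some $B_n$; rather, it defines a new continuous monotone $\hat{\phi}$ directly from finite data (essentially $n \in \hat{\phi}(X)$ iff some finite $s \subseteq X$ forces $n \in \phi(A)$ for all $A \in \cu$ with $s \subseteq A$), and then uses the P-point property to show that $\hat{\phi}$ still maps $\cu$ cofinally into $\cv$. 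Your outline has the right ingredients (monotonicity, P-point pseudo-intersections, finite decision data), but the logical structure is inverted and the key step is asserted rather than argued. If you want to make your version work, you must either prove the finite-determination claim rigorously or restructure along the lines just described.
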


Note that this result shows that the obstruction in Conjecture \ref{c:workinghypothesis} that every element has at most $\mathfrak{c}$ many predecessors is necessary in the Tukey setting as well.

We will now explain some progress on the conjecture.

\subsection{Boolean algebras}

One of the main advances towards Conjecture \ref{c:workinghypothesis} is the proof of the second author and Shelah that, under $\MA(\sigma$-centered), Boolean algebra $(\mathcal P(\omega)/\FIN,\subseteq^*)$ embeds into the set of P-points under both the $RK$ and the Tukey ordering (see \cite{dilip_shelah}). 
In particular, they were able to prove the following.

\begin{Theorem}
	Assume $\MA(\sigma$-centered).
	Then there is a sequence of P-points $\seq{\cu_{[a]}:[a]\in\mathcal P(\omega)/\FIN}$ such that the following two conditions hold:
	\begin{enumerate}
		\item if $a\subseteq^* b$, then $\cu_{[a]}\le_{RK}\cu_{[b]}$;
		\item if $b\not\subseteq^* a$, then $\cu_{[b]}\not\le_T\cu_{[a]}$.
	\end{enumerate}
\end{Theorem}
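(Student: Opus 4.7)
The plan is to construct the system by transfinite recursion of length $\cc$, driven by the equivalence $\MA(\sigma\text{-centered}) \Leftrightarrow \mathfrak{p} = \cc$. For each $a \in \pc{\omega}{\omega}$, think of $\cu_{[a]}$ as an ultrafilter on $a$; and for every pair $a, b$ with $a \subseteq^* b$, fix in advance a function $\pi_{a,b}\colon b \to a$ that equals the identity on the cofinite set $a \cap b$. By Remark \ref{r:downwardPpoint}, to secure clause (1) it suffices to maintain at every stage $\alpha < \cc$ a coherent system of filter bases $\langle \FFF^\alpha_{[a]} : a \in \pc{\omega}{\omega} \rangle$, each of size $< \cc$, such that $a \subseteq^* b$ implies $\pi_{a,b}''X \in \FFF^\alpha_{[a]}$ whenever $X \in \FFF^\alpha_{[b]}$.

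The bookkeeping enumerates $\cc$-many tasks of three types: (i) for a prescribed countable descending sequence in some $\FFF^\alpha_{[a]}$, insert a pseudo-intersection (P-point property); (ii) for a prescribed $X \subseteq a$, decide its membership in $\cu_{[a]}$ (ultrafilter property); and (iii) for every triple $([a], [b], \phi)$ with $b \not\subseteq^* a$ and $\phi\colon \Pset(a) \to \Pset(b)$ continuous and monotone, insert some $B \in \FFF^{\alpha+1}_{[b]}$ so that no $X \in \cu_{[a]}$ satisfies $\phi(X) \subseteq B$. Restricting (iii) to continuous monotone $\phi$ is justified by the Dobrinen--Todorcevic theorem quoted above, which promotes every Tukey reduction from the P-point $\cu_{[a]}$ to a continuous monotone cofinal map; and such maps are coded by countable data, so only $\cc$ of them exist.

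Each single-task extension is produced by a $\sigma$-centered forcing whose conditions are finite coherent approximations of simultaneous extensions of all $\FFF^\alpha_{[c]}$ that the task touches, $\sigma$-centering being witnessed by grouping conditions according to their finite \emph{stems}. For task (iii), the critical density lemma exploits $b \setminus a$ being infinite: this yields infinitely many coordinates of $b$ where one can freely arrange $B$ to miss $\phi(X)$, regardless of what $X \in \cu_{[a]}$ is later chosen. Applying $\MA(\sigma\text{-centered})$ to the $<\cc$-many dense sets arising from the task yields a generic filter that accomplishes the diagonalization while preserving coherence with every $[c] \subseteq^* b$ and $[c] \supseteq^* b$ via the fixed $\pi$'s. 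In the limit, $\cu_{[a]} = \bigcup_{\alpha<\cc} \FFF^\alpha_{[a]}$ is the desired P-point.

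The main obstacle is the design of the forcing for task (iii) and the proof of its central density lemma. The difficulty is twofold: on the one hand, enlarging $\FFF^{\alpha+1}_{[b]}$ propagates by coherence to every $[c]$ with $b \subseteq^* c$, so the chosen witness $B$ must be consistent with the simultaneous enlargements of all these $\FFF^\alpha_{[c]}$; on the other hand, the diagonalization must use $b \setminus a$ in a sufficiently robust way to survive every future coherence demand on the $a$-side. Once the forcing is set up so that finite stems witness $\sigma$-centering and the density lemma is proved by a direct combinatorial argument exploiting the infinitude of $b \setminus a$, the remaining verification that the $\cu_{[a]}$'s are P-points satisfying (1) and (2) is routine bookkeeping on the recursion.
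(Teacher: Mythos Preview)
The paper is a survey and does not include a proof of this theorem; it is stated with a reference to \cite{dilip_shelah}. So there is no in-paper argument to compare against, and the comments below concern only the soundness of your outline.

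The shape is right --- recursion of length $\cc$ driven by $\mathfrak{p}=\cc$, three task types, $\sigma$-centered forcings, and invoking the Dobrinen--Todorcevic theorem to reduce clause~(2) to defeating $\cc$-many continuous monotone maps. The gap is in task~(iii), and it is not merely a missing detail. Your heuristic that ``$b\setminus a$ infinite yields infinitely many coordinates where one can freely arrange $B$ to miss $\phi(X)$'' does not survive the simplest test case: for $a\subsetneq b$ take $\phi$ to be the inclusion $\mathcal{P}(a)\hookrightarrow\mathcal{P}(b)$, so $\phi(X)=X\subseteq a$ never meets $b\setminus a$. Defeating this $\phi$ amounts to finding $B\in\cu_{[b]}$ with $B\cap a\notin\cu_{[a]}$; but under your coherence scheme (with $\pi_{a,b}$ the identity on $a\cap b$), adding $B$ to $\cu_{[b]}$ forces $\pi_{a,b}''B\supseteq B\cap a$ into $\cu_{[a]}$. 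Whether one can nonetheless keep $B\cap a$ out of $\cu_{[a]}$ depends entirely on what $\pi_{a,b}$ does on $b\setminus a$ --- which you do not specify --- and on a global commutativity of the $\pi$-system --- which you do not address. Once all the $\pi$'s are fixed in advance, every $\cu_{[a]}$ is simply the $\pi_{a,\omega}$-pushforward of the single ultrafilter $\cu_{[\omega]}$, so the construction collapses to building one P-point whose various projections must realize all the Tukey non-reducibilities in (2); that is exactly where the hard combinatorics lies, and ``a direct combinatorial argument exploiting the infinitude of $b\setminus a$'' is not yet that argument. The published proof in \cite{dilip_shelah} does not use identity-type projections; the RK-maps there carry genuine combinatorial content tailored to make the Tukey separation go through, and that idea is absent from your sketch.
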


Since any partial order of size at most the continuum can be embedded into the Boolean algebra $(\mathcal P(\omega)/\FIN,\subseteq^*)$, this result immediately yields a corollary.

\begin{Cor}
	Under $\MA(\sigma$-centered) any partial order of size at most $\mathfrak{c}$ embeds into the set of P-points under both $RK$ and Tukey ordering.
\end{Cor}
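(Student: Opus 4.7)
The plan is to compose two embeddings. The Theorem supplies an embedding $[a] \mapsto \cu_{[a]}$ of $(\mathcal{P}(\omega)/\FIN, \subseteq^*)$ into the P-points that respects both orderings: clause (1) gives $a \subseteq^* b \Rightarrow \cu_{[a]} \le_{RK} \cu_{[b]}$, which upgrades to $\cu_{[a]} \le_T \cu_{[b]}$ since $\le_{RK}$ always refines $\le_T$; and clause (2) gives the reverse direction in its strongest form, $b \not\subseteq^* a \Rightarrow \cu_{[b]} \not\le_T \cu_{[a]}$, which rules out $\cu_{[b]} \le_{RK} \cu_{[a]}$ as well. Therefore it suffices to embed an arbitrary partial order $(P, \le)$ with $|P| \le \cc$ into $(\mathcal{P}(\omega)/\FIN, \subseteq^*)$ and compose.

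The required universality of $(\mathcal{P}(\omega)/\FIN, \subseteq^*)$ under $\p = \cc$, which is precisely $\MA(\sigma$-centered), is a standard transfinite recursion. Enumerate $P = \{p_\alpha : \alpha < \cc\}$ without repetition and build $A_\alpha \in [\omega]^\omega$ by recursion on $\alpha$ so that $A_\alpha \subseteq^* A_\beta \Leftrightarrow p_\alpha \le p_\beta$ for all $\alpha, \beta$. At stage $\alpha$, the positive constraints $A_\gamma \subseteq^* A_\alpha \subseteq^* A_\beta$ (for previously chosen $\gamma$ with $p_\gamma < p_\alpha$ and $\beta$ with $p_\alpha < p_\beta$) are internally consistent in $\mathcal{P}(\omega)/\FIN$ by transitivity of $\le$ and the inductive hypothesis. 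The absence of $(\kappa, \lambda)$-gaps in $\mathcal{P}(\omega)/\FIN$ for $\kappa, \lambda < \p$, a standard consequence of $\p = \cc$, then yields a set $B_\alpha$ satisfying all these positive constraints simultaneously. The negative constraints, that $[A_\alpha]$ be $\subseteq^*$-incomparable with $[A_\beta]$ whenever $p_\alpha$ and $p_\beta$ are incomparable in $P$, are then enforced by perturbing $B_\alpha$ so that $B_\alpha \setminus A_\beta$ and $A_\beta \setminus B_\alpha$ are both infinite for each relevant $\beta$. Since fewer than $\p$ such requirements are active at stage $\alpha$, a standard bookkeeping argument produces $A_\alpha$ as desired.

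The main obstacle is confirming, at each stage of the recursion, that the filter-like positive constraints leave room for the splitting demanded by the negative constraints. This is where $\p = \cc$ does the heavy lifting: without the absence of small gaps, the intermediate element $B_\alpha$ may fail to exist, and without $\p$-many splittings at our disposal the incomparability requirements cannot be met simultaneously with the positive ones. Once the recursion is complete and $\iota : P \to \mathcal{P}(\omega)/\FIN$ is in hand, deducing the Corollary from the Theorem via clauses (1) and (2) is a one-line argument.
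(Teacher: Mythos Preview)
Your overall strategy is exactly the paper's: compose the Theorem's embedding $[a]\mapsto\cu_{[a]}$ with an embedding of the given poset into $(\PPP(\omega)/\FIN,\subseteq^*)$. The paper treats the latter universality as a known fact and derives the Corollary in a single sentence, without carrying out any recursion.

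Your attempted proof of that universality under $\p=\cc$ has a genuine gap. The claim that there are no $(\kappa,\lambda)$-gaps in $\PPP(\omega)/\FIN$ for $\kappa,\lambda<\p$ is simply false: Hausdorff $(\omega_1,\omega_1)$-gaps exist in $\ZFC$, and under $\p=\cc>\omega_1$ this directly contradicts your assertion. More to the point, gap-filling results concern linearly ordered pregaps, whereas your families $\{A_\beta:\beta<\alpha,\ p_\alpha<p_\beta\}$ and $\{A_\gamma:\gamma<\alpha,\ p_\gamma<p_\alpha\}$ need not be towers at all. Concretely, nothing in your inductive hypothesis prevents $A_{\beta_1}\cap A_{\beta_2}$ from being finite for incomparable $p_{\beta_1},p_{\beta_2}$ processed at earlier stages; if some later $p_\alpha$ satisfies $p_\alpha<p_{\beta_1}$ and $p_\alpha<p_{\beta_2}$, you would need an infinite $A_\alpha$ with $A_\alpha\subseteq^*A_{\beta_i}$ for both $i$, which is impossible. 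The recursion can be salvaged, but it requires a stronger inductive invariant---for instance, that every finite subfamily of $\{A_\beta:\beta<\alpha\}$ which \emph{ought} to admit a common refinement in the target order actually has infinite intersection---and an appeal to Bell's theorem via a $\sigma$-centered interpolation forcing rather than to gap-filling.
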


\subsection{Lower bounds}

In this subsection we present a strengthening of Theorem \ref{t:blass_lower} in a sense that with a reasonably stronger assumption, we extend the result of that theorem.
These stronger assumptions have two ingredients.
One is that we assume $\MA$, which guarranties the existence of many P-points. 
The other is that the considered set of P-point ultrafilters has a P$_{\mathfrak{c}}$-point as an upper bound.
Recall that an ultrafilter $\cu$ on $\omega$ is \emph{a P$_{\mathfrak{c}}$-point} if for every $\alpha<\mathfrak{c}$ and each collection $\set{a_i:i<\alpha}\subseteq \cu$ there is an $a\in\cu$ such that $a\subseteq^*a_i$ for each $i<\alpha$.
As well as the set of P-points, the set of P$_{\mathfrak{c}}$-points is downward closed with respect to the $RK$-ordering. 
Finally, we can state the main result of this subsection, which is a jont work of the authors with Verner (see \cite{lowerbounds}).

\begin{Theorem}\label{t:krv}
	Assume $\MA_{\alpha}$.
	Suppose that $\set{\cu_i:i<\alpha}$ is a set of P-points such that $\cu_0$ is a P$_{\mathfrak{c}}$-point and that $\cu_i\le_{RK}\cu_0$ for each $i<\alpha$.
	Then there is a P-point $\cu$ such that $\cu\le_{RK}\cu_i$ for each $i<\alpha$.
\end{Theorem}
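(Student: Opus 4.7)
The plan is to construct a single function $h \colon \omega \to \omega$ and set $\cu = \set{X \subseteq \omega : h^{-1}(X) \in \cu_0}$. For each $i < \alpha$, fix $g_i \colon \omega \to \omega$ witnessing $\cu_i \le_{RK} \cu_0$, so that $X \in \cu_i$ iff $g_i^{-1}(X) \in \cu_0$. I aim to build $h$ with the following two properties:
\begin{enumerate}
\item for each $i < \alpha$ there exist $A_i \in \cu_0$ and $\phi_i \colon \omega \to \omega$ with $h(n) = \phi_i(g_i(n))$ for every $n \in A_i$;
\item $h^{-1}(\set{k}) \notin \cu_0$ for every $k < \omega$.
\end{enumerate}
Condition (1) makes $\phi_i$ witness $\cu \le_{RK} \cu_i$ directly from the definitions: $X \in \cu$ iff $h^{-1}(X) \cap A_i \in \cu_0$ iff $A_i \cap g_i^{-1}(\phi_i^{-1}(X)) \in \cu_0$ iff $\phi_i^{-1}(X) \in \cu_i$. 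Condition (2) makes $\cu$ non-principal. Because $h$ itself witnesses $\cu \le_{RK} \cu_0$, Remark \ref{r:downwardPpoint} makes $\cu$ automatically a P-point.

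I would construct $h$ by applying $\MA_{\alpha}$ to a ccc poset $\mathbb{P}$ whose conditions are tuples $p = (s_p, F_p, \seq{\phi_i^p, a_i^p : i \in F_p})$, where $s_p$ is a finite partial function $\omega \to \omega$ approximating $h$, $F_p \in {[\alpha]}^{< \omega}$, each $\phi_i^p$ is a finite partial function approximating $\phi_i$, and each $a_i^p \subseteq \dom(s_p)$ is a finite subset of $\omega$ representing an initial piece of $A_i$. The compatibility clause requires $s_p(n) = \phi_i^p(g_i(n))$ whenever $i \in F_p$, $n \in a_i^p$, and $g_i(n) \in \dom(\phi_i^p)$, with extension defined componentwise. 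The dense sets passed to $\MA_{\alpha}$ would include: for each $n < \omega$, the set of conditions with $n \in \dom(s_p)$; for each $i < \alpha$, the set with $i \in F_p$; for each $i < \alpha$ and $k < \omega$, the set with $k \in \dom(\phi_i^p)$; for each $i < \alpha$, dense sets driving $a_i^p$ along a pre-chosen decreasing tower in $\cu_0$; and, for each $k < \omega$, dense sets ensuring $h$ takes values other than $k$ on cofinally many members of that tower.

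The main obstacle is twofold. First, ccc-ness of $\mathbb{P}$ requires a $\Delta$-system reduction on the finite shapes $(s_p, F_p, \seq{\phi_i^p})$ together with an amalgamation step that invokes the P-point property of each $\cu_i$ to reconcile partial values of $\phi_i^p$ and $\phi_i^q$ on overlapping inputs. Second, and more delicate, is forcing each $A_i := \bigcup \set{a_i^p : p \in G}$ to lie in $\cu_0$: naively this requires one dense set per element of $\cu_0$, giving $\cc$-many in total and thus too many for $\MA_{\alpha}$. The P$_{\cc}$-point hypothesis on $\cu_0$ is precisely what breaks this barrier: it allows one to pre-select an $\alpha$-indexed decreasing tower in $\cu_0$ with a pseudo-intersection still in $\cu_0$, and to phrase the $a_i^p$-growth and non-principality requirements along this tower instead of along all of $\cu_0$. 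Orchestrating the ccc verification together with this pseudo-intersection bookkeeping is the heart of the proof.
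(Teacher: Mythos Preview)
The paper does not give a self-contained proof of Theorem \ref{t:krv}; it cites \cite{lowerbounds} and explains that the argument there is model-theoretic, going through Theorem \ref{t:krv_models}: one shows that the intersection $\bigcap_{i<\alpha} M_i$ of the principal submodels $M_i \cong \omega^\omega/\cu_i$ (all cofinal in $M_0$) contains a principal submodel cofinal with each $M_i$, and the ultrafilter attached to a generator of that submodel is the desired lower bound. Your direct $\MA$-construction of a single map $h$ is a genuinely different route.

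There is, however, a real gap in your sketch, exactly at the point you flag as ``the heart of the proof.'' You need each $A_i = \bigcup\{a_i^p : p \in G\}$ to land in $\cu_0$, and you propose to achieve this by pre-selecting an $\alpha$-indexed $\subseteq^*$-decreasing tower in $\cu_0$ and growing the $a_i^p$'s along it. But a tower of length $\alpha$ in $\cu_0$ is not a base for $\cu_0$ (the character of $\cu_0$ may well be $\cc$), so meeting every member of such a tower, even cofinally, does not force $A_i \in \cu_0$; it only forces $A_i$ to have infinite intersection with each tower element, which is far weaker. The $\mathrm{P}_{\cc}$-point hypothesis gives pseudo-intersections of fewer than $\cc$ sets already in $\cu_0$, but it does not provide a short cofinal family in $(\cu_0,\supseteq^*)$ along which to phrase density conditions. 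As written, then, you have $\cc$-many genuine requirements per index $i$ and no mechanism to reduce them to $\alpha$-many.

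A second, smaller issue: your non-principality clause (2) asks that $h^{-1}(\{k\}) \notin \cu_0$ for every $k$, and you propose dense sets ``ensuring $h$ takes values other than $k$ on cofinally many members of that tower.'' Again, this only gives $\omega \setminus h^{-1}(\{k\})$ infinite intersection with tower elements, not membership in $\cu_0$. The model-theoretic approach in \cite{lowerbounds} avoids both problems by never trying to hit dense sets indexed by $\cu_0$: it works inside the ultrapower, where the $\mathrm{P}_{\cc}$-point hypothesis translates into a saturation property of $M_0$ that lets one diagonalize over $\alpha$-many submodels at once. If you want to salvage a direct forcing argument, the natural move is to put an actual set $B_p \in \cu_0$ into each condition (as a side condition, not a finite approximation) and to use the $\mathrm{P}_{\cc}$-property \emph{after} obtaining the filter $G$ to intersect the $\alpha$-many $B_p$'s relevant to a given $i$; but then $\sigma$-centeredness, not ccc, is what you should be verifying, and the bookkeeping is rather different from what you describe.
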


There is an immediate corollary to this theorem.

\begin{Cor}
	Assume $\MA$.
	Then:
	\begin{enumerate}
		\item If a collection of fewer than $\mathfrak{c}$ many P$_{\mathfrak{c}}$-points has an upper bound which is a P$_{\mathfrak{c}}$-point, then there is a P$_{\mathfrak{c}}$-point which is a lower bound of this collection in the $RK$-ordering. 
		\item The class of P$_{\mathfrak{c}}$-points is downward $<\mathfrak{c}$-closed in the $RK$-ordering.
	\end{enumerate}
\end{Cor}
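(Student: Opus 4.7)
The plan is to derive both parts of the corollary directly from Theorem \ref{t:krv} together with the elementary observation that the class of P$_\mathfrak{c}$-points is itself downward closed under $\le_{RK}$. This downward closure holds in $\ZFC$ and requires no extra hypotheses: if $f:\omega\to\omega$ witnesses $\cu \le_{RK} \cv$ and $\cv$ is a P$_\mathfrak{c}$-point, then for any $\set{a_\xi:\xi<\beta}\subseteq \cu$ with $\beta<\mathfrak{c}$, the preimages $\set{f^{-1}(a_\xi):\xi<\beta}$ lie in $\cv$, so by the P$_\mathfrak{c}$-point property of $\cv$ there is $b\in\cv$ with $b\subseteq^* f^{-1}(a_\xi)$ for all $\xi$. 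Then $f''b\in\cu$ (since $f^{-1}(f''b)\supseteq b\in\cv$) and for each $\xi$ the set $f''b\setminus a_\xi \subseteq f''\bigl(b\setminus f^{-1}(a_\xi)\bigr)$ is finite, so $f''b\subseteq^* a_\xi$; hence $\cu$ is a P$_\mathfrak{c}$-point.

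For part (1), let $\set{\cv_i:i<\alpha}$ be the given collection of P$_\mathfrak{c}$-points with $\alpha<\mathfrak{c}$, and let $\cw$ be a P$_\mathfrak{c}$-point upper bound with $\cv_i\le_{RK}\cw$ for every $i<\alpha$. I would reindex by setting $\cu_0 = \cw$ and $\cu_{1+i} = \cv_i$ for $i<\alpha$. Then $\set{\cu_j:j<1+\alpha}$ is a family of P-points (P$_\mathfrak{c}$-points being P-points), $\cu_0$ is a P$_\mathfrak{c}$-point, and $\cu_j\le_{RK}\cu_0$ for every $j<1+\alpha$. Because $1+\alpha<\mathfrak{c}$, $\MA$ gives $\MA_{1+\alpha}$, so Theorem \ref{t:krv} produces a P-point $\cu$ with $\cu\le_{RK}\cu_j$ for all $j<1+\alpha$; in particular $\cu\le_{RK}\cv_i$ for each $i<\alpha$ and $\cu\le_{RK}\cw$. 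The downward-closure observation above, applied to the reduction $\cu\le_{RK}\cw$, upgrades $\cu$ from a P-point to a P$_\mathfrak{c}$-point, completing part (1).

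For part (2), I interpret ``downward $<\mathfrak{c}$-closed'' in the natural way: any $\le_{RK}$-decreasing $\beta$-sequence of P$_\mathfrak{c}$-points with $\beta<\mathfrak{c}$ admits a P$_\mathfrak{c}$-point lower bound. Given such a sequence $\set{\cu_i:i<\beta}$, its zeroth element $\cu_0$ is itself a P$_\mathfrak{c}$-point and $\cu_i\le_{RK}\cu_0$ for every $i<\beta$, so $\cu_0$ serves as a P$_\mathfrak{c}$-point upper bound and part (1) produces the desired lower bound.

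There is no real obstacle here: the only content beyond Theorem \ref{t:krv} is the downward-closure lemma for P$_\mathfrak{c}$-points, which is a routine RK-computation. The sole mild subtlety is the bookkeeping that places $\cw$ into the hypothesis of Theorem \ref{t:krv} alongside the $\cv_i$'s, ensuring that the resulting lower bound is RK-below a P$_\mathfrak{c}$-point and therefore inherits the P$_\mathfrak{c}$ property.
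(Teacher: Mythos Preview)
Your proposal is correct and matches the paper's intended derivation: the paper states this result as an ``immediate corollary'' of Theorem~\ref{t:krv} without giving a proof, and your argument---reindex so that the P$_{\mathfrak c}$-point upper bound sits at position $0$, apply Theorem~\ref{t:krv} under $\MA$, then invoke the downward closure of P$_{\mathfrak c}$-points under $\le_{RK}$ (which the paper has already noted just before the theorem)---is exactly the intended route. Your reading of ``downward $<\mathfrak c$-closed'' in part~(2) is the natural one, and the reduction to part~(1) via the first term of the sequence is correct.
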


The thing to emphasize about this result is that it uses model-theoretic analysis of the $RK$-ordering in the same way Blass uses it to prove Theorem \ref{t:blass_lower}.
Namely, the result of Blass is that if $\set{M_i:i<\omega}$ is a collection of pairwise cofinals submodels of $M$ (in the notation from the paragraph just before Lemma \ref{l:model_p_point}) such that at least one of $M_i$'s is principal, then $\bigcap_{i<\omega}M_i$ contains a principal submodel cofinal with each $M_i$ ($i<\omega$).
In \cite{lowerbounds} this result is extended as follows.

\begin{Theorem}\label{t:krv_models}
	Assume $\MA_{\alpha}$.
	Let $\set{M_i:i<\alpha}$ be a collection of pairwise cofinal submodels of $M$.
	Suppose that $M_0$ is a principal submodel and that $\cu_0=\set{X\subseteq \omega:a_0\in\ {}^{*}X}$ is a P$_{\mathfrak{c}}$-point, where $a_0$ generates $M_0$.
	Then $\bigcap_{i<\alpha}M_i$ contains a principal submodel cofinal with each $M_i$.
\end{Theorem}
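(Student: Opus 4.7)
The goal is to find $b \in \bigcap_{i < \alpha} M_i$ that generates a principal submodel cofinal with every $M_i$. Since $M_0$ is principal with generator $a_0$, the requirement $b \in M_0$ forces $b$ to have the form $b = {}^*g(a_0)$ for some $g \in \omega^\omega$. I would produce this $g$ by applying $\MA_\alpha$ to a carefully chosen $\sigma$-centered forcing. The principal submodel generated by ${}^*g(a_0)$ is precisely $\{{}^*(h \circ g)(a_0) : h \in \omega^\omega\}$, so cofinality of $M_b$ with $M_i$ reduces to controlling the growth of the compositions $h \circ g$ against a chosen cofinal family inside $M_i$.

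The first preparatory step is to exploit pairwise cofinality of each $M_i$ with $M_0$. For each $i < \alpha$ and each $f \in \omega^\omega$, fix $c_{i, f} \in M_i$ dominating ${}^*f(a_0)$, and conversely fix $\tilde f_i \in \omega^\omega$ such that $c_{i, f} \; {}^*\!\le \; {}^*\tilde f_i(a_0)$. This translates all information about the non-principal submodels $M_i$ into data in $\omega^\omega$, so that the forcing never has to refer to an element of $M_i$ directly.

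The forcing $\PP$ has conditions of the form $(p, A)$, where $p \in \omega^{<\omega}$ is a finite initial segment of $g$ and $A \in \cu_0$ is a promise set restricting how the full $g$ extends $p$ on indices in $A$. This poset is $\sigma$-centered: any two conditions sharing stem $p$ can be amalgamated by intersecting their promise sets in $\cu_0$. For each $i < \alpha$, countably many dense subsets of $\PP$ encode (a) the requirement $b \in M_i$, by forcing $g$ to agree, on a set in $\cu_0$, with some composition $h \circ \tilde f_i$, so that ${}^*g(a_0) = {}^*h({}^*\tilde f_i(a_0))$ with ${}^*\tilde f_i(a_0) \in M_i$; and (b) the cofinality requirement, by forcing each composition $h \circ g$ (for $h$ in a prescribed enumeration) to eventually dominate a chosen element of $M_i$ under ${}^*\!\le$. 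A single application of $\MA_\alpha$ to the $\alpha$-indexed union of these dense systems then produces the desired $g$.

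The main obstacle, and the reason the hypothesis on $\cu_0$ must be strengthened from P-point to P$_\cc$-point, lies in verifying density of the extensions that capture $b \in M_i$. At a typical step of the argument one must pseudo-intersect a family of fewer than $\cc$ many promise sets, accumulated from the previously considered $M_i$'s, into a single member of $\cu_0$ on which the stem $p$ can then be extended. A P-point only supplies pseudo-intersections of countably many sets in $\cu_0$, which is exactly what fuels Blass's argument for Theorem \ref{t:blass_lower} at $\alpha = \omega$; the P$_\cc$-point hypothesis is what lifts the construction across all $\alpha < \cc$ simultaneously and thereby upgrades Blass's theorem to Theorem \ref{t:krv_models}.
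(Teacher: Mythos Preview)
The paper does not give its own proof of this theorem; it is quoted from \cite{lowerbounds}. Your overall architecture --- write $b = {}^{*}g(a_0)$ and manufacture $g$ with a $\sigma$-centered poset and $\MA_\alpha$ --- is the right one and matches the approach there, and your diagnosis of where the P$_{\mathfrak c}$-point hypothesis is spent is essentially correct.

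There is, however, a genuine gap in your mechanism for (a). You assert that forcing $g$ to agree with $h \circ \tilde f_i$ on a set in $\cu_0$ gives ${}^{*}g(a_0) = {}^{*}h({}^{*}\tilde f_i(a_0))$ with ${}^{*}\tilde f_i(a_0) \in M_i$. The equality is fine, but ${}^{*}\tilde f_i(a_0)$ lies in $M_0$, not in $M_i$: it is the image of $a_0 \in M_0$ under a standard function, and nothing you have arranged puts it into $M_i$. The element you actually possess in $M_i$ is $c_{i,f}$, and all you know is the inequality $c_{i,f} \; {}^{*}\!\le \; {}^{*}\tilde f_i(a_0)$. So this step does not place $b$ in $M_i$.

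The repair (which is Blass's original idea) is a sandwich argument. One arranges $g$ to be non-decreasing and to satisfy $g(n) = g(\tilde f_i(n))$ for all $n$ in some $A_i \in \cu_0$. Then ${}^{*}g$ is constant on the nonstandard interval $[a_0, {}^{*}\tilde f_i(a_0)]$; since $a_0 \; {}^{*}\!\le \; c_i \; {}^{*}\!\le \; {}^{*}\tilde f_i(a_0)$ with $c_i \in M_i$, this yields ${}^{*}g(a_0) = {}^{*}g(c_i) \in M_i$. Note that once $b$ is nonstandard, cofinality of $M_b$ with every $M_i$ is automatic from Lemma~\ref{l:model_p_point} and the pairwise-cofinality hypothesis, so your requirement (b) needs no separate dense sets. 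The forcing conditions must accordingly carry not an unstructured promise set $A \in \cu_0$ but an interval partition coarse enough to absorb the functions $\tilde f_i$ already handled; extending a condition to absorb a new $i$ forces one to coarsen the partition further, and it is in showing that the relevant extension sets are dense that the P$_{\mathfrak c}$-point property of $\cu_0$ is used when $\alpha$ exceeds $\omega$.
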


A natural question here is whether in Theorem \ref{t:krv} and Theorem \ref{t:krv_models} one can remove the assumption of $\cu_0$ being a P$_{\mathfrak{c}}$-point.

\begin{Question}
	Is it consistent with $\ZFC$ that for any $\alpha<\mathfrak{c}$ and any collection of P-points $\set{\cu_i:i<\alpha}$ such that $\cu_i\le_{RK}\cu_0$ for each $i<\alpha$, there is a P-point $\cu$ such that $\cu\le_{RK}\cu_i$.
\end{Question}

At this point we would also like to mention two results about descending chains of P-points.
Both of these prove that there is a P-point with specific properties.
Among other things, it is a rapid ultrafilter.

\begin{Def}\label{d:rapid}
	We say that an ultrafilter $\mathcal U$ on $\omega$ is \emph{rapid} if for every $f\in\omega^{\omega}$ there is $X\in \mathcal U$ such that $X(n)\ge f(n)$ for every $n<\omega$.
\end{Def}

The first of these results is due to Laflamme in \cite{laflamme}, where he is able to generically construct $RK$-descending chains of P-points with very strong properties, but only of arbitrary countable length.
In particular, he proves the following (note that this concise statement is a minor modification of the statement from \cite{natasha_stevo_transactions_2}).

\begin{Theorem}\label{t:laflamme}
	For each $1\le\alpha<\omega_1$, there is an ultrafilter $\cu_{\alpha}$, generic for certain partial order $\mathbb P_{\alpha}$ with the following properties:
	\begin{enumerate}
		\item $\cu_{\alpha}$ is a rapid P-point ultrafilter.
		\item There is a sequence $\seq{\cv_{\gamma}:\gamma<\alpha+1}$ of P-points such that $\cv_0=\cu_{\alpha}$, that $\cv_{\gamma}<_{RK}\cv_{\beta}$ for all $\beta<\gamma<\alpha+1$, and that for any $\cu$ with $\cu\le_{RK}\cu_{\alpha}$ there is $\gamma<\alpha+1$ such that $\cu\equiv_{RK}\cv_{\gamma}$.
	\end{enumerate}
\end{Theorem}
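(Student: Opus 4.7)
The plan is to design $\mathbb{P}_\alpha$ so that its generic ultrafilter $\cu_\alpha$ carries a natural iterated structure of depth $\alpha+1$, giving rise to an explicit decreasing chain of RK-projections $\cv_\gamma$ for $\gamma<\alpha+1$. A convenient realization is to identify $\omega$ with the set of leaves of a well-founded infinitely-branching tree $T_\alpha$ of rank $\alpha$, and to define, for each $\gamma\le\alpha$, a map $\pi_\gamma:\omega\to\omega$ sending a leaf to its ancestor at rank $\gamma$ (so $\pi_0$ is the identity and $\pi_\gamma$ becomes coarser as $\gamma$ grows). Conditions of $\mathbb{P}_\alpha$ are then pairs $(s,X)$, with $s$ a finite subset of $\omega$ approximating $\cu_\alpha$ and $X$ a ``perfect'' subtree of $T_\alpha$ preserving the prescribed branching at every level above $\max(s)$, ordered by end-extension of $s$ combined with refinement of $X$.

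First, I would verify the standard forcing-theoretic properties of $\mathbb{P}_\alpha$. A Prikry--Mathias pure-decision and fusion argument, adapted to this tree setting, shows that $\mathbb{P}_\alpha$ adds no reals and that the union of the finite stems along the generic filter produces an ultrafilter $\cu_\alpha$ on $\omega$. A further fusion shows $\cu_\alpha$ is a P-point, and rapidity follows from the freedom to thin perfect subtrees so that their leaves dominate any prescribed ground-model function.

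Second, setting $\cv_\gamma=\pi_\gamma(\cu_\alpha)$ gives a sequence of P-points with $\cv_0=\cu_\alpha$, where each $\cv_\gamma$ is a P-point by Remark \ref{r:downwardPpoint}. The reduction $\cv_\gamma\le_{RK}\cv_\beta$ for $\beta<\gamma$ follows from the factorization $\pi_\gamma=\sigma_{\gamma,\beta}\circ\pi_\beta$, where $\sigma_{\gamma,\beta}$ sends a rank-$\beta$ node to its rank-$\gamma$ ancestor. Strictness is established by a density argument: for each $\beta<\gamma$, no ground-model function can undo $\sigma_{\gamma,\beta}$ on any perfect subtree, since the forcing allows one to diagonalize against any such candidate.

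The main obstacle, and the bulk of the work, is proving that every $\cu\le_{RK}\cu_\alpha$ is RK-equivalent to some $\cv_\gamma$. Fix $f:\omega\to\omega$ witnessing such a reduction; since $\mathbb{P}_\alpha$ adds no reals, $f$ lies in the ground model. The goal is to show by density that some condition forces the existence of a unique $\gamma\le\alpha$ and a set $A\in\cu_\alpha$ on which $f(n)=f(m)$ if and only if $\pi_\gamma(n)=\pi_\gamma(m)$ for all $n,m\in A$. This is a canonical Ramsey statement for well-founded perfect trees, in the spirit of Pudl\'ak--R\"odl canonization, adapted to the tree-forcing combinatorics of $\mathbb{P}_\alpha$. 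Establishing it requires a transfinite fusion through the ranks of $T_\alpha$, handling at each rank both the finite-to-one and the constant-on-fibers alternatives for $f$; the selectivity-like behaviour of $\cu_\alpha$ at each level is exactly what makes this canonization go through. Once it is in place, the image filter under $f$ is manifestly RK-equivalent to $\cv_\gamma$, completing the classification.
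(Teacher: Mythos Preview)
The paper does not contain a proof of Theorem \ref{t:laflamme}; it is stated as a result of Laflamme and attributed to \cite{laflamme}, with no argument given beyond the surrounding commentary. So there is no ``paper's own proof'' against which to compare your proposal.

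That said, your sketch is a reasonable outline, but it is worth noting that it is much closer in spirit to the Dobrinen--Todorcevic construction recorded immediately afterwards as Theorem \ref{t:stevonatasha} than to Laflamme's original argument. Laflamme in \cite{laflamme} does not work with well-founded trees and Mathias-style stem-and-subtree conditions; rather, he defines a transfinite hierarchy of filters $\FFF_\alpha$ on $\omega$ directly (by iterating a ``next filter'' operation through $\alpha$), forces with the positive sets of these filters, and establishes the classification of RK-predecessors via complete-combinatorics arguments for those filters. Your tree-of-rank-$\alpha$ picture, the projections $\pi_\gamma$ to ancestors at prescribed ranks, and the Pudl\'ak--R\"odl-style canonization are essentially the ingredients of the topological Ramsey spaces $\mathcal R_\alpha$ of \cite{natasha_stevo_transactions_2}. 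Both routes reach the same conclusion; Laflamme's is more hands-on with filter combinatorics, while the route you describe packages the canonization step into a Ramsey-theoretic framework that also yields the Tukey classification of Theorem \ref{t:stevonatasha}. If your aim is specifically to reproduce Laflamme's proof, you would need to recast things in terms of his filter hierarchy rather than trees.
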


The second one is due to Dobrinen and Todorcevic in \cite{natasha_stevo_transactions_2}.
There, for each countable ordinal, they construct a topological Ramsey space $\mathcal R_{\alpha}$, which then generically gives a specific P-point $\cu_{\alpha}$ and completely describes its Tukey predecessors.
Note that this essentially proves the mentioned Laflamme's result in the Tukey setting.

\begin{Theorem}\label{t:stevonatasha}
	For each $1\le\alpha<\omega_1$, there is a topological Ramsey space $\mathcal R_{\alpha}$, and an ultrafilter $\cu_{\alpha}$, generic for a partial order $(\mathcal R_{\alpha},\le_{\alpha}^*)$ with the following properties:
	\begin{enumerate}
		\item $\cu_{\alpha}$ is a rapid P-point ultrafilter.
		\item There is a sequence $\seq{\cv_{\gamma}:\gamma<\alpha+1}$ of P-points such that $\cv_0=\cu_{\alpha}$, that $\cv_{\gamma}<_{T}\cv_{\beta}$ for all $\beta<\gamma<\alpha+1$, and that for any $\cu$ with $\cu\le_{T}\cu_{\alpha}$ there is $\gamma<\alpha+1$ such that $\cu\equiv_{T}\cv_{\gamma}$.
	\end{enumerate}
\end{Theorem}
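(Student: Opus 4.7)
The plan is to proceed by transfinite recursion on $\alpha < \omega_1$, building for each ordinal a topological Ramsey space $\mathcal R_\alpha$ in the axiomatic sense of Todorcevic (axioms \textbf{A.1}--\textbf{A.4}). Take $\mathcal R_1$ to be the Ellentuck space $\pc{\omega}{\omega}$, whose generic is selective and so supplies the base case via the discussion in Section \ref{sec:selective}. For successor $\alpha + 1$, build a ``tree'' space whose elements are well-founded trees of rank $\alpha+1$ whose nodes are labeled by finite $\mathcal R_\alpha$-data, equipped with a natural fusion relation $\le^*_{\alpha+1}$. For limit $\alpha$, fix a cofinal sequence $\alpha_n \nearrow \alpha$ and amalgamate the $\mathcal R_{\alpha_n}$'s into a product-like space via a diagonal construction. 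At each stage one must verify \textbf{A.1}--\textbf{A.4} for the triple $(\mathcal R_\alpha, \le^*_\alpha, r)$, where $r$ enumerates the finite approximations; Todorcevic's abstract Ellentuck theorem then yields that every metrically Baire subset of $\mathcal R_\alpha$ is Ramsey.

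Given the spaces, I would let $\cu_\alpha$ be read off from a generic $G$ for the countably closed forcing $(\mathcal R_\alpha, \le^*_\alpha)$ by projecting each $X \in G$ onto the canonical infinite subset of $\omega$ it determines. Countable closure together with the Ramsey property give, in analogy with item (\ref{kunenramsey:fourth}) of Theorem \ref{thm:kunenramsey} and item (\ref{blassramsey:fourth}) of Theorem \ref{thm:blassramsey}, that $\cu_\alpha$ is an ultrafilter on $\omega$. The P-point property is then immediate from $\sigma$-closure, and rapidity follows by a standard density argument: for each $f \in \BS$ the set of conditions $X$ forcing $X(n) \ge f(n)$ for all $n$ is dense below any condition.

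The heart of the matter, and the principal obstacle, is a canonical partition theorem for $\mathcal R_\alpha$ that is the exact analogue of item (\ref{blassramsey:third}) of Theorem \ref{thm:blassramsey} and of the Pudl\'ak--R\"odl canonical Ramsey theorem for the Ellentuck space. Concretely, one must prove: for every $\varphi : \mathcal R_\alpha \to \omega$ there exist $X \in \mathcal R_\alpha$ and a canonical ``truncation projection'' $\pi_\gamma$, for some $\gamma \le \alpha$, such that $\varphi$ on the space below $X$ factors as a one-to-one function composed with $\pi_\gamma$. Here $\pi_\gamma$ retains exactly the first $\gamma$ levels of the tree structure, and there are exactly $\alpha + 1$ such mutually non-equivalent projections. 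The proof is by induction on $\alpha$, combining the abstract Ellentuck theorem with a careful analysis of how finite approximations at different levels interact; this is where the bulk of the technical work lies. Once the canonical form is in hand, set $\cv_\gamma = \pi_\gamma(\cu_\alpha)$: these are P-points RK-below $\cu_\alpha$ with $\cv_0 = \cu_\alpha$, and every $\cu$ that is RK-below $\cu_\alpha$ is RK-equivalent to exactly one $\cv_\gamma$.

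Finally, to lift the classification from RK to Tukey and obtain the strict inequalities $\cv_\gamma <_T \cv_\beta$ for $\beta < \gamma < \alpha+1$, I would prove along the lines of Theorem \ref{thm:basic} and Corollary \ref{cor:selectivetukey} that $\cu_\alpha$ is basically generated, so that every monotone cofinal map from $\cu_\alpha$ into a $\cv \le_T \cu_\alpha$ is continuous on a cofinal set and hence, by the canonical partition theorem, factors through one of the $\pi_\gamma$. Strictness follows by exhibiting, for $\beta < \gamma$, a cofinal family in $\cv_\beta$ that no continuous monotone cofinal map can push into a cofinal family of $\cv_\gamma$, the obstruction being that $\pi_\beta$ encodes strictly more levels of tree structure than $\pi_\gamma$.
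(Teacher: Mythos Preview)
The paper does not prove this theorem. It is a survey, and Theorem \ref{t:stevonatasha} is stated without proof as a result of Dobrinen and Todorcevic \cite{natasha_stevo_transactions_2}; the surrounding text merely says that the spaces $\mathcal R_\alpha$ are constructed there and refers the reader to \cite[Section 5]{natasha_stevo_transactions_2} for details. So there is no ``paper's own proof'' to compare your proposal against.

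That said, your outline is broadly in the spirit of the actual Dobrinen--Todorcevic argument: one does build topological Ramsey spaces by recursion on $\alpha$, one does prove a Pudl\'ak--R\"odl style canonization theorem identifying finitely many canonical equivalence relations on fronts, and one does pass from RK to Tukey via the continuous-cofinal-map theorem for P-points. A few points of caution, however. First, the spaces $\mathcal R_\alpha$ in \cite{natasha_stevo_transactions_2} are not built as successor/limit amalgams of the earlier $\mathcal R_\beta$'s in the way you describe; they are constructed directly from specific uniform tree structures indexed by $\alpha$, and verifying axiom \textbf{A.4} (the pigeonhole axiom) is delicate and does not reduce to an inductive appeal to the earlier spaces. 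Second, your claim that ``the P-point property is immediate from $\sigma$-closure'' is too quick: $\sigma$-closure of $(\mathcal R_\alpha, \le^*_\alpha)$ gives that the generic filter on $\mathcal R_\alpha$ has pseudointersections, but one must still argue that the projected ultrafilter on $\omega$ inherits this. Third, the Tukey classification does not go through ``basically generated'' in \cite{natasha_stevo_transactions_2}; rather, one uses directly that any $\cv \le_T \cu_\alpha$ admits a continuous monotone cofinal map from $\cu_\alpha$ (since $\cu_\alpha$ is a P-point), and then applies the canonization theorem to that map. Your sketch is a plausible high-level plan but underestimates where the real work lies.
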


Note that it is not necessary to force with $(\mathcal R_{\alpha},\le^*)$ to obtain the ultrafilter $\cu_{\alpha}$.
For example, under $\CH$ or $\MA$, this ultrafilter can be constructed from the Ramsey space $\mathcal R_{\alpha}$.
For more details see \cite[Section 5]{natasha_stevo_transactions_2}.

\subsection{Initial segments}

As we have already pointed out, there has been a lot of work on understanding the order structure of the set of P-points under natural set-theoretic assumptions.
The direction we have described so far concentrated on Conjecture \ref{c:workinghypothesis} and the question which partial orders can be embedded into the set of P-points under the $RK$ and other relevant orderings.
More information can be obtained if one investigates which orders can be embedded as initial segments into the same set.
Recall that $(P,\le_P)$ embeds into $(Q,\le_Q)$ as an initial segment if the image of $P$ under the embedding is a downward closed subset of $Q$ (i.e. the image is an initial segment of $Q$).

Note that the result of Laflamme, Theorem \ref{t:laflamme}, is in this spirit.
It gives an initial segment of P-points which is a reversed successor ordinal, and Theorem \ref{t:stevonatasha}, extends it to the Tukey setting.
However, for the rest of this subsection we focus on increasing initial segments of P-points.

One of the first results in this direction, in the class of all ultrafilters, is again due to Blass in \cite{blass_initial} where he constructed an initial segment of ultrafilters of order type $\omega_1$ under $\CH$.
In other words, using $\CH$ he inductively constructed a sequence $\seq{\cu_{\alpha}:\alpha<\omega_1}$ of ultrafilters such that $\cu_{\alpha}<_{RK}\cu_{\beta}$ for all $\alpha<\beta<\omega_1$ and that for any ultrafilter $\cu$, if $\cu\le_{RK}\cu_{\alpha}$ for some $\alpha<\omega_1$, then there is $\gamma\le\alpha$ such that $\cu\equiv_{RK}\cu_{\gamma}$.

Note that Blass uses the mentioned model theoretic approach in this result as well.
In particular, for a model of complete arithmetic $M'$ he defines its proper elementary extension $M''$ to be \emph{strictly minimal} if every proper submodel of $M''$ is a subset of $M'$.
He then proves, using $\CH$, that it is possible to build an increasing continuous chain of models of arithmetic in which each successor term is strictly minimal extension of its immediate predecessor.
This chain can be then transformed in the required $RK$-initial segment of ultrafilters.

Rosen, in his construction of an $RK$-initial segment of P-points of order type $\omega_1$, also under $\CH$, takes a similar approach.
For a P-point $\cu$, he defines a P-point $\cv$ to be \emph{a strong immediate successor} of $\cu$ if $\cu\le_{RK}\cv$, and for any ultrafilter $\mathcal W$, if $\mathcal W\le_{RK}\cv$, then $\mathcal W\le_{RK}\cu$.

The results of Eck from his PhD Thesis \cite{eck_phd} then take care of the existence of a countable initial segment of P-points.
Namely, Eck provided a method which proves the following theorem.

\begin{Theorem}\label{t:eck}
	Assume $\CH$.
	Let $\cu$ be a P-point ultrafilter.
	Then there is a P-point $\cv$ which is a strong immediate successor of $\cu$.
\end{Theorem}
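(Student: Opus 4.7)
The plan is a standard $\CH$ diagonalization of length $\omega_1$. Fix a surjection $f: \omega \to \omega$ with infinite fibres, and construct $\cv$ on $\omega$ so that $f_{\ast}(\cv) = \cu$, which witnesses $\cu \le_{RK} \cv$. Using $\CH$, enumerate $\mathcal{P}(\omega)$ as $\{A_\alpha : \alpha < \omega_1\}$, enumerate all countable sequences of subsets of $\omega$ as $\{\bar X^\alpha : \alpha < \omega_1\}$, and enumerate all functions $g: \omega \to \omega$ as $\{g_\alpha : \alpha < \omega_1\}$. I would recursively build an increasing chain $\langle \mathcal{F}_\alpha : \alpha < \omega_1 \rangle$ of filter bases on $\omega$, each of cardinality $< \aleph_1$, each refining $\{f^{-1}(A) : A \in \cu\}$.

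At stage $\alpha+1$ I would perform three tasks. First (deciding), add whichever of $A_\alpha$ or $\omega \setminus A_\alpha$ is compatible with $\mathcal{F}_\alpha$. Second ($P$-point closure), if the family $\bar X^\alpha$ is already contained in $\mathcal{F}_\alpha$, I produce a pseudo-intersection: for each $m$ use the $P$-point property of $\cu$ to pick $Z_m \subseteq f^{-1}(\{m\})$ with $Z_m \subseteq^{\ast} X^\alpha_n$ for every $n$, then paste the $Z_m$'s over a $\cu$-large set $A$, again using the $P$-point property of $\cu$. Third (canonical form), find $Y_\alpha$ that is $\mathcal{F}_\alpha$-positive and on which $g_\alpha$ takes one of two canonical shapes: either $g_\alpha = h \circ f$ on $Y_\alpha$ for some $h: \omega \to \omega$, or $g_\alpha$ is one-to-one on $Y_\alpha$. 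Throughout I would maintain the negative invariant that no set in $\mathcal{F}_\alpha$ is contained in the graph of any $h: \omega \to \omega$, which guarantees that $\cu <_{RK} \cv$ is strict at the limit.

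At the end, $\cv = \bigcup_{\alpha < \omega_1} \mathcal{F}_\alpha$ is an ultrafilter by the deciding step, it is a $P$-point by the pseudo-intersection step, $\cu \le_{RK} \cv$ via $f$, and the inequality is strict by the negative invariant. For the strong immediate successor property, observe that any $\mathcal{W} \le_{RK} \cv$ is of the form $g_{\ast}(\cv)$ for some $g : \omega \to \omega$; in the factoring canonical form, $g_{\ast}(\cv) \le_{RK} f_{\ast}(\cv) = \cu$, while in the injective canonical form, $g_{\ast}(\cv) \equiv_{RK} \cv$. So every non-trivial RK-predecessor of $\cv$ lies below $\cu$, which is the desired dichotomy.

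The hard part is the canonical form step. Given $g$ and $\mathcal{F}$, one first applies Ramsey's theorem inside each fibre $f^{-1}(\{m\})$ to produce an infinite $E_m \subseteq f^{-1}(\{m\})$ on which $g$ is either constant or one-to-one. The ultrafilter $\cu$ selects which alternative occurs on a $\cu$-large set of $m$'s. In the constant case, pasting the $E_m$'s via the $P$-point property of $\cu$ yields an $\mathcal{F}$-positive set on which $g$ factors as $h \circ f$. In the one-to-one case, a diagonal selection is required: for each $m$ in a $\cu$-large set, thin $E_m$ to avoid collisions with the $g$-images from previously chosen fibres; the $P$-point property of $\cu$ then produces a single $\mathcal{F}$-positive pasting on which $g$ is globally one-to-one. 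The delicate situation is when fibre images overlap too heavily to diagonalise away but no $\cu$-large set exhibits uniform constancy. Here a finer argument is required, mixing a Ramsey-style colouring of pairs $\{m, m'\}$ recording the intersection pattern of $g''E_m$ with $g''E_{m'}$ against the $P$-point diagonalisation of $\cu$, in order to recognise a concealed factoring through $f$. This coordinated use of Ramsey inside fibres and $P$-point closure across fibres is the technical crux of Eck's method and is where the assumption that $\cu$ is a $P$-point is indispensable.
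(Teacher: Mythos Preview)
The paper does not supply a proof of this theorem; it is simply attributed to Eck's thesis. The closest the paper comes is a remark following Definition~\ref{d:normaltriple}: using the poset of pairs $\seq{c_q,\pi_q}$ with $c_q\in\mathbb{P}$, $\pi_q''\sset(c_q)\in\cu$, and $\seq{\pi_q,\psi_q,b_q}$ a normal triple for some $\psi_q,b_q$, a sufficiently generic filter under $\CH$ produces the desired strong immediate successor. So there is no detailed argument to compare against, only this poset sketch.

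Your route is genuinely different from that sketch. You fix a single surjection $f$ with infinite fibres once and for all and build $\cv$ as a filter refining $f^{-1}(\cu)$; the paper's poset lets the Rudin--Keisler witness $\pi_q$ vary with the condition and imposes the block structure of $\mathbb{P}$ (Definition~\ref{d:orderP}) on the generators. Your approach is closer to the direct filter-diagonalizations of Blass and Eck; the normal-triple machinery is designed to scale to the $\delta$-generic sequences of Definition~\ref{d:genericsequence} and delivers monotone RK-maps automatically. Both approaches ultimately hinge on a canonical-form dichotomy for maps $g:\omega\to\omega$ restricted to a filter set, which you correctly identify as the crux.

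One step in your outline is misstated. In the P-point closure you write ``use the P-point property of $\cu$ to pick $Z_m\subseteq f^{-1}(\{m\})$''. The P-point property of $\cu$ says nothing about subsets of an individual fibre; $\cu$ lives on the base, not on $f^{-1}(\{m\})$. The fibre traces $X^\alpha_n\cap f^{-1}(\{m\})$ need not even be infinite for every $m$, so you cannot first fix $Z_m$ fibre-by-fibre and then paste. What actually works is to use the P-point property of $\cu$ on the sets $f''X^\alpha_n\in\cu$ to get a single $A\in\cu$ almost contained in each, and simultaneously perform a diagonal thinning over $m\in A$ so that the resulting union is $\subseteq^*$ every $X^\alpha_n$ and still positive for $\mathcal{F}_\alpha$. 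This is routine once stated correctly, but your phrasing suggests the wrong mechanism. The same conflation recurs in the canonical-form step: the ``concealed factoring'' case you flag is real and does require coordinating Ramsey inside fibres with a P-point diagonalization across them, but the latter is again a base-level use of $\cu$, not a fibre-level one.
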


This construction can be then carried countably many times to obtain an $RK$-initial segment of P-points of order type $\omega$.
Note however, that Rosen was not able to simply use Eck's result in his construction of an uncountable initial segment.
Some care has to be taken in the construction of strong immediate successors in each step, to make it possible for a given countable limit initial segment to be minimaly extended.
In order to precisely state these results, we recall a few more notions from model theory.

If $M'$ is a finitely generated model of complete arithmetic, then we say that $M'$ is \emph{element generated} if for every generator $a$ of $M'$ there is a generator $b$ of $M'$ such that $b\in a(M')$.
Here $a(M')=\set{c\in M':M'\models c\in' a}$, where $\in'$ is defined as follows: for $m,n\in\omega$, we say that $m\in' n$ iff $2^m$ occurs in the binary expansion of $n$.
\emph{A cut} in an ultrapower $\omega^{\omega}/\cu$ is a partition into convex sets $S$ and $L$ such that $a\ {}^{*}\!\!<b$ for every $a\in S$ and $b\in L$.
If $X$ is a set, and $f$ is a function which is finite-to-one on $X$, then we define a function $C_{X,f}$ by $C_{X,f}(n)=\abs{X\cap f^{-1}\br{\set{n}}}$.
If $\cv$ is a P-point and $f:\omega\to\omega$ witnesses that $\cu\le_{RK}\cv$, then the cut in $\omega^{\omega}/\cu$ associated to $f$ and $\cv$ is defined by putting into $L$ all germs $[C_{X,f}]_{\cu}$ for $X\in\cv$ and all larger germs, and defining $S=\br{\omega^{\omega}/\cu}\setminus L$.
Now we are able to state the main technical lemma Rosen uses:

\begin{Theorem}\label{rosen_main}
	Assume $\CH$.
	Let $\set{\cu_i:i<\omega}$ be an $RK$-increasing sequence of P-points, where $f_i$ witnesses that $\cu_i\le_{RK}\cu_{i+1}$, and let $M'$ be the direct limit of the family $\set{\br{\omega^{\omega}/\cu_i,f_i^*}:i<\omega}$.
	For $i\ge 1$ let $g_i=f_0\circ f_1\circ \cdots\circ f_{i-1}$ and let $\br{S^i,L^i}$ be the cut in $\omega^{\omega}/\cu_0$ associated to $g_i$ and $\cu_i$.
	Assume also that $M'$ admits a strictly minimal extension, that $S_i\subseteq S^{i+1}$ and $S^i\neq S^{i+1}$ for $i\ge 1$, and that $\omega^{\omega}/\cu_i$ is element generated for $i\ge 0$.
	Then there is an element generated, strictly minimal extension of $M'$ whose every nonstandard submodel is cofinal with it.
\end{Theorem}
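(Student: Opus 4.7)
The goal is to construct, under CH, a P-point ultrafilter $\cv$ on $\omega$ such that $\cu_i \le_{RK} \cv$ for every $i < \omega$ via maps compatible with the direct limit structure of $M'$, and such that the ultrapower $\omega^\omega/\cv$ realizes the desired strictly minimal, element generated extension of $M'$. By Lemma \ref{l:model_p_point}, the P-pointness of $\cv$ will then yield the cofinality of every nonstandard submodel of $\omega^\omega/\cv$.

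The plan is a standard $\omega_1$-length diagonalization with additional model-theoretic bookkeeping. Fix a strictly minimal extension $M^{\ast}$ of $M'$ guaranteed by hypothesis and a generator $a \in M^{\ast} \setminus M'$ for it; the ultrafilter $\cu_{a} = \{X \subseteq \omega : a \in {}^{\ast}X\}$ gives an initial candidate with the correct embedding behavior. Using CH, I would enumerate in type $\omega_1$ every object that must be decided: each $X \subseteq \omega$ (for $\cv$-membership), each countable ${\subseteq}^{\ast}$-descending sequence of potential members of $\cv$ (to be given a pseudointersection, ensuring P-pointness), and each $h \in \omega^\omega$ (to be tested against every $g_i$ and, when $[h]_{\cv}$ lies outside the embedded image of $M'$, to be refined to produce an element generator of the principal submodel it generates). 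I would then build an increasing chain $\langle \mathcal{F}_\alpha : \alpha < \omega_1 \rangle$ of countable filter bases on $\omega$, each containing a coherent family of preimages $g_i^{-1}(B)$ for $B \in \cu_i$, handling a single task at each successor stage: element generatedness of $\omega^\omega / \cu_i$ is used either to certify that the presented $h$ is $\cv$-equivalent to a function arising from some $\cu_i$ (so $[h]_{\cv}$ lies in the copy of $M'$) or to exhibit an internal generator $h'$ with $h' \in h(\omega^\omega / \cv)$; the existence of $M^{\ast}$ is used to certify at each stage that no intermediate submodel between $M'$ and the principal submodel coded by $a$ needs to be accommodated.

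The main obstacle is coordinating across the $\omega_1$ stages the three-way interaction among element generation, strict minimality over $M'$, and compatibility with the embeddings induced by the $g_i$. The hypothesis $S^i \subsetneq S^{i+1}$ is the key device making this coordination possible: at each stage and each candidate $h$, the strict increase of the cuts supplies a fresh layer of indices at level $i+1$ on which the decision about $h$ relative to $g_i$ can be recorded without overwriting earlier decisions, and it guarantees that the principal submodels generated at each finite level genuinely stack into a limit distinct from each finite stage. The countable limit stages $\lambda < \omega_1$ are where the argument is most delicate: one must verify that $\bigcup_{\alpha < \lambda} \mathcal{F}_\alpha$ still supports a strictly minimal extension of $M'$, which reduces to a fusion argument enabled by the P-pointness of each $\cu_i$ together with the element generation hypothesis. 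Once the diagonalization terminates, the filter generated by $\bigcup_{\alpha < \omega_1} \mathcal{F}_\alpha$ extends uniquely to the sought P-point $\cv$, and $\omega^\omega / \cv$ is the desired element generated, strictly minimal extension of $M'$ whose every nonstandard submodel is cofinal with it.
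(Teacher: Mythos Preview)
The paper does not prove this theorem. It is a survey, and Theorem~\ref{rosen_main} is stated as ``the main technical lemma Rosen uses'' with a citation to \cite{rosen}; no proof or proof sketch is given in the paper itself. So there is no paper proof to compare your proposal against.

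That said, your overall architecture---a $\CH$-length diagonalization producing a P-point $\cv$ above the $\cu_i$, with bookkeeping to secure ultrafilter membership, pseudointersections, element generation, and strict minimality over $M'$---is the natural shape such an argument must take, and is consistent with how the surrounding discussion in the paper describes Rosen's method (an adaptation of Eck's successor-stage construction to handle a countable limit). Two points in your sketch deserve more care before this could be called a proof. First, your use of the hypothesis $S^i \subsetneq S^{i+1}$ is described only metaphorically (``a fresh layer of indices''); in Rosen's argument the strictly increasing cuts are what allow one to locate, at each stage, a function whose germ in $\omega^\omega/\cu_0$ separates the levels and thereby to control where a newly presented $h$ falls relative to the tower of embedded models---you would need to make this mechanism explicit. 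Second, the tension between strict minimality and P-pointness is the heart of the matter: when you encounter an $h$ whose germ is not in the image of $M'$, you must arrange that $h$ generates the \emph{whole} extension, not an intermediate model, and you must do this while still leaving enough room to diagonalize for pseudointersections later. Your appeal to ``fusion'' at countable limit stages and to the existence of $M^\ast$ as a certificate is where the real work lies, and the proposal does not yet indicate how the element-generatedness of each $\omega^\omega/\cu_i$ is actually invoked to push a generator into $h(\omega^\omega/\cv)$.
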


So, in \cite{rosen}, Rosen is using a modification of Eck's result for the successor case which enables him to use Theorem \ref{rosen_main} in order to construct the required initial segment of type $\omega_1$.

\begin{Theorem}
	Assume $\CH$.
	Let $\cu$ be a selective ultrafilter. 
	Then there is an $RK$-initial segment of P-points $\seq{\cu_{\alpha}:\alpha<\omega_1}$ such that $\cu_0=\cu$.
\end{Theorem}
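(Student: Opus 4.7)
The plan is to construct the sequence $\seq{\cu_\alpha : \alpha < \omega_1}$ by transfinite recursion, setting $\cu_0 = \cu$ and maintaining at every stage $\alpha$ the invariants that $\seq{\cu_\beta : \beta \le \alpha}$ is $RK$-strictly increasing, that each $\cu_\beta$ is a P-point, that the sequence so far is an initial segment of the $RK$-order on all ultrafilters of $\omega$, that every ultrapower $\omega^\omega/\cu_\beta$ is element generated, and that the cuts in $\omega^\omega/\cu_0$ associated with the successive compositions of $RK$-maps strictly grow. The whole recursion is carried out under $\CH$, which is used both inside the $\CH$-based lemmas cited above and to enumerate, along $\omega_1$ steps, the countably many functions, filter bases, and potential elementary submodels that must be confronted at each stage.

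At a successor stage $\alpha + 1$, I would apply a strengthened form of Eck's Theorem \ref{t:eck}, as used by Rosen in \cite{rosen}, to produce a P-point $\cu_{\alpha+1}$ which is a \emph{strong immediate successor} of $\cu_\alpha$. The strengthening makes sure that $\omega^\omega/\cu_{\alpha+1}$ is element generated and that the new cut in $\omega^\omega/\cu_0$ associated with $g_{\alpha+1} = f_0 \circ \cdots \circ f_\alpha$ is strictly larger than any cut obtained before. The strong-immediate-successor property takes care of the initial-segment condition directly: if $\cv \le_{RK} \cu_{\alpha+1}$ for some ultrafilter $\cv$, then $\cv \le_{RK} \cu_\alpha$, and the induction hypothesis supplies $\gamma \le \alpha$ with $\cv \equiv_{RK} \cu_\gamma$.

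At a limit stage $\lambda < \omega_1$, since $\cf(\lambda) = \omega$, I would fix a strictly increasing sequence $\seq{\alpha_n : n < \omega}$ cofinal in $\lambda$, let $f_n$ witness $\cu_{\alpha_n} \le_{RK} \cu_{\alpha_{n+1}}$, and form the direct limit $M'$ of the system $\set{(\omega^\omega/\cu_{\alpha_n}, f_n^\ast) : n < \omega}$. The invariants carried through the recursion are exactly the hypotheses of Theorem \ref{rosen_main}, so that theorem yields an element generated, strictly minimal extension of $M'$ every nonstandard elementary submodel of which is cofinal with it. This extension is $\omega^\omega/\cu_\lambda$ for a P-point $\cu_\lambda$ satisfying $\cu_{\alpha_n} \le_{RK} \cu_\lambda$ for all $n$. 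The strict minimality is what enforces the initial-segment condition: if $\cv \le_{RK} \cu_\lambda$ and $\cv \not\equiv_{RK} \cu_\lambda$, then $\omega^\omega/\cv$ embeds as a proper elementary submodel of $\omega^\omega/\cu_\lambda$, hence lies inside $M'$; being generated by a single element, that generator belongs to some $\omega^\omega/\cu_{\alpha_n}$, so $\cv \le_{RK} \cu_{\alpha_n}$ and the induction hypothesis finishes the step.

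The hard part is neither the successor step nor the limit step in isolation, but the bookkeeping at every successor stage that guarantees all the side conditions demanded by Theorem \ref{rosen_main} at every \emph{subsequent} limit stage will be met simultaneously. In particular, Eck's theorem has to be refined so that the strong immediate successor chosen at stage $\alpha+1$ preserves element generatedness, keeps each evolving direct limit admitting a strictly minimal extension, and strictly enlarges the cut in $\omega^\omega/\cu_0$. Juggling these requirements through $\omega_1$ stages without any limit stage ever failing the hypotheses of Rosen's lemma is the essential technical content of the argument in \cite{rosen}, and it is where $\CH$ is used most heavily.
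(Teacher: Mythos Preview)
Your proposal is correct and follows essentially the same approach that the paper sketches in the paragraphs preceding the theorem: transfinite recursion using a strengthened form of Eck's Theorem~\ref{t:eck} at successor stages (preserving element generatedness and strictly growing cuts) and Theorem~\ref{rosen_main} at limit stages, with the crucial point being the bookkeeping at successor steps to maintain the hypotheses of Theorem~\ref{rosen_main} for all later limits. The paper itself does not give a formal proof of this theorem but only summarizes Rosen's method from~\cite{rosen}; your outline is a faithful expansion of that summary.
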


Since Dobrinen and Todorcevic proved in \cite{dt} that, under $\MA$, $\omega_1$ embeds into the Tukey ordering of P-points, the following question suggests itself.

\begin{Question}
	Is there, under suitable set-theoretic assumptions, a Tukey-initial segment of P-points of length $\omega_1$?
\end{Question}

\subsection{Chains} As we have already mentioned, Blass proved that $\omega_1$ embeds into the $RK$ ordering of P-points under suitable hypothesis, while by comments after the statement of Conjecture \ref{c:workinghypothesis} we know that $\mathfrak{c}^+$ is the largest ordinal which can possibly embed into the set of P-points under both $RK$ and Tukey ordering.
In \cite{longchain} the authors proved that this is indeed possible under $\CH$.
Afterwards, the second author and Verner in \cite{verner} improved the construction from \cite{longchain}, and Starosolski contributed by constructing shorter chains but under the weaker assumptions.

First we explain the construction from \cite{longchain}.
There, the authors use the notion of a $\delta$-generic sequence of P-points.
To state the definition we have to introduce a few notions.
First, the fundamental partial order that was used.

\begin{Def}\label{d:orderP}
	Define $\mathbb{P}$ to be the set of all functions $c:\omega\to\FIN$ such that $\abs{c(n)}<\abs{c(n+1)}$ and $c(n)<_{\mathtt{b}} c(n+1)$ for each $n<\omega$.
	If $c,d\in \mathbb{P}$, then $c\le d$ if there is an $l<\omega$ such that $c\le_l d$, where $$c\le_l d \Leftrightarrow \forall m\ge l\ \exists n\ge m\ [c(m)\subseteq d(n)].$$
\end{Def}

For $c\in \mathbb{P}$ we define $\sset(c)=\bigcup_{n<\omega}c(n)$.
Now we introduce the concept of a normal triple.

\begin{Def}\label{d:normaltriple}
	A triple $\seq{\pi,\psi,c}$ is called a \emph{normal triple} if $\pi,\psi\in \omega^{\omega}$, for every $l\le l'<\omega$ we have that $\psi(l)\le \psi(l')$, if $\ran(\psi)$ is infinite, and if $c\in \mathbb{P}$ is such that for $l<\omega$ we have $\pi''c(l)=\set{\psi(l)}$ and for $n\in \omega\setminus \sset(c)$ we have $\pi(n)=0$.
\end{Def}

Before we explain the construction of the $\mathfrak{c}^+$-chain of P-points from \cite{longchain}, let us present a simplified version.
Suppose that we have two P-points $\cu_0$ and $\cu_1$ such that $\cu_0\le_{RK}\cv_1$ and that this is moreover witnessed with a function $\pi_{1,0}$ which is nondecreasing.
Then we could use the poset $\mathbb Q$ of all $q=\seq{c_q,\pi_{q,1},\pi_{q,0}}$ such that $c_q\in\mathbb{P}$ and $\pi_{q,0},\pi_{q,1}\in\omega^{\omega}$, and that for each $i<2$:
\begin{itemize}
	\item $\pi_{q,i}''\sset(c_q)\in\cu_i$,
	\item $\forallbutfin k\in\sset(c_q)\ [\pi_{q,0}(k)=\pi_{1,0}(\pi_{q,1}(k))]$,
	\item there are $\psi_{q,i}\in\BS$ and $b_{q,i}\ge c_q$ such that $\seq{\pi_{q,i},\psi_{q,i},b_{q,i}}$ is a normal triple.
\end{itemize}
The ordering on $\mathbb Q$ would be: $q\le s$ if and only if $c_q\le c_s$ and $\pi_{q,0}=\pi_{s,0}$ and $\pi_{q,1}=\pi_{s,1}$.
Now using CH and a set of conditions meeting enough dense sets we would obtain a sequence $\seq{c_{q_{\alpha}}:\alpha<\mathfrak{c}}$ generating a P-point $\cu$ such that $\cu_0\le_{RK}\cu$ and $\cu_1\le_{RK}\cu$.
Moreover, both of these inequalities would be witnessed by nondecreasing maps.

The poset $\mathbb Q$ is a simplified version of the countably closed poset $\mathbb Q^{\delta}$ from Definition \ref{d:theposet} which is used to extend a given $\delta$-generic sequence from Definition \ref{d:genericsequence}.

Note that if only one P-point $\cu$ is given, then just with the poset consisting of pairs $q=\seq{c_q,\pi_q}$ where $c_q\in\mathbb{P}$ and $\pi_q\in\omega^{\omega}$ are such that $\pi_q''\sset(c_q)\in \cu$ and that there are $\psi_q\in\omega^{\omega}$ and $b_q\ge c_q$ such that $\seq{\pi_q,\psi_q,b_q}$ is a normal triple (and the order analoguous to the order of $\mathbb Q$), using $\CH$ and a sequence meeting appropriate dense sets, one can construct a P-point $\cv$ such that $\cu\le_{RK}\cv$.
Moreover, if one chooses dense sets a bit more carefully, then the P-point $\cv$ can be forced to be a strong immediate successor of $\cu$, i.e. $\cv$ then satisfies the condition of Theorem \ref{t:eck} with respect to $\cu$.

Now we are ready to state the definition of a $\delta$-generic sequence of ultrafilters.
We will comment in more detail on some key properties, the role of the remaining notions is more or less clear.
For example, clause (\ref{i5}) means that the maps $\pi_{\beta,\alpha}$ are Rudin-Keisler maps, while the commutativity of these RK maps as stated in clause (\ref{i0}) is unavoidable in a chain.
Clause (\ref{i6}) provides another feature of the sequence produced in this way.
It is a chain in the stronger ordering: every $\omega_2$-generic sequence of P-points is increasing in $\le_{RB}^+$ ordering, i.e. one can choose a function witnessing the $RK$ comparability to be nondecreasing.

The central part of the definition are clauses (\ref{i1}) and (\ref{i8}).
Their role is to anticipate conditions which will be required by future ultrafilters.
Let us try to explain (\ref{i1}) in a simplified setting.
So assume that $\langle {\cu}_{n}: n < \omega \rangle$ has already been built, and that we are now constructing the ultrafilter ${\cu}_{\omega}$.
Suppose for example that we want $\sset(d)$ to be in ${\cu}_{\omega}$, for some $d \in \mathbb{P}$, and that the maps $\langle {\pi}_{\omega, i}: i \leq n \rangle$ are constructed, for some $n \in \omega$.
In particular, we assume ${\pi}_{\omega, n}''\sset(d) \in {\cu}_{n}$.
Now one wants to define what is ${\pi}_{\omega, n + 1}$, and is allowed to strengthen $d$ to some ${d}^{\ast} \leq d$.
However, it must hold that ${\pi}_{\omega, n + 1}''\sset({d}^{\ast}) \in {\cu}_{n + 1}$ and that ${\pi}_{\omega, n }$ commutes through ${\pi}_{\omega, n + 1}$.
Clause (\ref{i1}) is defined in such a way that ${\cu}_{n + 1}$ had this in mind, so there are cofinally many $b \in {\cu}_{n + 1}$ making this possible.

Next, we discuss (\ref{i8}).
So again, assume that $\langle {\cu}_{\alpha}: \alpha < {\omega}_{1} \rangle$ has been built and that we are constructing ${\cu}_{{\omega}_{1}}$.
Supose that for some decreasing sequence $\langle {d}_{n}: n < \omega \rangle$ of conditions in $\mathbb{P}$, we have already decided $\langle \sset({d}_{n}): n < \omega \rangle \subseteq {\cu}_{{\omega}_{1}}$.
Suppose also that $\langle {\pi}_{{\omega}_{1}, n}: n < \omega \rangle$ has been defined.
In particular we assume that ${\pi}_{{\omega}_{1}, n}''\sset({d}_{m}) \in {\cu}_{n}$ for all $n,m<\omega$, and that each ${\pi}_{{\omega}_{1}, n}$ is as it should be on some ${d}_{m}$.
Now we need to determine a ${d}^{\ast} \in \mathbb{P}$ which is stronger than all ${d}_{n}$, and also define the map ${\pi}_{{\omega}_{1}, \omega}$ in such a way that ${\pi}_{{\omega}_{1}, \omega}''\sset({d}^{\ast}) \in {\cu}_{\omega}$, that ${\pi}_{{\omega}_{1}, \omega}$ is of the right form on ${d}^{\ast}$, and that all of the ${\pi}_{{\omega}_{1}, n}$ commute through ${\pi}_{{\omega}_{1}, \omega}$.
Clause (\ref{i8}) is defined is such a way that ${\cu}_{\omega}$ had this in mind, so there are cofinally many $b \in {\cu}_{\omega}$ making this possible.

\begin{Def}\label{d:genericsequence}
	Let $\delta\le \omega_2$ . We call $\seq{\seq{c^{\alpha}_i:i<\mathfrak{c}\wedge\alpha<\delta},\seq{\pi_{\beta,\alpha}:\alpha\le\beta<\delta}}$ \emph{$\delta$-generic} if and only if:
	\begin{enumerate}
		\item\label{i2} for every $\alpha<\delta$, $\seq{c_i^{\alpha}:i<\mathfrak{c}}$ is a decreasing sequence in $\mathbb{P}$; for every $\alpha \leq \beta < \delta$, ${\pi}_{\beta, \alpha} \in \BS$;
		\item\label{i4} for every $\alpha<\delta$, $\mathcal U_{\alpha}=\set{a\in \mathcal P(\omega):\exists i<\mathfrak{c}\ [\sset(c^{\alpha}_i)\subseteq^*a]}$ is an ultrafilter on $\omega$ and it is a rapid P-point (we say that $\mathcal U_{\alpha}$ is \emph{generated by} $\seq{c_i^{\alpha}:i<\mathfrak{c}}$);
		\item\label{i1} for every $\alpha<\beta<\delta$, every normal triple $\seq{\pi_1,\psi_1,b_1}$ and every $d\le b_1$ if $\pi''_1\sset(d)\in\cu_{\alpha}$, then for every $a\in\cu_{\beta}$ there is $b\in\cu_{\beta}$ such that $b\subseteq^* a$ and that there are $\pi,\psi\in\omega^{\omega}$ and $d^*\le_0 d$ so that $\seq{\pi,\psi,d^*}$ is a normal triple, $\pi''\sset(d^*)=b$ and $\forall k\in \sset(d^*)\ [\pi_1(k)=\pi_{\beta,\alpha}(\pi(k))]$.
		\item\label{i9} if $\alpha<\beta<\delta$, then $\cu_{\beta}\nleq_T\cu_{\alpha}$.
		\item\label{i3} for every $\alpha<\delta$, $\pi_{\alpha,\alpha}=\operatorname{id}$ and:
		\begin{enumerate}
			\item\label{i5} $\forall \alpha\le \beta<\delta)\ \forall i<\mathfrak{c}\ [\pi_{\beta,\alpha}''\sset(c^{\beta}_{i})\in \mathcal U_{\alpha}]$;
			\item\label{i0} $\forall \alpha\le \beta\le \gamma<\delta\ \exists i<\mathfrak{c}\ \forall^{\infty} k\in \sset(c_i^{\gamma})\ [\pi_{\gamma,\alpha}(k)=\pi_{\beta,\alpha}(\pi_{\gamma,\beta}(k))]$;
			\item\label{i6} for $\alpha<\beta<\delta$ there are $i<\mathfrak{c}$, $b_{\beta,\alpha}\in\mathbb{P}$ and $\psi_{\beta,\alpha}\in \omega^{\omega}$ such that $\seq{\pi_{\beta,\alpha},\psi_{\beta,\alpha},b_{\beta,\alpha}}$ is a normal triple and $c^{\beta}_i\le b_{\beta,\alpha}$;
		\end{enumerate}
		\item\label{i8} if $\mu<\delta$ is a limit ordinal such that $\cf(\mu)=\omega$, $X \subseteq \mu$ is a countable set such that $\sup(X) = \mu$, $\seq{d_j:j<\omega}$ is a decreasing sequence of conditions in $\mathbb{P}$, and $\seq{\pi_{\alpha}:\alpha\in X}$ is a sequence of maps in $\omega^{\omega}$ such that:
		\begin{enumerate}
			\item\label{i21} $\forall \alpha\in X\ \forall j<\omega\ [\pi_{\alpha}''\sset(d_j)\in \mathcal U_{\alpha}]$;
			\item\label{i22} $\forall \alpha,\beta\in X\ [\alpha\le \beta\Rightarrow \exists j<\omega\ \forall^{\infty} k\!\in\!\sset(d_j)\ [\pi_{\alpha}(k)\!=\!\pi_{\beta,\alpha}(\pi_{\beta}(k))]]$;
			\item\label{i23} for all $\alpha\in X$ there are $j<\omega$, $b_{\alpha}\in \mathbb{P}$ and $\psi_{\alpha}\in\omega^{\omega}$ such that $\seq{\pi_{\alpha},\psi_{\alpha},b_{\alpha}}$ is a normal triple and $d_j\le b_{\alpha}$;
		\end{enumerate}
		then the set of all $i^*<\mathfrak{c}$ such that there are $d^*\in \mathbb{P}$ and $\pi, \psi \in \omega^{\omega}$ satisfying:
		\begin{enumerate}[resume]
			\item\label{i31} $\sset(c^{\mu}_{i^*})=\pi''\sset(d^*)$ and $\forall j<\omega\ [d^*\le d_j]$;
			\item\label{i32} $\forall \alpha\in X\ \forall^{\infty} k\in \sset(d^*)\ [\pi_{\alpha}(k)=\pi_{\mu,\alpha}(\pi(k))]$;
			\item\label{i34} $\seq{\pi,\psi,d^*}$ is a normal triple;
		\end{enumerate}
		is cofinal in $\mathfrak{c}$;
	\end{enumerate}
\end{Def}

Clearly an $\omega_2$-generic sequence witnesses that $\mathfrak{c}^+$ embeds into both $RK$ and Tukey orderings of P-points, under $\CH$.
It was constructed by extending (for $\delta<\omega_2$) a given $\delta$-generic sequence $$S=\seq{\seq{c^{\alpha}_i:i<\mathfrak{c}\wedge\alpha<\delta},\seq{\pi_{\beta,\alpha}:\alpha\le\beta<\delta}},$$ using a sufficiently generic filter over the following countably closed partial order (depending only on $S$). 

\begin{Def}\label{d:theposet}
	Let $\mathbb Q^{\delta}$ be the set of all $q=\seq{c_q,\gamma_q,X_q,\seq{\pi_{q,\alpha}:\alpha\in X_q}}$ such that:
	\begin{enumerate}
		\item\label{i11} $c_q\in\mathbb{P}$;
		\item\label{i12} $\gamma_q\le \delta$;
		\item\label{i13} $X_q\in[\delta]^{\le\omega}$ is such that $\gamma_q=\sup(X_q)$ and $\gamma_q\in X_q$ iff $\gamma_q<\delta$;
		\item\label{i14} $\pi_{q,\alpha}$ ($\alpha\in X_q$) are mappings in $\omega^{\omega}$ such that:
		\begin{enumerate}
			\item\label{i16} $\pi''_{q,\alpha}\sset(c_q)\in \mathcal U_{\alpha}$;
			\item\label{i17} $\forall \alpha,\beta\in X_q\ \left[\alpha\le \beta\Rightarrow \forall^{\infty}k\in \sset(c_q)\ [\pi_{q,\alpha}(k)=\pi_{\beta,\alpha}(\pi_{q,\beta}(k))]\right];$
			\item\label{i15} there are $\psi_{q,\alpha}\in\omega^{\omega}$ and $b_{q,\alpha}\ge c_q$ such that $\seq{\pi_{q,\alpha},\psi_{q,\alpha},b_{q,\alpha}}$ is a normal triple;
		\end{enumerate}
	\end{enumerate}
	The ordering on $\mathbb Q^{\delta}$ is given by: $q_1\le q_0$ if and only if
$$\textstyle
c_{q_1}\le c_{q_0}\ \mbox{and}\ X_{q_1}\supseteq X_{q_0}\ \mbox{and for every}\ \alpha\in X_{q_0},\ \pi_{q_1,\alpha}=\pi_{q_0,\alpha}.
$$
\end{Def}

In particular, it was proved in \cite{longchain} that for any $\delta<\omega_2$ and every $\delta$-generic sequence $S_{\delta}$, there is an $\omega_2$-generic sequence $S$ such that $S\upharpoonright\delta=S_{\delta}$.
Thus we have the following theorem.

\begin{Theorem}
	Assume $\CH$.
	The ordinal $\mathfrak{c}^+$ embeds into both the $RK$ and the Tukey ordering of P-points.
\end{Theorem}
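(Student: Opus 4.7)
The plan is to read the theorem off from an inductive construction of an $\omega_2$-generic sequence (in the sense of Definition \ref{d:genericsequence}), using $\CH$ to meet $\aleph_1 = \mathfrak{c}$ many dense sets against the poset $\mathbb{Q}^{\delta}$ of Definition \ref{d:theposet} at every stage $\delta < \omega_2$. The starting point is the empty sequence, which is vacuously $0$-generic. Suppose that a $\delta$-generic sequence $S_{\delta} = \seq{\seq{c^{\alpha}_i : i < \mathfrak{c}, \alpha < \delta}, \seq{\pi_{\beta,\alpha} : \alpha \leq \beta < \delta}}$ has been constructed. I would first verify that $\mathbb{Q}^{\delta}$ (built from $S_{\delta}$) is countably closed: given a descending $\omega$-sequence of conditions $\seq{q_n : n < \omega}$, the union $X = \bigcup_n X_{q_n}$ is countable, and for each $\alpha \in X$ the maps $\pi_{q_n, \alpha}$ agree eventually; the anticipation built into clause (\ref{i8}) of Definition \ref{d:genericsequence} (applied at the countable-cofinality limits $\leq \delta$ encountered along the way) is exactly what is needed to produce a lower bound in $\mathbb{Q}^{\delta}$.

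Next I would enumerate, under $\CH$, an $\aleph_1$-sequence of dense subsets of $\mathbb{Q}^{\delta}$ capturing: (a) diagonalisation against every subset $a \subseteq \omega$, to ensure $\cu_{\delta}$ is an ultrafilter; (b) diagonalisation against every $f \in \omega^{\omega}$, to ensure rapidity; (c) the anticipation content of clauses (\ref{i1}) and (\ref{i8}) so that the \emph{extended} sequence (of length $\delta + 1$) remains generic at the new top level; and crucially (d) for every potential monotone cofinal map $\phi$ between any two of the relevant ultrafilters, a dense set forcing $\phi$ to fail, thereby delivering the Tukey incomparability demanded by clause (\ref{i9}). By countable closure, a decreasing sequence of conditions meeting all $\aleph_1$ dense sets exists, and it yields the new generators $\seq{c_i^{\delta} : i < \mathfrak{c}}$ together with the coherent maps $\seq{\pi_{\delta, \alpha} : \alpha \leq \delta}$. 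The union $\cu_{\delta} = \{ a \subseteq \omega : \exists i \; \sset(c^{\delta}_i) \subseteq^{\ast} a\}$ is then a rapid P-point; the form of $\mathbb{Q}^{\delta}$, whose conditions carry normal triples witnessing each $\pi_{q,\alpha}$, ensures clauses (\ref{i5}), (\ref{i6}), and the coherence condition (\ref{i0}).

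For limit $\delta$ of uncountable cofinality the sequence $S_{\delta} = \bigcup_{\gamma < \delta} S_{\gamma}$ is automatically $\delta$-generic since any countable data witnessing (\ref{i1}) or (\ref{i8}) at $\delta$ already appears below some $\gamma < \delta$. For $\cf(\delta) = \omega$ one takes the union and checks that the anticipation at preceding stages passes to $\delta$ --- this is the sole reason clause (\ref{i8}) was built into the definition. The outcome is an $\omega_2$-generic sequence $\seq{\cu_{\alpha} : \alpha < \omega_2}$; by (\ref{i5}) together with (\ref{i6}), $\alpha \mapsto \cu_{\alpha}$ is RK-monotone (in fact witnessed by nondecreasing maps), and by (\ref{i9}) no $\cu_{\beta}$ with $\beta > \alpha$ is Tukey below $\cu_{\alpha}$, so the map embeds $\mathfrak{c}^+ = \omega_2$ into both the RK and Tukey orderings of P-points. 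The main obstacle is the simultaneous maintenance of (\ref{i1}), (\ref{i8}), and (\ref{i9}) across $\omega_2$ stages: every successor-stage extension must both absorb enough normal-triple data to keep future limit extensions alive, and yet diagonalise against every Tukey map into every previously constructed $\cu_{\alpha}$; the rigidity imposed on RK-witnesses by the normal-triple framework is what makes these two demands compatible.
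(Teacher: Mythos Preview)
Your outline matches the paper's approach exactly: the survey states only that it was proved in \cite{longchain} that every $\delta$-generic sequence extends to an $\omega_2$-generic one via a sufficiently generic filter on the countably closed poset $\mathbb{Q}^{\delta}$, and you have filled in a reasonable sketch of that recursion, correctly identifying that clause~(\ref{i8}) is what underwrites countable closure and that limit stages simply take unions.

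One point needs tightening. In step~(d) you propose to meet a dense set for \emph{every} potential monotone cofinal map $\phi:\cu_{\alpha}\to\mathcal{P}(\omega)$ in order to secure clause~(\ref{i9}). There are $2^{\mathfrak{c}}$ such maps, which under $\CH$ may vastly exceed $\aleph_1$, so this enumeration is impossible as written. The fix is the Dobrinen--Todorcevic theorem recalled earlier in the paper: since each $\cu_{\alpha}$ is a P-point, any Tukey reduction $\cu_{\delta}\leq_T\cu_{\alpha}$ is witnessed by a \emph{continuous} monotone cofinal map, and there are only $\mathfrak{c}=\aleph_1$ continuous maps from $\mathcal{P}(\omega)$ to $\mathcal{P}(\omega)$. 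With that reduction in hand your dense-set count drops to $\aleph_1$ and the diagonalization goes through; this is indeed how \cite{longchain} handles clause~(\ref{i9}).
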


Now that we know that the longest possible ordinal embeds into these two orderings of P-points, the following two questions seem very natural.

\begin{Question}
	Is it true, under suitable set-theoretic assumptions, that $\mathfrak{c}^+$ embeds as an initial segment into the $RK$ ordering of P-points.
\end{Question}

\begin{Question}
	Is it true, under suitable set-theoretic assumptions, that $\mathfrak{c}^+$ embeds as an initial segment into the Tukey ordering of P-points.
\end{Question}

It was shown by the second author and Verner in \cite{raghavan_verner_v1} that conditions (\ref{i1}) and (\ref{i8}) in Definition \ref{d:genericsequence} are redundant.
In particular, if \emph{a weakly $\delta$-generic} sequence is defined as a sequence satisfying clauses (\ref{i2}), (\ref{i4}), (\ref{i9}), and (\ref{i3}) of Definition \ref{d:genericsequence}, then $S$ being $\delta$-generic is equivalent to $S$ being weakly $\delta$-generic.

We would like to point out here that not every $RK$ increasing $\omega_1$-sequence of P-points can be extended, as proved by the second author and Verner in \cite{verner}.
Namely, using $\diamondsuit$, they recursively construct an $RK$ increasing $\omega_1$-sequence of P-points $\seq{\cu_{\gamma}:\gamma<\omega_1}$ which cannot have a P-point on top.
To succeed, they start with arbitrary P-point, and in every successor step, for already constructed sequence $\seq{\cu_{\gamma}:\gamma\le\alpha}$ they choose arbitrary P-point $RK$ above $\cu_{\alpha}$.
In the case when $\seq{\cu_{\gamma}:\gamma<\alpha}$ for $\alpha$ limit is given, using diamond sequence, they define a countably closed forcing which contains a sufficiently generic P-filter, and any extension of this P-filter will be the required P-point ultrafilter.

In the same paper \cite{verner}, they proved the following.

\begin{Theorem}\label{t:raghavanverner}
	Assume $\CH$.
	Let $\delta<\omega_2$ and let $\seq{\cu_{\gamma}:\gamma<\delta}$ be an $RK$ increasing sequence of rapid P-points.
	Then there is a rapid P-point such that $\cu_{\gamma}\le_{RK}\cu$ for every $\gamma<\delta$.
\end{Theorem}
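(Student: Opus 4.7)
The strategy is to adapt the framework of \cite{longchain}: the plan is to recognize $\seq{\cu_\gamma : \gamma < \delta}$ as (up to re-indexing and a choice of normal-triple witnesses) a $\delta$-generic sequence in the sense of Definition \ref{d:genericsequence}, with clause (\ref{i9}) dropped since the conclusion of the theorem only concerns the RK order, and then to use the countably closed forcing $\mathbb{Q}^\delta$ of Definition \ref{d:theposet} to add a single rapid P-point above the chain. The new ultrafilter $\cu$ will be generated by the block sequences $c_q$ appearing in a sufficiently generic filter, and the maps $\pi_{q,\gamma}$ will witness $\cu_\gamma \leq_{RK} \cu$ for every $\gamma < \delta$.

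First I would preprocess the data. For each $\gamma < \delta$, rapidity together with the P-point property of $\cu_\gamma$ and $\CH$ allow us to fix a $\subseteq^*$-decreasing sequence $\seq{c_i^\gamma : i < \omega_1}$ in $\mathbb{P}$ generating $\cu_\gamma$, with block lengths chosen to witness rapidity. Next, using the P-point property at each $\cu_\alpha$, I would replace the given RK-witnesses by maps $\pi_{\beta,\alpha}$ that fit into normal triples $\seq{\pi_{\beta,\alpha},\psi_{\beta,\alpha},b_{\beta,\alpha}}$ with $b_{\beta,\alpha} \geq c_i^\beta$ for some $i$, and I would arrange the commutativity clause (\ref{i0}) by thinning the $c_i^\beta$'s countably many times at each level. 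By the remark from \cite{raghavan_verner_v1} that weak $\delta$-genericity is $\delta$-genericity, clauses (\ref{i1}) and (\ref{i8}) then come for free.

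With this preparation in place, I would invoke the countably closed forcing $\mathbb{Q}^\delta$. Under $\CH$ we have $|\mathbb{Q}^\delta| \leq 2^{\aleph_0} = \aleph_1$, so the relevant dense sets -- those deciding each $A \subseteq \omega$, providing pseudo-intersections for each countable family of generic sets in order to enforce the P-point property, producing witnesses to rapidity against each $f \in \omega^\omega$, and forcing every $\gamma < \delta$ to enter $X_q$ for some $q$ in the generic filter -- form a collection of size $\aleph_1$. Using countable closure of $\mathbb{Q}^\delta$, I would build a descending $\omega_1$-sequence of conditions meeting each such dense set, and set $\cu = \set{A \subseteq \omega : \exists q \in G\ [\sset(c_q) \subseteq^* A]}$, where $G$ is the resulting filter. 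Then $\cu$ is a rapid P-point, and for each $\gamma < \delta$ the coherently defined $\pi_{q,\gamma}$'s amalgamate into a single function witnessing $\cu_\gamma \leq_{RK} \cu$.

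The principal technical obstacle is verifying countable closure of $\mathbb{Q}^\delta$. Given a descending $\omega$-sequence $\seq{q_n : n < \omega}$, one must fuse the $c_{q_n}$ into a single $c^* \in \mathbb{P}$ with $\pi_{q_n,\alpha}''\sset(c^*) \in \cu_\alpha$ for every $\alpha$ in the countable set $X = \bigcup_n X_{q_n}$, and, if $\mu = \sup(X) \notin X$ and $\mu < \delta$, furnish a new coherent map $\pi_{q^*,\mu}$ reconciling the $\pi_{q_n,\alpha}$ for $\alpha < \mu$ with the given $\pi_{\mu,\alpha}$ from the chain. The first task uses the P-point property at every $\cu_\alpha$ in the form of a diagonal pseudo-intersection argument, while the second is precisely the content of clause (\ref{i8}) of Definition \ref{d:genericsequence} and is where the assumption that the ultrafilter at the limit stage is a rapid P-point is consumed. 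Once this is established, the rest of the argument is routine bookkeeping parallel to that carried out in \cite{longchain}.
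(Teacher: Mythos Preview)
The paper is a survey and does not itself supply a proof of this theorem; it is quoted from \cite{verner}. Still, your proposed reduction to the $\delta$-generic machinery of \cite{longchain} has a concrete gap.

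Your preprocessing step claims that, ``using the P-point property at each $\cu_\alpha$,'' the given RK witnesses can be replaced by maps $\pi_{\beta,\alpha}$ fitting into normal triples so that clause (\ref{i6}) of Definition \ref{d:genericsequence} holds. But a normal triple $\seq{\pi_{\beta,\alpha},\psi_{\beta,\alpha},b_{\beta,\alpha}}$ forces $\pi_{\beta,\alpha}$ to be constant on each block $b_{\beta,\alpha}(l)$ with nondecreasing values $\psi_{\beta,\alpha}(l)$; together with $c_i^\beta\le b_{\beta,\alpha}$ this makes $\pi_{\beta,\alpha}$ essentially nondecreasing on a set in $\cu_\beta$, i.e.\ it witnesses $\cu_\alpha\le_{RB}^+\cu_\beta$, not merely $\cu_\alpha\le_{RK}\cu_\beta$. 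The hypothesis of the theorem, however, assumes only an RK-increasing sequence, and the paper explicitly emphasizes, in the paragraph following the statement, that ``the sequence obtained using Theorem \ref{t:raghavanverner} need not be increasing in the stronger ordering $\le_{RB}^+$.'' So in general no such normal-triple witnesses exist, weak $\delta$-genericity fails, and the equivalence from \cite{raghavan_verner_v1} you appeal to never applies. Since $\mathbb Q^\delta$ is defined relative to a $\delta$-generic sequence, your construction does not get started.

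A secondary problem is commutativity (clause (\ref{i0})): even if normal-triple witnesses existed for each pair, two routes $\pi_{\gamma,\alpha}$ and $\pi_{\beta,\alpha}\circ\pi_{\gamma,\beta}$ need not agree on any member of $\cu_\gamma$, and ``thinning the $c_i^\beta$'s'' only shrinks bases---it cannot reconcile genuinely different functions. The argument in \cite{verner} must therefore work directly with arbitrary finite-to-one RK reductions between rapid P-points rather than forcing the input into the normal-triple framework; the paper's contrast between Theorem \ref{t:raghavanverner} and the $\omega_2$-generic construction is precisely this point.
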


This is clearly stronger result than the existence of an $\omega_2$-generic sequence from \cite{longchain}, in a sense that no care is needed while constructing the approximations to the resulting sequence. 
Note however, that the sequence obtained using Theorem \ref{t:raghavanverner} need not be increasing in the stronger ordering $\le_{RB}^+$.
Hence, in this sense, the construction of that sequence is weaker than the construction of an $\omega_2$-generic sequence from Definition \ref{d:genericsequence}.

Theorem \ref{t:raghavanverner} can be adapted to work under $\MA$ as well.
In this case it would prove that any $RK$ increasing $\mathfrak{c}$-sequence of rapid P-points can be extended with a rapid P-point.
However, it is not clear what is the optimal assumption for this sort of a result.
For example, in \cite{verner}, the authors ask whether $\mathfrak b=\mathfrak{c}$ is sufficient to get the conclusion of Theorem \ref{t:raghavanverner}. 

The work of Starosolski in \cite{starosolski} improves many known results about the $RK$ ordering of P-points to this hypothesis.
For example, he was able to prove the following.

\begin{Theorem}
	Assume $\mathfrak b=\mathfrak c$.
	Then:
	\begin{enumerate}
		\item If $\seq{\cu_n:n<\omega}$ is an $RK$-increasing sequence of P-point ultrafilters, then there is a P-point $\cu$ such that $\cu_n<_{RK}\cu$ for each $n<\omega$.
		\item For each P-point ultrafilter $\cu$, there is an embedding of both the real line and the long line in the $RK$-ordering of P-points above $\cu$.
		\item For every P-point $\cu$ and every $\gamma<\mathfrak{c}^+$ there is an $RK$-increasing sequence of P-points $\seq{\cu_{\alpha}:\alpha<\gamma}$ such that $\cu_0=\cu$.
	\end{enumerate}
\end{Theorem}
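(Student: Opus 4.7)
The strategy is to prove (1) first, then deduce (3) by transfinite induction, and finally (2) by iterating (3) and interpolating via (1). Throughout, the bounding assumption $\mathfrak{b} = \mathfrak{c}$ serves as the substitute for $\CH$ or $\MA$ used in earlier work such as \cite{longchain, verner, raghavan_verner_v1}: at each stage of a $\mathfrak{c}$-length construction, the objects built so far form a family of size less than $\mathfrak{c}$ in $\omega^\omega$, and $\mathfrak{b} = \mathfrak{c}$ supplies an $<^{\ast}$-upper bound, which is precisely what is needed to produce the next set and extend the existing witness functions compatibly. For part (1), given the RK-increasing sequence $\seq{\cu_n : n < \omega}$ with witnessing maps $\pi_{n+1, n}$ and their compositions $\pi_{n, m}: \omega \to \omega$ for $m \le n$, I would build a filter base $\seq{A_\alpha : \alpha < \mathfrak{c}}$ generating a P-point $\cu$, together with coherent maps $\seq{\pi_{\omega, n} : n < \omega}$ witnessing $\cu_n \le_{RK} \cu$. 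The $\mathfrak{c}$-many stages would enumerate and handle (a) countable families of previously committed sets, to be refined by a pseudointersection for the P-point property; (b) subsets of $\omega$, to decide membership in $\cu$ and render it an ultrafilter; (c) candidate RK-reductions from $\cu$ into $\cu_n$, to force $\cu_n <_{RK} \cu$ strictly. Coherence ($\pi_{n, m} \circ \pi_{\omega, n} =^{\ast} \pi_{\omega, m}$ on a set in $\cu$) would be maintained by always choosing the new $A_\alpha$ to lie inside the relevant commutativity set.

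Part (3) then follows by transfinite induction on $\gamma < \mathfrak{c}^+$. Successor steps use a diagonal Eck--Rosen style construction (cf.\ Theorem~\ref{t:eck}) to produce a strict RK-successor P-point, again using only $\mathfrak{b} = \mathfrak{c}$ for the diagonalizations. Limit steps of countable cofinality reduce directly to (1). Limit steps of cofinality $\kappa$ with $\omega_1 \le \kappa \le \mathfrak{c}$ use the same construction as in (1), but along a cofinal sequence of length $\kappa$; since $\kappa < \mathfrak{c}^+$ and the construction still has $\mathfrak{c}$ stages, the bound coming from $\mathfrak{b} = \mathfrak{c}$ continues to apply at each stage. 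For (2), apply (3) with $\gamma = \omega_1$ to embed the long line in the RK-ordering of P-points above $\cu$. To embed $\mathbb{R}$, first use (3) to build an RK-chain of P-points indexed by $\mathbb{Q}$ above $\cu$ (interleaving successor and countable-limit constructions), and then at each irrational cut apply (1) to an $\omega$-cofinal subsequence of rationals below to produce a P-point strictly between the lower and upper cuts of the chain.

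The principal obstacle is the limit step in (3) for cofinalities $\omega_1 \le \cf(\lambda) \le \mathfrak{c}$, where one must simultaneously extend the filter, produce the new RK-witness $\pi_{\lambda, \alpha}$ for every $\alpha < \lambda$, and ensure coherence with all $\pi_{\beta, \alpha}$ already built. The $\delta$-generic machinery from \cite{longchain} and \cite{verner} furnishes a countably closed forcing framework that handles this, but replacing $\CH$ with $\mathfrak{b} = \mathfrak{c}$ requires verifying that each density requirement in that framework can be met using only $<^{\ast}$-bounded families of functions of size less than $\mathfrak{c}$ --- which is exactly what $\mathfrak{b} = \mathfrak{c}$ provides. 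A secondary subtlety appears in (2): one must ensure that the chain indexed by $\mathbb{R}$ is strictly order-preserving in $<_{RK}$, which reduces to tracking the canonical form (constant, finite-to-one, or bijective modulo the filter) of each witnessing map, as in Blass~\cite{blass}.
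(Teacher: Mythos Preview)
The paper does not prove this theorem; it is a survey, and this result is quoted from Starosolski~\cite{starosolski} without argument. So there is no proof in the paper to compare against, and I can only assess your outline on its own terms.

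Your plan for (1) and (3) is broadly the right shape and matches the spirit of the machinery in \cite{longchain, verner}, and you correctly identify the uncountable-cofinality limit in (3) as the crux. One caution: you cannot simply say ``use the same construction as in (1) along a cofinal $\kappa$-sequence'' for $\kappa \geq \omega_1$. The paper explicitly notes (just before Theorem~\ref{t:raghavanverner}) that under~$\diamondsuit$ there exist RK-increasing $\omega_1$-sequences of P-points with no P-point upper bound. Hence the induction for (3) must build the sequence with extra structure from the outset (rapid, or satisfying the weak $\delta$-generic conditions) so that every proper initial segment is extendible; invoking the $\delta$-generic framework is appropriate, but it is an ingredient that has to be woven through the entire induction, not applied only at the limit stage.

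Your argument for (2) has two genuine gaps. First, applying (3) with $\gamma = \omega_1$ yields an $\omega_1$-chain, not the long line; the long line is $\omega_1 \times [0,1)$ lexicographically, so after obtaining an $\omega_1$-chain you still have to interpolate a copy of $[0,1)$ between each pair of consecutive stages. Second, and more seriously, your interpolation at an irrational cut $r$ uses (1) to get a P-point $\cv$ above $\{\cu_q : q \in \mathbb{Q},\, q < r\}$, but (1) gives no control placing $\cv$ \emph{below} any $\cu_{q'}$ with $q' > r$; the $\cv$ you produce could be RK-incomparable with the upper half of the $\mathbb{Q}$-chain, and then the map $\mathbb{R} \to \text{P-points}$ is not order-preserving. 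The standard fix (as in Blass~\cite{blass}) is not to build a free upper bound but to work \emph{inside} the cone below some fixed $\cu_{q'}$ with $q' > r$: one finds a P-point $\cv \leq_{RK} \cu_{q'}$ that still dominates all $\cu_q$ with $q < r$, and is distinct from each of them. That requires a relative version of (1) (an upper bound within a prescribed RK-cone), which is a separate lemma you have not stated.
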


\section{Weakenings of being a P-point}\label{sec:weakppoints}

In this section we mention some results about generalizations of the notion of a P-point ultrafilter.
We consider two generalizations, the first with respect to the Tukey ordering and the other related to being generic over a natural partial order. 

Before we proceed with these, we would like to mention another well known generalization of being a P-point.
\emph{A weak P-point} was defined by Kunen in \cite{kunenweak} as an ultrafilter $\cu$ which is not limit of any countable subset of $\omega^*\setminus\set{\cu}$.
Equivalently, $\cu$ is a weak P-point if for any countably many non-principal ultrafilters $\cv_n$ ($n<\omega$), each different from $\cu$, there is $a\in\cu$ such that $a\notin \cv_n$ for $n<\omega$.
In the mentioned paper, it was proved that there are, in $\ZFC$, $2^{\mathfrak{c}}$ many weak P-points in $\omega^*$.

\subsection{Isbell's problem}

As we have already mentioned, Kunen proved, in $\ZFC$, that there are two ultrafilters which are $RK$ incomparable.
Isbell asked the same question in the Tukey context.
Note that every ultrafilter is a directed set of cardinality $\mathfrak{c}$ and that $[\mathfrak{c}]^{<\omega}$ is the maximal Tukey type among directed sets of cardinality at most $\mathfrak{c}$.
Isbell proved that, in $\ZFC$, there is an ultrafilter $\cu$ such that $(\cu,\supseteq)\equiv_T [\mathfrak{c}]^{<\omega}$.
Hence, his question about the existence of two Tukey incomparable ultrafilters reduces to the following.

\begin{Question}
	Is there an ultrafilter $\cu$ on $\omega$ such that $\cu<_T [\mathfrak{c}]^{<\omega}$.
\end{Question}

This is the same as asking whether it is consistent with $\ZFC$ that for every ultrafilter on $\omega$ we have $[\mathfrak{c}]^{<\omega}\le_T\cu$, or equivalently $\cu\equiv_T [\mathfrak{c}]^{<\omega}$.
Note that for every P-point $\cu$ we do have $\cu<_T [\mathfrak{c}]^{<\omega}$.
The reason for this is that in a P-point ultrafilter there are many infinite bounded subsets.
Hence, one can say that, in a sense, any ultrafilter $\cv$ satisfying $\cv<_T [\mathfrak{c}]^{<\omega}$ is close to being a P-point.
We proceed to describe one such class.

Since any ultrafilter is a subset of $\mathcal P(\omega)$, it is a separable metric space with the metric inherited from the Cantor space.
Hence, we can talk about the convergence of a sequence of elements in an arbitrary ultrafilter $\cu$: a sequence $\seq{a_n:n<\omega}$ in $\cu$ converges to some $a\in\cu$ iff for every $m<\omega$ there is some $k<\omega$ such that $a\cap m=a_n\cap k$ for each $n\ge k$.

\begin{Def}\label{def:basicgen}
	An ultrafilter $\cu$ on $\omega$ is \emph{basically generated} if it has a filter basis $\mathcal B\subseteq \cu$ such that every sequence $\seq{a_n:n<\omega}$ in $\mathcal B$ converging to an element of $\mathcal B$ has a subsequence $\seq{a_{n_k}:k<\omega}$ such that $\bigcap_{k<\omega}a_{n_k}\in\cu$.
\end{Def}

Dobrinen and Todorcevic were able to prove in \cite{dt} that each basically generated ultrafilter is weakening of a P-point ultrafilter.

\begin{Theorem}
	If $\cu$ is a basically generated ultrafilter on $\omega$, then $\cu<_T [\mathfrak{c}]^{<\omega}$.
\end{Theorem}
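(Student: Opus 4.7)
The upper bound $\cu\le_T[\mathfrak{c}]^{<\omega}$ is immediate from $\abs{\cu}\le\mathfrak{c}$, since $[\mathfrak{c}]^{<\omega}$ is Tukey maximal among directed sets of cardinality at most $\mathfrak{c}$; the content is therefore the strict inequality $[\mathfrak{c}]^{<\omega}\not\le_T\cu$. The plan is to invoke the standard reformulation: $[\mathfrak{c}]^{<\omega}\le_T(D,\le_D)$ holds if and only if $D$ contains a subset of size $\mathfrak{c}$ every infinite subset of which is unbounded in $D$. Applied to $(\cu,\supseteq)$, where boundedness from below means the intersection contains, hence equals modulo passing to a subset, an element of $\cu$, the task reduces to showing that for every family $\set{a_\alpha:\alpha<\mathfrak{c}}\subseteq\cu$ there is an infinite $I\subseteq\mathfrak{c}$ with $\bigcap_{\alpha\in I}a_\alpha\in\cu$.

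Let $\mathcal{B}$ be a filter basis witnessing that $\cu$ is basically generated. The first step is to refine the family to the basis: pick $b_\alpha\in\mathcal{B}$ with $b_\alpha\subseteq a_\alpha$ for each $\alpha<\mathfrak{c}$, so that it suffices to find an infinite $I$ with $\bigcap_{\alpha\in I}b_\alpha\in\cu$. The second step is to apply compactness of the Cantor space $2^\omega\cong\mathcal P(\omega)$ to the $\mathfrak{c}$-sized set $\set{b_\alpha:\alpha<\mathfrak{c}}$ in order to extract a convergent $\omega$-subsequence $b_{\alpha_n}\to c$ in $2^\omega$.

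The third and decisive step is to arrange that $c\in\mathcal{B}$, so that the hypothesis of basic generation is actually available; this is the main obstacle, because $\mathcal{B}$ need not be closed in $2^\omega$ and a bare compactness argument can produce a cluster point outside $\mathcal{B}$. To surmount it, I would fix a target $b^*\in\mathcal{B}$ in advance and work below it. For each $\alpha$, $b_\alpha\cap b^*\in\cu$, so choose $e_\alpha\in\mathcal{B}$ with $e_\alpha\subseteq b_\alpha\cap b^*\subseteq b^*$; all $e_\alpha$ now live in $2^{b^*}$. A Cantor-scheme pigeonhole on length-$n$ initial segments of the $e_\alpha$'s within $b^*$, combined with the filter-base property of $\mathcal{B}$ (which realizes arbitrarily long initial segments of $b^*$ in $\cu$, allowing cofinally many $e_\alpha$ to match $b^*$ on initial segments), should produce an infinite subsequence $e_{\alpha_n}\to b^*$. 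The genuinely delicate point is forcing the limit to equal $b^*$ itself rather than a proper subset; this is the step that substantively uses the basis structure of $\mathcal{B}$ and not merely the metric compactness of $2^\omega$.

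Once such an alignment is achieved, the definition of basic generation applied to $\seq{e_{\alpha_n}:n<\omega}$ yields a sub-subsequence $\seq{e_{\alpha_{n_k}}:k<\omega}$ with $d=\bigcap_{k<\omega}e_{\alpha_{n_k}}\in\cu$. Setting $I=\set{\alpha_{n_k}:k<\omega}\in[\mathfrak{c}]^{\aleph_0}$, the nesting $e_{\alpha_{n_k}}\subseteq b_{\alpha_{n_k}}\subseteq a_{\alpha_{n_k}}$ gives $\bigcap_{\alpha\in I}a_\alpha\supseteq d\in\cu$, so $\bigcap_{\alpha\in I}a_\alpha\in\cu$. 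This contradicts the existence of an $\omega$-unbounded family of size $\mathfrak{c}$ in $(\cu,\supseteq)$ and establishes $[\mathfrak{c}]^{<\omega}\not\le_T\cu$, completing the proof.
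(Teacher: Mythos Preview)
The paper does not include a proof of this theorem; it is stated with attribution to Dobrinen and Todorcevic~\cite{dt}. So there is nothing in the paper to compare against directly, and the question is whether your argument stands on its own.

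Your overall strategy is correct: reduce to showing that every $\mathfrak{c}$-sized family in $\cu$ has an infinite subfamily whose intersection lies in $\cu$, refine into the witnessing base $\mathcal{B}$, and then produce a sequence in $\mathcal{B}$ converging to an element of $\mathcal{B}$ so that the defining property of basic generation can be invoked. The gap is in your third step, where you try to force convergence to a pre-chosen $b^{\ast}\in\mathcal{B}$. The mechanism you sketch does not work. Having $e_{\alpha}\subseteq b^{\ast}$ for all $\alpha$ in no way forces any subsequence to converge to $b^{\ast}$; the limit of a convergent subsequence will be some subset of $b^{\ast}$, typically a proper one. Your appeal to ``the filter-base property of $\mathcal{B}$ (which realizes arbitrarily long initial segments of $b^{\ast}$ in $\cu$, allowing cofinally many $e_{\alpha}$ to match $b^{\ast}$ on initial segments)'' is not justified: the $e_{\alpha}$ are chosen below $b_{\alpha}\cap b^{\ast}$, and nothing prevents, say, $\min(b^{\ast})\notin b_{\alpha}$ for every $\alpha$, in which case no $e_{\alpha}$ agrees with $b^{\ast}$ on even the first coordinate. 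The filter-base property constrains intersections, not initial segments.

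The correct fix, which is what is done in~\cite{dt}, avoids choosing the limit in advance. After refining to $\{b_{\alpha}:\alpha<\mathfrak{c}\}\subseteq\mathcal{B}$, either some value repeats infinitely often (and you are done trivially), or uncountably many of the $b_{\alpha}$ are pairwise distinct. Any uncountable subset of the second-countable space $2^{\omega}$ contains a point that is the limit of a one-to-one sequence of other points of the set: choose a condensation point $b_{\alpha^{\ast}}$ of $\{b_{\alpha}\}$ and then a sequence $b_{\alpha_{n}}\to b_{\alpha^{\ast}}$ with $\alpha_{n}\neq\alpha^{\ast}$. Now the limit $b_{\alpha^{\ast}}$ lies in $\mathcal{B}$ automatically, because it is one of the $b_{\alpha}$'s, and the basic-generation hypothesis applies directly to yield the required infinite intersection in $\cu$.
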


Although the last result shows that basically generated is a generalization of being a P-point in a certain precise sense, it is also possible to prove this in a much more direct sense, as the next theorem shows.
It was also proved in \cite{dt}.

\begin{Theorem}\label{t:ppointbasicallyg}
	Every P-point ultrafilter is basically generated.
\end{Theorem}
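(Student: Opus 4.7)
The plan is to verify the definition directly with $\mathcal{B} = \cu$ itself as the required filter basis. Suppose $\seq{a_n : n < \omega}$ is a sequence in $\cu$ converging to some $a \in \cu$. I would first apply the P-point property to the countable family $\{a \cap a_n : n < \omega\} \subseteq \cu$ to obtain $b \in \cu$ satisfying $b \subseteq a$ and $b \subseteq^* a_n$ for every $n$. Writing $F_n = b \setminus a_n$, each $F_n$ is finite; moreover $F_n \subseteq a \setminus a_n$, and convergence $a_n \to a$ in the Cantor metric forces $\min(a \triangle a_n) \to \infty$, so $\min F_n \to \infty$ (with $\min \emptyset := \infty$). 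The entire problem thereby reduces to locating an infinite $X \subseteq \omega$ with $\bigcup_{n \in X} F_n \notin \cu$: enumerating such an $X = \{n_k : k < \omega\}$ increasingly gives $\bigcap_{k < \omega}(b \cap a_{n_k}) = b \setminus \bigcup_{k} F_{n_k} \in \cu$, whence $\bigcap_{k} a_{n_k} \in \cu$ as required.

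To extract $X$, I would first thin to a subsequence on which the $F_n$ are pairwise disjoint, which is possible because $\min F_n \to \infty$. Put $U = \bigcup_{n} F_n$. If $U \notin \cu$, then any infinite $X$ works. Otherwise, define $\cv = \{A \subseteq \omega : \bigcup_{n \in A} F_n \in \cu\}$. The pairwise disjointness of the $F_n$'s combined with $U \in \cu$ makes $\cv$ a bona fide ultrafilter on $\omega$, and it is non-principal because every $F_n$ is finite and thus not in $\cu$. Any non-principal ultrafilter on $\omega$ admits an infinite subset not belonging to it --- for instance the complement of any infinite, co-infinite element of $\cv$ --- and choosing such an $X$ completes the construction.

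The only real obstacle is arranging for $\cv$ to be a genuine ultrafilter, which is precisely what the preliminary thinning to pairwise disjoint $F_n$'s is designed to ensure: disjointness converts intersections of index-unions into index intersections, so that $\cv$ inherits the filter and ultrafilter axioms from $\cu$. Beyond this bookkeeping step, the argument is a direct reduction of Cantor-metric convergence to a finitary selection problem, solved by the trivial observation that no non-principal ultrafilter on $\omega$ contains every infinite subset of $\omega$. Notably, no property of $\cu$ beyond being a P-point enters the argument, and no choice of filter basis smaller than $\cu$ is required.
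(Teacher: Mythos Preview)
Your argument is correct. The paper itself does not supply a proof of this theorem; it merely records the result and attributes it to Dobrinen and Todorcevic~\cite{dt}. Your approach---taking $\mathcal{B} = \cu$ and using the P-point property to reduce to a pseudo-intersection, then extracting an infinite index set $X$ whose associated union of finite ``error'' sets $F_n$ misses $\cu$---is essentially the same strategy as in~\cite{dt}, and in fact proves the slightly stronger statement that every P-point is \emph{basic} (i.e., the definition of basically generated holds with $\mathcal{B} = \cu$ itself), a fact the paper alludes to elsewhere when discussing selective ultrafilters.

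One small remark on presentation: in the thinning step where you pass to pairwise disjoint $F_n$'s, you should dispose of the case where infinitely many $F_n$ are empty separately (any such subsequence already has intersection containing $b$), so that the inductive selection via $\min F_n \to \infty$ operates only on nonempty $F_n$'s and the bound $\max F_{n_k} < \min F_{n_{k+1}}$ is meaningful. This is implicit in your convention $\min\emptyset = \infty$, but making it explicit avoids any ambiguity.
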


Theorem \ref{t:ppointbasicallyg} suggested a question whether there is an ultrafilter $\cu$ which is not basically generated but such that still $\cu<_T [\mathfrak{c}]^{<\omega}$.
This was answered in the negative by Blass, Dobrinen, and Raghavan in \cite{blass_dilip_natasha}.
The result follows from Theorem \ref*{t:smallTukeytype}, Theorem \ref*{t:canoniconcg} and Theorem \ref*{t:cgnotbasicallygenerated} in our presentation.

\subsection{Ultrafilters generic over a Fubini product of ideals}

In this subsection we consider ultrafilters over $\omega^k$ for an integer $k\ge 2$, however, it is clear that using a bijection between $\omega$ and $\omega^k$ this is equivalent to considering ultrafilters on $\omega$.
This approach makes the presentation simpler.
For $l<k$, let $\pi_l:\omega^k\to\omega$ denote the projection to the first $l$ many coordinates, i.e. $\pi_l(x)=\seq{x(0),\dots,x(l-1)}$ for $x\in\omega^k$.
Recall that the ideals $\FIN^{\otimes k+1}$ for $k<\omega$ are defined recursively: $\FIN^{\otimes 1}$ is $\FIN$, while for $k>1$:
$$\FIN^{\otimes k+1}=\set{X\subseteq \omega^{k+1}:\abs{\set{n<\omega:\set{j\in\omega^k:\langle n\rangle^{\frown}j\in X}\notin\FIN^{\otimes k}}}<\omega}.$$
Let $\mathcal{G}_k$ be a filter generic over $V$ for $\mathcal P(\omega^k)/\FIN^{\otimes k}$.
Then in $V[\mathcal{G}_k]$, the filter $\mathcal{G}_k$ is an ultrafilter on $\omega^k$.
As shown in \cite{blass_dilip_natasha}, the ultrafilter $\mathcal{G}_2$ is not a P-point because there is no set $A$ in $\mathcal{G}_2$ such that $\pi_0$ is either constant or finite-to-one on $A$.
However, its projection $\pi_0''\mathcal{G}_2$ is a selective ultrafilter, and moreover it is generic for $\mathcal P(\omega)/\FIN$.
This ultrafilter $\mathcal{G}_2$ has many interesting properties.
For example, although it is not a P-point it is a generalization in a sense of Kunen.

\begin{Theorem}
	The generic ultrafilter $\mathcal{G}_2$ is a weak P-point.
\end{Theorem}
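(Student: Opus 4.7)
The plan is a $\sigma$-closure fusion argument in the forcing $\mathbb{P} = \mathcal{P}(\omega^2)/\FIN^{\otimes 2}$. Suppose, in $V[\mathcal{G}_2]$, we are given a countable collection $\{\mathcal{V}_n : n < \omega\}$ of non-principal ultrafilters on $\omega^2$ distinct from $\mathcal{G}_2$; I want to produce $A \in \mathcal{G}_2$ with $A \notin \mathcal{V}_n$ for all $n$. Working with $\mathbb{P}$-names $\dot{\mathcal{V}}_n$, and using that $\mathbb{P}$ is $\sigma$-closed (and so adds no new subsets of $\omega^2$), it suffices to show that, below any condition $A_0$ forcing the $\dot{\mathcal{V}}_n$ to be as stated, the set of conditions $A$ that force $\omega^2 \setminus A \in \dot{\mathcal{V}}_n$ for every $n$ is dense. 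Any such $A$ lies in the generic filter and, since $\dot{\mathcal{V}}_n$ is forced to be an ultrafilter, satisfies $A \notin \mathcal{V}_n$.

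I would build recursively a decreasing sequence $A_0 \geq A_1 \geq A_2 \geq \cdots$ in $\mathbb{P}$, together with ground-model sets $B_n \subseteq \omega^2$, so that for each $n$ the condition $A_{n+1}$ forces $B_{n+1} \in \dot{\mathcal{V}}_n$ while $A_{n+1} \cap B_{n+1} \in \FIN^{\otimes 2}$. The recursive step runs as follows. Since $A_n$ forces $\dot{\mathcal{V}}_n \neq \dot{\mathcal{G}}_2$, and both are forced to be ultrafilters, some $A_n' \leq A_n$ decides a ground-model set $B$ lying in $\dot{\mathcal{V}}_n \triangle \dot{\mathcal{G}}_2$; replacing $B$ by $\omega^2 \setminus B$ if necessary, we may arrange that $A_n'$ forces $B \in \dot{\mathcal{V}}_n$ and $B \notin \dot{\mathcal{G}}_2$. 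Forcing $B \notin \dot{\mathcal{G}}_2$ just means that $A_n'$ is incompatible with $B$ in $\mathbb{P}$, i.e.\ $A_n' \cap B \in \FIN^{\otimes 2}$. Set $A_{n+1} = A_n'$ and $B_{n+1} = B$. By $\sigma$-closure choose $A^* \in \mathbb{P}$ below every $A_n$. Then for each $n$, $A^* \cap B_n \in \FIN^{\otimes 2}$, so $B_n \setminus A^* = B_n$ modulo $\FIN^{\otimes 2}$, and non-principality of $\dot{\mathcal{V}}_n$ forces $B_n \setminus A^* \in \dot{\mathcal{V}}_n$. Upward closure of $\dot{\mathcal{V}}_n$ gives $A^* \Vdash \omega^2 \setminus A^* \in \dot{\mathcal{V}}_n$, completing the density argument.

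The one delicate point, and the step I expect to require the most care, is the first recursive step: we must justify that the witness to $\dot{\mathcal{V}}_n \neq \dot{\mathcal{G}}_2$ can be taken in the ground model and aligned with the correct side of the symmetric difference. This rests on the fact that $\mathbb{P}$ adds no new subsets of $\omega^2$, together with a mixing argument applied to a name for such a witness. Modulo this, the proof is a direct adaptation of Kunen's classical argument that generic ultrafilters over $\mathcal{P}(\omega)/\FIN$ are weak P-points, with $\FIN^{\otimes 2}$ and $\omega^2$ replacing $\FIN$ and $\omega$ throughout.
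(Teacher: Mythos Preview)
The paper is a survey and does not supply a proof of this theorem; the result is simply stated and attributed to \cite{blass_dilip_natasha}. So there is no in-paper argument to compare against, and I will assess your proposal on its own.

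There is a genuine gap at the line ``non-principality of $\dot{\mathcal{V}}_n$ forces $B_n \setminus A^* \in \dot{\mathcal{V}}_n$.'' What you have arranged is $A^{*}\cap B_{n}\in\FIN^{\otimes 2}$, and from this you want $B_{n}\setminus A^{*}$ and $B_{n}$ to lie in the same cell of $\dot{\mathcal{V}}_n$. But sets in $\FIN^{\otimes 2}$ need not be finite: $\{0\}\times\omega$ and $\{(k,j):j\le k\}$ are both in $\FIN^{\otimes 2}$, and a non-principal ultrafilter on $\omega^{2}$ may perfectly well concentrate on either one. Non-principality rules out \emph{finite} sets, not sets in $\FIN^{\otimes 2}$. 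This is precisely where your analogy with $\mathcal P(\omega)/\FIN$ breaks down: over $\FIN$, ``non-principal'' coincides with ``disjoint from the ideal,'' whereas over $\FIN^{\otimes 2}$ it does not. As written, your argument only handles those $\mathcal{V}_n$ that happen to extend the co-$\FIN^{\otimes 2}$ filter.

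The remaining case---some $\mathcal{V}_n$ meeting $\FIN^{\otimes 2}$---cannot be dismissed, and fixing it forces you to upgrade the bookkeeping. Pure $\sigma$-closure only gives $A^{*}\le A_{n}$ modulo $\FIN^{\otimes 2}$, so even if you choose representatives with $A_{n+1}\cap B_{n+1}=\emptyset$ literally, the lower bound $A^{*}$ may reintroduce an infinite piece of $B_{n+1}$. One needs a fusion-style construction that freezes finitely many columns at each stage, so that the final $A^{*}$ genuinely (not just modulo the ideal) avoids the witnesses, together with a side argument for those $\mathcal{V}_n$ that live on a single column or below the graph of a ground-model function. The proof in \cite{blass_dilip_natasha} carries out this extra work; it is not the one-line transplant from the $\mathcal P(\omega)/\FIN$ case that your last paragraph suggests.
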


The ultrafilter $\mathcal{G}_2$ is also a counterexample to Isbell's problem.
In particular we have the following theorem.

\begin{Theorem}\label{t:smallTukeytype}
	There are at most continuum many ultrafilters $\cu$ such that $\cu\le_T\mathcal{G}_2$.
	Hence, $\mathcal{G}_2<_T [\mathfrak{c}]^{<\omega}$.
\end{Theorem}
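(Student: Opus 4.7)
The strategy is to reduce any monotone cofinal map from $\mathcal{G}_2$ to an ultrafilter $\cu$ on $\omega$ to something coded by countable data, and then to derive the strict Tukey inequality by a cardinality count. From the earlier characterisation, $\cu\le_T\mathcal{G}_2$ is witnessed by some monotone cofinal map $\phi:\mathcal{G}_2\to\cu$, and such a $\phi$ determines $\cu$ as the filter generated by $\phi''\mathcal{G}_2$, so it will suffice to bound the number of essentially different cofinal maps $\phi$ by $\cc$.

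The main step will be a canonical-form theorem: every monotone cofinal $\phi:\mathcal{G}_2\to\cu$ agrees, on a cofinal subfamily of $\mathcal{G}_2$, with a map coded by a ground-model real. I plan to exploit the fact that the forcing $\mathcal{P}(\omega^2)/\FIN^{\otimes 2}$ is countably closed---which is checked by a standard diagonal construction on slices---so it adds no new reals and preserves $\cc$. Given a name $\dot\phi$ for a monotone cofinal map into a name $\dot\cu$, I would build, by a fusion argument, a condition $A_\ast$ and a Borel-coded monotone map $\psi\in V$ defined on the $\FIN^{\otimes 2}$-positive subsets of $A_\ast$, such that $A_\ast$ forces $\dot\phi$ and $\psi$ to agree on a cofinal subfamily of $\mathcal{G}_2$ below $A_\ast$. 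The fusion is to proceed along two axes: horizontally using the selectivity of the min-projection $\pi_0''\mathcal{G}_2$ (noted earlier to be a selective ultrafilter generic for $\mathcal{P}(\omega)/\FIN$), and vertically along the slices, using the Fubini structure of $\FIN^{\otimes 2}$. This is in the same spirit as the Solecki--Todorcevic continuous cofinal map theorem, and the canonical form for basically generated ultrafilters from \cite{tukey}, but adapted to the Fubini-product context.

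Once the canonical form is in place, the count is immediate: there are only $\cc$ many Borel codes, each such code determines at most one target $\cu$ up to Tukey equivalence, so $\lc\set{\cu:\cu\le_T\mathcal{G}_2}\rc\le\cc$. For the ``hence'' clause, any directed set of cardinality at most $\cc$ is Tukey reducible to $[\cc]^{<\omega}$, so every ultrafilter on $\omega$ lies Tukey below $[\cc]^{<\omega}$. Since there are $2^\cc$ ultrafilters on $\omega$ but only $\cc$ of them are Tukey below $\mathcal{G}_2$, $\mathcal{G}_2\equiv_T[\cc]^{<\omega}$ is impossible, hence $\mathcal{G}_2<_T[\cc]^{<\omega}$.

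The main obstacle is the canonical-form step: $\mathcal{G}_2$ is \emph{not} a P-point, and by the earlier observations $\pi_0$ is neither constant nor finite-to-one on any set in $\mathcal{G}_2$, so the standard pseudointersection arguments available in the P-point and basically generated settings have to be replaced by a two-dimensional fusion. Arranging the fusion so that the resulting $\psi$ is coded by a real, while still capturing enough of $\phi$ to recover $\cu$ rather than merely a coarsening of it, is the delicate technical core of the argument.
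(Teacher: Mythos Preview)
Your plan is essentially correct and matches the strategy of the original source \cite{blass_dilip_natasha}. Note, however, that the present paper is a survey and does not give a self-contained proof of this theorem: it merely records that the statement follows from Theorem~\ref{t:canoniconcg} (``Note that this last result also implies Theorem~\ref{t:smallTukeytype}''). The derivation from Theorem~\ref{t:canoniconcg} is the short count you already have in mind: there are only $\cc$ many antichains $P\subseteq[\omega^2]^{<\omega}\setminus\{\emptyset\}$, each $\mathcal{G}(P)$ has at most $\cc$ many RK-images, and every $\cu\le_T\mathcal{G}_2$ is among them.

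Your proposal differs only in packaging: instead of quoting Theorem~\ref{t:canoniconcg} as a black box, you aim to prove directly that each monotone cofinal $\phi:\mathcal{G}_2\to\cu$ is, below some condition, determined by a ground-model (hence real-coded) monotone map. This is exactly the technical core behind Theorem~\ref{t:canoniconcg} in \cite{blass_dilip_natasha}, so you are not taking a genuinely different route---you are reproving the key lemma rather than citing it. The fusion you describe (countable closure of $\mathcal{P}(\omega^2)/\FIN^{\otimes 2}$, selectivity of the projection $\pi_0''\mathcal{G}_2$, two-dimensional diagonalisation along columns) is the right mechanism, and your ``hence'' clause via the $2^{\cc}$-versus-$\cc$ cardinality gap is correct. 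One small sharpening: rather than aiming for a Borel code in the abstract, the concrete target is a finitary generating map $\hat\phi:[\omega^2]^{<\omega}\to\mathcal{P}(\omega)$ with $\phi(A)\supseteq\bigcup\{\hat\phi(s):s\in[A]^{<\omega}\}$ cofinally; this is what the fusion actually produces and is what makes the $\cc$-count immediate.
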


The ultrafilter $\mathcal{G}_2$ is actually a counterexample to Isbell's problem in a strong sense, that even $[\omega_1]^{<\omega}$ is not Tukey reducible to it, as shown by the next theorem.

\begin{Theorem}
	$([\omega_1]^{<\omega},\subseteq)\not\le_T (\mathcal{G}_2,\supseteq)$.
\end{Theorem}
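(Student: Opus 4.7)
The plan is to prove the contrapositive: every $\omega_1$-sequence $\langle A_\alpha : \alpha<\omega_1\rangle \subseteq \mathcal{G}_2$ admits an uncountable subfamily with a common $\subseteq$-lower bound in $\mathcal{G}_2$, which is the standard reformulation of $[\omega_1]^{<\omega} \not\le_T \mathcal{G}_2$. Suppose for contradiction that a ``bad'' $\omega_1$-sequence exists. Let $P = \mathcal{P}(\omega^2)/\FIN^{\otimes 2}$, a countably closed forcing (so $\omega_1^V = \omega_1^{V[\mathcal{G}_2]}$). In the ground model $V$, fix $P$-names $\langle \dot A_\alpha : \alpha < \omega_1 \rangle$ for this sequence and a condition $p_0 \in \mathcal{G}_2$ forcing both $\dot A_\alpha \in \dot\mathcal{G}_2$ for every $\alpha$ and the badness.

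In $V$, associate to each $\alpha$ its \emph{$p_0$-envelope}
\[
B_\alpha \;=\; \set{x \in \omega^2 : \neg\,(p_0 \forces x \notin \dot A_\alpha)}.
\]
Since $p_0$ forces $\dot A_\alpha \subseteq \check B_\alpha$ and $\dot A_\alpha \in \dot\mathcal{G}_2$, one has $p_0 \le B_\alpha$ in $P$, so $p_0 \setminus B_\alpha \in \FIN^{\otimes 2}$. By the definition of $\FIN^{\otimes 2}$, the set $F_\alpha = \set{m \in \omega : (p_0 \setminus B_\alpha)_m \text{ is infinite}}$ is finite. Because there are only countably many finite subsets of $\omega$, a pigeonhole argument in $V$ yields an uncountable $X \subseteq \omega_1$ and a single finite $F \subseteq \omega$ with $F_\alpha = F$ for every $\alpha \in X$. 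Setting $p_1 = p_0 \setminus (F \times \omega)$ produces a condition below $p_0$ in $P$ such that $(p_1)_m \subseteq^* (B_\alpha)_m$ for every $\alpha \in X$ and every $m < \omega$.

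The remaining step is to descend from the envelopes $B_\alpha$ to the actual sets $A_\alpha$. I would perform a column-by-column fusion below $p_1$: using the countable closure of $P$ and the selectivity of the generic fiber ultrafilters $\mathcal{V}_m$ on each column, one builds a decreasing $\omega$-sequence of conditions below $p_1$ that diagonally decides, column by column, the ``trace'' of the names $\dot A_\alpha$ on the chosen column, uniformly for $\alpha \in X$. The uniform finite bound $F_\alpha = F$ and the column-wise structure of $\FIN^{\otimes 2}$ localize the $\alpha$-dependent discrepancies between $\dot A_\alpha$ and $\check B_\alpha$ to finitely many exceptions per column, which the selectivity of $\mathcal{V}_m$ absorbs through a single diagonal thinning. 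The fusion yields $q \le p_1$ in $P$ and a $C \in V$ with $q \le C$ such that $q \forces \check C \subseteq \dot A_\alpha$ for every $\alpha \in X$; by density such a $q$ lies in $\mathcal{G}_2$, forcing $C \in \mathcal{G}_2$ to be a common $\subseteq$-lower bound of the uncountable subfamily $\set{A_\alpha : \alpha \in X}$, contradicting the badness.

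The main obstacle will be the coherence of this fusion across the uncountable index set $X$: while each envelope $B_\alpha$ agrees with $p_1$ modulo $\FIN^{\otimes 2}$, the ideal $\FIN^{\otimes 2}$ is not a $\sigma$-ideal, so the union of the $B_\alpha$-discrepancies over uncountably many $\alpha$ need not be small. The uniform bound $F_\alpha = F$ localizes the problem to individual columns, and it is there that the specific Fubini structure of $\FIN^{\otimes 2}$ together with the genericity of $\mathcal{G}_2$, which supplies the selectivity of each fiber $\mathcal{V}_m$, will be indispensable to carry out a single diagonal thinning that handles all $\alpha \in X$ simultaneously.
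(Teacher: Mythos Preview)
The paper is a survey and does not include a proof of this theorem; it is quoted from \cite{blass_dilip_natasha} without argument, so there is no in-paper proof to compare your attempt against. I will therefore assess the proposal on its own terms.

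Your reformulation and the first two moves are sound. The equivalence you use is correct (in fact ``infinite'' rather than ``uncountable'' already suffices: $[\omega_1]^{<\omega}\not\le_T D$ is equivalent to every $\omega_1$-sequence in $D$ having an \emph{infinite} bounded subfamily). The envelope $B_\alpha$ is well-defined, the inclusion $p_0\le B_\alpha$ in $P$ is correct, and the pigeonhole on the finite sets $F_\alpha$ is legitimate.

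The gap is in the fusion step, and it is a real one, not a matter of missing detail. Two problems compound each other. First, you invoke ``the selectivity of the generic fiber ultrafilters $\mathcal{V}_m$'' inside what is explicitly a ground-model construction of a condition $q\le p_1$; but the $\mathcal{V}_m$ live in $V[\mathcal{G}_2]$ and cannot be used to guide a descending sequence of conditions in $V$. Second, and more seriously, a fusion below $p_1$ is an $\omega$-length construction: at each stage you can decide countably many bits of information, so over the whole fusion you meet countably many requirements. You are asking it to force $\check C\subseteq \dot A_\alpha$ for \emph{uncountably} many $\alpha\in X$ simultaneously. Nothing in your argument produces the needed uniformity: the fact that $p_1\setminus B_\alpha\in\FIN^{\otimes 2}$ with a common ``bad-column'' set $F$ controls only the outer envelopes $B_\alpha$, not the names $\dot A_\alpha$ themselves. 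For a fixed column $m\notin F$ and a fixed $x\in (p_1)_m$, the conditions below $p_1$ that force $x\in\dot A_\alpha$ may vary with $\alpha$ and need not have any common refinement; your ``single diagonal thinning'' presupposes exactly the coherence you are trying to establish. You acknowledge this in your final paragraph, but acknowledging the obstacle is not the same as overcoming it.

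In short: the argument stalls at precisely the point where the genuine content of the theorem lies. The proof in \cite{blass_dilip_natasha} does not proceed via envelopes and a column-wise fusion over an uncountable index set; it exploits a canonization of names (showing that below a suitable condition the name $\dot A_\alpha$ is essentially decided by a ground-model set) together with a further combinatorial reduction that cuts the problem down to countably many parameters before any diagonalization is attempted. Your outline never reaches such a reduction, and without it the fusion cannot close.
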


In the next part of this subsection we explain how $\mathcal{G}_2$ is close to being a basically generated ultrafilter, yet it fails to have that property.
Note that Theorem \ref{thm:basic} and \ref{t:canoniconcg} are the same except that basically generated is replaced with generic for $\mathcal P(\omega^2)/\FIN^{\otimes 2}$.

\begin{Theorem}\label{t:canoniconcg}
	Let $\cu$ be an ultrafilter in $V[\mathcal{G}_2]$ such that $\cu\le_T \mathcal{G}_2$.
	Then there is $P\subseteq [\omega^2]^{<\omega}\setminus\set{\emptyset}$ such that 
	\begin{enumerate}
		\item $\forall s,t\in P[t\subseteq s\Rightarrow t=s]$,
		\item $\mathcal{G}(P)\equiv_T \mathcal{G}_2$,
		\item $\cu\le_{RK}\mathcal{G}(P)$.
	\end{enumerate} 
\end{Theorem}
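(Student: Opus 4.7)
The plan is to adapt the proof of Theorem \ref{thm:basic} from the basically generated setting to the present generic setting, exploiting the countable closure and genericity of $\mathcal P(\omega^2)/\FIN^{\otimes 2}$ as a substitute for basic generation, which by Theorem \ref{t:cgnotbasicallygenerated} is unavailable for $\mathcal G_2$. Starting from a monotone cofinal $\phi: \mathcal G_2 \to \cu$ in $V[\mathcal G_2]$, I first note that the forcing adds no new reals, so every element of $\cu$ and every value $\phi(B)$ lies in $V$, even though $\cu$ itself and $\phi$ need not. Using a fusion-style density argument below some $A^* \in \mathcal G_2$, I then construct in $V$ a monotone function $\Psi: [A^*]^{<\omega} \setminus \{\emptyset\} \to \mathcal P(\omega)$ such that $\Psi(s) \in \cu$ for every $s$ in the domain, the family $\{\Psi(s) : s \in [B]^{<\omega},\ B \in \mathcal G_2\}$ is cofinal in $\cu$, and each $\Psi(s)$ refines $\phi(B)$ for some $B \in \mathcal G_2$ extending $s$. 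Countable closure is used to decide countably many relevant names at once, and the genericity of $\mathcal G_2$ threads these decisions together coherently.

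I then define $P$ to be the set of $\subseteq$-minimal $s \in [A^*]^{<\omega} \setminus \{\emptyset\}$ for which $\Psi(s)$ produces a membership in $\cu$ not already witnessed by any proper initial segment of $s$. Condition (1) is immediate from the minimality clause. Condition (3), $\cu \le_{RK} \mathcal G(P)$, follows by defining a map $f: P \to \omega$ through a selector choice guided by $\Psi$ and verifying the Rudin-Keisler condition using the cofinality of the $\Psi$-values in $\cu$ together with the cofinality of $\phi$.

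The main obstacle is condition (2), namely $\mathcal G(P) \equiv_T \mathcal G_2$. The inequality $\mathcal G(P) \le_T \mathcal G_2$ is easier: once $P$ is shown to be $\mathcal G_2$-large, which is built into the construction of $\Psi$, it is witnessed by the natural monotone cofinal map $B \mapsto P \cap [B]^{<\omega}$. The delicate direction is $\mathcal G_2 \le_T \mathcal G(P)$; since $\mathcal G_2$ is not basically generated, the continuity arguments underlying Theorem \ref{thm:basic} are unavailable, and the genericity of $\mathcal G_2$ must play the decisive role. The idea is that for any cofinal $\mathcal F \subseteq \mathcal G(P)$ in $V[\mathcal G_2]$, the names for $\mathcal F$ yield, via an absoluteness argument, a $V$-dense collection of conditions $D_{\mathcal F} \subseteq \mathcal P(\omega^2)/\FIN^{\otimes 2}$ below $A^*$, and since $\mathcal G_2$ meets every such dense set in $V$, one can reconstruct a cofinal subset of $\mathcal G_2$ from $\mathcal F$. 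This genericity-based reconstruction is what replaces continuity in the classical proof and is the principal novelty of the argument.
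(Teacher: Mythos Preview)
The paper does not contain a proof of Theorem \ref{t:canoniconcg}; the result is stated as a citation from \cite{blass_dilip_natasha}, so there is no ``paper's own proof'' to compare against. What you have written is a plan rather than a proof, and while the overall architecture (canonize the monotone cofinal map into a finitary $\Psi$, extract a front $P$, read off the RK-map) is the right shape and matches the basically generated template from \cite{tukey}, the two places where genericity must do real work are not carried out.

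The most serious gap is your treatment of $\mathcal{G}_2 \le_T \mathcal{G}(P)$. Your proposal to take an arbitrary cofinal $\mathcal{F} \subseteq \mathcal{G}(P)$ in $V[\mathcal{G}_2]$ and argue via ``names for $\mathcal{F}$'' and absoluteness is not workable as stated: $\mathcal{F}$ is an arbitrary family of size continuum in the extension, not a single real, and there is no uniform density argument that lets genericity of $\mathcal{G}_2$ reconstruct a cofinal subset of $\mathcal{G}_2$ from it. In the actual argument (see \cite{blass_dilip_natasha}) this direction does not go through an arbitrary $\mathcal{F}$ at all; rather, $P$ is built to be a \emph{front} below $A^*$ in a suitable sense, so that the map $R \mapsto \bigcup R$ (or a close variant) already witnesses $\mathcal{G}_2 \le_T \mathcal{G}(P)$ directly. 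The genericity is used earlier, in the construction of $\Psi$ and of the front $P$, not at this Tukey step. Your ``fusion-style density argument'' for $\Psi$ is also only named, not executed; this is where the countable closure of $\mathcal{P}(\omega^2)/\FIN^{\otimes 2}$ genuinely substitutes for basic generation, and it requires a careful inductive decision procedure on finite subsets of $\omega^2$, not merely the observation that no reals are added.
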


Note that this last result also implies Theorem \ref{t:smallTukeytype}.
However, as the next theorem shows, $\mathcal{G}_2$ is not basically generated.

\begin{Theorem}\label{t:cgnotbasicallygenerated}
  In $V[\mathcal{G}_2]$, the ultrafilter $\mathcal{G}_2$ is not basically generated.
\end{Theorem}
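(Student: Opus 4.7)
I would argue by contradiction: assume that in $V[\mathcal{G}_2]$ there is a filter basis $\mathcal{B} \subseteq \mathcal{G}_2$ witnessing that $\mathcal{G}_2$ is basically generated, and produce a convergent sequence inside $\mathcal{B}$ admitting no subsequence with $\mathcal{G}_2$-large intersection. Throughout, I would work below a condition $p \in \mathcal{G}_2$ forcing that a fixed $\mathcal{P}(\omega^2)/\FIN^{\otimes 2}$-name $\dot{\mathcal{B}}$ evaluates to such a basis. The entire strategy rests on the fact that $\mathcal{G}_2$ is generic for the countably closed forcing $\mathcal{P}(\omega^2)/\FIN^{\otimes 2}$: $\mathcal{G}_2$ is pseudo-intersection-complete modulo $\FIN^{\otimes 2}$ but, as noted just before Theorem \ref{t:smallTukeytype}, it badly fails to be a P-point. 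This mismatch between the ideals $\FIN^{\otimes 2}$ and $\FIN$ is what I expect to obstruct basic generation.

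The construction proceeds in two stages. First fix $A \in \mathcal{B}$. In stage one I would build a sequence $\langle S_k : k < \omega \rangle$ of subsets of $A$ with $S_k \in \FIN^{\otimes 2}$, $S_k \to \emptyset$ pointwise in $\mathcal{P}(\omega^2)$, and $\bigcup_{k \in I} S_k \in \mathcal{G}_2$ for every infinite $I \subseteq \omega$. These will be obtained by a fusion-style recursion along a descending sequence of conditions in $\mathcal{P}(\omega^2)/\FIN^{\otimes 2}$ in the ground model, exploiting the selectivity of $\pi_0 \mathcal{G}_2$ established in \cite{blass_dilip_natasha} to diagonalize against the continuum of potential infinite index sets $I$. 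In stage two, for each $k$ I would use the basis property of $\mathcal{B}$ together with another fusion to pick $A_k \in \mathcal{B}$ with $A_k \subseteq A \setminus S_k$ and $A_k$ agreeing with $A$ on initial segments of $\omega^2$ of size tending to infinity. Then $A_k \to A$ in the Cantor topology on $\mathcal{P}(\omega^2)$, and for any infinite $I$ the inclusion $\bigcap_{k \in I} A_k \subseteq A \setminus \bigcup_{k \in I} S_k$ forces $\bigcap_{k \in I} A_k \notin \mathcal{G}_2$, contradicting basic generation.

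The hardest step is stage one. The naive attempt of taking $S_k = \{n_k\} \times \omega$ for some sparse $\{n_k\}$ with $n_k \to \infty$ does not work: for such $S_k$, $\bigcup_{k \in I} S_k = \{n_k : k \in I\} \times \omega$ lies in $\mathcal{G}_2$ if and only if $\{n_k : k \in I\} \in \pi_0 \mathcal{G}_2$, and since $\pi_0 \mathcal{G}_2$ is an ultrafilter that splits the set $\{n_k : k < \omega\}$, many infinite $I$ will yield a union outside $\mathcal{G}_2$. The correct $S_k$ must therefore be genuinely two-dimensional, chosen so that the union along any infinite index set captures enough of both coordinates of the generic. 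This uses density arguments specific to $\mathcal{P}(\omega^2)/\FIN^{\otimes 2}$ rather than any product of selective ultrafilters, and it is precisely the feature that separates $\mathcal{G}_2$ from a Fubini product of two P-points, the latter being basically generated by the results of Dobrinen and Todorcevic. Once the $S_k$ are in hand, the refinement from $A \setminus S_k$ to elements of $\mathcal{B}$ with the right convergence property should be routine via the density of $\mathcal{B}$ in $\mathcal{G}_2$ combined with the countable closure of the forcing.
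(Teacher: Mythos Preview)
The survey does not prove this theorem; it merely records the result from \cite{blass_dilip_natasha}. So there is no in-paper argument to compare against, and your outline must stand on its own. It does not: both stages contain genuine gaps.

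The fatal problem is Stage two. You want $A_k\in\mathcal B$ with $A_k\subseteq A\setminus S_k$ \emph{and} $A_k\to A$. The basis property only lets you refine inside $\mathcal G_2$: given $A\setminus S_k\in\mathcal G_2$ you get \emph{some} $A_k\in\mathcal B$ below it, with no control whatsoever over $A_k$ on any prescribed finite set. Convergence $A_k\to A$ demands $A_k\cap F=A\cap F$ for growing finite $F$, and nothing in the definition of a filter basis, nor countable closure of the forcing, nor density of $\mathcal B$, forces elements of $\mathcal B$ to realize a specified finite trace. Concretely, $\mathcal B$ could consist entirely of sets avoiding a fixed point, or (after harmless restriction) have every element meet a fixed column in a prescribed finite pattern; you cannot manufacture the finite agreement needed for convergence. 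This is exactly the place where the real argument in \cite{blass_dilip_natasha} does nontrivial work: one has to use genericity to force elements of $\dot{\mathcal B}$ with controlled finite parts into existence, not merely invoke the basis property.

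Stage one is also problematic. You ask for $S_k\in\FIN^{\otimes 2}$ with $S_k\to\emptyset$ and $\bigcup_{k\in I}S_k\in\mathcal G_2$ for \emph{every} infinite $I\subseteq\omega$. Writing $E_x=\{k:x\in S_k\}$, the convergence $S_k\to\emptyset$ makes each $E_x$ finite, while the third requirement says $\{x:E_x\cap I=\emptyset\}\notin\mathcal G_2$ for every infinite $I$; in particular, for every finite $F\subseteq\omega$ the set $\{x:E_x\subseteq F\}$ lies outside $\mathcal G_2$. This is a continuum-sized family of constraints on a ground-model sequence $\langle S_k\rangle$, and a countable fusion cannot decide them one at a time. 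Your appeal to selectivity of $\pi_0''\mathcal G_2$ ``to diagonalize against the continuum of potential infinite index sets'' is a placeholder, not a mechanism: selectivity of a projection gives you homogeneity for colourings of pairs from $\omega$, not a way to force $\bigcup_{k\in I}S_k$ into the generic simultaneously for all $I$. You correctly identify the naive column construction as inadequate, but you have not replaced it with anything.
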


After the work we have just explained, Dobrinen in \cite{natasha_tukey_non_p} investigated ultrafilters $\mathcal{G}_k$ for $k>2$.
Among many relevant results there, we would like to emphasize the characterization of all the ultrafilters Tukey reducible to $\mathcal{G}_k$ for each $k\ge 2$.
The proof is contained in \cite[Sections 6]{natasha_tukey_non_p}.
Note also that canonical form of a monotone map from $\mathcal{G}_k$ to $\mathcal P(\omega)$ has been obtained in Section 5 of the mentioned paper.
However, the presentation of that result would require introduction of many notions, so we just formulate the announced characterization.

\begin{Theorem}\label{t:natashatukeycharacterization}
  Let $k\ge 2$.
  Then $\pi_l''\mathcal{G}_k<_T \pi_{l+1}''\mathcal{G}_k$ for each $l<k-1$.
  Moreover, if $\cv$ is any non-principal ultrafilter in $V[\mathcal{G}_k]$ such that $\cv\le_T\mathcal{G}_k$, then $\cv\equiv_T \pi_l''\mathcal{G}_k$ for some $l<k$.
\end{Theorem}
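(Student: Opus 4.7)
The plan is to prove both assertions in parallel, following the pattern established by Theorem \ref{t:canoniconcg} (the $k=2$ case) and extending it via the canonical form theorem for monotone cofinal maps from $\mathcal{G}_k$ developed in Section 5 of \cite{natasha_tukey_non_p}.

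For the chain $\pi_l''\mathcal{G}_k \le_T \pi_{l+1}''\mathcal{G}_k$, observe that the coordinate projection $p\colon\omega^{l+1}\to\omega^l$ with $p(n_0,\ldots,n_l)=(n_0,\ldots,n_{l-1})$ is a Rudin-Keisler map from $\pi_l''\mathcal{G}_k$ into $\pi_{l+1}''\mathcal{G}_k$, because $A\in\pi_l''\mathcal{G}_k$ iff $p^{-1}(A)=A\times\omega\in\pi_{l+1}''\mathcal{G}_k$; and every RK-reduction is a Tukey reduction. The same observation applied to the full projection $\omega^k\to\omega^l$ shows $\pi_l''\mathcal{G}_k\le_T\mathcal{G}_k$ for every $l\le k$.

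The strictness $\pi_{l+1}''\mathcal{G}_k\not\le_T\pi_l''\mathcal{G}_k$ is the heart of the matter. Assume toward contradiction a monotone cofinal $\phi\colon\pi_l''\mathcal{G}_k\to\pi_{l+1}''\mathcal{G}_k$. Composing with the projection $\omega^k\to\omega^l$ gives a monotone cofinal map $\tilde\phi\colon\mathcal{G}_k\to\pi_{l+1}''\mathcal{G}_k$, which we normalize on a positive set using the canonical form theorem of \cite[Section 5]{natasha_tukey_non_p}. The normalized $\tilde\phi$ produces output depending only on the first $l$ coordinates of its input, yet must be cofinal in an ultrafilter built via an $(l+1)$-fold Fubini product. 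Exploiting genericity of $\mathcal{G}_k$ for $\mathcal{P}(\omega^k)/\FIN^{\otimes k}$, we produce an $\FIN^{\otimes k}$-positive set whose image under $\tilde\phi$ omits some member of $\pi_{l+1}''\mathcal{G}_k$, contradicting cofinality.

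For the characterization, let $\cv\le_T\mathcal{G}_k$ be witnessed by a monotone cofinal $\phi\colon\mathcal{G}_k\to\cv$. Apply the canonical form theorem to $\phi$: on a set in $\mathcal{G}_k$, the map $\phi$ reduces to one of finitely many canonical shapes, each determined by which coordinates of $\omega^k$ carry essential information. By the symmetries of the Fubini hierarchy, each canonical shape is Tukey-equivalent to one whose essential coordinates form an initial segment $\{0,1,\ldots,l-1\}$, yielding $\cv\equiv_T\pi_l''\mathcal{G}_k$ for some $l\le k$. The main obstacle lies here, in the classification of canonical shapes and the subsequent collapsing step. For $k=2$ (Theorem \ref{t:canoniconcg}) the dichotomy between one-coordinate and two-coordinate behaviour is sharp and easy to state, but for $k\ge 3$ the raw canonical forms are indexed by arbitrary subsets of $\{0,\ldots,k-1\}$, and one must verify that non-initial-segment profiles Tukey-collapse onto an initial-segment form via the asymmetric Fubini structure of $\FIN^{\otimes k}$. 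This collapsing step, which crucially uses that the Fubini product is not symmetric in its factors, is where the bulk of the technical effort resides.
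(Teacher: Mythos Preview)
The survey you are reading does not give a proof of this theorem; it records the result and attributes the proof to \cite[Section~6]{natasha_tukey_non_p}, noting that the canonical form theorem for monotone cofinal maps on $\mathcal{G}_k$ is obtained in Section~5 of that reference. Your proposal correctly identifies this as the key machinery and sketches an outline consistent with that approach, so at the level of strategy there is nothing to compare against in the present paper.

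One point where your sketch likely misdescribes Dobrinen's argument: you speculate that for $k\ge 3$ the raw canonical shapes are indexed by arbitrary subsets of $\{0,\ldots,k-1\}$ and must then be collapsed to initial-segment profiles via the asymmetry of the Fubini product. In the high-dimensional Ellentuck space framework of \cite{natasha_tukey_non_p}, however, the space is built as a tree whose level structure already encodes the left-to-right asymmetry of $\FIN^{\otimes k}$, and the canonical cofinal maps are classified directly by that level structure. The outcome of the canonical form theorem is therefore already parameterized by initial segments (equivalently, by the depth $l\le k$), so the separate ``collapsing step'' you flag as the main obstacle does not arise as a distinct phase. Your outline is otherwise a reasonable plan, with the substantive work correctly deferred to the cited reference.
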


This answered a problem left open in \cite{blass_dilip_natasha}: where exactly is $\mathcal{G}_2$ is in the Tukey order of ultrafilters?
By Theorem \ref{t:natashatukeycharacterization}, it is minimal over the projection $\pi_0''\mathcal{G}_2$, which is selective.
Hence $\mathcal{G}_2$ has exactly one Tukey predecessor, while $\mathcal{G}_k$ has exactly $k-1$ Tukey predecessors.
Note that analoguous result to Theorem \ref{t:natashatukeycharacterization}, for the Rudin-Keisler reducibility holds as well \cite[Theorem 6.5]{natasha_tukey_non_p}.

\begin{Theorem}
  Let $k\ge 2$.
  If $\cv$ is a non-principal ultrafilter in $V[\mathcal{G}_k]$ such that $\cv\le_{RK}\mathcal{G}_k$, then $\cv\equiv_{RK}\pi_l''\mathcal{G}_k$ for some $l<k$.
\end{Theorem}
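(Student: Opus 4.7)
The plan is to establish a canonical form theorem for functions $f\colon\omega^k\to\omega$ with respect to the forcing $\PPP(\omega^k)/\FIN^{\otimes k}$, and then apply it to the $\le_{RK}$-witness. Fix a function $f$ in $V[\mathcal{G}_k]$ with $\cv=\{B\subseteq\omega:f^{-1}(B)\in\mathcal{G}_k\}$. Since $\PPP(\omega^k)/\FIN^{\otimes k}$ is countably closed and thus adds no reals, we may assume $f\in V$.

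The key claim is that the set of conditions $A$ modulo $\FIN^{\otimes k}$ for which there exist $l<k$ and a one-to-one function $g\colon\pi_l''A\to\omega$ with $f\restrict A=g\circ(\pi_l\restrict A)$ is dense. I would prove this by induction on $k$, with Lemma \ref{lem:functionprod} serving essentially as the $k=2$ base case: it produces four canonical forms for $f$ on a set positive modulo $\FIN^{\otimes 2}$, of which the constant one is incompatible with the non-principality of $\cv$, and the ``pure second coordinate'' form is ruled out by a diagonal argument exploiting that a generic $\mathcal{G}_2$ makes its fiber ultrafilters pairwise non-isomorphic. For $k>2$, decompose $\omega^k=\omega\times\omega^{k-1}$, apply the inductive hypothesis slicewise, and use a Fubini-style selection inside $\FIN^{\otimes k}$ to glue the slicewise canonical forms into a single canonical form on one positive condition.

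Granting the canonical form, a density argument provides $A\in\mathcal{G}_k$, an index $l<k$, and an injective $g$ with $f\restrict A=g\circ(\pi_l\restrict A)$. Then $g$ witnesses $\cv\le_{RK}\pi_l''\mathcal{G}_k$; and since $g\restrict\pi_l''A$ is a bijection onto a set in $\cv$, any total extension of its inverse witnesses $\pi_l''\mathcal{G}_k\le_{RK}\cv$, so $\cv\equiv_{RK}\pi_l''\mathcal{G}_k$, as required. The main obstacle is the canonical form theorem itself, in particular showing that the ``genuine dependencies'' of $f$ can be arranged to form an initial segment of the coordinates. A function depending on coordinate $i$ but not on coordinate $i-1$ would, by a back-and-forth construction inside a positive condition, produce some $B\subseteq\omega$ with both $f^{-1}(B)$ and $f^{-1}(\omega\setminus B)$ positive modulo $\FIN^{\otimes k}$, contradicting the ultrafilter property of $\cv$; verifying this in full generality is the delicate combinatorial core of the argument, and aligns the resulting RK classification with the Tukey classification of Theorem \ref{t:natashatukeycharacterization}.
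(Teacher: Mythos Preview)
The paper does not prove this statement: it is a survey, and the theorem is quoted from Dobrinen~\cite{natasha_tukey_non_p} with the remark that the proof (via canonical forms for maps on the high-dimensional Ellentuck spaces) is in Sections~5--6 there. So there is no in-paper argument to compare against. Your overall strategy --- reduce to a canonical-form theorem for $f\colon\omega^k\to\omega$ on a condition in $\mathcal{G}_k$, then read off the RK type --- is the right shape and is close in spirit to what the cited paper does, though Dobrinen proceeds through topological Ramsey space machinery rather than the direct Fubini-style induction you sketch.

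However, both specific justifications you offer for the step you yourself flag as the crux are wrong. First, $\mathcal{G}_2$ is not a Fubini product and has no ``fiber ultrafilters'' in any useful sense (each column $\{m\}\times\omega$ is null for $\FIN^{\otimes 2}$), so the appeal to ``pairwise non-isomorphic fibers'' to rule out the pure second-coordinate form does not make sense as stated. Second, and more seriously, your final argument --- that a function depending on coordinate $i$ but not $i-1$ yields a $B$ with both $f^{-1}(B)$ and $f^{-1}(\omega\setminus B)$ positive, ``contradicting the ultrafilter property of $\cv$'' --- is simply incorrect: both preimages being $\FIN^{\otimes k}$-positive is perfectly compatible with $\cv$ being an ultrafilter, since $\mathcal{G}_k$ will by genericity contain exactly one of them. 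For the concrete map $f(m,n)=n$ on $\omega^2$, the second-coordinate form genuinely occurs on conditions and is never ``ruled out''; what actually happens is that below any positive $A$ one can refine to a positive $A'\subseteq A$ whose columns are pairwise disjoint infinite sets, so that $f$ becomes one-to-one on $A'$ and this case is absorbed into the injective case (yielding $\cv\equiv_{RK}\mathcal{G}_2$, not a contradiction). The real content of the canonical-form theorem is a density argument of this kind inside $\PPP(\omega^k)/\FIN^{\otimes k}$, and your sketch supplies neither the correct statement of what happens to non-initial-segment dependencies nor a mechanism for establishing it.
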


\providecommand{\bysame}{\leavevmode\hbox to3em{\hrulefill}\thinspace}
\providecommand{\MR}{\relax\ifhmode\unskip\space\fi MR }
\providecommand{\MRhref}[2]{%
  \href{http://www.ams.org/mathscinet-getitem?mr=#1}{#2}
}
\providecommand{\href}[2]{#2}

\end{document}